\theoremstyle{definition}
\newtheorem{thm}{Theorem}[section]
\newtheorem{lem}[thm]{Lemma}
\newtheorem{defi}[thm]{Definition}
\newtheorem{prop}[thm]{Proposition}
\newtheorem{cor}[thm]{Corollary}
\newtheorem{claim}[thm]{Claim}
\newtheorem{rmk}[thm]{Remark}
\newcommand{\argmin}{\operatornamewithlimits{argmin}}
\newcommand{\argmax}{\operatornamewithlimits{argmax}}
\newcommand{\ee}{\mathrm{e}}
\newcommand{\EE}{\mathbb{E}}
\newcommand{\RR}{\mathbb{R}}
\newcommand{\NN}{\mathbb{N}}
\newcommand{\mcB}{\mathcal{B}}
\newcommand{\mcC}{\mathcal{C}}
\newcommand{\mcK}{\mathcal{K}}
\newcommand{\mcL}{\mathcal{L}}
\newcommand{\mcN}{\mathcal{N}}
\newcommand{\mcQ}{\mathcal{Q}}
\newcommand{\mcV}{\mathcal{V}}
\newcommand{\mbV}{\mathbf{V}}
\newcommand{\mbD}{\mathbf{D}}
\newcommand{\Norm}[1]{\left \lVert #1 \right \rVert}
\newcommand{\norm}[1]{\lVert #1 \rVert}
\newcommand{\dist}{\mathord{\mathrm{dist}}}
\newcommand{\cone}{\mathord{\mathrm{cone}}}
\newcommand{\aff}{\mathord{\mathrm{aff}}}
\newcommand{\Span}{\mathord{\mathrm{span}}}
\newcommand{\bigO}{\mathord{\mathrm{O}}}
\newcommand{\set}[1]{\{#1\}}
\title{Estimating Piecewise Monotone Signals}
\author[1,2]{Kentaro Minami}
\affil[1]{The University of Tokyo}
\affil[2]{Preferred Networks, Inc.}
\date{7 March 2020}
\begin{document}
\maketitle

\begin{abstract}
We study the problem of estimating piecewise monotone vectors.
This problem can be seen as a generalization of the isotonic regression that allows a small number of order-violating changepoints.
We focus mainly on the performance of the nearly-isotonic regression proposed by Tibshirani et al. (2011).
We derive risk bounds for the nearly-isotonic regression estimators that are adaptive to piecewise monotone signals.
The estimator achieves a near minimax convergence rate over certain classes of piecewise monotone signals under a weak assumption.
Furthermore, we present an algorithm that can be applied to the nearly-isotonic type estimators on general weighted graphs.
The simulation results suggest that the nearly-isotonic regression performs as well as the ideal estimator that knows the true positions of changepoints.

\noindent \textbf{keywords}: piecewise monotone function, isotonic regression, nearly-isotonic regression, adaptive risk bounds
\end{abstract}


\tableofcontents

\section{Introduction}

Isotonic regression is a popular statistical method based on partial order structures, which has a long history in statistics \citep{Ayer1955, Brunk1955, vanEeden1956}.
Suppose that $\theta^* \in \RR^n$ is a monotone vector satisfying $\theta^*_1 \leq \theta^*_2 \leq \cdots \leq \theta^*_n$, and $y$ is a noisy observation of $\theta^*$.
The goal of the isotonic regression is to find a least-square fit under the monotone constraint:
\begin{equation}\label{eq:isotonic_regression_1}
    \text{minimize} \ \norm{y - \theta}_2 \quad
    \text{subject to} \ \theta_1 \leq \theta_2 \leq \cdots \leq \theta_n.
\end{equation}
In other words, the isotonic regression is the least squares estimator $\hat{\theta} = \hat{\theta}_{K_n^\uparrow}$ over a closed convex cone $K^\uparrow_n := \set{\theta \in \RR^n: \theta_1 \leq \theta_2 \leq \cdots \leq \theta_n}$.
Broadly speaking, the isotonic regression is an example of \textit{shape restricted regression}.
For comprehensive reviews on this field, see \citet{Robertson88,Groeneboom,Chatterjee2015,Guntuboyina2017b} and references therein.

In this paper, we study the problem of estimating \textit{piecewise monotone} vectors, which can be regarded as a generalization of isotonic regression that allows order-violating changepoints.
We formulate the problem precisely as follows.
Let us consider the Gaussian sequence model
\begin{equation}\label{eq:gaussian_sequence}
    y_i = \theta^*_i + \xi_i, \quad i = 1, 2, \ldots, n,
\end{equation}
where $y = (y_1, y_2, \ldots, y_n)^\top \in \RR^n$ is the observed vector, $\theta^* = (\theta^*_1, \theta^*_2, \ldots, \theta^*_n)^\top \in \RR^n$ is the unknown parameter of interest, and $\xi = (\xi_1, \xi_2, \ldots, \xi_n)^\top$ is the unobserved noise distributed according to the Gaussian distribution $N(0, \sigma^2 I_n)$.
Given the noisy observation $y$, the problem is to find a good piecewise monotone approximation of $\theta^*$.
Here we define piecewise monotone vectors as follows.

\begin{defi}\label{def:piecewise_monotone}
Let $\Pi = (A_1, A_2, \ldots, A_m)$ be a connected partition of $[n] = \set{1, 2, \ldots, n}$, that is, there exists a sequence $1 = \tau_1 < \tau_2 < \cdots < \tau_{m} < \tau_{m+1} = n + 1$ such that $A_i = \set{\tau_{i}, \tau_{i} + 1, \ldots, \tau_{i+1} - 1}$ ($i = 1, 2, \ldots, m$).
We say that a vector $\theta \in \RR^n$ is \textit{piecewise monotone} on $\Pi$ if the restriction on each $A_i$ is monotone:
\[
    \theta_{\tau_i} \leq \theta_{\tau_i + 1} \leq \cdots \leq \theta_{\tau_{i+1} - 1}, \quad \text{for $i= 1, 2, \ldots, m$}.
\]
We also say that $\theta$ is $m$-piecewise monotone if $\theta$ is piecewise monotone on some partition $\Pi$ with $|\Pi| = m$.
\end{defi}

We are particularly interested in the case where the number of pieces $m$ is larger than two but much smaller than $n$ because it is reduced to simpler problems if otherwise.
From Definition \ref{def:piecewise_monotone}, a monotone vector in $K_n^\uparrow$ is $m$-piecewise monotone for any $m \geq 1$.
In particular, the least squares estimators over $1$-piecewise monotone vectors coincide with the isotonic regression.
Besides, since any vector in $\RR^n$ is $n$-piecewise monotone, the least squares estimator over $n$-piecewise monotone vectors is merely the identity function $\hat{\theta}_{\mathrm{id}} = y$.

\begin{figure}[tb]

\begin{minipage}[c]{\linewidth}
\centering
\includegraphics[width=0.95\linewidth]{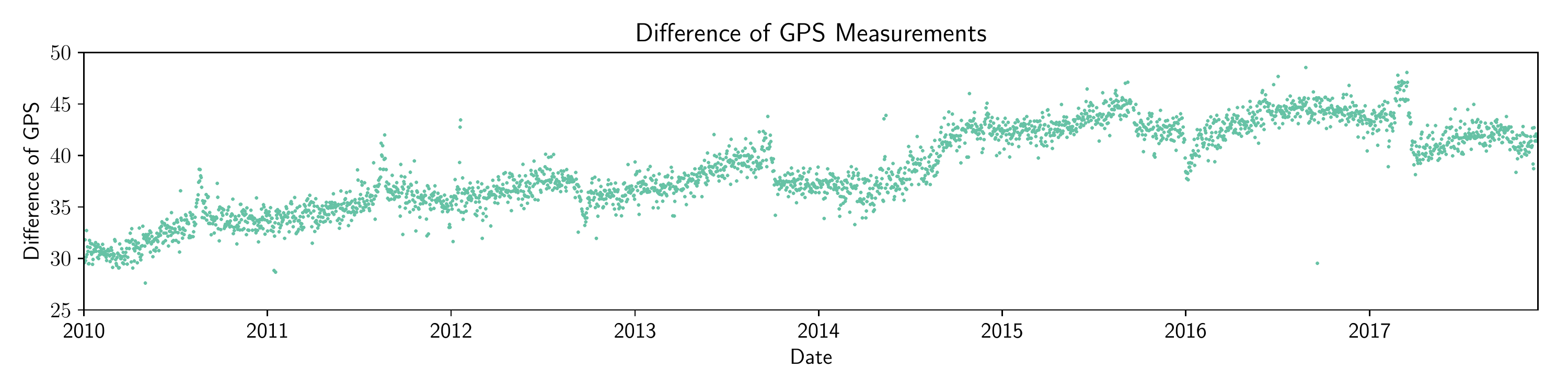}
\end{minipage}

\begin{minipage}[c]{\linewidth}
\centering
\includegraphics[width=0.95\linewidth]{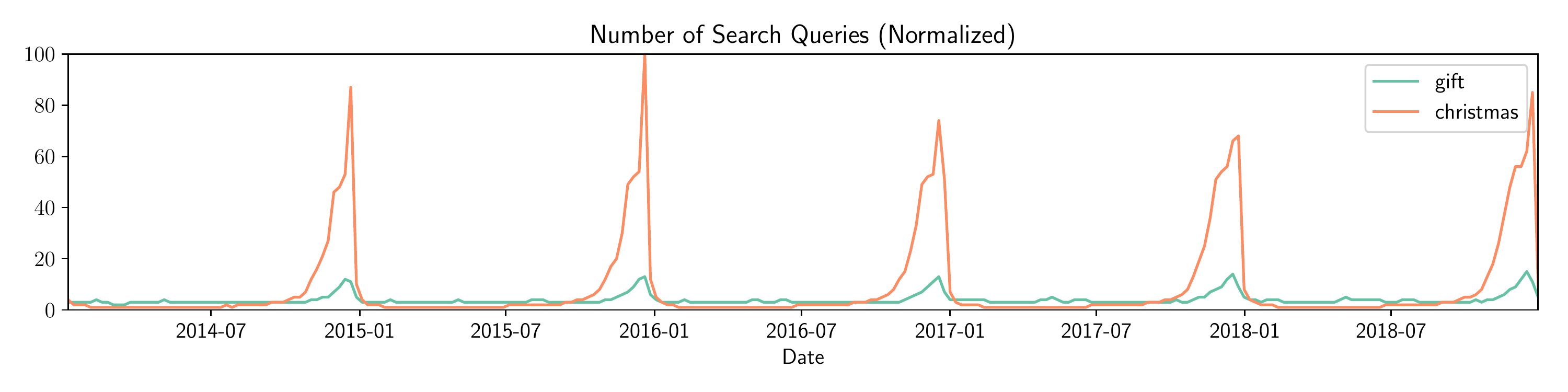}
\end{minipage}

\caption{\textbf{Examples of piecewise monotone signals in real-world data.}
\textbf{Top}: The difference of the east-west component of GPS measurements between Victoria (British Columbia, Canada) and Seattle (United States).
The trend factor seems to be approximated by a piecewise monotone signal.
A possible reason for this behavior is the seismological phenomenon reported in \citet{Rogers2003}.
See Section \ref{sec:geological} for a more detailed explanation of this data.
\textbf{Bottom}: The numbers of search queries for two words ``Christmas'' and ``gift'' in Google Trends (\url{https://www.google.com/trends}).}
\label{fig:real_data_example}
\end{figure}

In real-world applications, there are many signals that can be approximated by piecewise monotone vectors.
Here, we provide a few examples.
First, in seismology, geological observations such as tide gauge records \citep{Nagao2013} and GPS records \citep{Rogers2003} often consist of a long-term monotonic trend and discontinuous jumps caused by tectonic activities.
In particular, \citet{Rogers2003} reported that GPS measurements that are nearby a subduction zone in North America can be approximated by a sawtooth function.
The top panel of Figure \ref{fig:real_data_example} shows an example of GPS measurements.
Second, the numbers of search queries for some words related to seasons (e.g., ``Christmas'' and ``gift'') can be seen as periodic piecewise monotone signals (see the bottom panel of Figure \ref{fig:real_data_example} for examples).
Third, in the ranking systems in online shopping websites, sales ranks of rarely sold items behave like piecewise monotone signals because they suddenly rise every time the items are sold \citep{Hattori2010}.

In this paper, we focus on the performance of \textit{nearly-isotonic regression} proposed by \citet{Tibshirani2011}.
Given $y \in \RR^n$ and a tuning parameter $\lambda \geq 0$, the nearly-isotonic regression estimator $\hat{\theta}_\lambda$ is defined as
\begin{equation}\label{eq:neariso}
    \hat{\theta}_\lambda \in \argmin_{\theta \in \RR^n}
    \left\{
        \frac{1}{2}\norm{y - \theta}_2^2 +
        \lambda \sum_{i=1}^{n-1}(\theta_i - \theta_{i+1})_+
    \right\},
\end{equation}
where $(z)_+ := \max\{ z, 0 \}$.
Intuitively, the tuning parameter $\lambda$ controls the degree of monotonicity.
The term $(\theta_i - \theta_{i+1})_+$ poses a positive penalty if and only if the directed edge $(i, i+1)$ is \textit{order violating}, i.e., $\theta_i > \theta_{i+1}$.
Hence, a large value of $\lambda > 0$ makes the estimator $\hat{\theta}_\lambda$ close to a monotone vector.
In particular, there is a sufficiently large $\lambda$ such that the solution $\hat{\theta}_\lambda$ becomes exactly the same as the isotonic regression \eqref{eq:isotonic_regression_1}.

\begin{figure}[tb]
\centering
\includegraphics[width=0.95\linewidth]{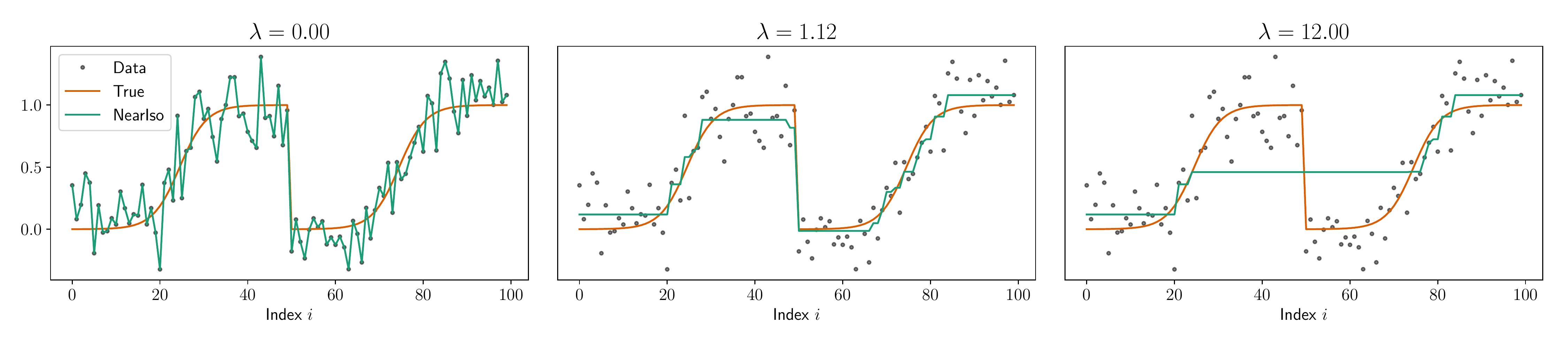}
\caption{\textbf{Examples of the nearly-isotonic regression estimators with different choices of tuning parameters.}
The nearly-isotonic regression interpolates between the identity estimator $\hat{\theta}_\mathrm{id} = y$ and the isotonic regression $\hat{\theta}_{K_n^\uparrow}$.
}
\label{fig:example_1dim}
\end{figure}

Our goal in this paper is to show that the nearly-isotonic regression can adapt to piecewise monotone vectors.
As suggested in \citet{Tibshirani2011}, the nearly-isotonic regression can fit to a ``nearly monotone'' vector that is close to $K^\uparrow_n$ in $\ell_2$-sense.
That is, the estimator performs well if $\theta^*$ has a small $\ell_2$-misspecification error $\dist(\theta^*, K_n^\uparrow)$ defined as
\[
    \dist(\theta^*, K_n^\uparrow) :=
    \inf_{\theta \in K_n^\uparrow} \norm{\theta^* - \theta}_2.
\]
Moreover, we can observe that the nearly-isotonic regression can fit to piecewise monotone vectors, even if $\theta^*$ is far from monotone in $\ell_2$-sense.
Figure \ref{fig:example_1dim} shows an example of the nearly-isotonic regression with $n = 100$.
The true parameter $\theta^*$ (orange line) is 2-piecewise monotone.
By varying the values of the tuning parameter $\lambda \geq 0$, the nearly-isotonic regression behaves as follows:
If $\lambda = 0$, the nearly-isotonic regression is just the identity estimator $\hat{\theta}_{\mathrm{id}} = y$, which clearly overfits to the noisy observation.
If $\lambda$ is set to a sufficiently large value, $\hat{\theta}_\lambda$ coincides with the isotonic regression.
In this example, however, the $\ell_2$-misspecification error $\mathrm{dist}^2(\theta^*, K_{n}^\uparrow)$ is large compared with the normalized noise variance $\sigma^2 / n$.
We can see that the mean squared error (MSE) $\frac{1}{n} \EE_{\theta^*} \norm{\hat{\theta} - \theta^*}_2^2$ of the isotonic regression can be much worse than that of the identity estimator, which coincides with $\sigma^2/n$ (see Section \ref{sec:lower_projection}).
Indeed, we can choose a 2-piecewise monotone vector $\theta^* \in K^\uparrow_{n/2} \times K^\uparrow_{n/2}$ with arbitrarily large $\ell_2$-misspecification error.
If we choose an intermediate value of $\lambda$, the nearly-isotonic regression seems to fit to the true parameter.
This suggests the adaptation property to piecewise monotone vectors.

\subsection{Summary of theoretical results}

In this paper, we investigate the adaptation property of the nearly-isotonic regression estimators defined in \eqref{eq:neariso}.

In the monotone regression setting (i.e., $m = 1$), it is known that the isotonic regression estimator $\hat{\theta}_{K_n^\uparrow}$ achieves the risk bound
\[
    \frac{1}{n} \EE_{\theta^*} \norm{\hat{\theta}_{K_n^\uparrow} - \theta^*}_2^2
    \leq C \left( \frac{\sigma^2 \mcV(\theta^*)}{n} \right)^{2/3}
    + \frac{C \sigma^2 \log \ee n}{n},
\]
where $\mcV(\theta) = \theta_n - \theta_1$ is the total variation of the monotone vector $\theta$.
It is also known that the rate $\bigO((\sigma^2 \mcV/n)^{2/3})$ is minimax optimal under the assumption that $\theta^*$ is monotone and $\mcV(\theta^*) \leq \mcV$ \citep{Zhang2002}.
Hence, a natural question is whether a similar rate can be achieved in piecewise monotone regression.

In Section \ref{sec:lower_general}, we provide the minimax lower bound over the class of piecewise monotone vectors.
Let $\Theta_n(m, \mcV)$ be the set of $m$-piecewise monotone vectors whose ``upper'' total variations are bounded by $\mcV$ (a precise definition is provided in Section \ref{sec:lower_general}).
Then, the minimax risk over $\Theta_n(m, \mcV)$ is bounded from below by a constant multiple of
\[
    \max \left\{
        \left( \frac{\sigma^2 \mcV}{n} \right)^{2/3}, \
        \frac{\sigma^2 m}{n} \log \frac{\ee n}{m}
    \right\}.
\]
In Section \ref{sec:model_selection}, we construct a concrete (but not computationally efficient) estimator that adaptively achieves this rate, and hence this lower bound is tight in the sense of the order in $n, m$, and $\mcV$.
Intuitively, this suggest that the cost of not knowing the true partition is of order $\bigO(\frac{\sigma^2 m}{n} \log \frac{\ee n}{m})$.

In Section \ref{sec:adaptation}, we provide the following risk bound for the nearly-isotonic regression estimator \eqref{eq:neariso}.
A precise statement is given in Corollary \ref{cor:moderate}.

\begin{claim}\label{claim:moderate}
Let $\theta^*$ be a piecewise monotone vector on a partition $\Pi = (A_1, A_2, \ldots, A_m)$.
Suppose that the following assumptions hold:
\begin{enumerate}[label=(\alph*)]
    \item The partition is equi-spaced: $|A_1| = |A_2| = \cdots = |A_m| \ (= \frac{n}{m})$.
    \item For each segment $A_j$, $\theta_{A_j}^*$ is monotone and the total variation is bounded as $\mcV(\theta_{A_j}^*) \leq \mcV/m$.
    \item $\theta^*_{A_j}$ satisfies an appropriate ``growth condition'' for each $j = 1, \ldots, m$.
\end{enumerate}
Then, the estimator \eqref{eq:neariso} with optimally tuned parameter $\lambda$ satisfies the following risk bound:
\begin{equation}\label{eq:moderate_bound}
    \frac{1}{n} \EE_{\theta^*} \norm{\hat{\theta}_\lambda - \theta^*}_2^2
    \leq C \left\{
        \left(
            \frac{\sigma^2 \mcV \log \ee n}{n}
        \right)^{2/3}
        + \frac{\sigma^2 m}{n} \log \frac{\ee n}{m}
    \right\}.
\end{equation}
\end{claim}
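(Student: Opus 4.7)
My starting point is the basic oracle inequality obtained from the optimality of $\hat{\theta}_\lambda$ in \eqref{eq:neariso}: writing $R(\theta) = \sum_{i=1}^{n-1}(\theta_i - \theta_{i+1})_+$ and substituting $y = \theta^* + \xi$ into $\tfrac{1}{2}\norm{y - \hat{\theta}_\lambda}_2^2 + \lambda R(\hat{\theta}_\lambda) \le \tfrac{1}{2}\norm{y - \theta^*}_2^2 + \lambda R(\theta^*)$, one gets
\[
    \tfrac{1}{2}\norm{\hat{\theta}_\lambda - \theta^*}_2^2 \le \langle \xi, \hat{\theta}_\lambda - \theta^* \rangle + \lambda R(\theta^*) - \lambda R(\hat{\theta}_\lambda).
\]
The entire proof then reduces to (i) controlling the penalty gap $R(\theta^*) - R(\hat{\theta}_\lambda)$ and (ii) controlling the Gaussian linear term, both in units of $\norm{\hat{\theta}_\lambda - \theta^*}_2$ and data-independent quantities, and finally tuning $\lambda$ so as to balance the two error sources.

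The piecewise structure of $\theta^*$ makes (i) tractable. Via the dual representation $R(\theta) = \max_{u \in [0,1]^{n-1}} \langle u, D\theta \rangle$, with $D$ the forward-difference operator, I plug in the indicator $u^\star = \mathbf{1}_S$ of the set $S \subseteq B(\Pi) := \set{\tau_2-1, \ldots, \tau_m - 1}$ of strictly downward boundary edges. Monotonicity of $\theta^*$ on each $A_j$ forces $R(\theta^*) = \langle u^\star, D\theta^* \rangle$, while $u^\star \in [0,1]^{n-1}$ gives $R(\hat{\theta}_\lambda) \ge \langle u^\star, D\hat{\theta}_\lambda \rangle$, so the penalty gap is bounded by $\lambda \langle u^\star, D(\theta^* - \hat{\theta}_\lambda) \rangle$, a linear functional involving only $|S| \le m - 1$ coordinates of the difference vector. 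Bounding $\norm{u^\star}_2 \le \sqrt{m-1}$ and using $\norm{D}_{\mathrm{op}} \le 2$ together with AM--GM absorbs this gap into $\tfrac{1}{4}\norm{\hat{\theta}_\lambda - \theta^*}_2^2$ plus a residual of order $\lambda^2 m$.

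For the Gaussian term I would employ a localization argument inside the tangent cone at $\theta^*$ of the class of vectors piecewise monotone on $\Pi$. Within each segment $A_j$ of length $n/m$ the local Gaussian complexity is that of an isotonic regression, and the adaptive analysis of \citet{Chatterjee2015} yields a per-segment contribution of order $|A_j| (\sigma^2 \mcV_j / |A_j|)^{2/3}$; summing over $m$ equi-spaced pieces with $\mcV_j \le \mcV/m$ produces the first term $(\sigma^2 \mcV \log \ee n / n)^{2/3}$, the extra logarithmic factor arising from chaining globally rather than on the known partition. The second term $\tfrac{\sigma^2 m}{n}\log \tfrac{\ee n}{m}$ then enters through a union bound over the $\binom{n-1}{m-1}$ possible breakpoint configurations, the intrinsic cost of not knowing $\Pi$ in advance; it is matched by choosing $\lambda \asymp \sigma \sqrt{\log(\ee n/m)}$, which is exactly the size needed to kill the $\lambda^2 m$ residual from the preceding paragraph.

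The main obstacle is that the nearly-isotonic penalty couples all $n-1$ consecutive differences: the KKT subgradient of $\hat{\theta}_\lambda$ is a single global dual certificate, not $m$ independent per-piece ones, so the known isotonic oracle inequalities cannot simply be summed segment by segment. Hypothesis (c), the growth condition on each $\theta^*_{A_j}$, is what makes the localization go through; it prevents the monotone pieces from flattening near their endpoints, which is precisely the configuration in which a misplacement of an estimated changepoint by a few indices would create an outsized bias. Under this condition one can verify that the dual subgradient restricted to each segment differs from the ordinary isotonic-regression subgradient only through a boundary term of magnitude $O(\lambda)$, so the per-piece bounds can in fact be aggregated without cross-segment inflation, giving \eqref{eq:moderate_bound}.
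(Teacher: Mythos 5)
Your decomposition into a penalty gap plus a Gaussian linear term is the right starting point, but the critical step --- localizing the Gaussian term to the tangent cone of $K_\Pi^\uparrow$ at $\theta^*$ --- is not justified. The estimator $\hat{\theta}_\lambda$ is not constrained to lie in $K_\Pi^\uparrow$, nor in any neighbourhood of it: for small $\lambda$ it can be arbitrarily close to $y$. The Gaussian term and the penalty must therefore be controlled \emph{jointly}, with the penalty supplying the localization. Your concluding paragraph acknowledges this but then resolves it with an unproved assertion, namely that under the moderate growth condition ``the dual subgradient restricted to each segment differs from the ordinary isotonic-regression subgradient only through a boundary term of magnitude $O(\lambda)$.'' This is not established anywhere, is not how the moderate growth condition is used in the paper, and does not obviously follow from the KKT conditions --- the Modified-PAVA path couples blocks across segment boundaries exactly when the solution starts to merge, which is precisely the regime relevant here. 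The per-segment Chatterjee-type bounds therefore cannot be summed as you suggest. The paper never works segment by segment with per-piece dual certificates at all.

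The paper's route is genuinely different and each of its ingredients matters: it bounds the Gaussian mean squared distance $\mbD(\lambda\partial\mcV_-(\theta))$ via the Oymak--Hassibi / Guntuboyina--Sen proximal-denoising oracle inequality (Lemma~\ref{lem:proximal_general}), reduces this (using weak decomposability of $\mcV_-$, proved via submodularity and the agglomerative-clustering condition) to the statistical dimension of a tangent cone (Proposition~\ref{prop:conic_polar} and Lemma~\ref{lem:guntuboyina_b5}), bounds that statistical dimension by a covering argument adapted from Guntuboyina et al. (Proposition~\ref{prop:statistical_dimension_bound}), and --- crucially --- instantiates the resulting oracle inequality not at $\theta^*$ but at a \emph{piecewise-constant approximation} $\theta'$ with $k \asymp (\mcV^2 n/\sigma^2)^{1/3}$ pieces (Lemma~\ref{lem:pc_approximation}), where the moderate growth condition guarantees the minimal length condition~\eqref{eq:min_length}, hence $M(\theta') \lesssim mk/n$. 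The two terms in~\eqref{eq:moderate_bound} arise from the bias $\|\theta^*-\theta'\|_2^2/n$ plus the $\sigma^2 k\log(\ee n/k)/n$ complexity term, balanced over $k$ --- not from summing per-segment isotonic risks and a union bound over $\binom{n-1}{m-1}$ breakpoint configurations as you describe. Your proposed tuning $\lambda\asymp\sigma\sqrt{\log(\ee n/m)}$ is also inconsistent with the $\lambda^*(\theta)$ required by Proposition~\ref{prop:minimal_lambda}, which under a length condition scales like $\sigma\sqrt{(n/m)\log\ee n}$; the paper avoids this apparent conflict because the residual in Theorem~\ref{thm:penalized} is $(\lambda-\lambda^*)^2 M(\theta)$, not $\lambda^2 m$, and vanishes at $\lambda=\lambda^*$.
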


The above claim is obtained as a corollary of a more general risk bound in Section \ref{sec:adaptation}.
In the above statement, we make somewhat restrictive assumptions.
Here, (a) and (b) are introduced just for the sake of notation simplicity, whereas (c) is an essential assumption.
If we assume only (a) and (b), the rate that appeared in \eqref{eq:moderate_bound} is minimax optimal up to a logarithmic multiplication factor.
However, we require an extra growth condition (c), which seems to be unavoidable for the estimator \eqref{eq:neariso}.
We will provide a precise definition of the growth condition in Section \ref{sec:piecewise_application}.

\subsection{Organization}

The rest of this paper is organized as follows.
In Section \ref{sec:related}, we give a brief literature review on the shape restricted regression and regularization based estimators and relate our theoretical results to previous work.
We provide lower bounds on the risks in the piecewise monotone regression problem in Section \ref{sec:lower}.
In Section \ref{sec:adaptation}, we describe our main results on the risk upper bounds for the nearly-isotonic regression estimator and its constrained form variant.
In particular, a precise statement of Claim \ref{claim:moderate} in the above is provided in Section \ref{sec:piecewise_application}.
In Section \ref{sec:model_selection}, we discuss the attainability of the minimax lower bound;
herein, we provide a concrete example of a model selection-based estimator that achieves the optimal rate.
Furthermore, we present some numerical examples in Section \ref{sec:simulations}.
Finally, we present our conclusion in Section \ref{sec:discussion}.
We have also included appendices which contain additional numerical examples on two-dimensional signals, explanations of algorithms, and all proofs of the theoretical results.

\subsection{Notation}

Throughout this paper, we assume that $y = \theta^* + \xi$ is distributed according to an isotropic normal distribution $N(\theta^*, \sigma^2 I_n)$, where $\theta^* \in \RR^n$ is the true mean parameter of interest and $\xi \sim N(0, \sigma^2 I_n)$ is the noise vector.
The symbol $\EE_{\theta^*}$ denotes the expectation with respect to $y$.

We sometimes denote by $C$ an absolute positive constant whose value may vary.

For any $\theta \in \RR^n$, we define the total variation $\mcV(\theta)$ and the \textit{lower total variation} $\mcV_-(\theta)$ by
\[
    \mcV(\theta) := \sum_{i=1}^{n-1} |\theta_i - \theta_{i+1}| \quad \text{and} \quad
    \mcV_-(\theta) := \sum_{i=1}^{n-1} (\theta_i - \theta_{i+1})_+,
\]
where $(z)_+ := \max\set{z, 0}$ for any $z \in \RR$.
For example, if $\theta$ is monotone nondecreasing, then $\mcV(\theta) = \theta_n - \theta_1$ and $\mcV_-(\theta) = 0$. 
In this paper, the meaning of subscripts of $\theta$ depends on the context (e.g., $\theta_i$, $\theta_A$, $\hat{\theta}_\lambda$, and $\hat{\theta}_{K_n^\uparrow}$).
If $A = \set{\tau, \tau + 1, \ldots, \tau + J - 1}$ is a connected subset of $[n]$, we denote by $\theta_{A}$ a sub-vector $(\theta_\tau, \theta_{\tau + 1}, \ldots, \theta_{\tau + J - 1})^\top \in \RR^J$.
We also denote by $\mcV^A(\theta_A)$ the total variation of $\theta_A$.

\section{Related work}\label{sec:related}

There are two classes of estimators that are closely related to the nearly-isotonic regression \eqref{eq:neariso}: the isotonic regression and the fused lasso.

As we mentioned above, the isotonic regression is an instance of shape restricted regression.
Many existing estimators in shape restricted regression can be formulated as least squares estimators (denoted by $\hat{\theta}_K$) onto closed convex sets (denoted by $K$).
Examples include, but not limited to, the isotonic regression, the isotonic regression in two-dimensional grid or more general partial orders (see e.g., \citet{Robertson1975} and \citet{Kyng2015}), and convex regression \citep{Hildreth1954}.

Recently, researchers have developed two important techniques for analyzing risk behaviors of least squares estimators.
First, \citet{Chatterjee2014} proved that the Euclidean norm $\norm{\hat{\theta}_K - \theta^*}_2$ is tightly concentrated around a certain quantity defined by the \textit{localized Gaussian width}.
As applications of Chatterjee's method, non-asymptotic upper bounds that have similar rates to the minimax risks have been proved for the isotonic regression \citep{Chatterjee2014, Bellec2015b}, the multi-isotonic regression on two or more high dimension \citep{Chatterjee2018, Han2017}, the multi-dimensional convex regression \citep{Han2016}, and the constrained form trend filtering estimator \citep{Guntuboyina2017a}.
See also Section 2.2 in \citet{Bellec2015b} for a related result.
Second, risk bounds based on the \textit{statistical dimension}  of the tangent cone of $K$ has been developed by \citet{Oymak2016} and \citet{Bellec2015b}.
This technique is useful because it takes into account the facial structure of $K$, which leads to risk bounds that are adaptive to low dimensional sub-structures.
It has been shown that some least squares estimators are adaptive to piecewise constant vectors: for example, the isotonic regression \citep{Bellec2015b} and the multi-isotonic regression \citep{Chatterjee2018, Han2017}.
In particular, for the one-dimensional isotonic regression, \citet{Chatterjee2015} and \citet{Bellec2015b} proved the following oracle inequality
\begin{equation}\label{eq:bellec_chatterjee_oracle_inequality}
    \frac{1}{n} \EE_{\theta^*}\norm{\hat{\theta}_{K^\uparrow_n} - \theta^*}_2^2
    \leq \inf_{\theta \in K_n^\uparrow} \left\{
        \frac{1}{n} \norm{\theta - \theta^*}_2^2
        + \frac{\sigma^2 k(\theta)}{n} \log \frac{\ee n}{k(\theta)}
    \right\},
\end{equation}
where $k(\theta)$ is the number of constant pieces of $\theta$.
If $\theta^*$ is monotone and $k(\theta^*)$ is small, the right-hand side can be much smaller than the worst-case rate of $\bigO((\sigma^2 \mcV/n)^{2/3})$.
However, the first term in the right-hand side can become arbitrarily large if $\theta^*$ is not included in $K_n^\uparrow$.

The fused lasso \citep{TSRZK05}, also known as the total variation regularization \citep{Rudin1992}, is a penalized estimator defined as
\begin{equation}\label{eq:total_variation}
    \hat{\theta}_{\mathrm{fused}, \lambda} = \argmin_{\theta \in \RR^n} \left\{
        \frac{1}{2} \norm{y - \theta}_2^2
        + \lambda \sum_{i=1}^{n-1} |\theta_i - \theta_{i+1}|
    \right\},
\end{equation}
where $\lambda \geq 0$ is the tuning parameter.
The fused lasso poses the penalty whenever $\theta_i \neq \theta_{i+1}$, whereas the penalty of the nearly-isotonic regression \eqref{eq:neariso} activates only if $\theta_i > \theta_{i+1}$.
Theoretical risk bounds for the fused lasso have been studied by \citet{Mammen1997}, \citet{Dalalyan17}, \citet{Lin2017}, and \citet{Guntuboyina2017a}.
In particular, \citet{Guntuboyina2017a} showed an oracle inequality of the following form:
\begin{equation}\label{eq:guntuboyina_oracle_inequality}
    \frac{1}{n} \EE_{\theta^*} \norm{\hat{\theta}_{\mathrm{fused}, \lambda^*} - \theta^*}_2^2
    \leq \inf_{\theta \in \RR^n} \left\{
        \frac{1}{n} \norm{\theta - \theta^*}_2^2
        + C \frac{\sigma^2 k(\theta)}{n} \log \frac{\ee n}{k(\theta)}
        + C \Delta_\mathrm{fused}(\theta)
    \right\},
\end{equation}
where $\lambda^*$ is an optimally tuned parameter.
One can control the quantity $\Delta_\mathrm{fused}(\theta)$ by assuming a mild regularity condition on $\theta^*$ so that the inequality \eqref{eq:guntuboyina_oracle_inequality} recovers the minimax rate for the piecewise constant vectors (see e.g., \citet{Gao2017}).
However, even if $\theta^*$ is a monotone vector, \eqref{eq:guntuboyina_oracle_inequality} does not recover the rate of the isotonic regression \eqref{eq:bellec_chatterjee_oracle_inequality} because $\Delta_\mathrm{fused}(\theta)$ becomes zero if and only if $\theta$ is just a constant vector.

Our risk bound for the nearly-isotonic regression in Section \ref{sec:penalized} fills the gap between the above risk bounds for the isotonic regression and the fused lasso.
We will show an oracle inequality of the following form:
\begin{equation*}
    \frac{1}{n} \EE_{\theta^*} \norm{\hat{\theta}_{\mathrm{neariso}, \lambda^*} - \theta^*}_2^2
    \leq \inf_{\theta \in \RR^n} \left\{
        \frac{1}{n} \norm{\theta - \theta^*}_2^2
        + C \frac{\sigma^2 k(\theta)}{n} \log \frac{\ee n}{k(\theta)}
        + C \Delta_\mathrm{neariso}(\theta)
    \right\}.
\end{equation*}
Like in the case of the fused lasso \eqref{eq:guntuboyina_oracle_inequality}, this inequality provides a meaningful risk bound even if we cannot approximate $\theta^*$ by a monotone vector.
Furthermore, $\Delta_\mathrm{neariso}(\theta)$ becomes zero for any monotone vector $\theta \in K_n^\uparrow$.
Hence, our result can exactly recover the rate achieved by the isotonic regression \eqref{eq:bellec_chatterjee_oracle_inequality}.

\section{Lower bounds}\label{sec:lower}

In this section, we provide lower bounds for the risk in one-dimensional piecewise monotone regression.

\subsection{Minimax lower bound}\label{sec:lower_general}

We are interested in the lower bound for the minimax risk defined as
\[
    \inf_{\hat{\theta}} \sup_{\theta^* \in \Theta}
    \frac{1}{n} \EE_{\theta^*} \norm{\hat{\theta} - \theta^*}_2^2,
\]
where $\Theta \subset \RR^n$ is a set of piecewise monotone vectors, and the infimum is taken over all (measurable) estimators $\hat{\theta}: \RR^n \to \RR^n$.
In particular, for $1 \leq m \leq n$, we consider the class of $m$-piecewise monotone vectors with a bounded total variation that is defined as follows.

\begin{defi}\label{defi:piecewise_class}
Let $n \geq 2$ and $1 \leq m \leq n$.
For any $\mcV > 0$, let $\tilde{\Theta}_n (m, \mcV)$ denote the set of (at most) $m$-piecewise monotone vectors such that the upper total variation is bounded by $\mcV$.
In other words, a vector $\theta \in \RR^n$ is an element of $\tilde{\Theta}_n(m, \mcV)$ if and only if the following conditions hold:
\begin{enumerate}[label=(\roman*)]
    \item $\theta$ is piecewise monotone on a connected partition $\Pi = \{ A_1, \ldots, A_{m^*} \}$ of $[n]$ whose cardinality $|\Pi| = m^*$ is not larger than $m$.
    \item There exist numbers $\mcV_1, \mcV_2, \ldots, \mcV_{m^*}$ such that $\sum_{i=1}^{m^*} \mcV_i = \mcV$, $\mcV_i \geq 0$, and $\mcV(\theta_{A_i}) \leq \mcV_i$ for all $i = 1, \ldots, m^*$.
\end{enumerate}
In addition, we also define $\Theta_n(m, \mcV)$ as the set of $m$-piecewise monotone vectors such that the total variations for all pieces are uniformly bounded by $\mcV / m$.
That is, $\Theta_n(m, \mcV)$ is obtained by replacing (ii) by the following condition:
\begin{enumerate}[label=(\roman*)', start=2]
    \item $\mcV(\theta_{A_i}) \leq \mcV / m$ for all $i = 1, \ldots, m^*$.
\end{enumerate}
\end{defi}

First, we consider $\theta^*$ is piecewise monotone on a \textit{known} partition $\Pi^* = \{ A_1, A_2, \ldots, A_{m^*} \}$ and that the total variation of the sub-vector $\theta^*_{A_i}$ is bounded as $\mcV(\theta_{i}^*) \leq \mcV_i$ for each $i = 1, 2, \ldots, m^*$.
Then, the problem is decomposed into $m^*$ independent subproblems of estimating monotone vectors $\theta_i^*$.
The minimax risk lower bound for monotone vectors has been proved by \citet{Zhang2002} and \citet{Chatterjee2015}.
For simplicity in the notation, we assume here that $n_i = |A_i| \geq 2$ for all $i = 1, 2, \ldots, m$.
The minimax risk can be written as
\begin{equation}\label{eq:monotone_lower}
    \inf_{\hat{\theta}_i}
    \sup_{\substack{\theta^*_{A_i} \in K_{A_i}^\uparrow:\\
    \mcV(\theta^*_i) \leq \mcV_i}}
    \frac{1}{n_i} \EE_{\theta^*_{A_i}} \norm{\hat{\theta}_i - \theta^*_i}_2^2
    \geq C_1 \left( \frac{\sigma^2 \mcV_i}{n_i} \right)^{2/3}
    \quad \text{for all $i = 1, \ldots, m$}.
\end{equation}
Hence, the minimax risk over $\tilde{\Theta}_n (m, \mcV)$ is clearly bounded from below by
\begin{equation}\label{eq:monotone_lower_sum}
    C_1 \sum_{i=1}^{m^*} \frac{n_i}{n} \left( \frac{\sigma^2 \mcV_i}{n_i} \right)^{2/3}.
\end{equation}
If the partition $\Pi^*$ is known, then this convergence rate can be obtained by concatenating the least squares estimators on all pieces.
By Jensen's inequality, the quantity \eqref{eq:monotone_lower_sum} is not larger than $(\sigma^2 \sum_i \mcV_i / n)^{2/3}$.

In the general setting, we have to deal with \textit{unknown} partitions.
The following proposition gives the lower bound over the class of piecewise monotone vectors in Definition \ref{defi:piecewise_class}.

\begin{prop}\label{prop:general_lower_bound}
Let $n \geq 3$, $3 \leq m \leq n$, and $\mcV > 0$.
Suppose that $\Theta$ is either $\tilde{\Theta}_n(m, \mcV)$ or $\Theta_n(m, \mcV)$ in Definition \ref{defi:piecewise_class}.
Then, for any estimator $\hat{\theta}: \RR^n \to \RR^n$, we have the following lower bound:
\begin{equation}\label{eq:general_lower_bound}
    \sup_{\theta^* \in \Theta} \frac{1}{n}\EE_{\theta^*} \norm{\hat{\theta} - \theta^*}_2^2
    \geq C \max\left\{
    \left( \frac{\sigma^2 \mcV}{n} \right)^{2/3},
    \quad
    \frac{\sigma^2 m}{n} \log \frac{\ee n}{m}
    \right\},
\end{equation}
where $C > 0$ is a universal constant.
\end{prop}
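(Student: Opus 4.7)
The plan is to establish each of the two terms in the maximum as an independent lower bound; the proposition then follows since $\max\{a,b\}$ is at least either summand.

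\emph{First term.} For $(\sigma^2\mcV/n)^{2/3}$ I would reduce to the classical monotone minimax lower bound of \citet{Zhang2002} and \citet{Chatterjee2015}. For $\tilde{\Theta}_n(m,\mcV)$, taking $m^* = 1$ embeds the class of monotone vectors of total variation at most $\mcV$, so the classical bound applies verbatim. For $\Theta_n(m,\mcV)$, I would fix the equi-spaced partition into $m$ pieces of size $n/m$ and restrict to the product sub-class of per-piece monotone vectors with $\mcV(\theta_{A_j}) \leq \mcV/m$; each factor contributes a per-coordinate lower bound of $(\sigma^2(\mcV/m)/(n/m))^{2/3} = (\sigma^2\mcV/n)^{2/3}$ by the classical result, and this per-coordinate rate is preserved when we aggregate the $m$ independent problems.

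\emph{Second term.} For $\sigma^2 m \log(en/m)/n$ I would apply Fano's method to a packing of sparse spike signals. Set $p = \lfloor (m-1)/2 \rfloor$, let $I \subseteq [n]$ be the set of odd indices (so $|I| \geq \lfloor n/2 \rfloor$), and for each $T \subset I$ with $|T| = p$ define $\theta^{(T)} = \delta \mathbf{1}_T$ for a parameter $\delta > 0$ chosen later. Since the spikes lie at non-adjacent positions, $[n]$ admits a partition into $2p + 1 \leq m$ constant pieces---one singleton per spike, plus the zero pieces between them---whose per-piece total variations all vanish. Therefore $\theta^{(T)} \in \tilde{\Theta}_n(m,\mcV) \cap \Theta_n(m,\mcV)$ for every $\mcV \geq 0$ irrespective of $\delta$, because both classes constrain only within-piece variation while all of $\theta^{(T)}$'s variation sits at piece boundaries. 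I would then invoke the Varshamov-Gilbert lemma on $\binom{I}{p}$ to extract a sub-packing $\mathcal{C}$ with $\log|\mathcal{C}| \gtrsim p \log(|I|/p) \asymp m \log(en/m)$ and pairwise separation $|T \triangle T'| \geq c p$. Since $\|\theta^{(T)} - \theta^{(T')}\|_2^2 = \delta^2 |T \triangle T'| \leq 2\delta^2 p$, the Gaussian KL divergences are at most $\delta^2 p/\sigma^2$, so choosing $\delta^2 = c' \sigma^2 \log(en/m)$ with $c'$ small enough validates the Fano condition and yields a per-coordinate lower bound of order $\delta^2 p/n \asymp \sigma^2 m \log(en/m)/n$.

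\emph{Main obstacle.} The delicate step is the Fano construction: finding a packing simultaneously rich enough to expose the $\log(en/m)$ factor and admissible in both piecewise-monotone classes uniformly in $\mcV > 0$, especially when $\mcV$ is tiny. Locating all jumps at piece boundaries---rather than within pieces---is what resolves this, because the classes $\tilde{\Theta}_n(m,\mcV)$ and $\Theta_n(m,\mcV)$ impose no constraint on inter-piece jumps. This structural observation is precisely what makes the changepoint-complexity term free of $\mcV$ and lets it combine with the isotonic term via the maximum.
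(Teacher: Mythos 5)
Your proposal is correct in substance and follows the same high-level structure as the paper (exhibit each term in the $\max$ as a separate lower bound), but it is more self-contained than the paper's proof, which reduces both terms to cited results rather than giving explicit constructions. For the first term, the paper proves $\mcK(\mcV)\subseteq\Theta_n(m,\mcV)$ directly by a greedy-partition argument (a monotone vector with $\mcV(\theta)\leq\mcV$ can be split into at most $m$ pieces each of total variation $\leq\mcV/m$, since the cumulative variation crosses multiples of $\mcV/m$ at most $m-1$ times) and then invokes the Zhang/Chatterjee monotone bound once; your product-of-$m$-subproblems argument is a valid alternative and arrives at the same rate, though it requires the (standard) observation that minimax risk of an independent product problem dominates the sum of per-block minimax risks. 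For the second term, the paper simply observes $\mcC(m)\subseteq\Theta$ (the same no-within-piece-variation observation you make) and quotes the $\ell_0$-sparse lower bound of Gao et al.~(2017) / Raskutti et al.~(2011), whereas you re-derive it via a Fano/Varshamov--Gilbert argument with spike signals at non-adjacent positions. Both are sound, and your explicit construction nicely isolates the key structural point that the classes place no constraint on inter-piece jumps, which is exactly what makes the complexity term free of $\mcV$.

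One small gap to flag: your Fano construction sets $p=\lfloor (m-1)/2\rfloor$ and $|I|\approx n/2$, and the sparse Gilbert--Varshamov packing that delivers $\log|\mathcal{C}|\gtrsim p\log(|I|/p)$ requires $p\leq |I|/c$ for some absolute $c>1$. When $m$ is within a constant factor of $n$ (e.g., $m>n/4$) this log factor degenerates and the argument as stated does not recover $\log(\ee n/m)$. This is easily patched: in that regime $\log(\ee n/m)\asymp 1$ and the target rate is $\asymp\sigma^2$, which already follows from a plain Assouad hypercube on the $p$ spike coordinates (giving $\delta^2 p/n\asymp\sigma^2$ per coordinate with $\delta\asymp\sigma$), or simply from citing the standard $\ell_0$-sparse bound that handles all $3\leq m\leq n$ uniformly. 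Also note that your piece count $2p+1$ is conservative: grouping each spike with the zeros preceding it gives monotone pieces $(0,\dots,0,\delta)$, so $p+1$ pieces suffice; this only costs you a constant and does not affect the conclusion.
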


It remains to verify that the lower bound \eqref{eq:general_lower_bound} is tight.
Thus, in Section \ref{sec:model_selection}, we will construct an estimator that adaptively achieves a similar rate.

\subsection{Lower bound of isotonic regression with misspecified partitions}\label{sec:lower_projection}

Suppose that $\theta^*$ is an $m$-piecewise monotone vector.
As we mentioned in the previous subsection, if we know the true partition on which $\theta^*$ is monotone, the least squares estimator can achieve the rate shown in \eqref{eq:monotone_lower_sum}.
Here, we consider what happens if we underestimate the true number of the pieces.

We consider the risk behavior of the isotonic regression $\hat{\theta}_{K_n^\uparrow}$, which corresponds to the least squares estimator for the underestimated number of pieces as $m = 1$.
If the true number of pieces is larger than or equal to two, $\theta^*$ may not be contained in $K_n^\uparrow$.
Recall that $\mathrm{dist}(\theta^*, K_n^\uparrow)$ is the $\ell_2$-misspecification error against the set of monotone vectors.
\citet{Bellec2015b} showed that the isotonic regression is robust against a small $\ell_2$-misspecification, that is, if $\mathrm{dist}(\theta^*, K_n^\uparrow) \leq \epsilon$, then
\[
    \frac{1}{n}\EE_{\theta^*} \norm{\hat{\theta}_{K_n^\uparrow} - \theta^*}_2^2
    \leq \epsilon^2 + \frac{\sigma^2 k(\bar{\theta})}{n} \log \frac{\ee n}{k(\bar{\theta})},
\]
where $k(\bar{\theta})$ is the orthogonal projection of $\theta^*$ onto $K_n^\uparrow$.
Conversely, if the $\ell_2$-misspecification error is large, we see that the isotonic regression can have an arbitrarily large risk.

\begin{prop}\label{prop:suboptimal}
There is a positive number $t = t_{n, \sigma^2}$ that depends on $n$ and $\sigma^2$ such that if the true parameter $\theta^*$ satisfies $\mathrm{dist}(\theta^*, K_n^\uparrow) > t$, then the MSE of the isotonic regression is bounded from below as
\[
    \frac{1}{n}\EE_{\theta^*} \norm{\hat{\theta}_{K_n^\uparrow} - \theta^*}_2^2
    > \sigma^2.
\]
In this case, the isotonic regression has a strictly larger MSE than that of the identity estimator $\hat{\theta}_{\mathrm{id}} = y$.
\end{prop}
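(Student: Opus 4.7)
The plan is to exploit the fact that the isotonic regression is the orthogonal projection of $y$ onto the closed convex cone $K_n^\uparrow$, and to show via the projection variational inequality that its MSE is bounded below by the squared misspecification error $\dist(\theta^*, K_n^\uparrow)^2$. Once that bound is in place, choosing the threshold $t_{n,\sigma^2} := \sigma\sqrt{n}$ is immediate, and the comparison with the identity estimator follows because $\frac{1}{n}\EE_{\theta^*}\norm{y - \theta^*}_2^2 = \sigma^2$.

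Concretely, let $\bar\theta$ denote the orthogonal projection of $\theta^*$ onto $K_n^\uparrow$, and set $\delta := \theta^* - \bar\theta$, so that $\norm{\delta}_2 = \dist(\theta^*, K_n^\uparrow)$. First, I would expand
\[
    \norm{\hat\theta_{K_n^\uparrow} - \theta^*}_2^2
    = \norm{\hat\theta_{K_n^\uparrow} - \bar\theta}_2^2
    - 2\langle \hat\theta_{K_n^\uparrow} - \bar\theta,\, \delta\rangle
    + \norm{\delta}_2^2.
\]
Next, since $\bar\theta$ is the projection of $\theta^*$ onto the convex set $K_n^\uparrow$, the variational inequality $\langle \phi - \bar\theta,\, \theta^* - \bar\theta\rangle \leq 0$ holds for every $\phi \in K_n^\uparrow$. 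Because the isotonic regression estimator always lies in $K_n^\uparrow$, applying this inequality with $\phi = \hat\theta_{K_n^\uparrow}$ yields $\langle \hat\theta_{K_n^\uparrow} - \bar\theta,\, \delta\rangle \leq 0$ pointwise in the noise. Dropping the nonnegative term $\norm{\hat\theta_{K_n^\uparrow} - \bar\theta}_2^2$ and using this sign information, I get the pointwise lower bound
\[
    \norm{\hat\theta_{K_n^\uparrow} - \theta^*}_2^2 \geq \norm{\delta}_2^2 = \dist(\theta^*, K_n^\uparrow)^2.
\]

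Taking expectations preserves the inequality, giving $\EE_{\theta^*}\norm{\hat\theta_{K_n^\uparrow} - \theta^*}_2^2 \geq \dist(\theta^*, K_n^\uparrow)^2$. Setting $t_{n,\sigma^2} := \sigma\sqrt{n}$ and assuming $\dist(\theta^*, K_n^\uparrow) > t_{n,\sigma^2}$ then yields $\frac{1}{n}\EE_{\theta^*}\norm{\hat\theta_{K_n^\uparrow} - \theta^*}_2^2 > \sigma^2$, which proves the first assertion. The second assertion, that the isotonic regression is strictly worse than the identity estimator, follows because $\frac{1}{n}\EE_{\theta^*}\norm{\hat\theta_{\mathrm{id}} - \theta^*}_2^2 = \frac{1}{n}\EE\norm{\xi}_2^2 = \sigma^2$.

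There is no real obstacle here; the crux is recognizing that the projection property provides a clean pointwise lower bound on the error in terms of the misspecification, without needing any Gaussian tail calculation. One sanity check worth performing is that the threshold $t_{n,\sigma^2} = \sigma\sqrt{n}$ is achievable by a genuine piecewise monotone signal — this is verifiable by taking, e.g., a sawtooth $\theta^* \in K_{n/2}^\uparrow \times K_{n/2}^\uparrow$ with a large downward jump, as alluded to in the introduction. The proof itself, however, needs only the convex-projection inequality.
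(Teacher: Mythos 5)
Your proof is correct, and it takes a genuinely different and more elementary route than the paper. The paper instead invokes Chatterjee's characterization of the projection-estimator risk (Theorem 1.2 of \citet{Chatterjee2014}): the risk is controlled by the maximizer $t_{\theta^*}$ of the localized-width functional $g_{\theta^*}$, one observes that $t_{\theta^*} \geq \dist(\theta^*, K)$ because $g_{\theta^*}$ is finite only there, and then $\frac{1}{n}\EE\norm{\hat\theta_K - \theta^*}_2^2 \geq \frac{1}{n}\left(t^2 - C_1 t^{3/2}\right)$; choosing $t$ large enough that the right-hand side exceeds $\sigma^2$ yields an implicit (and, as the paper itself remarks, quite conservative) threshold. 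Your approach bypasses all of that: since $\hat\theta_{K_n^\uparrow}$ always lies in $K_n^\uparrow$, the deterministic inequality $\norm{\hat\theta_{K_n^\uparrow}-\theta^*}_2 \geq \dist(\theta^*, K_n^\uparrow)$ holds pointwise in the noise, so $\frac{1}{n}\EE\norm{\hat\theta_{K_n^\uparrow}-\theta^*}_2^2 \geq \frac{1}{n}\dist^2(\theta^*,K_n^\uparrow)$, giving the clean, explicit, and sharper threshold $t_{n,\sigma^2} = \sigma\sqrt{n}$. What your approach buys is simplicity and an explicit constant; what the paper's approach buys is nothing additional for \emph{this} statement, though it illustrates the machinery used elsewhere. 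One small remark: the three-term expansion and the variational inequality $\langle \hat\theta_{K_n^\uparrow}-\bar\theta,\delta\rangle \leq 0$ you deploy are more than you need; the inequality $\norm{\hat\theta_{K_n^\uparrow}-\theta^*}_2 \geq \dist(\theta^*, K_n^\uparrow)$ is immediate from the definition of $\dist$ as an infimum over $K_n^\uparrow$ together with $\hat\theta_{K_n^\uparrow} \in K_n^\uparrow$, without invoking the projection-characterization of $\bar\theta$ at all. (Your decomposition is still correct; it is just unnecessary.)
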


We can easily check that there is a 2-piecewise monotone vector with an arbitrarily large $\ell_2$-misspecification error. To see this, let $\theta^* \in \RR^{2n}$ be a piecewise constant vector defined as $\theta^*_i = M > 0$ for $i = 1, \ldots, n$ and $\theta^*_i = 0$ for $i = n+1, \ldots, 2n$.
Then, it is easy to see that $\mathrm{dist}(\theta^*, K^{\uparrow}_{2n}) = \sqrt{nM^2/2}$ diverges as $M \to \infty$.
Figure \ref{fig:example_1dim} shows an example of a 2-piecewise monotone vector $\theta^*$ such that the isotonic regression has a larger squared loss value than the identity estimator.

\section{Risk bounds for nearly-isotonic regression}\label{sec:adaptation}

In this section, we develop the risk bound for the nearly-isotonic regression estimator \eqref{eq:neariso}.
Proofs of all the theorems and propositions in this section are presented in Appendix \ref{sec:appendix_proof_adaptation}.

\subsection{Risk bounds for constrained estimators}\label{sec:constrained}

Before considering the original version of the nearly-isotonic regression \eqref{eq:neariso}, we consider the performance of the \textit{constrained form nearly-isotonic regression} $\hat{\theta}_{\mcV}$ defined by the following constrained optimization problem:
\begin{align}\label{eq:neariso_constrained}
    \text{minimize} \; \lVert y - \theta \rVert_2^2 \quad
    \text{subject to} \; \sum_{i=1}^{n-1} (\theta_i - \theta_{i+1})_+ \leq \mcV,
\end{align}
where $\mcV \geq 0$ is the tuning parameter.
By the fundamental duality theorem in convex optimization, there exists a Lagrange multiplier $\lambda_{\mcV} \geq 0$ such that the regularization type formulation \eqref{eq:neariso} admits the same solution $\hat{\theta}_{\lambda_\mcV} = \hat{\theta}_{\mcV}$.
Hence, the solution path of penalized estimators $\set{\hat{\theta}_\lambda: \lambda \geq 0}$ and that of constrained estimators $\set{\hat{\theta}_\mcV: \mcV \geq 0}$ are equivalent.
However, the properties of estimators with fixed values of $\lambda \geq 0$ and $\mcV \geq 0$ can be different in the following sense:

\begin{itemize}
    \item From a computational perspective, calculating the constrained estimator \eqref{eq:neariso_constrained} for a given $\mcV \geq 0$ is more difficult than the regularization estimator \eqref{eq:neariso}.
    For the regularization estimator \eqref{eq:neariso}, we can use the Modified Pool Adjacent Violators Algorithm (Modified PAVA) proposed by \citet{Tibshirani2011}, which outputs the solution path for every $\lambda \geq 0$.
    In particular, given $\lambda \geq 0$, we can always obtain an \textit{exact} solution $\hat{\theta}_\lambda$.
    However, to the best of our knowledge, there are no practical algorithms that obtain an exact solution for the constrained problem \eqref{eq:neariso_constrained} that run as fast as the algorithms for the penalized problem \eqref{eq:neariso}.
    We present detailed explanations for the algorithms in Section \ref{sec:algorithms}.
    \item From a statistical perspective, the correspondence between tuning parameters $\lambda$ and $\mcV$ is not deterministic (i.e., it depends on the realization of the data $y$).
    For this reason, a risk bound that is obtained for one of \eqref{eq:neariso} or \eqref{eq:neariso_constrained} cannot be directly applied to the other.
\end{itemize}

We show the main results on the adaptation property to piecewise monotone vectors in terms of sharp oracle inequality.

Before proceeding, we introduce some notations.
Suppose that $\theta \in \RR^n$ is piecewise constant on a connected partition $\Pi_\mathrm{const} = \{ A_1, \ldots, A_k \}$ of $[n]$.
We denote by $k(\theta) := |\Pi_{\mathrm{const}}|$ the number of pieces in which $\theta$ becomes constant.
That is, there are integers $1 = \tau_1 < \cdots < \tau_{k + 1} = n + 1$ such that
(i) $A_i = \set{\tau_i, \tau_i + 1, \ldots, \tau_{i+1} - 1}$ for $i = 1, \ldots, k$
and (ii) for any $i \in [k]$, there exists $t_i \in \RR$ such that $\theta_j = t_i$ for all $j \in A_i$.
We define the sign $w_i \in \set{ 0, 1}$ associated with each knot $\tau_i$ ($i = 1, \ldots, k + 1$) as
\begin{align}\label{eq:sign_definition}
    w_1 & = w_{k+1} = 0 \quad \text{and} \nonumber \\
    w_i & = \left\{ \begin{aligned}
    1 & \quad (t_{i-1} > t_{i})\\
    0 & \quad (t_{i-1} < t_{i})
    \end{aligned} \right.
    \quad \text{for $i = 2, \ldots, k$}.
\end{align}
In other words, $w_i = 1$ if and only if the order violation $\theta_{j - 1} > \theta_{j}$ occurs at $j = \tau_i$.
See Figure \ref{fig:signs} for the graphical illustration.
Then, we define $M(\theta)$ as
\begin{equation}\label{eq:m_definition}
    M(\theta) := \sum_{j=2}^k \max \left\{ \frac{1}{|A_j|}, \frac{k}{n} \right\} 1_{\set{ w_{j-1} \neq w_j }}.
\end{equation}
$M(\theta)$ determines the non-monotonicity of a piecewise constant vector $\theta$.
If $\theta$ is $m$-piecewise monotone, then it is clear that $M(\theta) \leq 2(m - 1)$.
In particular, for any monotone vector $\theta$, we have $M(\theta) = 0$.
Based on these notations, we have the following sharp oracle inequality.

\begin{figure}[tb]
    \centering
    \begin{tikzpicture}
\draw [<->, thin] (0, 2.0) -- (0, -0.7) -- (8, -0.7);
\node [below] at (8, -0.7) {$i$};
\node [left] at (0, 1.5) {$\theta_i$};

\node [below] at (0.5, -0.7) {$A_1$};
\node [above, color=teal] at (0, 0) {$w_1 = 0$};
\fill (0, 0) circle (2pt);
\draw (0, 0) -- (1, 0);
\draw [dashed] (1, 0) -- (1, 0.5);
\draw [fill=white] (1, 0) circle (2pt);

\node [below] at (1.5, -0.7) {$A_2$};
\node [above, color=teal] at (1, 0.5) {$w_2 = 0$};
\fill (1, 0.5) circle (2pt);
\draw (1, 0.5) -- (2, 0.5);
\draw [dashed] (2, 0.5) -- (2, 1);
\draw [fill=white] (2, 0.5) circle (2pt);

\node [below] at (2.5, -0.7) {$A_3$};
\node [above, color=teal] at (2, 1) {$w_3 = 0$};
\fill (2, 1) circle (2pt);
\draw (2, 1) -- (3, 1);
\draw [dashed] (3, 1) -- (3, -0.5);
\draw [fill=white] (3, 1) circle (2pt);

\node [below] at (3.5, -0.7) {$A_4$};
\node [above, color=teal] at (3, -0.5) {$w_4 = 1$};
\fill (3, -0.5) circle (2pt);
\draw (3, -0.5) -- (4, -0.5);
\draw [dashed] (4, -0.5) -- (4, 0.5);
\draw [fill=white] (4, -0.5) circle (2pt);

\node [below] at (4.5, -0.7) {$A_5$};
\node [above, color=teal] at (4, 0.5) {$w_5 = 0$};
\fill (4, 0.5) circle (2pt);
\draw (4, 0.5) -- (5, 0.5);
\draw [dashed] (5, 0.5) -- (5, 0);
\draw [fill=white] (5, 0.5) circle (2pt);

\node [below] at (5.5, -0.7) {$A_6$};
\node [above, color=teal] at (5, 0) {$w_6 = 1$};
\fill (5, 0) circle (2pt);
\draw (5, 0) -- (6, 0);
\draw [dashed] (6, 0) -- (6, -0.5);
\draw [fill=white] (6, 0) circle (2pt);

\node [below] at (6.5, -0.7) {$A_7$};
\node [above, color=teal] at (6, -0.5) {$w_7 = 1$};
\fill (6, -0.5) circle (2pt);
\draw (6, -0.5) -- (7, -0.5);
\draw [dashed] (7, -0.5) -- (7, 1.5);
\draw [fill=white] (7, -0.5) circle (2pt);

\node [below] at (7.5, -0.7) {$A_8$};
\node [above, color=teal] at (7, 1.5) {$w_8 = 0$};
\fill (7, 1.5) circle (2pt);
\draw (7, 1.5) -- (8, 1.5);
\draw [fill=white] (8, 1.5) circle (2pt);

\node [above, color=teal] at (8.4, 1.5) {$w_9 = 0$};
\end{tikzpicture}
    \caption{\textbf{Illustration of the knot signs defined in \eqref{eq:sign_definition}.}
    In this example, $\theta$ is assumed to be $k$-piecewise constant with $k = 8$.
    The corresponding signs are given as $(w_1, w_2, \ldots, w_8, w_9) = (0, 0, 0, 1, 0, 1, 1, 0, 0)$.
    Moreover, if we assume $|A_1| = |A_2| = \cdots = |A_8|$, the quantity $M(\theta)$ defined in \eqref{eq:m_definition} is given as $M(\theta) = \frac{1}{|A_4|} + \frac{1}{|A_5|} + \frac{1}{|A_6|} + \frac{1}{|A_8|} = \frac{4k}{n}$.
    }
    \label{fig:signs}
\end{figure}
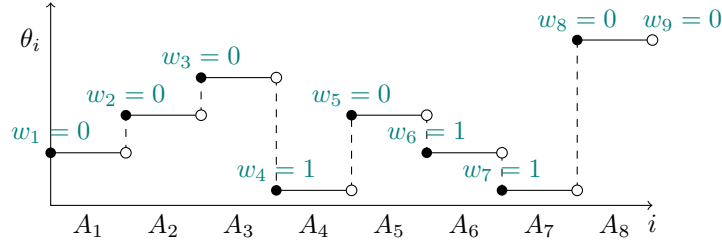

\begin{thm}\label{thm:constrained_tangent}
For any $\theta^* \in \RR^n$, the constrained nearly-isotonic regression \eqref{eq:neariso_constrained} satisfies the following oracle inequality:
\begin{align}\label{eq:thm_constrained_tangent_1}
    & \frac{1}{n} \EE_{\theta^*} \lVert \hat{\theta}_{\mathcal{V}} - \theta^* \rVert_2^2 \nonumber \\
    & \leq \inf_{\substack{\theta \in \RR^n:\\
    \mcV_-(\theta) = \mathcal{V}}}
    \bigg\{
        \frac{1}{n} \lVert \theta - \theta^* \rVert_2^2 
        + C\sigma^2 \frac{k(\theta)}{n} \log \frac{\ee n}{k(\theta)}
        + C\sigma^2 \frac{M(\theta)}{k(\theta)} \log \frac{\ee n}{k(\theta)}
    \bigg\}.
\end{align}
Moreover, for any $\eta \in (0, 1)$, we have
\begin{align}
    & \frac{1}{n} \lVert \hat{\theta}_{\mathcal{V}} - \theta^* \rVert_2^2 \nonumber \\
    & \leq \inf_{\substack{\theta \in \RR^n:\\ \mcV_-(\theta) = \mathcal{V}}} \left\{
    \frac{1}{n} \lVert \theta - \theta^* \rVert_2^2
        + C\sigma^2 \frac{k(\theta)}{n} \log \frac{\ee n}{k(\theta)}
        + C\sigma^2 \frac{M(\theta)}{k(\theta)} \log \frac{\ee n}{k(\theta)}
    \right\} \nonumber \\
    & + \frac{4 \sigma^2 \log\eta^{-1}}{n}
\end{align}
with probability at least $1 - \eta$.
\end{thm}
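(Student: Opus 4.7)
The plan is to exploit the fact that $\hat{\theta}_\mcV$ is the Euclidean projection of $y$ onto the closed convex set $C_\mcV := \set{\theta \in \RR^n : \mcV_-(\theta) \leq \mcV}$ and invoke a Bellec/Oymak-type oracle inequality for projection estimators: for every $\theta \in C_\mcV$,
\[
    \EE_{\theta^*}\, \norm{\hat{\theta}_\mcV - \theta^*}_2^2 \leq \norm{\theta - \theta^*}_2^2 + \sigma^2 \delta(T_\theta C_\mcV),
\]
where $\delta$ denotes statistical dimension and $T_\theta C_\mcV$ the tangent cone at $\theta$. The high-probability tail with the extra $4\sigma^2 \log \eta^{-1}/n$ term follows from Cirelson--Ibragimov--Sudakov concentration, since $y \mapsto \hat{\theta}_\mcV$ is $1$-Lipschitz as a projection onto a convex set.

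The substantive content is a bound on $\delta(T_\theta C_\mcV)$ for any piecewise-constant test vector $\theta$ with $\mcV_-(\theta) = \mcV$ (equality is needed so that the constraint is active; otherwise the tangent cone is all of $\RR^n$ and the bound is vacuous). A direct computation of the directional derivative of the convex function $\mcV_-$ at such $\theta$ gives
\[
    \mcV_-'(\theta; v) = \sum_{j=1}^{k} \sum_{\substack{i, i+1 \in A_j}} (v_i - v_{i+1})_+ \; + \sum_{j : w_j = 1}(v_{\tau_j - 1} - v_{\tau_j}),
\]
so $v \in T_\theta C_\mcV$ iff $v$ is non-decreasing on each piece $A_j$ and the total gap at active (order-violating) knots is non-positive. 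Hence $T_\theta C_\mcV$ embeds into the product isotonic cone $\prod_{j=1}^k K_{|A_j|}^\uparrow$ intersected with a single half-space.

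Using the classical bound $\delta(K_m^\uparrow) \lesssim \log(em)$ together with Jensen's inequality, the product cone contributes at most $C k(\theta) \log(\ee n / k(\theta))$ to the statistical dimension, which produces the first term of the theorem. The additional contribution comes from the half-space relaxation: by a case analysis on the sign pattern $(w_j)$, pairs of adjacent pieces whose boundary signs agree chain together into a single isotonic constraint at essentially no cost, whereas each knot at which $w_{j-1} \neq w_j$ decouples the adjacent isotonic block and injects Gaussian width of order $\sqrt{\max\set{1/|A_j|, k/n} \cdot n \log(\ee n/k(\theta))}$; squaring, summing over such knots, and dividing by $n$ yields precisely $C\sigma^2 \frac{M(\theta)}{k(\theta)} \log\frac{\ee n}{k(\theta)}$.

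The main obstacle is the combinatorial accounting in this last step: one must show that only the sign-flip knots inflate the Gaussian width and that the per-knot cost matches the weight $\max\set{1/|A_j|, k/n}$ in \eqref{eq:m_definition}. Once this identification is carried out, the oracle inequality \eqref{eq:thm_constrained_tangent_1} is obtained by taking the infimum over piecewise-constant $\theta$ with $\mcV_-(\theta) = \mcV$ (general boundary $\theta$ are then handled by continuity of the right-hand side), and the Lipschitz concentration argument above delivers the high-probability refinement.
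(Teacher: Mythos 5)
Your overall framework is aligned with the paper: the oracle inequality comes from a Bellec-type statistical dimension bound for projection estimators (this is exactly the paper's Lemma~\ref{lem:projection_general}), and the high-probability refinement comes from Gaussian concentration for the Lipschitz projection map. The substantive content of the theorem is indeed the bound on $\delta(T_\theta C_\mcV)$, and your directional-derivative computation of $\mcV_-'(\theta; v)$ at a piecewise-constant $\theta$ is correct.

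However, there is a genuine gap immediately after. From
\[
    \mcV_-'(\theta; v) = \sum_{j=1}^{k}\mcV_-^{A_j}(v_{A_j}) + \sum_{j : w_j = 1}(v_{\tau_j - 1} - v_{\tau_j}) \leq 0
\]
you conclude that $v \in T_\theta C_\mcV$ \emph{iff} $v$ is non-decreasing on each piece $A_j$ and the total order-violating gap is non-positive. That is only a \emph{sufficient} condition, not a characterization. The tangent cone allows $\mcV_-^{A_j}(v_{A_j}) > 0$ on any piece, provided the sum of within-piece lower variations is compensated by upward slack $\sum_j w_j(v_{\tau_j} - v_{\tau_j - 1})$ at the active knots (this is exactly the set $T(\Pi, w)$ in the paper's Lemma~\ref{lem:tangent_characterization}). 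Consequently your claim that $T_\theta C_\mcV$ ``embeds into the product isotonic cone $\prod_j K_{|A_j|}^\uparrow$ intersected with a single half-space'' is false whenever $\theta$ has at least one order violation, which is the only case where $M(\theta)>0$ and the theorem's second correction term is nontrivial. In the monotone case your picture is right, but then $M(\theta)=0$ and the theorem reduces to the known isotonic bound.

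This is not a cosmetic slip: controlling the actual tangent cone is where all of the paper's technical work lives. Because $v$ can trade within-piece non-monotonicity against knot slack, one cannot decouple the pieces into independent isotonic cones. The paper handles this by (i) parametrizing the slack allocation via the quantities $\Gamma_i(v,\ell_i)$ (Lemma~\ref{lem:gamma_control}), (ii) quantizing both the $\ell_2$-mass allocation and the slack allocation into a polynomially bounded cover $\{T(\mathbf{t},\mathbf{q},\mathbf{l})\}$ (Lemma~\ref{lem:quantization}, Proposition~\ref{prop:cardinality}), (iii) bounding the Gaussian width of each quantized piece by writing a sub-vector with bounded lower variation as a difference of two monotone vectors (Lemmas~\ref{lem:suprema_1}--\ref{lem:suprema_2}), and (iv) choosing a suitable refinement of the constant partition with controlled segment lengths to recover the weight $\max\{1/|A_j|, k/n\}$ that appears in $M(\theta)$. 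Your ``combinatorial accounting'' of the sign-flip knots is gestured at but never carried out, and it cannot be carried out inside the product-cone-plus-half-space model because that model discards the part of the cone that actually produces the $M(\theta)$ contribution. You need the $T(\Pi,w)$ characterization and the quantization decomposition to make the per-knot Gaussian width cost precise.
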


The following risk bound for the best choice of the tuning parameter $\mcV \geq 0$ is an immediate consequence of Theorem \ref{thm:constrained_tangent}.

\begin{cor}\label{cor:constrained_tangent}
Suppose $\theta^* \in \RR^n$.
Choose $\mcV^* \geq 0$ that minimizes the upper bound in \eqref{eq:thm_constrained_tangent_1} (thus, $\mcV^*$ depends on the true parameter $\theta^*$).
Then, we have
\begin{align}\label{eq:cor_ct_1}
    & \frac{1}{n} \EE_{\theta^*} \lVert \hat{\theta}_{\mcV^*} - \theta^* \rVert_2^2 \nonumber \\
    & \leq
    \inf_{\theta \in \RR^n} \left\{
        \frac{1}{n} \lVert \theta - \theta^* \rVert_2^2
        + C\sigma^2 \frac{k(\theta)}{n} \log \frac{\ee n}{k(\theta)}
        + C\sigma^2 \frac{M(\theta)}{k(\theta)} \log \frac{\ee n}{k(\theta)}
    \right\}.
\end{align}
Also, choosing $\mcV := \mcV^*$ or $\mcV := \mcV_-(\theta^*)$, we have
\begin{equation}\label{eq:cor_ct_2}
    \frac{1}{n} \EE_{\theta^*} \lVert \hat{\theta}_{\mcV} - \theta^* \rVert_2^2
    \leq
    C\sigma^2 \left\{
        \frac{k(\theta^*)}{n} \log \frac{\ee n}{k(\theta^*)}
        + \frac{M(\theta^*)}{k(\theta^*)} \log \frac{\ee n}{k(\theta^*)}
    \right\}.
\end{equation}
\end{cor}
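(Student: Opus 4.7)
The plan is to derive Corollary \ref{cor:constrained_tangent} as an immediate specialization of Theorem \ref{thm:constrained_tangent}; no new probabilistic content is needed, and the work is entirely in manipulating the deterministic right-hand side of \eqref{eq:thm_constrained_tangent_1}. The key observation is that Theorem \ref{thm:constrained_tangent} provides a valid risk bound for every fixed choice of the tuning parameter $\mcV \geq 0$ (possibly $\theta^*$-dependent, as long as it does not depend on the data $y$), so we are free to pick $\mcV$ to make the bound as small as possible.

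For \eqref{eq:cor_ct_1}, let
\[
    B(\mcV) := \inf_{\theta \,:\, \mcV_-(\theta) = \mcV}
    \left\{
        \frac{1}{n} \lVert \theta - \theta^* \rVert_2^2
        + C\sigma^2 \frac{k(\theta)}{n} \log \frac{\ee n}{k(\theta)}
        + C\sigma^2 \frac{M(\theta)}{k(\theta)} \log \frac{\ee n}{k(\theta)}
    \right\}
\]
denote the bracketed expression on the right-hand side of \eqref{eq:thm_constrained_tangent_1}. Choose $\mcV^* \in \argmin_{\mcV \geq 0} B(\mcV)$ (or an $\varepsilon$-minimizer if the infimum is not attained, letting $\varepsilon \downarrow 0$ at the end). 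Theorem \ref{thm:constrained_tangent} applied with this deterministic $\mcV^*$ gives $\frac{1}{n} \EE_{\theta^*} \lVert \hat{\theta}_{\mcV^*} - \theta^* \rVert_2^2 \leq B(\mcV^*) = \inf_{\mcV \geq 0} B(\mcV)$. Since every $\theta \in \RR^n$ is feasible in the inner infimum for exactly one value of $\mcV$, namely $\mcV = \mcV_-(\theta) \geq 0$, the nested infimum $\inf_{\mcV \geq 0} B(\mcV)$ coincides with $\inf_{\theta \in \RR^n}\{ \cdots \}$ applied to the same bracketed expression, yielding \eqref{eq:cor_ct_1}.

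For \eqref{eq:cor_ct_2}, I would substitute $\theta = \theta^*$ into the infimum on the right-hand side of \eqref{eq:cor_ct_1}: the approximation error $\frac{1}{n} \lVert \theta - \theta^* \rVert_2^2$ vanishes, leaving only the two complexity terms evaluated at $\theta^*$. Equivalently, taking $\mcV := \mcV_-(\theta^*)$ directly in Theorem \ref{thm:constrained_tangent} makes $\theta^*$ itself feasible for the constraint $\mcV_-(\theta) = \mcV$, so the infimum on the right is trivially upper-bounded by its value at $\theta = \theta^*$, producing the same conclusion. I do not anticipate any real obstacle, since the statement is essentially a rearrangement of Theorem \ref{thm:constrained_tangent}; the only points worth flagging are that $\mcV^*$ depends on the unknown $\theta^*$, so \eqref{eq:cor_ct_1} is a genuine oracle inequality rather than a bound for a fully data-driven estimator, and that the limiting argument above is needed in case the infimum defining $\mcV^*$ is not attained.
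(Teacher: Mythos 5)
Your proposal is correct and matches the paper's own (unstated, since the paper declares the corollary ``an immediate consequence'' of Theorem~\ref{thm:constrained_tangent}). Collapsing the nested infimum $\inf_{\mcV\ge 0}\inf_{\theta:\,\mcV_-(\theta)=\mcV}$ into $\inf_{\theta\in\RR^n}$, and then specializing to $\theta=\theta^*$ (or equivalently to $\mcV=\mcV_-(\theta^*)$) for \eqref{eq:cor_ct_2}, is exactly the intended argument; your remarks on the oracle nature of $\mcV^*$ and on using $\varepsilon$-minimizers when the infimum is not attained are sensible but not strictly required.
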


\begin{rmk}[]
We briefly comment on the proof of Theorem \ref{thm:constrained_tangent} and Corollary \ref{cor:constrained_tangent}.
A key ingredient is to obtain a bound on the \textit{statistical dimension} \citep{Amelunxen2014} of the tangent cone of the constraint set $\set{ \theta \in \RR^n: \mcV_-(\theta) \leq \mcV }$.
This methodology was first developed for the isotonic regression and the convex regression by \citet{Bellec2015b}.
In particular, our approach is inspired by the analysis of the constrained trend filtering estimators by \citet{Guntuboyina2017a}.
See Appendix \ref{sec:appendix_proof_adaptation} for detailed proofs.
\end{rmk}

By restricting the region over which the infimum in \eqref{eq:cor_ct_1} is taken, we have the oracle inequality for monotone vectors
\[
    \frac{1}{n} \EE_{\theta^*} \lVert \hat{\theta}_{\mcV^*} - \theta^* \rVert_2^2
    \leq
    \inf_{\theta \in K^\uparrow_n} \left\{
        \frac{1}{n} \lVert \theta - \theta^* \rVert_2^2
        + C\sigma^2 \frac{k(\theta)}{n} \log \frac{en}{k(\theta)}
    \right\},
\]
which recovers the existing results on the isotonic regression \citep{Chatterjee2015,Bellec2015b} up to a constant multiplicative factor.

To understand the general upper bound in \eqref{eq:cor_ct_1}, we have to control the quantity $M(\theta)$ defined in \eqref{eq:m_definition}.
To this end, we consider the \textit{minimal length condition};
we say that $\theta \in \RR^n$ satisfies the minimal length condition for a constant $c > 0$ if it satisfies
\begin{equation}\label{eq:min_length}
    \min \set{ |A_i|: 1 \leq i \leq k, w_i \neq w_{i+1} } \geq \frac{cn}{k},
\end{equation}
where the partition $\Pi_{\mathrm{const}} = \set{A_1, A_2, \ldots, A_k}$ and the signs $w_i$ ($i = 1, \ldots, k+1$) are defined as in \eqref{eq:m_definition}. 
Intuitively, a signal $\theta \in \RR^n$ is well approximated by another signal that satisfies the minimal length condition if $\theta$ has ``moderate slopes'' around the order-violating jumps.
For further discussion on such growth conditions, see Section \ref{sec:piecewise_application}.

Based on the minimal length condition, we have the following result from Theorem \ref{thm:constrained_tangent} .

\begin{cor}\label{cor:min_length}
Suppose that $\theta^* \in \RR^n$ satisfies the minimal length condition \eqref{eq:min_length} for a constant $c > 0$. Assume that $\theta^*$ is $k(\theta^*)$-piecewise constant and $m(\theta^*)$-piecewise monotone. Then, the constrained nearly-isotonic regression \eqref{eq:neariso_constrained} satisfies
\begin{align}\label{eq:cor_min_length}
    & \frac{1}{n} \EE_{\theta^*} \lVert \hat{\theta}_{\mathcal{V}} - \theta^* \rVert_2^2 \nonumber \\
    & \leq (\mcV_-(\theta^*) - \mcV)^2
    + C \sigma^2 \left(
        \frac{k(\theta^*)}{n} + \frac{2c^{-1} (m(\theta^*) - 1)}{n}
    \right) \log \frac{\ee n}{k(\theta^*)}.
\end{align}
In particular, if the tuning parameter $\mcV$ is chosen so that
\[
    (\mcV_-(\theta^*) - \mcV)^2
    \leq C^\prime \frac{k(\theta^*)}{n} \log \frac{\ee n}{k(\theta^*)}
\]
for a positive constant $C^\prime$, we have
\[
    \frac{1}{n} \EE_{\theta^*} \lVert \hat{\theta}_{\mathcal{V}} - \theta^* \rVert_2^2
    \leq C'' \sigma^2 \left(
        \frac{k(\theta^*)}{n} + \frac{2c^{-1} (m(\theta^*) - 1)}{n}
    \right) \log \frac{\ee n}{k(\theta^*)},
\]
where $C''$ is a positive constant.
\end{cor}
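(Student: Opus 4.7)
The plan is to derive Corollary \ref{cor:min_length} by evaluating the oracle inequality of Theorem \ref{thm:constrained_tangent} at a carefully chosen candidate $\tilde\theta$ satisfying $\mcV_-(\tilde\theta) = \mcV$. I would control the three summands on the right-hand side of \eqref{eq:thm_constrained_tangent_1} separately: the approximation error by $(\mcV_-(\theta^*) - \mcV)^2$, the complexity penalty by $k(\theta^*)$, and the non-monotonicity penalty by $m(\theta^*)$ via the minimal length condition.

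For the approximation step, if $\mcV_-(\theta^*) = \mcV$ set $\tilde\theta = \theta^*$. Otherwise, letting $\delta = \mcV_-(\theta^*) - \mcV > 0$, I would construct $\tilde\theta$ iteratively: pick a downward-jump knot $\tau_j$ of $\theta^*$ and shift every entry beyond it upward by the smaller of the remaining slack and the current jump size, then repeat at subsequent downward jumps until a total $\mcV_-$ reduction of $\delta$ is achieved. Each entry is shifted by a cumulative amount of at most $\delta$, so $\|\tilde\theta - \theta^*\|_2^2 \leq n\delta^2$, which gives $\frac{1}{n}\|\tilde\theta - \theta^*\|_2^2 \leq (\mcV_-(\theta^*) - \mcV)^2$. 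Provided each sub-shift is strictly smaller than the jump it targets, the modification preserves the piecewise-constant partition $\Pi_{\mathrm{const}}$ and the sign sequence $w_i$ of $\theta^*$, so $k(\tilde\theta) = k(\theta^*)$ and $M(\tilde\theta) = M(\theta^*)$; otherwise pieces merge, which only decreases $k$ and leaves the $M$ bound unchanged.

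The key quantitative step is the bound on $M(\theta^*)$. Because $\theta^*$ is $m(\theta^*)$-piecewise monotone, the sign $w_i$ is constant on the interior knots of each monotone piece---$0$ on a nondecreasing block and $1$ on a nonincreasing block---so sign-change indices $j$ with $w_{j-1} \neq w_j$ arise only near the $m(\theta^*) - 1$ boundaries between monotone pieces and number at most $2(m(\theta^*) - 1)$. For each such index the minimal length condition \eqref{eq:min_length} pins the relevant constant piece to have length at least $cn/k(\theta^*)$, so $\max\{1/|A_j|, k(\theta^*)/n\} \leq c^{-1}k(\theta^*)/n$. Summing yields $M(\theta^*)/k(\theta^*) \leq 2c^{-1}(m(\theta^*) - 1)/n$; substituting into Theorem \ref{thm:constrained_tangent} gives \eqref{eq:cor_min_length}, and the ``in particular'' clause then follows by absorbing $(\mcV_-(\theta^*) - \mcV)^2$ into the main rate. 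The principal obstacle is the explicit perturbation construction---simultaneously matching $\mcV_-(\tilde\theta) = \mcV$ exactly, preserving the partition and sign pattern of $\theta^*$ so that the minimal length condition continues to control $M(\tilde\theta)$, and keeping the $\ell_2$ cost under $n(\mcV_-(\theta^*) - \mcV)^2$---together with the minor bookkeeping needed to align the index $j$ appearing in \eqref{eq:m_definition} with the index $i$ in \eqref{eq:min_length}.
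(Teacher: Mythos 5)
Your overall route is the paper's route: evaluate the oracle inequality of Theorem~\ref{thm:constrained_tangent} at a candidate $\tilde\theta$ with $\mcV_-(\tilde\theta)=\mcV$, bound the approximation error by $(\mcV_-(\theta^*)-\mcV)^2$, and bound $M(\theta^*)/k(\theta^*)$ by $2c^{-1}(m(\theta^*)-1)/n$ via the minimal length condition --- and that last step is correct and essentially identical to what the paper does. Where you diverge is in the perturbation construction. The paper defines $\theta'$ by keeping all upward increments $(\theta^*_{j+1}-\theta^*_j)_+$ intact and \emph{proportionally rescaling} every downward increment by $\alpha = \mcV/\mcV_-(\theta^*)$, so that $\theta'_i - \theta^*_i = (1-\alpha)\sum_{j<i}(\theta^*_j-\theta^*_{j+1})_+$. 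This gives $\mcV_-(\theta')=\mcV$ by linearity, $\|\theta'-\theta^*\|_\infty \le |1-\alpha|\,\mcV_-(\theta^*)=|\mcV_-(\theta^*)-\mcV|$, and --- crucially --- every nonzero jump of $\theta^*$ stays nonzero with the same sign (as long as $\alpha>0$), so $k(\theta')=k(\theta^*)$, $\Pi_{\mathrm{const}}(\theta')=\Pi_{\mathrm{const}}(\theta^*)$, and $M(\theta')=M(\theta^*)$ automatically, with no case analysis about pieces merging. Your iterative ``knock down one downward jump at a time by the remaining slack'' construction can reach the same $\ell_2$ and sup-norm bound, but it only works in the direction $\mcV \le \mcV_-(\theta^*)$; the corollary allows $\mcV > \mcV_-(\theta^*)$ as well, where the paper simply takes $\alpha>1$ while your scheme would need a separate symmetric argument to \emph{inflate} the downward jumps. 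You also have to carefully ensure no sub-shift exhausts a jump, and your remark that merged pieces ``leave the $M$ bound unchanged'' is not obvious (merging changes both the $|A_j|$ and $k$ in \eqref{eq:m_definition}). None of this is fatal --- one can make your construction rigorous --- but the proportional-scaling candidate makes all of those issues disappear in one stroke, which is why the paper uses it.
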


\begin{rmk}
If $\theta$ is $k$-piecewise constant and $m$-piecewise monotone, it is always true that $k \geq 2(m - 1)$. Hence, the inequality \eqref{eq:cor_min_length} can be simplified as
\[
    \frac{1}{n} \EE_{\theta^*} \lVert \hat{\theta}_{\mathcal{V}} - \theta^* \rVert_2^2
    \leq (\mcV_-(\theta^*) - \mcV)^2 + C(c) \sigma^2
    \frac{k(\theta^*)}{n}\log \frac{\ee n}{k(\theta^*)},
\]
where $C(c) > 0$ is a constant that depends on $c$ alone.
\end{rmk}

\begin{rmk}
We comment on the minimal length condition and the relation to estimation of piecewise constant vectors. We conjecture that the minimum length condition \eqref{eq:min_length} is essentially unavoidable for the risk bound of the nearly-isotonic regression due to the following analogy to the fused lasso. The minimal length condition for the fused lasso is considered by \citet{Guntuboyina2017a}.
For the fused lasso, \citet{Fan2017} showed that the minimum length condition cannot be removed in the sense that there is a lower bound depending on the minimum length $\Delta = \min_{i} |A_i|$ (see also the experimental result by \citet{Guntuboyina2017a}, Remark 2.5).
\end{rmk}

\subsection{Risk bounds for penalized estimators}\label{sec:penalized}

In this section, we consider the risk bounds for the nearly-isotonic regression \eqref{eq:neariso} in the original penalized form by \citet{Tibshirani2011}. 

\begin{thm}\label{thm:penalized}
For any $\lambda \geq 0$, let $\hat{\theta}_\lambda$ denote the nearly-isotonic regression estimator defined in \eqref{eq:neariso}.
Let $\theta^*$ and $\theta$ be any vectors in $\RR^n$.
Then, there exists a tuning parameter $\lambda^* = \lambda^*(\theta) \geq 0$ that depends only on $\theta$ such that, for any $\lambda \geq \lambda^*$, we have the following risk bound:
\begin{align}\label{eq:thm_penalized_1}
    \frac{1}{n} \EE_{\theta^*} \norm{\hat{\theta}_{\lambda} - \theta^*}_2^2
    & \leq
    \frac{1}{n} \norm{\theta - \theta^*}_2^2
    + C\sigma^2 \frac{k(\theta)}{n} \log \frac{\ee n}{k(\theta)}
    + C\sigma^2 \frac{M(\theta)}{k(\theta)} \log \frac{\ee n}{k(\theta)} \nonumber \\
    & + 3 (\lambda - \lambda^*)^2 M(\theta),
\end{align}
where $M(\theta)$ and $k(\theta)$ are defined similarly as in Theorem \ref{thm:constrained_tangent}.
Furthermore, for any $\eta \in (0, 1)$, the inequality
\begin{align}\label{eq:thm_penalized_2}
    \frac{1}{n} \norm{\hat{\theta}_{\lambda} - \theta^*}_2^2
    & \leq
    \frac{1}{n} \norm{\theta - \theta^*}_2^2
    + 2C\sigma^2 \frac{k(\theta)}{n} \log \frac{\ee n}{k(\theta)}
    + 2C\sigma^2 \frac{M(\theta)}{k(\theta)} \log \frac{\ee n}{k(\theta)} \nonumber \\
    & + 6 (\lambda - \lambda^*)^2 M(\theta)
    + \frac{16 \sigma^2 \log\eta^{-1}}{n}
\end{align}
holds with probability $1 - \eta$.
\end{thm}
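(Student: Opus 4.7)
The plan is to convert the penalty-based optimality condition for $\hat\theta_\lambda$ into a constrained-looking basic inequality by introducing a subgradient $s^*\in\partial\mcV_-(\theta)$ together with a dual scalar $\lambda^*=\lambda^*(\theta)$, and then decompose the residual as (i) a piece controlled by the tangent-cone bound underlying Theorem~\ref{thm:constrained_tangent} and (ii) a perturbation proportional to $(\lambda-\lambda^*)^2$.

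I would start with the \emph{sharp} basic inequality that combines the first-order optimality $y-\hat\theta_\lambda=\lambda\hat s$ for some $\hat s\in\partial\mcV_-(\hat\theta_\lambda)$ with the identity $\|y-\theta\|_2^2=\|y-\hat\theta_\lambda\|_2^2+2\langle y-\hat\theta_\lambda,\hat\theta_\lambda-\theta\rangle+\|\hat\theta_\lambda-\theta\|_2^2$; after substituting $y=\theta^*+\xi$, rearranging, and using the subgradient inequality $\langle\hat s,\hat\theta_\lambda-\theta\rangle\geq \mcV_-(\hat\theta_\lambda)-\mcV_-(\theta)$, this produces
\begin{equation*}
\|\hat\theta_\lambda-\theta^*\|_2^2 + \|\hat\theta_\lambda-\theta\|_2^2 \leq \|\theta-\theta^*\|_2^2 + 2\langle\xi,\hat\theta_\lambda-\theta\rangle + 2\lambda\bigl(\mcV_-(\theta)-\mcV_-(\hat\theta_\lambda)\bigr).
\end{equation*}
Convexity of $\mcV_-$ and the subgradient inequality at $\theta$ then let me upper bound the penalty difference by $-\langle s,\hat\theta_\lambda-\theta\rangle$ for any $s\in\partial\mcV_-(\theta)$, turning the right-hand side into $\|\theta-\theta^*\|_2^2+2\langle\xi-\lambda s,\hat\theta_\lambda-\theta\rangle$.

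Next, I would fix a distinguished subgradient $s^*\in\partial\mcV_-(\theta)$ whose nonzero coordinates sit at the sign-flip knots contributing to $M(\theta)$ and which, after an appropriate smoothing across the pieces of $\theta$, satisfies $\|s^*\|_2^2\leq M(\theta)$; and I would define $\lambda^*(\theta)\geq 0$ so that $\lambda^*s^*$ acts as a valid KKT dual certificate for $\theta$ relative to the sublevel set $\{\phi:\mcV_-(\phi)\leq\mcV_-(\theta)\}$. Splitting $\xi-\lambda s^*=(\xi-\lambda^*s^*)+(\lambda^*-\lambda)s^*$, Cauchy--Schwarz and Young's inequality give $2|\lambda-\lambda^*|\,\|s^*\|_2\,\|\hat\theta_\lambda-\theta\|_2\leq 3(\lambda-\lambda^*)^2 M(\theta)+\tfrac13\|\hat\theta_\lambda-\theta\|_2^2$, and the latter quadratic is absorbed by the $\|\hat\theta_\lambda-\theta\|_2^2$ that the sharp inequality produced on the left. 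The remaining inner product $2\langle\xi-\lambda^*s^*,\hat\theta_\lambda-\theta\rangle$ is handled exactly as in Theorem~\ref{thm:constrained_tangent}: the vector $\hat\theta_\lambda-\theta$ lies essentially in the tangent cone at $\theta$ of the sublevel set of $\mcV_-$, and the statistical-dimension bound from Section~\ref{sec:constrained} controls its expectation by $C\sigma^2\frac{k(\theta)}{n}\log\frac{\ee n}{k(\theta)}+C\sigma^2\frac{M(\theta)}{k(\theta)}\log\frac{\ee n}{k(\theta)}$. Taking expectation yields~\eqref{eq:thm_penalized_1}; invoking Borell--TIS for the Gaussian supremum on the tangent cone and doubling the constants produces the in-probability version~\eqref{eq:thm_penalized_2} with the standard $\frac{16\sigma^2\log\eta^{-1}}{n}$ additive correction.

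The main obstacle is the simultaneous construction of $s^*$ and $\lambda^*(\theta)$: one needs a subgradient whose squared Euclidean norm is actually governed by $M(\theta)$ (so that the residual matches $(\lambda-\lambda^*)^2 M(\theta)$ rather than the cruder $(\lambda-\lambda^*)^2 k(\theta)$), while ensuring that $\xi-\lambda^*s^*$ is ``cone-aligned'' in the sense needed to recycle the tangent-cone argument of Theorem~\ref{thm:constrained_tangent}. Achieving both requirements forces one to smear the within-piece multipliers in a precise way around the sign-flip knots, and this balancing act is the delicate point; it is also the reason the bound is stated only for $\lambda\geq\lambda^*(\theta)$ rather than uniformly in $\lambda\geq 0$.
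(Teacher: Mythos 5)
The basic inequality you set up is fine, and you are right that a subgradient $s^*$ with $\|s^*\|_2^2\le M(\theta)$ is at the heart of the argument. But the key step, where you handle the remaining inner product $2\langle\xi-\lambda^*s^*,\hat\theta_\lambda-\theta\rangle$ ``exactly as in Theorem~\ref{thm:constrained_tangent}'' because ``$\hat\theta_\lambda-\theta$ lies essentially in the tangent cone at $\theta$ of the sublevel set of $\mcV_-$,'' has a genuine gap. For the penalized estimator there is no reason that $\hat\theta_\lambda$ should satisfy $\mcV_-(\hat\theta_\lambda)\le\mcV_-(\theta)$, so $\hat\theta_\lambda-\theta$ need not lie in $T_{K_-(\mcV_-(\theta))}(\theta)$; the convexity argument that produced cone membership in the constrained case is simply unavailable here. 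Moreover, once you have fixed a \emph{single} subgradient $s^*$ you have discarded exactly the slack that makes the argument work: the usual basic-inequality step gives the pointwise bound $\|\hat\theta-\theta^*\|_2^2-\|\theta-\theta^*\|_2^2\le\sigma^2\,\dist^2(Z,\lambda\partial\mcV_-(\theta))$ by taking the infimum over the entire subdifferential, and it is the expectation of this \emph{distance to the set} (the Gaussian mean squared distance $\mbD(\lambda\partial\mcV_-(\theta))$) that is of the right order; the corresponding quantity with a single $s^*$ is $\EE\|\sigma Z-\lambda^*s^*\|_2^2$, which is $\Theta(n\sigma^2)$ and useless.

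The paper's proof avoids the cone-membership issue entirely by staying on the polar side: it applies the proximal oracle inequality (Lemma~\ref{lem:proximal_general}) to reduce the problem to bounding $\mbD(\lambda\partial\mcV_-(\theta))$, and then invokes the Guntuboyina--Sen comparison (Lemma~\ref{lem:guntuboyina_b5} and Corollary~\ref{cor:lem_guntuboyina_b5}) to get
$\mbD(\lambda\partial\mcV_-(\theta))\le 3\,\mbD(\cone(\partial\mcV_-(\theta)))+3(\lambda-\lambda^*)^2\|v_0\|_2^2+112$,
where $v_0$ is the minimum-norm element of $\aff(\partial\mcV_-(\theta))$. Two ingredients you do not have are essential here: first, $\mbD(\cone(\partial\mcV_-(\theta)))=\delta(T_{K_-(\mcV_-(\theta))}(\theta))$ (Proposition~\ref{prop:conic_polar}) is what licenses reuse of the tangent-cone bound from Theorem~\ref{thm:constrained_tangent}, via duality rather than via a membership claim about $\hat\theta_\lambda-\theta$; and second, the weak decomposability of $\mcV_-$ (Proposition~\ref{prop:mcv_weak_decomp}, proved through the submodular/Lov\'{a}sz-extension structure) is what guarantees $v_0\in\partial\mcV_-(\theta)$ and yields $\|v_0\|_2^2=\sum_i 1_{\{w_i\ne w_{i+1}\}}/|A_i|\le M(\theta)$, which is what makes the residual $(\lambda-\lambda^*)^2M(\theta)$ rather than something larger. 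Finally, your $\lambda^*$ is described as a KKT certificate for $\theta$, but the paper's $\lambda^*(\theta)=\EE_{Z}[\lambda(Z)]+2/\|v_0\|_2$ (Lemma~\ref{lem:guntuboyina_b5}) is a Gaussian average of the random optimal scaling plus a buffer, not a deterministic stationarity multiplier; this difference matters because the one-sided condition $\lambda\ge\lambda^*$ comes from needing to dominate that Gaussian average, not from a KKT feasibility requirement.
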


We comment on some direct consequences of Theorem \ref{thm:penalized}.
In this theorem, $\lambda^*(\theta)$ is defined as a function of $\theta$.
To understand the risk bound \eqref{eq:thm_penalized_1}, we consider the choice of the tuning parameter $\lambda \geq 0$ that depends on the true parameter $\theta^*$.
Let $\bar{\theta}$ be a vector that minimizes the quantity
\[
    \frac{1}{n} \norm{\theta - \theta^*}_2^2
    + C\sigma^2 \frac{k(\theta)}{n} \log \frac{\ee n}{k(\theta)}
    + C\sigma^2 \frac{M(\theta)}{k(\theta)} \log \frac{\ee n}{k(\theta)}
\]
among all $\theta \in \RR^n$.
Then, taking $\lambda^{**} := \lambda^*(\bar{\theta})$, we have the following oracle inequality which has the same form as \eqref{eq:cor_ct_1}:
\begin{align*}
    & \frac{1}{n} \EE_{\theta^*} \norm{\hat{\theta}_{\lambda^{**}} - \theta^*}_2^2 \\
    & \leq \inf_{\theta \in \RR^n} \left\{
        \frac{1}{n} \norm{\theta - \theta^*}_2^2
        + C\sigma^2 \frac{k(\theta)}{n} \log \frac{\ee n}{k(\theta)}
        + C\sigma^2 \frac{M(\theta)}{k(\theta)} \log \frac{\ee n}{k(\theta)}
    \right\}.
\end{align*}
Moreover, if $\lambda := \lambda^{**}$ or $\lambda := \lambda^*(\theta^*)$, we have
\[
    \frac{1}{n} \EE_{\theta^*} \norm{\hat{\theta}_{\lambda} - \theta^*}_2^2
    \leq C\sigma^2 \left\{
        \frac{k(\theta^*)}{n} \log \frac{\ee n}{k(\theta^*)}
        + \frac{M(\theta^*)}{k(\theta^*)} \log \frac{\ee n}{k(\theta^*)}
    \right\}.
\]
Again, if we assume the minimal length condition \eqref{eq:min_length} on $\theta^*$, we obtain a simplified bound of the form \eqref{eq:cor_ct_2}.

We move on to discuss a precise expression of $\lambda^*(\theta)$ in Theorem \ref{thm:penalized}.
The next proposition provides an upper bound for $\lambda^*(\theta)$.

\begin{prop}\label{prop:minimal_lambda}
Suppose $\theta \in \RR^n$.
Let $\Pi_\mathrm{const}(\theta) := \set{A_1, A_2, \ldots, A_k}$ be the constant partition of $\theta$, and $w_1, w_2, \ldots, w_{k+1}$ be the associated signs defined in \eqref{eq:sign_definition}.
Then, there is a universal constant $C > 0$ such that $\lambda^*(\theta)$ in Theorem \ref{thm:penalized} is bounded from above by
\[
    C \sigma \min \left\{
        \frac{\norm{\theta}_2}{\mcV_-(\theta)}, \
        \left(
            \sum_{i = 1}^k \frac{1_{\set{w_i \neq w_{i + 1}}}}{|A_i|}
        \right)^{-1/2}
    \right\}
    \sqrt{
        \left(
            k(\theta) + \frac{n M(\theta)}{k(\theta)}
        \right) \log \frac{\ee n}{k(\theta)}
    }.
\]
\end{prop}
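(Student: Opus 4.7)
The plan is to identify $\lambda^*(\theta)$ from the proof of Theorem \ref{thm:penalized} as the Lagrange multiplier of the constrained problem \eqref{eq:neariso_constrained} at level $\mcV = \mcV_-(\theta)$, and then bound it using two explicit dual witnesses, one for each term in the stated minimum. If $\hat\theta$ denotes the constrained optimum at that level, the KKT stationarity condition reads $y - \hat\theta = \lambda^*(\theta)\,D^\top s$, where $D$ is the first-difference matrix and $s \in [0,1]^{n-1}$ is a subgradient of $\mcV_-$ at $\hat\theta$ (hence $s_i = 1$ on every strict order-violating edge). Taking the inner product with $D^\top s$ and applying Cauchy--Schwarz yields the baseline estimate
\[
\lambda^*(\theta) \leq \frac{\norm{y - \hat\theta}_2}{\norm{D^\top s}_2}
\]
for any admissible $s$, and the freedom in choosing $s$ is what produces the two denominators in the final bound.

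For the first term in the min, I would take the naive witness $s^{(1)}$ whose support is the set of strictly order-violating edges of $\theta$; then $\langle \theta, D^\top s^{(1)}\rangle = \mcV_-(\theta)$, so pairing with $\theta$ itself forces $\norm{D^\top s^{(1)}}_2 \geq \mcV_-(\theta)/\norm{\theta}_2$. For the second term, I would construct a piecewise-constant witness $s^{(2)}$ whose value on each constant block $A_i$ of $\theta$ is determined by the sign indicator $w_i$; this arrangement supports $D^\top s^{(2)}$ exactly on the sign-change knots $\{j: w_j \neq w_{j+1}\}$ with entries scaled so that $\norm{D^\top s^{(2)}}_2^2$ is comparable to $\sum_i \mathbf{1}\{w_i \neq w_{i+1}\}/|A_i|$, yielding the second denominator.

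In each case the numerator $\norm{y - \hat\theta}_2$ is split into deterministic and stochastic parts via $y = \theta + \xi$; the deterministic part $\norm{\theta - \hat\theta}_2$ and the stochastic projection $\langle \xi, D^\top s\rangle/\norm{D^\top s}_2$ are both controlled using a Gaussian-width estimate on the tangent cone at $\hat\theta$, reusing the statistical-dimension calculation that underlies Theorem \ref{thm:constrained_tangent}. This produces the $\sigma\sqrt{(k(\theta) + nM(\theta)/k(\theta))\log(\ee n/k(\theta))}$ factor in the stated bound.

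The main obstacle is the explicit construction of the piecewise-constant witness $s^{(2)}$: it must simultaneously lie in $\partial\mcV_-(\theta)$ (entries in $[0,1]$, equal to $1$ on strict violations), collapse $D^\top s^{(2)}$ to precisely the sign-change knots, and achieve the claimed norm identity in terms of the $1/|A_i|$. Once this combinatorial witness is in hand, the remaining estimates are essentially algebraic and follow by combining Cauchy--Schwarz, the KKT identity, and the Gaussian concentration already developed in Section \ref{sec:constrained}.
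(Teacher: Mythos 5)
Your proposal rests on a misidentification of $\lambda^*(\theta)$. In the paper, $\lambda^*(\theta)$ is \emph{not} the data-dependent Lagrange multiplier of the constrained problem \eqref{eq:neariso_constrained}; it is the deterministic quantity introduced in Lemma~\ref{lem:guntuboyina_b5} (Guntuboyina et al.), namely
$\lambda^*(\theta) = \EE_{Z \sim N(0,I_n)}[\lambda(Z)] + 2/\norm{v_0}_2$,
where $\lambda(z) := \argmin_{\lambda \geq 0} \dist(z, \lambda\,\partial\mcV_-(\theta))$ is the scaling that best aligns $\lambda\,\partial\mcV_-(\theta)$ with a Gaussian point $z$, and $v_0$ is the minimum-norm element of $\aff(\partial\mcV_-(\theta))$. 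As the paper itself stresses at the start of Section~\ref{sec:constrained}, the Lagrange multiplier corresponding to a fixed constraint level $\mcV$ is random (it depends on the realization of $y$), so it cannot be what appears in the deterministic statement of Theorem~\ref{thm:penalized}. A KKT argument at the level $\mcV = \mcV_-(\theta)$ would bound the wrong object.

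There is also a logical gap even within the proposed KKT argument: the stationarity condition $y - \hat\theta = \lambda D^\top s$ involves a specific $s \in \partial\mcV_-(\hat\theta)$ determined by the optimizer $\hat\theta$, whereas the two witnesses $s^{(1)}, s^{(2)}$ you wish to insert are built from the sign pattern $w_i$ of $\theta$, i.e.\ they live in $\partial\mcV_-(\theta)$. There is no reason these two subdifferentials agree, so "freedom in choosing $s$" is not available; one cannot simply pair the KKT identity with an unrelated subgradient and apply Cauchy--Schwarz.

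The paper's actual argument takes a different route. Write $C = \cone(\partial\mcV_-(\theta))$ and $T = T_{K_-(\mcV_-(\theta))}(\theta) = C^\circ$. For $Z \sim N(0,I_n)$, the Moreau decomposition gives $Z = P_C(Z) + P_T(Z)$ and $P_C(Z) = \lambda(Z)\,v(Z)$ for some $v(Z) \in \partial\mcV_-(\theta)$. The two entries in the min then come from pairing $P_C(Z)$ with two fixed test vectors: (i) with $\theta$ itself, using the support-function identity $\langle \theta, v\rangle = \mcV_-(\theta)$ for all $v\in\partial\mcV_-(\theta)$, which yields $\lambda(Z)\,\mcV_-(\theta) = \langle\theta, Z - P_T(Z)\rangle \leq \norm{\theta}_2 \norm{P_T(Z)}_2$ in expectation; and (ii) with $v^* = v_0$, using $\langle v(Z), v^*\rangle = \norm{v^*}_2^2 = \sum_i 1_{\{w_i\neq w_{i+1}\}}/|A_i|$ since $v^*$ minimizes the norm over the affine hull. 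In both cases $\EE\norm{P_T(Z)}_2^2 = \delta(T)$, which is then bounded by Proposition~\ref{prop:statistical_dimension_bound} to give the $\sqrt{(k(\theta) + nM(\theta)/k(\theta))\log(\ee n/k(\theta))}$ factor; the residual $2/\norm{v_0}_2$ is absorbed into the second term of the min. Your proposal intersects the paper only in the general flavor of choosing two dual witnesses, but the object being bounded, the identity being used, and the role of the Gaussian width are all different.
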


The purpose of the choice of $\lambda^*$ in Proposition \ref{prop:minimal_lambda} is to derive the theoretical convergence rate in terms of $k(\theta)$ and $M(\theta)$.
However, different choices are possible if we are interested in other theoretical aspects (e.g., estimation consistency for changepoints).
For the fused lasso estimator \eqref{eq:total_variation}, several authors have studied theoretical choices of tuning parameters that result in risk upper bounds \citep{Dalalyan17, Lin2017, Guntuboyina2017a}.

\begin{rmk}[Example of parameter choice]
Here, we provide an example choice of the tuning parameter $\lambda$ under a simple length condition. Let us assume that (i) $\theta^*$ is not globally monotone (i.e., $M(\theta^*) > 0)$) and (ii) $|A_i|$ is of order $n/k$, that is, 
\[
    c_1 \frac{n}{k} \leq |A_i| \leq c_2 \frac{n}{k},
    \quad i = 1, \ldots, k
\]
holds for some $0 < c_1 < c_2$. Then, we can see that $\lambda^*(\theta^*)$ is bounded from above by
\[
    C' \sigma \sqrt{n \log \ee n},
\]
where $C'$ is a constant that depends on $C, c_1, c_2$. For the fused lasso, the theoretical choice $\lambda = O(\sigma \sqrt{n \log \ee n})$ has been suggested by \citet{Dalalyan17} and \citet{Guntuboyina2017a}. For a detailed discussion, see Remark 2.7 by \citet{Guntuboyina2017a} and references therein.
\end{rmk}

\begin{rmk}
In general, the choice of the tuning parameter that minimizes the risk can be different from the theoretical suggestion.
More importantly, we cannot obtain the value of $\lambda$ suggested in Proposition \ref{prop:minimal_lambda} because it depends on the unknown true parameter $\theta^*$ and the noise standard deviation $\sigma$.
In practice, there are two typical data-dependent choices of $\lambda$:
\begin{itemize}
    \item \textbf{Stein's unbiased risk estimate:} If we know $\sigma$ or its estimate value $\hat{\sigma}$, we can reasonably choose a parameter $\lambda$ by minimizing Stein's unbiased risk estimate (SURE)
    \begin{equation}\label{eq:sure}
        \mathrm{SURE}(\lambda) = \frac{1}{n} \norm{ y - \hat{\theta}_\lambda}_2^2
        + \frac{2 \hat{\sigma}^2}{n} \hat{\mathrm{df}}(\hat{\theta}_\lambda) + (\mathrm{constant}).
    \end{equation}
    Here, $\hat{\mathrm{df}}(\hat{\theta}_\lambda) := k(\hat{\theta}_\lambda)$ is an unbiased estimate of the \textit{degrees of freedom}.
    See \citet{Tibshirani2011} for the derivation.
    \item \textbf{Cross-validation:}
    We can also apply the cross-validation when the model \eqref{eq:gaussian_sequence} is interpreted as a discrete observation of a continuous signal.
    Specifically, suppose that the data is generated according to the following nonparametric regression model:
    \begin{equation}\label{eq:regression_setting}
        y_i = f^*(x_i) + \xi_i, \quad i = 1, \ldots, n,
    \end{equation}
    where $x_1 < x_2 < \ldots < x_n$ are given design points in $[0, 1]$ and $f^*: [0, 1] \to \RR$ is an unknown piecewise monotone function.
    We define the nearly-isotonic regression estimator $\hat{f}_\lambda$ over the interval $[0, 1]$ as follows:
    First, we determine the values $\hat{\theta}_{\lambda, i}$ ($i = 1, 2, \ldots, n$) by solving
    \begin{equation}\label{eq:neariso_design_points}
        \hat{\theta}_\lambda \in \argmin_{\theta \in \RR^n} \left\{
            \frac{1}{2} \norm{y - \theta}_2^2
            + \lambda \sum_{i=1}^{n - 1} \frac{(\theta_i - \theta_{i+1})_+}{x_{i+1} - x_i}
        \right\}.
    \end{equation}
    Then, we define $\hat{f}_\lambda: [0, 1] \to \RR$ by interpolation.
    For instance, one can output a piecewise constant function so that $\hat{f}_\lambda(x_i) = \hat{\theta}_{\lambda, i}$.
    In this sense, given a new design point $x^{\mathrm{new}}$, we can predict the value of $f^*(x^\mathrm{new})$ by $\hat{f}_\lambda(x^{\mathrm{new}})$.
    Hence, we can naturally apply the cross-validation in this situation.
\end{itemize}
\end{rmk}

\subsection{Application to piecewise monotone vectors}\label{sec:piecewise_application}

To gain a deeper understanding of the adaptation property of the nearly-isotonic regression, we study the risk bound under a more specific assumption.
We define the following \textit{moderate growth condition} for piecewise monotone vectors.

\begin{defi}\label{defi:moderate_growth}
Let $n \geq 2$. We say that a monotone vector $\theta \in K^\uparrow_{n}$ satisfies the moderate growth condition if
\[
    \theta_i \leq \theta_1 + \frac{i - 1}{n - 1}\mcV(\theta)
    \quad \text{for $i = 1, 2, \ldots, \lceil n / 2 \rceil$}
\]
and
\[
    \theta_i \geq \theta_1 + \frac{i - 1}{n - 1}\mcV(\theta)
    \quad \text{for $i = \lceil n / 2 \rceil, \lceil n / 2 \rceil + 1, \ldots, n$}.
\]
\end{defi}

Figure \ref{fig:moderate_growth} gives an illustration of the moderate growth condition.
In words, the signal $\theta \in \RR^n$ satisfying the moderate growth condition is not larger than the linear signal in the left half of the domain, and not less than that in the right half of the domain.
Intuitively, the role of the moderate growth condition is to guarantee the minimal length condition \eqref{eq:min_length} for a piecewise constant approximation.

\begin{figure}[tb]
    \centering
    \includegraphics[width=\linewidth]{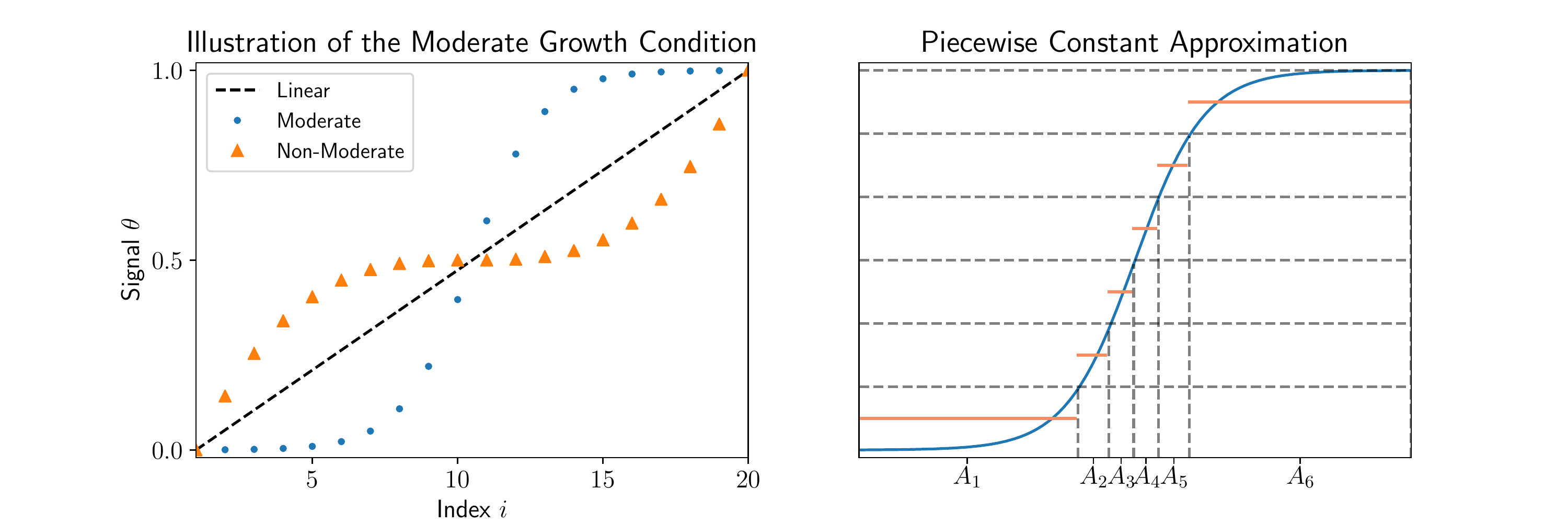}
    \caption{\textbf{Illustration of the moderate growth condition.}
    \textbf{Left:} The plotted three signals are monotone vectors in $K_n^\uparrow$ with $n = 20$ and $\mcV(\theta) = 1$.
    The dotted line represents the linear signal $\theta^{\mathrm{linear}}_i = i/n$ ($i = 1, 2, \ldots, n$).
    The blue circles depict an example of a signal that satisfies the moderate growth condition.
    That is, it is not larger than the linear signal $\theta^{\mathrm{linear}}_i$ for $1 \leq i \leq 10$, and not less than $\theta^{\mathrm{linear}}_i$ for $10 \leq i \leq 20$.
    On the other hand, the orange triangles depict a counterexample for this condition.
    \textbf{Right:} If $\theta$ satisfies the moderate growth condition, there is a $k$-piecewise monotone vector such that the lengths of segments at both ends are not less than $k / n$. See Appendix \ref{sec:cor_moderate_proof} for a detailed explanation.}
    \label{fig:moderate_growth}
\end{figure}

Suppose that the true signal $\theta^*$ is piecewise monotone and every segment satisfies the moderate growth condition.
Then, the nearly-isotonic regression achieves a nearly minimax convergence rate as follows.

\begin{cor}\label{cor:moderate}
Suppose that the following assumptions hold:
\begin{enumerate}[label=(\alph*)]
    \item The partition is equi-spaced: $|A_1| = |A_2| = \cdots = |A_m| \; (= \frac{n}{m})$.
    \item $\theta_{A_j}^*$ is monotone and $\mcV(\theta_{A_j}^*) \leq \mcV / m$ for each $j = 1, \ldots, m$.
    \item $\theta^*_{A_j}$ satisfies the moderate growth condition for each $j = 1, 2, \ldots, m$.
\end{enumerate}
Then, the estimator \eqref{eq:neariso} with optimally tuned parameter $\lambda$ satisfies the following risk bound:
\begin{equation}\label{eq:moderate_risk_bound}
    \frac{1}{n} \EE_{\theta^*} \norm{\hat{\theta}_\lambda - \theta^*}_2^2
    \leq C \max \left\{
        \left( \frac{\sigma^2 \mcV \log \frac{\ee n}{m}}{n} \right)^{2/3}, \
        \frac{\sigma^2 m}{n} \log \frac{\ee n}{m}
    \right\}.
\end{equation}
\end{cor}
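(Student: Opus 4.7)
The plan is to apply the oracle inequality displayed just after Theorem \ref{thm:penalized} (the consequence for optimally tuned $\lambda$) with a carefully constructed piecewise constant approximation $\bar\theta$ of $\theta^*$, then optimize the number of pieces. Specifically, I aim to bound the right-hand side
\[
\inf_{\theta \in \RR^n}\left\{ \frac{1}{n}\|\theta - \theta^*\|_2^2 + C\sigma^2\frac{k(\theta)}{n}\log\frac{\ee n}{k(\theta)} + C\sigma^2\frac{M(\theta)}{k(\theta)}\log\frac{\ee n}{k(\theta)} \right\}
\]
by plugging in a tailored $\bar\theta$ with $K$ constant pieces and choosing $K$ to balance the three terms.

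For the construction, on each monotone segment $A_j$ (of length $n/m$ and total variation at most $\mcV/m$) I split the value range of $\theta^*_{A_j}$ into $k_j = K/m$ equal-height bins and let $\bar\theta_{A_j}$ take the bin midpoint on each preimage, which is a contiguous block by monotonicity. A standard estimate bounds the squared bias on $A_j$ by $|A_j|(\mcV/(m k_j))^2 = n\mcV^2/(m K^2)$; summing over $j$ gives normalized bias $\mcV^2/K^2$, while by construction $k(\bar\theta) = K$.

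The moderate growth condition enters in controlling $M(\bar\theta)$. In the right half of $A_j$, moderate growth forces $\theta^*_{A_j}$ to sit above its linear interpolant, so the preimage of the top bin has length at least $|A_j|/(2k_j) \geq n/(2K)$; by symmetry, the same holds for the bottom bin. At each of the $m-1$ junctions between consecutive monotone segments, the value must drop (since each segment is nondecreasing by Definition \ref{def:piecewise_monotone}); consequently both the last piece of $\bar\theta$ on $A_j$ and the first piece on $A_{j+1}$ serve as sign-changing pieces in the formula \eqref{eq:m_definition}, and each has length at least $n/(2K)$. Therefore $\max\{1/|A|, K/n\} \leq 2K/n$ for every such piece, giving $M(\bar\theta) \leq 4(m-1)K/n$ and $M(\bar\theta)/K \leq 4(m-1)/n$.

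Plugging these into the oracle inequality yields $\mcV^2/K^2 + C\sigma^2(K/n)\log(\ee n/K) + 4C\sigma^2(m/n)\log(\ee n/K)$. I then choose $K = \max\{m,\, \lceil (n\mcV^2/\sigma^2)^{1/3} \rceil\}$: when the cube root dominates, balancing the first two terms produces $(\sigma^2\mcV/n)^{2/3}$ up to a $\log^{2/3}(\ee n/m)$ factor; when $m$ dominates, the defining inequality $n\mcV^2 \leq \sigma^2 m^3$ forces $\mcV^2/m^2 \leq \sigma^2 (m/n)\log(\ee n/m)$, so the bias is absorbed into the $m$-term. In both cases \eqref{eq:moderate_risk_bound} follows, using $\log(\ee n/K) \leq \log(\ee n/m)$ since $K \geq m$. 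The main obstacle is the argument of paragraph three: one must verify that moderate growth really does grant the required minimal length at every junction simultaneously, and that both extremal pieces adjacent to a junction (not just one) contribute $\gtrsim n/K$, so that the constant in $M(\bar\theta) \lesssim mK/n$ depends only on the universal moderate-growth factor. This is precisely the content hinted at in the right panel of Figure \ref{fig:moderate_growth}.
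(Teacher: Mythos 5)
Your approach matches the paper's: equi-height binning of each monotone segment, the moderate growth condition certifying the minimal length condition at the extremal pieces adjacent to each junction, and plugging into the oracle inequality following Theorem \ref{thm:penalized}; this is precisely Lemma \ref{lem:pc_approximation} together with the argument in Appendix \ref{sec:cor_moderate_proof}. The one concrete gap is your choice of $K$. With $K = \max\{m,\ \lceil (n\mcV^2/\sigma^2)^{1/3}\rceil\}$ the bias term $\mcV^2/K^2$ is of order $(\sigma^2\mcV/n)^{2/3}$, but the variance term $\sigma^2(K/n)\log(\ee n/K)$ is of order $(\sigma^2\mcV/n)^{2/3}\log(\ee n/K)$ --- a full logarithm rather than a $\log^{2/3}$ factor --- so the two terms are not balanced, and the claim that ``balancing produces $(\sigma^2\mcV/n)^{2/3}$ up to a $\log^{2/3}(\ee n/m)$ factor'' does not follow from this $K$. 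To reach \eqref{eq:moderate_risk_bound} the logarithm must sit inside the cube root: take $K$ of order $\max\{m,\ (n\mcV^2/(\sigma^2\log(\ee n/m)))^{1/3}\}$ as in \eqref{eq:lem_pc_approximation_k}; then the bias becomes $(\sigma^2\mcV\log(\ee n/m)/n)^{2/3}$ and matches the variance term, and the $K\geq m$ branch is handled exactly as you describe. The remainder of your argument, in particular the reduction of the $M(\bar\theta)$ bound to showing that the first and last constant piece of every monotone segment have length at least of order $n/K$, is exactly the content of Lemma \ref{lem:pc_approximation}-(iii).
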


The risk bound \eqref{eq:moderate_risk_bound} achieves the minimax rate over $\Theta_n(m, \mcV)$ in Proposition \ref{prop:general_lower_bound} up to a multiplicative factor of $\log^{2/3} \frac{\ee n}{m}$.
We should note that the restrictive assumption (a) in Corollary \ref{cor:moderate} is employed merely for the sake of simplicity of the proof.
We may relax this assumption as
\[
    \min_{1 \leq i \leq m} |A_i| \geq \frac{c' n}{m}
\]
for some $c' > 0$.

\section{Model selection based estimators}\label{sec:model_selection}

Here, we consider estimators obtained by model selection among all partitions $\Pi$.
The main purpose of this section is to discuss whether the minimax lower bound in Proposition \ref{prop:general_lower_bound} can be achieved without any additional assumption such as the moderate growth condition.

Given a connected partition $\Pi = (A_1, A_2, \ldots, A_m)$ of $[n]$, we write $K_\Pi^\uparrow$ for the set of piecewise monotone vectors on $\Pi$, i.e.,
\[
    K_\Pi^\uparrow := K_{|A_1|}^\uparrow \times K_{|A_2|}^\uparrow \times \cdots \times K_{|A_m|}^\uparrow.
\]
Let $\hat{\theta}_\Pi$ denote the projection estimator onto $K_\Pi^\uparrow$.
By definition, $\hat{\theta}_\Pi$ is obtained by concatenating isotonic regression estimators defined in every segment.

If we know the true partition $\Pi^*$ on which $\theta^*$ is piecewise monotone, then the risk of the projection estimator $\hat{\theta}_{\Pi^*}$ is bounded from above by
\[
    C \sum_{i=1}^m \frac{|A_i|}{n} \left(
        \frac{\sigma^2 \mcV^{A_i}(\theta^*_{A_i})}{|A_i|}
    \right)^{2/3}.
\]
If the true partition is unknown, a natural idea is to select a data-dependent partition $\hat{\Pi}$ by a penalized selection rule:
\begin{equation}\label{eq:model_selection_estimator}
    \hat{\Pi} \in \argmin_{\Pi} \left\{
        \norm{y - \hat{\theta}_\Pi}_2^2
        + \mathrm{pen}(\Pi)
    \right\}.
\end{equation}
Here, $\mathrm{pen}(\Pi)$ is a positive penalty for the partition $\Pi$.

The penalized selection rules have been well studied in statistics.
In particular, \citet{Birge2001} and \citet{Massart2007} developed non-asymptotic risk bounds for generic model selection settings in Gaussian sequence models.
Hereafter, we construct a penalized selection estimator in the spirit of Theorem 4.18 in \citet{Massart2007}.

Instead of selecting $\hat{\theta}_\Pi$ according to \eqref{eq:model_selection_estimator}, we introduce the \textit{total variation sieves}.
Namely, in addition to selecting partitions, we also select budgets of piecewise total variations as follows.
Let $\Pi = (A_1, A_2, \ldots, A_m)$ be a connected partition.
For any vector $\mbV = (\mcV_1, \mcV_2, \ldots, \mcV_m)$ with $\mcV_i \geq 0$ ($i = 1, 2, \ldots m$), we define the set of piecewise monotone vectors with bounded total variations as
\[
    K_\Pi^\uparrow(\mbV) = K_\Pi^\uparrow(\mcV_1, \mcV_2, \ldots, \mcV_m)
    := \set{\theta \in K_\Pi^\uparrow: \mcV^{A_i}(\theta_{A_i}) \leq \mcV_i
    \ \text{for $i = 1, 2, \ldots, m$}}.
\]
Then, we define $\hat{\theta}_{\Pi, \mbV}$ as the projection estimator onto $K_\Pi^\uparrow(\mbV)$.
Next, we define a countable set of vectors $\mbV$ as
\[
    \mathscr{V}(m) := \left\{
        (v(j_1), v(j_2), \ldots, v(j_m))
        : \ (j_1, j_2, \ldots, j_m) \in \NN^m
    \right\},
\]
where $v(j) := j^{3/2}$.
Finally, we select a pair $(\hat{\Pi}, \hat{\mbV})$ as the solution of the following minimization problem:
\begin{equation}\label{eq:model_selection_estimator_sieve}
    \min_{\Pi} \min_{\mbV \in \mathscr{V}(|\Pi|)} \left\{
        \norm{y - \hat{\theta}_{\Pi, \mbV}}_2^2
        + \mathrm{pen}(\Pi, \mbV)
    \right\}.
\end{equation}
With a careful choice of the penalty term $\mathrm{pen}(\Pi, \mbV)$, we have the following result:

\begin{thm}\label{thm:model_selection}
There exists an absolute constant $C_\mathrm{pen} > 0$ such that the following statement holds.
For any pair $(\Pi, \mbV)$, define the penalty $\mathrm{pen}(\Pi, \mbV)$ so that
\[
    \mathrm{pen}(\Pi, \mbV)
    = C_\mathrm{pen} \left(
        \sum_{i=1}^m \sigma^{4/3} |A_i|^{1/3} \mcV_i^{2/3}
        + \sigma^2 m \log \frac{\ee n}{m}
    \right).
\]
Let $(\hat{\Pi}, \hat{\mbV})$ be the minimizer in \eqref{eq:model_selection_estimator_sieve}.
\begin{align*}
    & \frac{1}{n} \EE_{\theta^*} \norm{\hat{\theta}_{\hat{\Pi}, \hat{\mbV}} - \theta^*}_2^2 \\
    & \leq \min_{\Pi} \min_{\mbV \in \mathscr{V}(|\Pi|)} \left\{
        \frac{3}{n} \dist^2(\theta^*, K_{\Pi}^\uparrow(\mbV))
        + \frac{2}{n} \mathrm{pen}(\Pi, \mbV)
    \right\} + \frac{256 \sigma^2}{n}.
\end{align*}
In particular, if $\theta^*$ is piecewise monotone on $\Pi = (A_1, A_2, \ldots, A_m)$, we have
\begin{align}\label{eq:thm_model_selection_1}
    & \frac{1}{n} \EE_{\theta^*} \norm{\hat{\theta}_{\hat{\Pi}, \hat{\mbV}} - \theta^*}_2^2 \nonumber \\
    & \leq 2 C_\mathrm{pen} \left\{
        \sum_{i=1}^m \frac{|A_i|}{n} \left(
        \frac{\sigma^2 (\mcV^{A_i}(\theta^*_{A_i}) + 1)}{|A_i|}
    \right)^{2/3}
    + \frac{\sigma^2 m}{n} \log \frac{\ee n}{m}
    \right\} + \frac{256 \sigma^2}{n}.
\end{align}
\end{thm}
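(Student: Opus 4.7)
The plan is to invoke a general model selection theorem for Gaussian sequence models in the spirit of Theorem 4.18 of \citet{Massart2007}, applied to the countable family of closed convex models $\set{K_\Pi^\uparrow(\mbV): \Pi \text{ a connected partition of } [n], \ \mbV \in \mathscr{V}(|\Pi|)}$. Such a theorem takes the following shape: if one assigns a nonnegative weight $L_{\Pi, \mbV}$ to each model satisfying a summability condition $\sum_{\Pi, \mbV} \ee^{-L_{\Pi, \mbV}} \leq \Sigma < \infty$, and if the penalty dominates the per-model complexity plus $\sigma^2 L_{\Pi, \mbV}$, then the penalized projection estimator enjoys an oracle inequality of the form $\EE_{\theta^*}\norm{\hat{\theta} - \theta^*}_2^2 \lesssim \min_{\Pi, \mbV}\set{\dist^2(\theta^*, K_\Pi^\uparrow(\mbV)) + \mathrm{pen}(\Pi, \mbV)} + \sigma^2 \Sigma$.

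First, I would bound the complexity of a single model. Since $K_\Pi^\uparrow(\mbV) = K_{|A_1|}^\uparrow(\mcV_1) \times \cdots \times K_{|A_m|}^\uparrow(\mcV_m)$ is a Cartesian product of isotonic cones intersected with total-variation balls, the localized Gaussian width (or expected squared projection error) decouples across segments. On a single piece of length $|A_i|$, the classical Gaussian width bound for monotone vectors with total variation bounded by $\mcV_i$ (see \citet{Zhang2002, Chatterjee2015}) is of order $\sigma^{4/3} |A_i|^{1/3} \mcV_i^{2/3}$. Summing over $i$ yields exactly the first term of $\mathrm{pen}(\Pi, \mbV)$. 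Second, I would choose the weights $L_{\Pi, \mbV}$ in two parts: one part of order $m \log \frac{\ee n}{m}$ to account for the $\binom{n-1}{m-1} \leq (\ee n/m)^m$ connected partitions of size $m$, and a second part of order $\sum_{i=1}^m \log(1 + j_i)$ to sum over $\mbV = (v(j_1), \ldots, v(j_m)) \in \mathscr{V}(m)$ (the sub-polynomial growth of $v(j) = j^{3/2}$ is essential for $\sum_j \ee^{-c \log(1 + j)}$ to be summable). After checking that $\Sigma$ is an absolute constant, the $\sigma^2 L_{\Pi, \mbV}$ contribution yields the $\sigma^2 m \log \frac{\ee n}{m}$ term in the penalty, and the residual $\sigma^2 \Sigma$ becomes the additive $256 \sigma^2 / n$ term in the statement.

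Third, for the specialized inequality \eqref{eq:thm_model_selection_1}, I would take $\Pi$ to be the partition on which $\theta^*$ is piecewise monotone, and for each $i$ choose $j_i := \lceil \mcV^{A_i}(\theta^*_{A_i})^{2/3} \rceil$. Then $\mcV_i := v(j_i) = j_i^{3/2} \geq \mcV^{A_i}(\theta^*_{A_i})$, so $\theta^* \in K_\Pi^\uparrow(\mbV)$ and the approximation error vanishes. Meanwhile, $\mcV_i^{2/3} = j_i \leq \mcV^{A_i}(\theta^*_{A_i})^{2/3} + 1 \leq C (\mcV^{A_i}(\theta^*_{A_i}) + 1)^{2/3}$, so the bound on $\mathrm{pen}(\Pi, \mbV)$ matches the claimed right-hand side of \eqref{eq:thm_model_selection_1}.

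The main obstacle I anticipate is verifying the precise Gaussian complexity bound for $K_\Pi^\uparrow(\mbV)$ in the form needed by the model selection machinery (Massart-type theorems typically require a bound on $\EE \sup_{\theta \in M \cap B(\theta^*, r)} \langle \xi, \theta - \theta^* \rangle$, not just the unconstrained width), and confirming that the weights $L_{\Pi, \mbV}$ indeed satisfy the summability condition uniformly. A secondary subtlety is that the discretization of total variations by the sieve $\mathscr{V}(m)$ must be fine enough to give the $(\mcV^{A_i}(\theta_{A_i}^*) + 1)^{2/3}$ rate, while coarse enough to keep the weighted sum finite; the exponent $3/2$ in $v(j) = j^{3/2}$ is the value that makes both requirements compatible, and checking this balance carefully is where the bookkeeping is most delicate.
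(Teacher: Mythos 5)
Your plan is essentially the paper's proof: the paper also organizes the argument around the framework of Theorem 4.18 in Massart (2007), setting up the basic inequality for the penalized projection rule and then controlling a normalized Gaussian process $\sup_{\theta\in K_\Pi^\uparrow(\mbV)}\langle\xi,\theta-\theta'\rangle/\omega(\theta)$ via a peeling lemma together with a localized-Gaussian-width bound for each $K_\Pi^\uparrow(\mbV)$ that decouples across segments (the paper uses Guntuboyina's Lemma B.1 and a quantization of the feasible set). Your specialization via $j_i=\lceil(\mcV^{A_i}(\theta^*_{A_i}))^{2/3}\rceil$, giving $\theta^*\in K_\Pi^\uparrow(\mbV)$ and $\mcV_i^{2/3}=j_i\leq(\mcV^{A_i})^{2/3}+1$, is also what the paper does. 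One small correction: the exponent in $v(j)=j^{3/2}$ is not what makes $\sum_j\ee^{-c\log(1+j)}$ summable (that holds for any $c>1$ regardless of $v$); its role is to make the penalty contribution $\mcV_i^{2/3}=j_i$ linear in the sieve index, so the per-step increment is constant and the rounding loss is $O(1)$. Relatedly, the paper does not use your weight $\sum_i\log(1+j_i)$ but instead takes the sieve-weight proportional to the complexity term itself, $\sum_i\sigma^{-2/3}|A_i|^{1/3}\mcV_i^{2/3}$, which folds directly into the penalty; both routes can be made to work, but the paper's choice streamlines the constant tracking.
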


We emphasize that Theorem \ref{thm:model_selection} does not require any additional assumptions on $\theta^*$, e.g., the minimum length condition or the moderate growth condition introduced in the previous section.
Therefore, it suggests the existence of a penalized model selection estimator that achieves the minimax rate in Proposition \ref{prop:general_lower_bound}.
However, the estimator \eqref{eq:model_selection_estimator_sieve} is not practical for a computational reason because it is obtained through the minimization over exponentially many possible partitions $\Pi$.

The dependence on the total variation of each segment in \eqref{eq:thm_model_selection_1} is $(\mcV^{A_i}(\theta^*_{A_i}) + 1)^{2/3}$ instead of $(\mcV^{A_i}(\theta^*_{A_i}))^{2/3}$.
The additional constant $1$ is due to the minimal resolution of the sieve.
To establish a non-asymptotic risk bound for the penalized model selection estimator without sieves (i.e., \eqref{eq:model_selection_estimator}) and remove the dependence on the sieve resolution remains an open problem.

\section{Simulations}\label{sec:simulations}

We provide some numerical examples for piecewise monotone regression problems.

\subsection{Dealing with inconsistency at boundaries}

Before presenting the simulation results, we here explain a well-known practical issue in the isotonic regression literature and a regularization method to cope with it.

In the study of statistical estimation under monotonicity constraints, it is known that the least squares estimator $\hat{\theta}_{K_n^\uparrow}$ is inconsistent at the boundary points (see e.g., \citet{Groeneboom} and \citet{Woodroofe1993}).
A similar issue arises for the nearly-isotonic regression estimators.
Since the penalty term in \eqref{eq:neariso} does not activate if the orders are not violated at the boundary points (i.e., $y_1 < y_2$ or $y_{n-1} < y_n$), the nearly-isotonic regression is not robust against a negative noise at the left boundary or a positive noise at the right boundary.
To overcome this issue, we consider the following boundary correction regularization for the nearly-isotonic regression:
\begin{equation}\label{eq:neariso_bc}
    \hat{\theta}_{\mathrm{boundary}, \lambda, \mu}
    = \argmin_{\theta \in \RR^n} \left\{
        \frac{1}{2} \norm{y - \theta}_2^2
        + \lambda \sum_{i=1}^n (\theta_i - \theta_{i+1})_+
        + \mu (\theta_n - \theta_1)
    \right\},
\end{equation}
where $\mu > 0$ is an additional tuning parameter.
It can easily be checked that the solution is equivalent to that of the ordinary nearly-isotonic regression \eqref{eq:neariso} applied to $\tilde{y} = (y_1 + \mu, y_2 \ldots, y_{n-1}, y_n - \mu)$.
Similar regularization methods for isotonic regression have been studied by \citet{Chen15, Wu2015} and \citet{Luss2017}.

\subsection{Simulation data}

Here, we evaluate the performance of the nearly-isotonic regression and related estimators on simulated data.
According to the one-dimensional regression model \eqref{eq:regression_setting}, we generated data with equi-spaced design points $x_i = (i - 1)/n$ ($i = 1, 2, \ldots, n$).
For the true function $f^*$, we consider $m$-piecewise monotone functions defined as
\[
    f^{(m)}(x) := \sum_{j = 1}^{m} f(m x - (j - 1)) 1_{I_j}(x)
\]
where $f: [0, 1) \to \RR$ is a given monotone function and $I_j := [(j-1)/m, j/m)$ for $j = 1, 2, \ldots, m$.
Following \citet{Meyer00}, we choose $f$ from the following two monotone functions:
\begin{align*}
    f_{\mathrm{sigmoid}}(x) & = \ee^{16 x - 8} / (1 + \ee^{16 x - 8}), \\
    f_{\mathrm{cubic}}(x) & = (2x - 1)^3 + 1.
\end{align*}
Figure \ref{fig:example_1dim} shows an example of $f = f_{\mathrm{sigmoid}}$ and $m = 2$.
It is worth noting that the former sigmoidal function $f_\mathrm{sigmoid}$ satisfies the moderate growth condition (see Definition \ref{defi:moderate_growth}), whereas the latter cubic function $f_\mathrm{cube}$ does not.
Hence, for the case of piecewise sigmoidal functions $f_\mathrm{sigmoid}^{(m)}$, the minimax rate of $\bigO(n^{-2/3})$ is achieved by both the nearly-isotonic regression and the fused lasso (see Corollary \ref{cor:moderate} above and Corollary 2.8 by \citet{Guntuboyina2017a}).

In our experiments, the size $n$ of the signal is chosen from $\set{2^6, 2^7, \ldots, 2^{10}}$.
The noise standard deviation $\sigma$ is assumed to be known and fixed to $0.25$.
We evaluated the MSE for the following four estimators:
\begin{itemize}
    \item \texttt{Neariso}: The nearly-isotonic regression \eqref{eq:neariso}.
    \item \texttt{NearisoBC}: The nearly-isotonic regression with boundary correction \eqref{eq:neariso_bc}
    \item \texttt{Fused}: The fused lasso \eqref{eq:total_variation}.
    \item \texttt{PO}: The projection estimator with the partition oracle, i.e., the projection estimator onto $K_\Pi^\uparrow$ provided with the true partition $\Pi$.
\end{itemize}
For \texttt{Neariso} and \texttt{Fused}, the tuning parameter $\lambda$ is selected by generalized $C_p$ criteria (i.e., minimizing SURE \eqref{eq:sure}).
For \texttt{NearisoBC}, the tuning parameters $(\lambda, \mu)$ are selected by a similar criterion.
To estimate the MSE, we generated 500 replications of the data and calculated the average value of the squared loss $\frac{1}{n} \norm{\hat{\theta} - \theta^*}_2^2$.

Figure \ref{fig:mse_1d} presents the results for $m = 2, 4$ and $f = f_\mathrm{sigmoid}, f_\mathrm{cubic}$.
The upper line shows log-log plots of the MSE versus $n$.
In each setting, the three regularization based estimators (i.e., \texttt{Neariso} \texttt{NearisoBC} and \texttt{Fused}) performed as well as the ideal estimator \texttt{PO}, whereas the former three estimators do not use the information about the true partition.
The risks of \texttt{PO} are well fitted by lines of slopes of $-2/3$, which means that the speed of the convergence is about the minimax optimal rate of $\bigO(n^{-2/3})$.

Next, we provide more detailed comparisons of regularization based estimators.
The lower line in Figure \ref{fig:mse_1d} shows the difference of MSEs from that of \texttt{PO}.
For piecewise sigmoidal functions, \texttt{NearisoBC} and \texttt{Fused} performed better than \texttt{Neariso}.
Notably, in the case of $m = 2$, the risks of \texttt{Fused} were even better than \texttt{PO} for large values of $n$.
A possible reason for the better performance of the fused lasso is that the sigmoidal function can be well approximated by a piecewise constant function near the boundaries.
On the other hand, for piecewise cubic functions, \texttt{Neariso} performed slightly better than the other two estimators for small values of $n$.

\begin{figure}[tb]

\begin{minipage}[c]{\linewidth}
\centering
\includegraphics[width=0.9\linewidth]{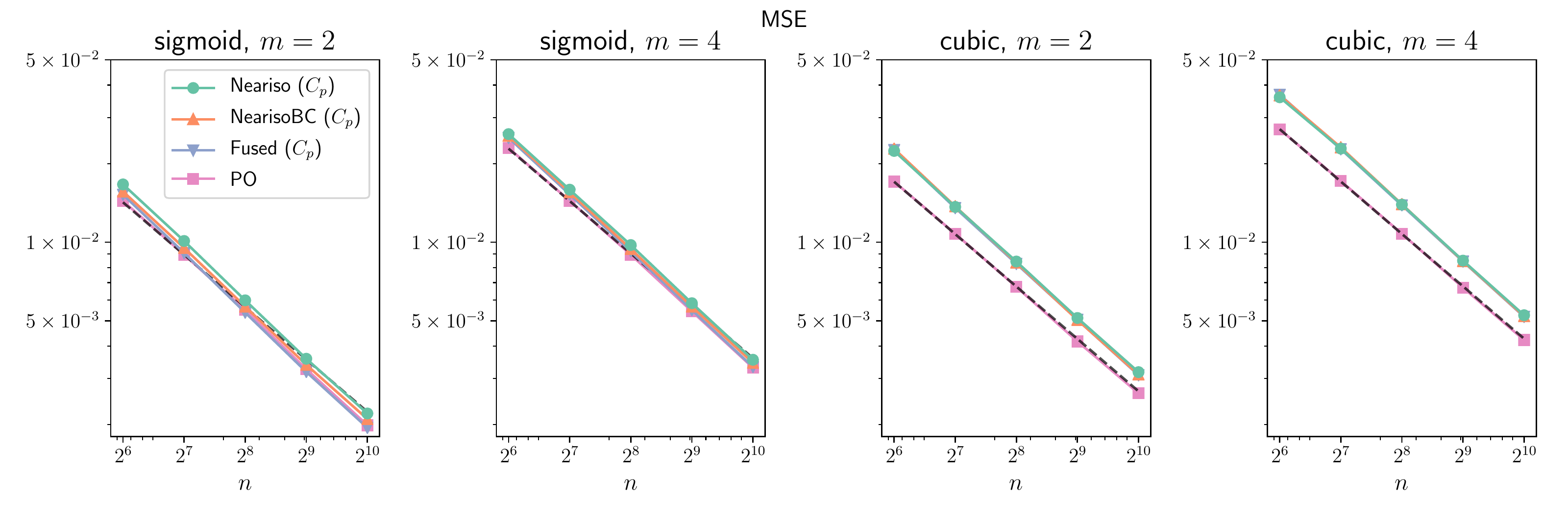}
\end{minipage}

\begin{minipage}[c]{\linewidth}
\centering
\includegraphics[width=0.9
\linewidth]{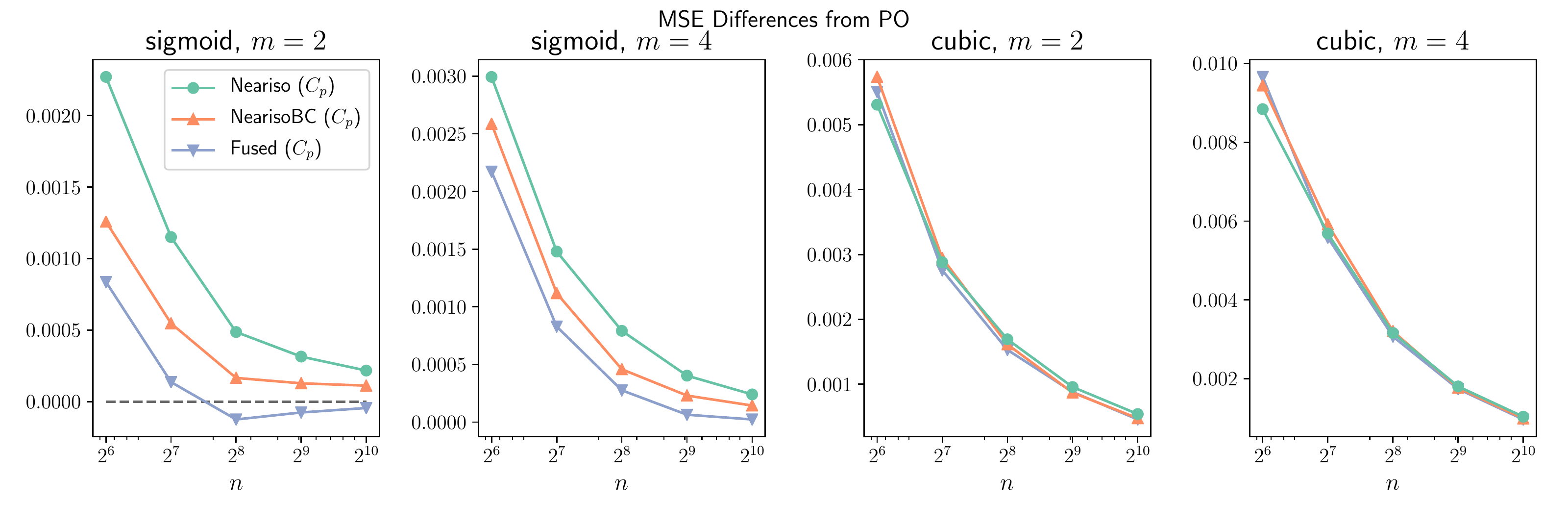}
\end{minipage}

\caption{\textbf{The risks of nearly-isotonic type estimators on simulated data}.
The upper line shows log-log plots of the MSEs versus $n$.
The lower line shows the difference of the MSEs between regularization type estimators (i.e., \texttt{Neariso} \texttt{NearisoBC} and \texttt{Fused}) and the projection estimator with the oracle partition choice (\texttt{PO}).}
\label{fig:mse_1d}
\end{figure}

\subsection{Geological data}\label{sec:geological}

We conducted experiments on GPS data related to a seismological phenomenon reported by \citet{Rogers2003}.
The aim here is to investigate the performance of the nearly-isotonic type estimators on real-world data in which piecewise monotone approximations have already been justified in the previous work.
For the signal $y$, we used the difference of the east-west components of GPS measurements between two observatories, which are located in Victoria (British Columbia, Canada) and Seattle (United States).
The GPS data is provided by \citet{Melbourne2018}.
The top panel in Figure \ref{fig:robust_neariso_on_gps} shows the plot.
The data period starts on January 1, 2010, and ends on December 2, 2017.
After removing missing records, the size of the signal is $n = 2885$.
The increasing trend of the signal is considered to be caused by the subduction process at the plate boundary.
We can also see periodic reversals in the signal, and the entire signal may be approximated by a piecewise monotone signal.
Such reversals may be related to the seismological phenomenon so-called the episodic tremor and slip.
According to \citet{Rogers2003}, such slip events were observed in every 13 to 16 months in their data taken from 1997 to 2003.

GPS data contains several anomalous values.
For the signal $y$ considered above, most of the values $y_i$ are between 20 and 50, except for a single outlier $y_{2344} = 139.34$.
The behaviors of the estimators are extremely affected by the existence of such outliers.
In our situation, we can manually remove the anomalous value (denoted by $\tilde{y}$).
However, it is often difficult to distinguish outliers in practical situations.
From this perspective, we also considered the robust $M$-estimation version of the nearly-isotonic regression defined as \eqref{eq:general_loss_neariso} with $\mcL(\theta; y) = \sum_{i=1}^n \ell_\delta(\theta_i - y_i)$.
Here, $\ell_\delta$ is the Huber loss:
\[
    \ell_\delta(u) := \left\{
    \begin{aligned}
        & \frac{1}{2} u^2 & \quad (|u| \leq \delta) \\
        & \delta |u| - \frac{1}{2} \delta^2 & \quad (|u| > \delta)
    \end{aligned}
    \right.,
\]
which is commonly used in the robust regression literature.

We applied the nearly-isotonic regression \eqref{eq:neariso} and its robust variant to the signals $y$ and $\tilde{y}$ in the above.
The tuning parameters $\lambda$ were determined by the $5$-fold cross-validation, and $\delta$ in the Huber loss was fixed as $\delta = 0.01$.

First, we consider the case where the outlier is removed manually.
The second panel in Figure \ref{fig:robust_neariso_on_gps} shows the result for the cross-validated nearly-isotonic regression.
The vertical lines denote the locations of downward jumps in the estimators.
We can see that the period of jump clusters is about 12 to 14 months, which is close to that of the seismological slip events suggested by \citet{Rogers2003}.

Next, we consider the case where the signal contains an outlier.
In this case, the value of the squared loss largely depends on the error at the coordinate of the outlier.
Then, the cross-validation may choose a large tuning parameter, and the resulting estimator becomes close to a monotone signal.
The third panel in Figure \ref{fig:robust_neariso_on_gps} shows that the number of downward jumps is considerably less than the number that is expected from the known frequency of the slip events.
Conversely, the fourth panel in Figure \ref{fig:robust_neariso_on_gps} shows that the robust version of the nearly-isotonic regression outputs similar clusters of change points as in the second panel.

\begin{figure}[tb]
\includegraphics[width=0.95\linewidth]{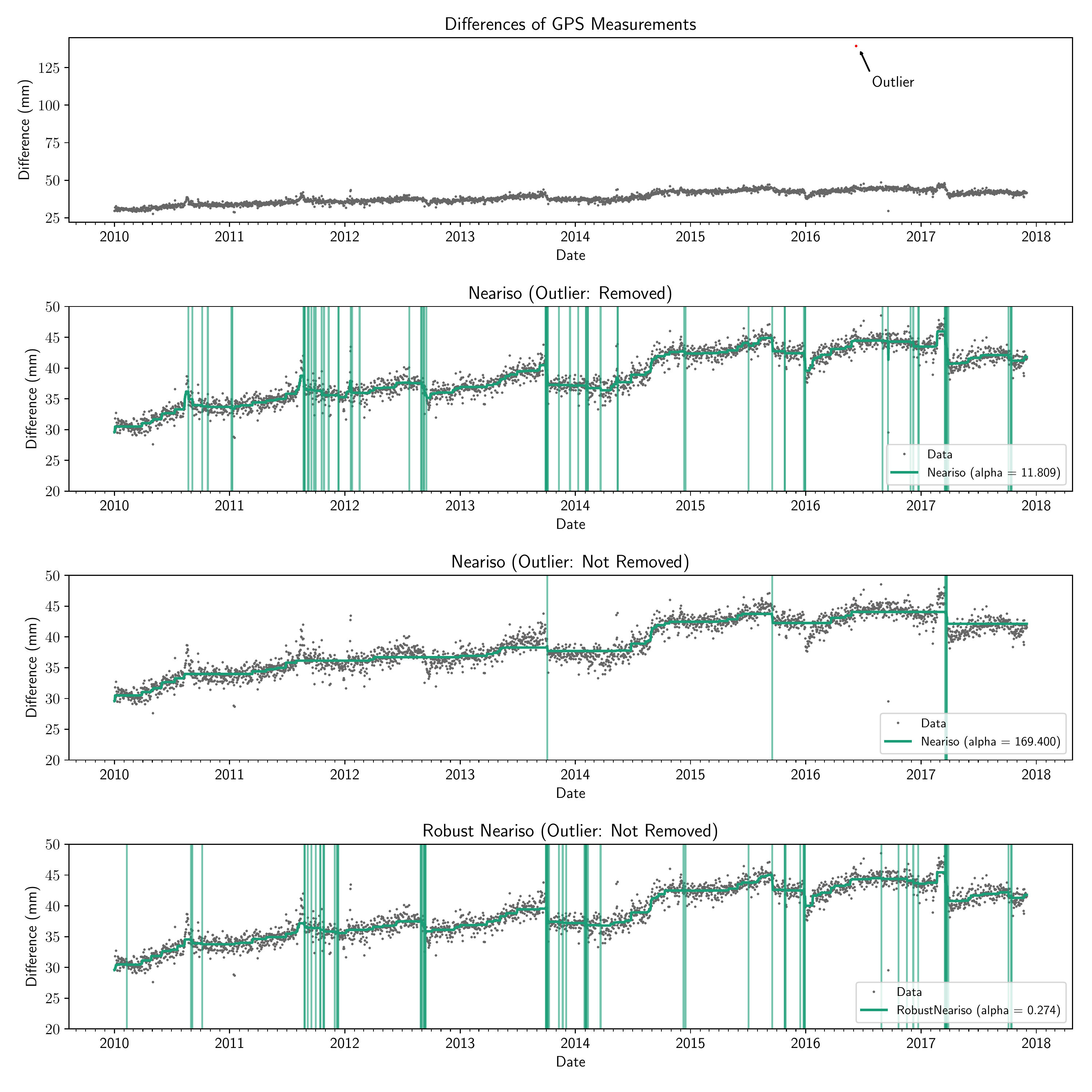}
\caption{\textbf{Nearly-isotonic type estimators applied to GPS data}.
See the text for details.
}
\label{fig:robust_neariso_on_gps}
\end{figure}

\section{Discussion}\label{sec:discussion}

In this paper, we studied the problem of estimating piecewise monotone signals.
The classical isotonic regression estimator cannot be applied in this setting because of the existence of arbitrarily large downward jumps.
We derived the minimax risk lower bound over piecewise monotone signals with bounded upper total variations.
The minimax rate is tight up to multiplicative constant because it can be achieved by a (computationally inefficient) model selection based estimator.
Our main results show that the nearly-isotonic regression estimator achieves this rate under an additional growth condition.
An advantage of the nearly-isotonic regression is that the estimator can be calculated efficiently on arbitrary directed graphs by parametric max-flow algorithms.
The simulation results demonstrate that the nearly-isotonic regression has an almost similar convergence rate as the ideal estimator that knows the true partition.

\subsection{Non-Gaussian noises}

In this paper, we provided risk bound for the nearly-isotonic regression under the assumption that the noise distribution is Gaussian. However, in practice, this assumption is too restrictive. We here briefly discuss the risk bound with non-Gaussian error distributions.

Suppose that $\xi_1, \ldots, \xi_n$ are i.i.d.~random variables with $\EE[\xi_1] = 0$ and $\mathrm{Var}(\xi_1) = \sigma^2$. Then, we can see that the ``expectation bound'' \eqref{eq:thm_penalized_1} holds with a different constant $C' > 0$. See Remark \ref{rmk:non_gauss} in the appendix for the key ingredients for the derivation. On the other hand, the ``high-probability bound'' \eqref{eq:thm_penalized_2} does not hold in general since it requires a more strong concentration property (i.e., the Gaussian concentration).

\subsection{Future directions}
An interesting direction for future work is to investigate the optimal rate of piecewise monotone regression on higher dimensional grids or general graphs.
Recently, several researchers have analyzed the risk bounds for the isotonic regression estimators on two or more higher dimensional grid graphs \citep{Chatterjee2018, Han2017}.
It is natural to ask whether one can construct a computationally efficient estimator that is adaptive to piecewise monotone vectors on a given graph.
We believe that the nearly-isotonic type estimator \eqref{eq:neariso_general_graph} is a candidate.
A major difficulty is to determine an appropriate graph topology.
Given a partial order $\preceq$ on a set $V = [n]$, the corresponding isotonic regression estimator is uniquely determined.
However, there are many directed acyclic graphs that correspond to partial order $\preceq$.
Hence, the graph topology for the nearly-isotonic type estimators is not unique.
To control the connectivity, it may be useful to introduce edge weightings proposed by \citet{Fan2017}.

Another direction is to develop a model selection method for least squares estimators over unbounded cones.
We introduced sieves on the total variation in Section \ref{sec:model_selection} to construct an estimator that is adaptive to piecewise monotone vectors.
In practice, sieve-based methods can be computationally inefficient.
Conversely, if the true vector $\theta^*$ is monotone, the isotonic regression automatically achieves the minimax rate with respect to the total variation.
We conjecture that it is also possible to select the least squares estimator $\hat{\theta}_{\Pi}$ without using sieves.
In particular, we leave it as an open question whether the adaptive risk bound is achieved by the penalized selection rule of the form \eqref{eq:model_selection_estimator}.

\appendix

\section{Algorithms for nearly-isotonic estimators}\label{sec:algorithms}

In this section, we present algorithms for the nearly-isotonic regression and related estimators and discuss their computational complexities.
Note that the main purpose of this section is to give a review of existing algorithms, and hence most results presented in this section are not new (except for Proposition \ref{prop:mod_pava_validity}).

\subsection{Penalized estimators}

Here, we introduce two algorithms to solve the penalized form nearly-isotonic regression \eqref{eq:neariso}.
In Section \ref{sec:alg_mod_pava}, we introduce the solution path algorithm developed by \citet{Tibshirani2011}.
The advantage of the solution path algorithm is that it outputs the solutions $\hat{\theta}_\lambda$ for every $\lambda \geq 0$ simultaneously.
However, the solution path algorithm cannot be applied to the estimators with general weights and graphs.
In Section \ref{sec:alg_divide_conquer}, we provide another algorithm that outputs the exact solution for a single $\lambda$.
The latter algorithm can be applied to the nearly-isotonic type estimators defined on any weighted directed graphs.

\subsubsection{One-dimensional problem}\label{sec:alg_mod_pava}

The modified pool adjacent violators algorithm (modified PAVA, \citet{Tibshirani2011}) is the algorithm used to calculate the solution path for the problem \eqref{eq:neariso}.
Here, we present a variant of the modified PAVA for the following weighted version of the estimator:
\begin{equation}
    \hat{\theta}_\lambda
    = \argmin_{\theta \in \RR^n} \left\{
        \frac{1}{2} \norm{y - \theta}_2^2
        + \lambda \sum_{i-1}^n c_i (\theta_i - \theta_{i+1})_+
    \right\},
    \label{eq:weighted_neariso}
\end{equation}
where $c_i > 0$ ($i = 1, 2, \ldots, n - 1$) are positive weight parameters.
Letting $c_i = (x_{i + 1} - x_i)^{-1}$, this formulation covers the nearly-isotonic regression for general increasing design points \eqref{eq:neariso_design_points}.

\begin{algorithm}[ht]
\caption{Modified Pool Adjacent Violators Algorithm \citep{Tibshirani2011}}
\label{alg:modified_pava}
\DontPrintSemicolon
\KwIn{$y \in \RR^n$, $c_1, \ldots, c_{n - 1} > 0$}
\KwOut{Set of finitely many breakpoints $\Lambda = \set{\lambda_0, \lambda_1, \ldots, \lambda_N}$, solution path $\set{\hat{\theta}_\lambda}_{\lambda \in \Lambda}$}
\nl $\lambda_0 \leftarrow 0$, $\hat{\theta}_{\lambda_0} \leftarrow y$\\
\nl Let $\Pi_0$ be the constant partition of $\hat{\theta}_{\lambda_0}$.
Below, the solution $\hat{\theta}_{\lambda_i}$ is kept to be constant on $\Pi_i$.\\
\For{$i = 1, 2, \ldots$}{
\nl Let $k = |\Pi_{i - 1}|$.
Let $A_j = \set{\tau_j, \tau_j + 1, \ldots, \tau_{j+1} - 1}$ be the $j$-th element in the partition $\Pi_{i - 1}$, and $t_j$ be the value of $\hat{\theta}_{\lambda_{i-1}}$ on $A_j$ ($j = 1, 2, \ldots, k$).\\
\nl Set $s_0 = s_{k} = 0$ and $c_0 = 0$.
Compute $s_j = 1_{\set{t_j > t_{j+1}}}$ for $j = 1, 2, \ldots, k - 1$.\\
\nl Compute the slopes $m_j$ ($j = 1, 2, \ldots, k$) by
\[
    m_j = \frac{c_{\tau_j - 1} s_{j-1} - c_{\tau_{j + 1} - 1} s_j}{|A_j|}.
\]\\
\nl Compute $\delta$ by
\[
    \delta = \min_{1 \leq j \leq k - 1} \frac{t_{j+1} - t_j}{m_j - m_{j+1}}.
\]\\
\nl If $\delta \leq 0$, then terminate.\\
\nl $\lambda_{i} \leftarrow \lambda_{i-1} + \delta$.\\
\nl Set $\hat{\theta}_{\lambda_{i}}$ to be the piecewise constant vector whose values on $A_j$ are $t_j + m_j \delta$ ($j = 1, 2, \ldots, k$).\\
\nl Set $\Pi_i$ to be the constant partition of $\hat{\theta}_{\lambda_i}$.\\
}
\end{algorithm}

The derivation of Algorithm \ref{alg:modified_pava} is straightforward from the original paper of \citet{Tibshirani2011}.
We should note that the validity of this algorithm crucially depends on the property that the solution path is piecewise linear and ``agglomerative''.
It is well known that the piecewise linearity of the solution path holds for many classes of regularization estimators \citep{Rosset2007}.
We say that the solution path $\set{\hat{\theta}_\lambda}_{\lambda \geq 0}$ is \textit{agglomerative} if it satisfies the following condition:
if $\hat{\theta}_{\lambda, i} = \hat{\theta}_{\lambda, j}$ holds for some $\lambda = \lambda_0$, then the same equality holds for any $\lambda \geq \lambda_0$.
For the constant weights ($c_i \equiv 1$), such agglomerative property was proved by \citet{Tibshirani2011}.
However, for general non-unitary edge weights ($c_i \neq 1$), this need not be true. Here, we provide the following proposition to ensure the agglomerative property for non-unitary edge weights.

\begin{prop}\label{prop:mod_pava_validity}
The solution path of weighted nearly-isotonic regression \eqref{eq:weighted_neariso} is piecewise linear and agglomerative if the edge weights satisfy the following concavity condition.
\begin{equation}
    c_{j -1} + c_{j + 1} \leq 2 c_j
    \quad \text{for all $j = 0, 1, \ldots, n - 2$},
    \label{eq:mod_pava_validity_condition}
\end{equation}
where we defined $c_0 := 0$.
In particular, this condition implies that Algorithm \ref{alg:modified_pava} outputs the exact solution path.
\end{prop}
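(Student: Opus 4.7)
The plan is to extend the solution-path analysis of \citet{Tibshirani2011} to the weighted estimator \eqref{eq:weighted_neariso}, where the only new issue is verifying the agglomerative property under the concavity condition \eqref{eq:mod_pava_validity_condition}. First I write the KKT conditions: there exist signs $s_1, \ldots, s_{n-1} \in [0, 1]$, extended by $s_0 = s_n = 0$, such that
\[
    \hat\theta_{\lambda, i} = y_i + \lambda \bigl( c_{i-1} s_{i-1} - c_i s_i \bigr), \qquad i = 1, \ldots, n,
\]
with $s_i = 1$ when $\hat\theta_{\lambda, i} > \hat\theta_{\lambda, i+1}$, $s_i = 0$ when $\hat\theta_{\lambda, i} < \hat\theta_{\lambda, i+1}$, and $s_i \in [0, 1]$ otherwise. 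Averaging this identity over a constant piece $A_j = \{\tau_j, \ldots, \tau_{j+1} - 1\}$ telescopes the internal signs and yields
\[
    t_j(\lambda) = \bar y_{A_j} + \frac{\lambda \, (c_{\tau_j - 1} s_{\tau_j - 1} - c_{\tau_{j+1} - 1} s_{\tau_{j+1} - 1})}{|A_j|},
\]
so $t_j$ is affine in $\lambda$ with slope equal to the $m_j$ used by Algorithm~\ref{alg:modified_pava}. Because only finitely many (partition, boundary-sign) configurations occur, piecewise linearity follows as soon as spontaneous splitting of pieces is ruled out.

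The decisive step is the agglomerative property. When two adjacent pieces $A_j$ and $A_{j+1}$ first collide at some $\lambda_0$, imposing the per-piece KKT identity on each half with a common merged value for all $\lambda \geq \lambda_0$ forces the interior boundary sign to equal
\[
    s^*_{\tau_{j+1} - 1} = \frac{|A_{j+1}| \, c_{\tau_j - 1} s_{\tau_j - 1} + |A_j| \, c_{\tau_{j+2} - 1} s_{\tau_{j+2} - 1}}{(|A_j| + |A_{j+1}|) \, c_{\tau_{j+1} - 1}}.
\]
Nonnegativity of $s^*$ is automatic, and the feasibility bound $s^* \leq 1$, after replacing $s_{\tau_j - 1}, s_{\tau_{j+2} - 1} \in [0, 1]$ by their worst case, reduces to
\[
    |A_{j+1}| \, c_{\tau_j - 1} + |A_j| \, c_{\tau_{j+2} - 1} \leq (|A_j| + |A_{j+1}|) \, c_{\tau_{j+1} - 1}.
\]
Setting $a = \tau_j - 1$, $b = \tau_{j+1} - 1$, $c = \tau_{j+2} - 1$, this is exactly the three-point concavity inequality $(c - a) c_b \geq (c - b) c_a + (b - a) c_c$ for the weight sequence. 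The local hypothesis \eqref{eq:mod_pava_validity_condition}, together with the endpoint conventions $c_0 = 0$ at the left and $c_n = 0$ at the right (the latter being implicit: the rightmost sign $s_n = 0$ neutralizes any $c_n$ in the merge formula, but the convention is still needed to invoke concavity up to the right boundary), makes the forward differences of $(c_i)$ non-increasing and therefore yields the three-point inequality at every integer triple $0 \leq a < b < c \leq n$. Boundary collisions reduce to the same inequality with $a = 0$ or $c = n$.

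Once $s^* \in [0, 1]$ is secured, the merged KKT system is feasible at $\lambda_0$, and because the outer signs $s_{\tau_j - 1}$ and $s_{\tau_{j+2} - 1}$ are piecewise constant in $\lambda$ along each linear segment of the path, the same $s^*$ remains feasible until the next collision event. Hence merged pieces never spontaneously split. With piecewise linearity and the agglomerative property in hand, Algorithm~\ref{alg:modified_pava} is correct: at each iteration it computes the smallest $\delta > 0$ triggering a collision (Step~6), records the resulting merge, and proceeds on the coarser partition. The main obstacle I anticipate is the uniform bookkeeping across collision types: an order-violating merge (from the $s = 1$ side) and a crossing (from the $s = 0$ side), at an interior location or at a domain endpoint, all trigger the same feasibility test $s^* \in [0, 1]$, but each case must be traced carefully through the sign conventions; once this is done, the concavity hypothesis \eqref{eq:mod_pava_validity_condition} supplies the required inequality in every case.
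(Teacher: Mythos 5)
Your approach is genuinely different from the paper's. The paper proves agglomeration indirectly: it casts $\mcV_G$ as the Lov\'asz extension of the cut function $\kappa_G$ of the weighted one-dimensional directed graph, verifies Bach's ``agglomerative clustering'' (AC) condition for $\kappa_G$ case-by-case in Remark~\ref{rmk:concavity_usage}, and then invokes Bach's Proposition~4 to conclude the solution path is agglomerative. You instead work directly with the KKT stationarity identity, telescope it over a constant piece to obtain the slope formula used by Algorithm~\ref{alg:modified_pava}, and derive the merged-boundary sign $s^*$ in closed form. Your derivation of $s^*$, and the reduction of $s^* \leq 1$ to a three-point secant inequality for the weight sequence, is correct and is closer in spirit to the original Tibshirani (2011) argument for unit weights; it also makes visible exactly where concavity enters, which the abstract AC-condition verification does not.

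The gap is at the right boundary, and it is not the bookkeeping issue you flag at the end. You append the convention $c_n := 0$ and assert that the forward differences of the extended sequence $(c_0, \ldots, c_{n-1}, c_n)$ are non-increasing, which is what you need to invoke the three-point inequality with $c = n$ (a merge of the two rightmost pieces). But that last forward difference is $c_n - c_{n-1} = -c_{n-1}$, so non-increase at the right end requires $c_{n-1} - c_{n-2} \geq -c_{n-1}$, i.e.\ $c_{n-2} \leq 2c_{n-1}$ --- and this is \emph{not} implied by the stated condition \eqref{eq:mod_pava_validity_condition}, which only constrains second differences at indices up to $n-2$. For instance, with $n = 5$ and $(c_1, c_2, c_3, c_4) = (10, 11, 12, 1)$, the stated condition holds (with $c_0 = 0$), yet the feasibility bound $|A_{j+1}|\,c_{\tau_j - 1} \leq (|A_j| + |A_{j+1}|)\,c_{\tau_{j+1}-1}$ fails for a merge of the piece $\{2,3,4\}$ (with $s_1 = 1$) into $\{5\}$, and one can choose $y$ so that this merge occurs along the solution path and later splits. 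So your argument as written would need the hypothesis strengthened --- either by extending \eqref{eq:mod_pava_validity_condition} through $j = n-1$ with the two-sided convention $c_n := 0$, or by adding $c_{n-2} \leq 2c_{n-1}$ explicitly. It is worth noting that the paper's own verification in Remark~\ref{rmk:concavity_usage} has the same blind spot: the first inequality in its Case~2 reverses direction when $c_{j_R} - c_{j_L - 1} < 0$, which is precisely the regime that arises at the right boundary.
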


The condition \eqref{eq:mod_pava_validity_condition} demands that $c_j$ can be written as $c_j = f(j)$ for some concave function $f: \RR_{\geq 0} \to \RR_{\geq 0}$ with $f(0) = 0$ and $f(x) > 0$ for all $x > 0$. In particular, for any $i \leq j \leq k$, we have
\[
    c_j \geq \frac{(k - j) c_i + (j - i) c_k}{k - i}
\]
and
\[
    c_j \geq \frac{j}{k} c_k.
\]
\if0
For instance, we can apply Algorithm \ref{alg:modified_pava} to calculate the solution path of \eqref{eq:neariso_design_points} if the design points $x_1 < x_2 < \ldots < x_n$ that satisfies $x_2 - x_1 \leq 2 (x_3 - x_2)$ and
\[
    \frac{1}{x_{j + 1} - x_{j}} + \frac{1}{x_{j + 3} - x_{j + 2}}
    \leq \frac{2}{x_{j + 2} - x_{j + 1}}
\]
for all $j = 1, 2, \ldots, n - 3$.
\fi

\begin{proof}[Proof sketch of Proposition \ref{prop:mod_pava_validity}]
We can prove the validity of Algorithm \ref{alg:modified_pava} by a similar argument as \citet{Tibshirani2011} if we assume the piecewise linearity and the agglomerative property. The piecewise linearity is already shown in \citet{Rosset2007}. Hence, it remains to prove the agglomerative property under the condition \eqref{eq:mod_pava_validity_condition}. To this end, we leverage the ``agglomerative clustering condition'' defined in Appendix \ref{sec:weak_decomp}. In particular, we defer the details to Remark \ref{rmk:on_agglomerative} as well as Remark \ref{rmk:concavity_usage}.
\end{proof}

\subsubsection{General graphs}\label{sec:alg_divide_conquer}

Let $G = (V, E)$ be a directed graph with $V := [n]$.
Suppose that each edge $(i, j) \in E$ is equipped with a positive weight $c_{(i, j)} > 0$.
We define the \textit{generalized nearly-isotonic regression} as
\begin{equation}\label{eq:neariso_general_graph}
    \hat{\theta}_{G, \lambda}
    = \argmin_{\theta \in \RR^n} \left \{
        \frac{1}{2} \norm{y - \theta}_2^2
        + \lambda \mcV_G(\theta)
    \right\}
\end{equation}
where $\mcV_G$ is a nearly-isotonic type penalty defined as
\begin{equation}\label{eq:neariso_penalty_general}
    \mcV_G (\theta) := \sum_{(i, j) \in E} c_{(i, j)} (\theta_i - \theta_j)_+.
\end{equation}
For any choices of $G$ and $c$, $\mcV_G$ becomes a convex function.
Clearly, the lower total variation $\mcV_-$ is a special case where $E = \set{(i, i + 1) : i = 1, 2, \ldots, n - 1}$ and $c_{(i, i + 1)} \equiv 1$.
Thus, \eqref{eq:neariso_general_graph} can be regarded as a generalization of the nearly-isotonic regression to general directed graphs.

The problem of the form \eqref{eq:neariso_general_graph} has been well studied in the optimization literature.
In particular, we can see that solving \eqref{eq:neariso_general_graph} is equivalent to solving a certain parametrized family of minimum-cut problems.
For detailed explanations of such an equivalence, see \citet{Obozinski16} and Chapter 8 in \citet{Bach13}.
Hence, \eqref{eq:neariso_general_graph} can be solved by the parametric max-flow algorithm \citep{Gallo1989} that runs in $\bigO(n |E| \log \frac{n^2}{|E|})$.
Conversely, it has been pointed out by \citet{Mairal2011} that, for many practical instances, some simplified variants of the parametric max-flow algorithm output the solution faster than the original algorithm by \citet{Gallo1989}.
We remark that \citet{Hochbaum2003} also developed the relationship between the isotonic regression and the parametric max-flow algorithm.

Algorithm \ref{alg:divide_conquer} shows the Divide-and-Conquer algorithm (Chapter 9 of \citet{Bach13}) that solves \eqref{eq:neariso_general_graph}.
In the inner loop, the algorithm recursively solves max-flow problems by defining smaller networks (Algorithm \ref{alg:divide_conquer_inner}).
See Figure \ref{fig:flow_networks} for examples of networks used in the first two recursions in the algorithm.

\begin{algorithm}
\caption{Divide-and-Conquer algorithm for the generalized nearly-isotonic regression \ref{eq:neariso_general_graph}}
\label{alg:divide_conquer}
\DontPrintSemicolon
\KwIn{$y \in \RR^V$, a directed graph $G = (V, E)$ with positive edge weights $\set{c_{(i, j)}}$, a tuning parameter $\lambda \geq 0$.}
\KwOut{The solution $\hat{\theta}_\lambda$ of \eqref{eq:neariso_general_graph}}
\nl Construct a flow network $\mcN$ by adding a source node $s$ and a sink node $t$ to the graph $G$.\\
\nl Compute $\hat{\theta}_\lambda = \mathrm{Prox}_{\lambda F_\mcN}(y)$ according to Algorithm \ref{alg:divide_conquer_inner}.
\end{algorithm}

\begin{algorithm}[ht]
\caption{$\mathrm{Prox}_{\lambda F_\mcN}(y)$}
\label{alg:divide_conquer_inner}
\DontPrintSemicolon
\KwIn{A flow network $\mcN = (V \cup \set{s} \cup \set{t}, E, c)$, $y\in \RR^V$ and $\lambda > 0$.}
\KwOut{Proximal operator $\mathrm{Prox}_{\lambda F_\mcN}(y)$.}
\nl Let $\alpha \leftarrow \frac{1}{|V|} (\sum_{i \in V} y_i - \lambda F_\mcN(V))$, where $F_\mcN(V)$ is the capacity of the edge $(s, t)$.\\
\nl \If{$|V| = 1$}{
\KwReturn{$\hat{\theta} = \alpha$}\\
}
\nl Find a subset $A \subseteq V$ minimizing the function
$A \mapsto \lambda F_\mcN(A) - \sum_{i \in A} y_i + \alpha |A|$.
Herein, $F_\mcN$ is the $s$-$t$ cut function of the network $\mcN$. 
This step is equivalent to solving the max-flow problem defined by the flow network in Figure \ref{fig:flow_networks}-(a).\\
\nl \If{$\lambda F_\mcN(A) - \sum_{i \in A} y_i + \alpha |A| = 0$}{
\KwReturn{$\hat{\theta} = \alpha 1_V$}.\\
}
\nl Let $\hat{\theta}_A \leftarrow \mathrm{Prox}_{\lambda F_{\mcN|A}}(y_A)$, where $\mcN|A$ is the reduction of $\mcN$ on $A$. The corresponding network is obtained by shrinking nodes $V \setminus A$ into the sink node $t$ (Figure \ref{fig:flow_networks}-(b)).\\
\nl Let $\hat{\theta}_{V \setminus A} \leftarrow \mathrm{Prox}_{\lambda F_{\mcN^A}}(y_{V \setminus A})$, where $\mcN^A$ is the contraction of $\mcN$ by $A$. The corresponding network is obtained by shrinking nodes $A$ into the source node $s$ and adding $- F_\mcN(A)$ to the capacity of $(s, t)$ (Figure \ref{fig:flow_networks}-(c)).\\
\end{algorithm}

\begin{figure}[tb]
\centering
\includegraphics[width=0.8\linewidth]{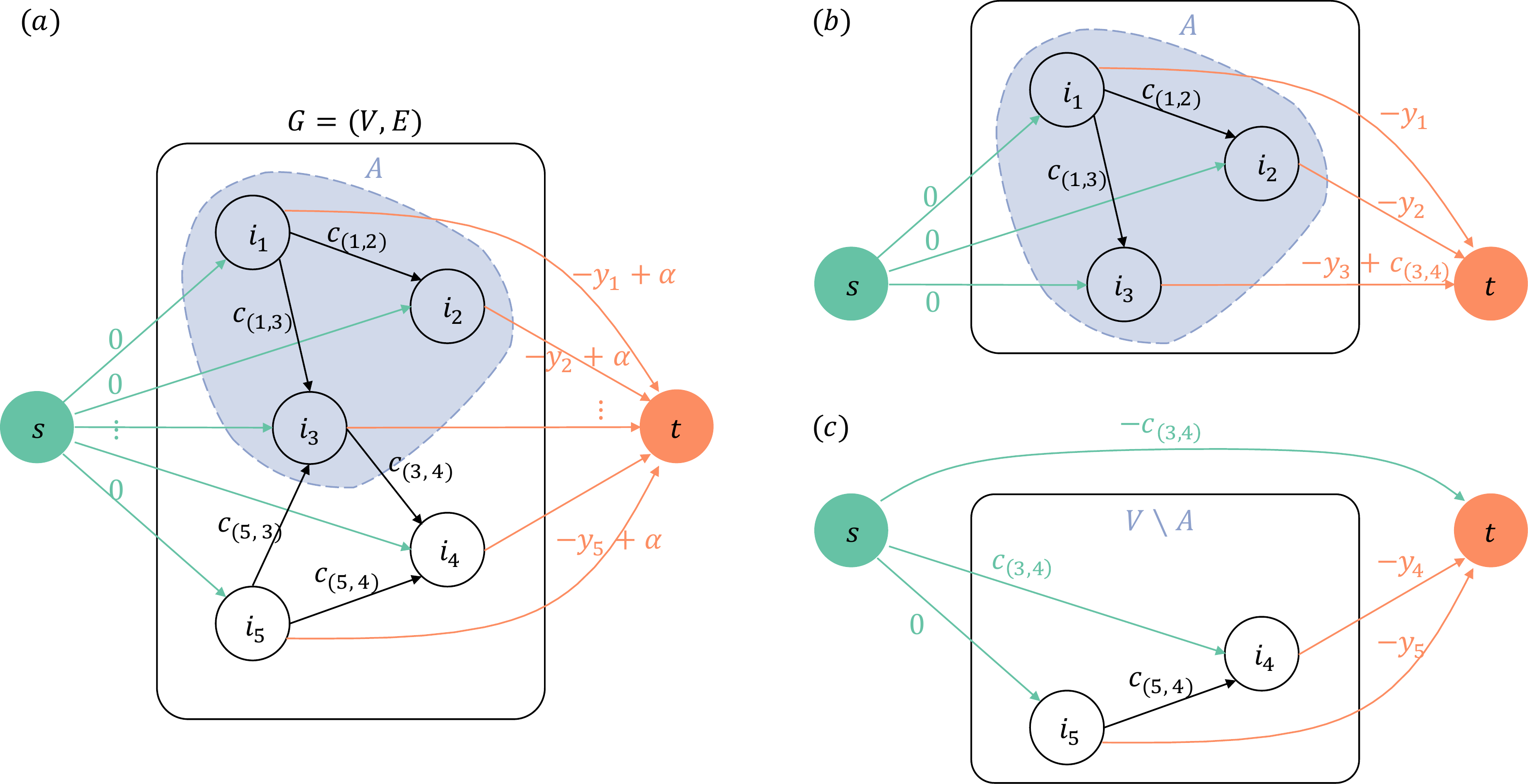}
\caption{\textbf{Flow networks in Algorithm \ref{alg:divide_conquer_inner}.}
Algorithm \ref{alg:divide_conquer_inner} requires to solve minimum $s$-$t$ cut problems (or equivalently maximum flow problems) defined on certain flow networks.
(a) A network that corresponds to the minimization problem in line 3.
(b) A network that corresponds to the function $B \mapsto \lambda F_{\mcN|A}(B) - y(B)$ in line 5.
(c) A network that corresponds to the function $B \mapsto \lambda F_{\mcN^A}(B) - y(B)$ in line 6.
Note that we assumed $\lambda = 1$ in this example.
}
\label{fig:flow_networks}
\end{figure}

\subsubsection{General convex loss functions}

In practice, we are often interested in general convex loss functions other than the squared loss.
Here, we consider a generalized problem of the following form:
\begin{equation}\label{eq:general_loss_neariso}
    \hat{\theta} \in \argmin_{\theta \in \RR^p} \left\{
        \mcL(\theta; y) + \lambda \mcV_G(\theta)
    \right\},
\end{equation}
where $\theta \mapsto \mcL(\theta; y)$ is a convex loss function for any $y \in \RR^n$.
As an example, this formulation contains the $M$-estimator in the regression setting $\mcL(\theta; y) = \frac{1}{2} \ell(y_i - \langle x_i, \theta \rangle)$, where $(y_i, x_i) \in \RR \times \RR^p$ ($i = 1, 2, \ldots, n$) are the observed data and $\ell: \RR \to \RR$ is a convex function.

We can also obtain algorithms that output approximate minimizers of \eqref{eq:general_loss_neariso} as follows. First of all, note that Algorithm \ref{alg:divide_conquer} outputs the \textit{proximal operator} of the regularization term $\mcV_G(\theta)$. Once we have an oracle for the proximal operator, we can apply \textit{proximal gradient methods} to solve \eqref{eq:general_loss_neariso}. In particular, if $\mcL(\theta; y)$ is convex and smooth, the Fast Iterative Shrinkage Thresholding Algorithm (FISTA, \citet{Beck2009}) outputs an $\bigO(\epsilon)$-optimal solution after $\bigO(\epsilon^{-2})$ evaluations of the proximal operator.

\subsection{Constrained estimators}

Consider the following generalized version of the constrained form of nearly-isotonic regression \eqref{eq:neariso_constrained}:
\begin{equation}\label{eq:neariso_constrained_general}
    \text{minimize} \ \norm{y - \theta}_2^2 \quad
    \text{subject to} \ \sum_{(i, j) \in E} c_{(i, j)} (\theta_i - \theta_j)_+ \leq \mcV,
\end{equation}
Unlike the penalized estimators, it is difficult to find an exact solution of \eqref{eq:neariso_constrained_general}.
However, since problem \eqref{eq:neariso_constrained_general} is an instance of a quadratic programming problem, there are polynomial time algorithms to obtain approximate solutions.
Here, we explain the \textit{existence} of such algorithms.
The following result is a direct application of Theorem 1 by \citet{Lee2018}, which provides a convergence guarantee of a variant of cutting plane methods.

\begin{prop}\label{prop:constraint_algorithm_theory}
Suppose that $G = ([n], E)$ is a directed graph equipped with positive weights $c_{(i, j)}$ for every $(i, j) \in E$.
Let $y \in \RR^n$ be any vector and $\mcV > 0$.
Then, for any $\epsilon > 0$, there exists a randomized algorithm that outputs $\tilde{\theta}$ satisfying
\[
    \mcV_G(\tilde{\theta}) := \sum_{(i, j) \in E} c_{(i, j)} (\tilde{\theta}_i - \tilde{\theta}_j)_+
    \leq \mcV + 2 \epsilon \sum_{(i, j) \in E} c_{(i, j)}
\]
and
\[
    \norm{y - \tilde{\theta}}_2
    \leq \min_{\theta \in \RR^n: \ \mcV_G(\theta) \leq \mcV} \norm{y - \theta}_2
    + 2 \epsilon \norm{y}_2
\]
with a probability of $0.99$.
The overall complexity of the algorithm is $\bigO((n + |E|)n^2 \log^{\bigO(1)} \frac{n}{\epsilon |E|})$.
\end{prop}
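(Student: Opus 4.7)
The plan is to reduce the constrained quadratic program \eqref{eq:neariso_constrained_general} to a generic convex optimization problem in a bounded region and then invoke Theorem 1 of \citet{Lee2018}, which provides a randomized cutting plane method with near-optimal oracle complexity. First, I would observe that the objective $\theta \mapsto \norm{y - \theta}_2^2$ is a smooth convex quadratic and the constraint functional $\mcV_G$ defined in \eqref{eq:neariso_penalty_general} is convex as a positive linear combination of ReLU-type functions. Since $0$ is always feasible (as $\mcV_G(0) = 0 \leq \mcV$), every optimum $\theta^\star$ satisfies $\norm{\theta^\star}_2 \leq 2 \norm{y}_2$, so restricting the search to a Euclidean ball $B \subset \RR^n$ of radius $R = 2 \norm{y}_2$ does not alter the optimal value. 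This localization step is essential because the cutting plane framework requires a bounded domain to quantify both the optimality gap and the iteration complexity.

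Next, I would construct the first-order oracles demanded by the cutting plane method. Given any query $\theta \in B$, evaluating $\mcV_G(\theta)$ and producing a subgradient $g \in \partial \mcV_G(\theta)$ each takes $\bigO(|E|)$ time: each term $c_{(i,j)}(\theta_i - \theta_j)_+$ contributes the subgradient $c_{(i,j)}(e_i - e_j)$ when $\theta_i > \theta_j$ and the zero vector when $\theta_i < \theta_j$, with a convex combination available at ties. This gives both a separation oracle for the feasible set $\{ \theta : \mcV_G(\theta) \leq \mcV \}$ and, trivially, a gradient oracle for the quadratic objective at cost $\bigO(n)$. A uniform Lipschitz bound $\norm{g}_2 \leq \sqrt{2} \sum_{(i,j) \in E} c_{(i,j)}$ for $\mcV_G$ and the obvious bound for $\norm{y - \theta}_2^2$ on $B$ supply the constants required by \citet{Lee2018}.

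With these ingredients in place, I would feed the problem to Theorem 1 of \citet{Lee2018}, which, after $\tilde{\bigO}(n^2)$ oracle calls, returns a random point $\tilde{\theta}$ that is approximately feasible and approximately optimal, the error being controlled by a scalar accuracy parameter $\epsilon'$. Choosing $\epsilon'$ proportionally to $\epsilon$, $R = 2\norm{y}_2$, and $\sum_{(i,j) \in E} c_{(i,j)}$ translates the generic guarantee into exactly the two bullet points of the proposition: $\mcV_G(\tilde{\theta}) \leq \mcV + 2 \epsilon \sum_{(i,j) \in E} c_{(i,j)}$ and $\norm{y - \tilde{\theta}}_2 \leq \min_{\mcV_G(\theta) \leq \mcV} \norm{y - \theta}_2 + 2 \epsilon \norm{y}_2$. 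The success probability of $0.99$ is inherited directly from the randomized guarantee of \citet{Lee2018}. Multiplying $\tilde{\bigO}(n^2)$ oracle calls by the per-call cost of $\bigO(n + |E|)$ and adding the $\bigO(n^3)$ overhead intrinsic to the cutting plane scheme recovers the advertised runtime $\bigO((n + |E|) n^2 \log^{\bigO(1)} \frac{n}{\epsilon |E|})$.

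The main obstacle will be reconciling scaling conventions: Theorem 1 of \citet{Lee2018} states its guarantees in a normalized form depending on the diameter of the domain and on the Lipschitz constants of the oracles, so I must carefully track how the absolute tolerance $\epsilon$ in the proposition propagates through the feasibility and optimality slacks. A secondary technical issue is the non-differentiability of $\mcV_G$ at ties $\theta_i = \theta_j$; fixing the convention that the oracle returns $c_{(i,j)}(e_i - e_j)$ or $0$ in these cases keeps the oracle deterministic and does not affect the validity of the cuts. No novel optimization machinery is required beyond \citet{Lee2018}; the content of the proposition is the verification that the piecewise-linear convex constraint induced by a weighted directed graph fits the oracle model of that work with the stated per-iteration cost.
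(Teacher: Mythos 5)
Your proposal matches the approach the paper takes: the paper simply remarks that the proposition ``is a direct application of Theorem 1 by \citet{Lee2018}'' and gives no further argument, whereas you supply the routine details --- localization to a ball of radius $2\norm{y}_2$ via feasibility of $\theta=0$, construction of the $O(n+|E|)$-time subgradient/separation oracles for $\mcV_G$, and the Lipschitz bound needed to translate the generic cutting-plane guarantee into the two stated inequalities and the claimed runtime. One small bookkeeping caution: the $\tilde{\bigO}(n^2)$ query count refers to membership/separation oracle calls in the \citet{Lee2018} framework, and the fixed $n^{\bigO(1)}$ overhead of the cutting-plane scheme is absorbed into the stated bound $\bigO((n+|E|)n^2 \log^{\bigO(1)}\frac{n}{\epsilon|E|})$; your phrasing treats these as separate additive pieces, which is harmless here but worth keeping straight when matching constants against the source theorem.
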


\begin{rmk}
In practice, due to computational considerations, we recommend to use the penalized estimator \eqref{eq:neariso_penalty_general} instead of the constrained estimator \eqref{eq:neariso_constrained_general}. For the penalized estimator, we empirically observed that Algorithm \ref{alg:divide_conquer} runs sufficiently fast graphs with several hundreds of nodes. For the constrained estimator, Proposition \ref{prop:constraint_algorithm_theory} theoretically guarantees polynomial time solvability of the constrained problem \eqref{eq:neariso_constrained_general}, whereas it does not provide a practical algorithm.
\end{rmk}

\section{Supplemental experiments}\label{sec:additional_experiments}

To understand the behavior of the nearly-isotonic regression in more generic settings, we present additional simulation results for the nearly-isotonic regression on general graphs \eqref{eq:neariso_general_graph}.
Here, we consider the problem of estimating piecewise monotone signals on two-dimensional grids.

We say that an $n_1 \times n_2$ matrix $\theta$ is monotone if $\theta_{ij} \leq \theta_{kl}$ whenever $i \leq k$ and $j \leq l$.
In other words, $\theta$ is monotone if it has no order-violating edges in the two-dimensional grid graph $G_2 = (V_2, E_2)$, where $V_2 = [n_1] \times [n_2]$ is the set of all subscripts $(i, j)$ and
\begin{align*}
    E_2 := & \set{((i, j), (i, j+1)) \ : \ 1 \leq i \leq n_1, 1 \leq j \leq n_2 - 1}\\
    & \cup \set{((i, j), (i+1, j)) \ : \ 1 \leq i \leq n_1 - 1, 1 \leq j \leq n_2}.
\end{align*}
We say that $\theta$ is piecewise monotone if there is a partition $\Pi$ of $V$ such that, for each $A \in \Pi$, $A$ is a weakly connected component of $G_2$ and $\theta_A$ has no order-violating edges in the induced subgraph.
For simplicity of experimental settings, we here only consider ``block'' type partitions, i.e., we say that $\Pi$ is of block type if it can be represented as a product of two partitions of the two coordinates.
The left panel in Figure \ref{fig:neariso_grid_2d_example} is an example of two-dimensional piecewise monotone signals on a block type partition.

\begin{figure}[tb]
    \centering
    \includegraphics[width=0.95\linewidth]{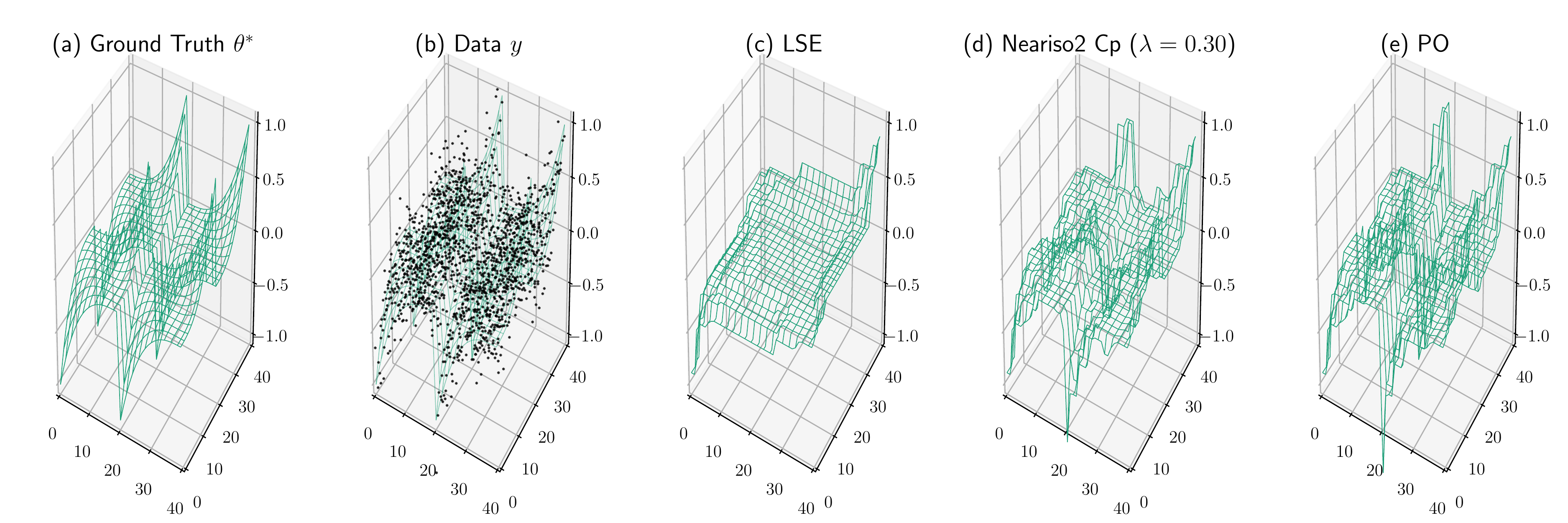}
    \caption{\textbf{Examples of estimators for piecewise monotone matrices.}
    The true parameter $\theta^*$ is a $32 \times 32$ matrix that is monotone on each $16 \times 16$ segment.
    The bivariate isotonic regression (\texttt{LSE}) does not capture the piecewise monotone structure.
    The solution of the nearly-isotonic regression (\texttt{Neariso2}) seems to be close to the partition oracle (\texttt{PO}).
    }
    \label{fig:neariso_grid_2d_example}
\end{figure}

We compare the following three estimators:
\begin{itemize}
    \item \texttt{LSE}: The bivariate isotonic regression (see e.g., \citet{Robertson88}).
    \item \texttt{Neariso2}: The two-dimensional nearly-isotonic regression with $C_p$-tuned parameter.
    \item \texttt{PO}: The bivariate isotonic regression applied to the true partition.
\end{itemize}
For monotone matrices, \citet{Chatterjee2018} proved that \texttt{LSE} is minimax rate optimal with respect to $n = n_1 n_2$.
Hence, the partition oracle estimator \texttt{PO} can be regarded as an ideal benchmark that is minimax optimal over piecewise monotone matrices.
On the other hand, if the true matrix $\theta^*$ is piecewise monotone, the risk of \texttt{LSE} can be arbitrarily large for the same reason as Proposition \ref{prop:suboptimal}.
\texttt{Neariso2} is the special case of the generalized nearly-isotonic regression \eqref{eq:neariso_general_graph} applied to the graph $G_2$ defined above.
\texttt{Neariso2} was originally discussed in \citet{Tibshirani2011}, but no experimental results have been presented.
Figure \ref{fig:neariso_grid_2d_example} shows examples of the solutions of the three estimators.

We construct an $n \times n$ matrix $\theta^*$ as follows: 
We define a $k \times k$ small monotone matrix $U$, and then we define $\theta^*$ as an $mk \times mk$ block matrix by repeating $U$ for $m$ times both in rows and columns (thus $n = mk$).
We choose the small matrix $U = (U_{ij})$ from
\[
    U^{\mathrm{cubic2d}}_{ij} = (x_i + x_j - 1)^3
\]
or
\[
    U^{\mathrm{cubic1d}}_{ij} = (2 x_i - 1)^3,
\]
where we write $x_i = \frac{i-1}{k-1}$ for $i = 1, 2, \ldots, k$.
With the former choice, $\theta^*$ becomes an $m^2$-piecewise monotone matrix.
With the latter choice, $\theta^*$ becomes an $m$-piecewise monotone matrix such that $\theta^*_{ij}$ does not depend on $j$.

We generated noisy observations $y$ by adding independent Gaussian noises $\xi_{ij} \sim N(0, (0.25)^2)$ to every entries of $\theta^*$.
To estimate the MSE, we used 500 replications of the data.
Figure \ref{fig:mse_2d_cubic} shows the results.
Clearly, the risks of \texttt{LSE} (blue triangles) are much larger than those of the other two estimators.
\texttt{Neariso2} (green circles) has slightly larger risks compared to \texttt{PO} (magenta squares), while their slopes seem to be close.

To visualize convergence rates, we fit the risks of \texttt{PO} by monomials $\propto n^{-a}$ ($a > 0$), and plotted as dashed lines in Figure \ref{fig:mse_2d_cubic}.
The values of the exponent $a$ are respectively as follows: $0.58$ (\texttt{cubic2d}, $m = 2$); $0.56$ (\texttt{cubic2d}, $m = 4$); $0.50$ (\texttt{cubic1d}, $m = 2$); $0.45$ (\texttt{cubic2d}, $m = 4$).
We should note that, in monotone matrix estimation, the theoretical convergence rate of \texttt{LSE} is known to be $\tilde{\bigO}(n^{-1/2})$ \citep{Chatterjee2018}.

\begin{figure}[tb]
    \centering
    \includegraphics[width=0.95\linewidth]{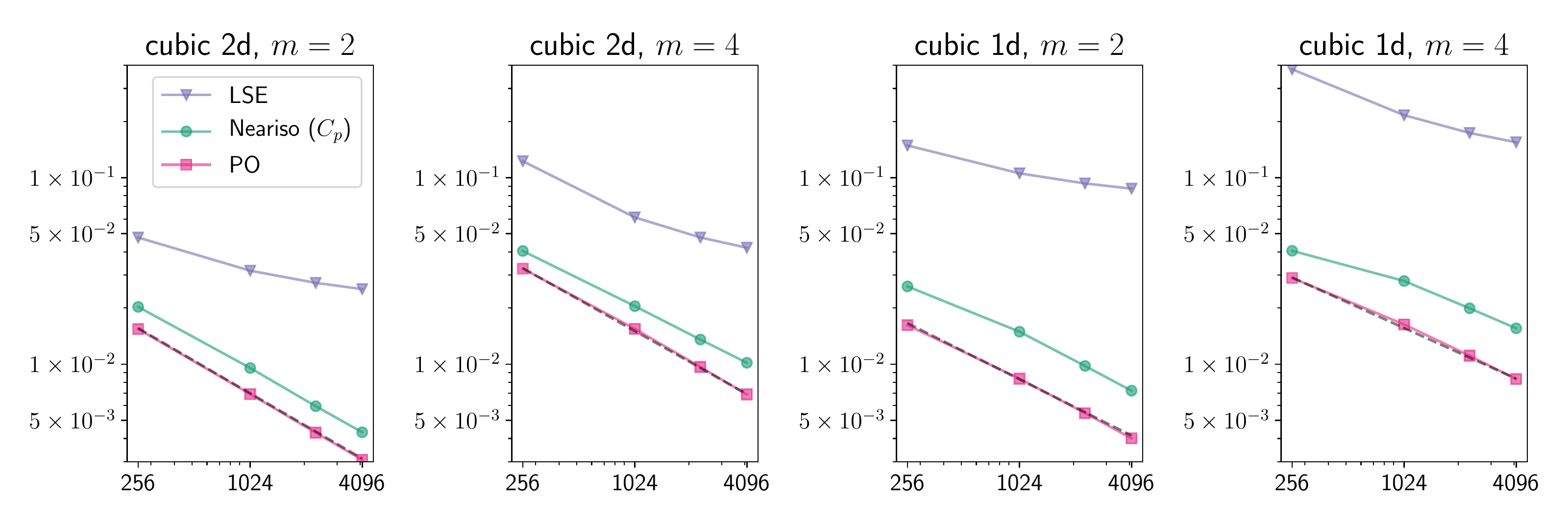}
    \caption{\textbf{The risks in piecewise monotone matrix estimation}.
    See the text for details.}
    \label{fig:mse_2d_cubic}
\end{figure}

\section{Proofs in Section \ref{sec:lower}}\label{sec:proof_1}

\subsection{Proof of Proposition \ref{prop:general_lower_bound}}

Let $\Theta$ be either $\tilde{\Theta}_n(m, \mcV)$ or $\Theta_n(m, \mcV)$, which are defined in Definition \ref{defi:piecewise_class}.
The minimax lower bound \eqref{eq:general_lower_bound} is proved by combining the following two lower bounds:

\begin{enumerate}[label=(\roman*)]
    \item \textbf{(Lower bound for monotone vectors \citep{Zhang2002, Chatterjee2015})}
    Let $\mcK(\mcV) = \set{ \theta \in K_n^\uparrow: \mcV(\theta) \leq \mcV }$ be the set of monotone vectors with bounded total variations.
    There is a universal constant $C_1 > 0$ such that for any estimator $\hat{\theta}$,
    \[
        \sup_{\theta^* \in \mcK(\mcV)} \frac{1}{n} \EE_{\theta^*} \norm{\hat{\theta} - \theta^*}_2^2 \geq C_1 \left( \frac{\sigma^2 \mcV}{n} \right)^{2/3}.
    \]
    \item \textbf{(Lower bound for piecewise constant vectors)}
    Let $\mcC(m)$ be the set of $m$-piecewise constant vectors in $\RR^n$, i.e.,
    $\theta \in \mcC(m)$ if $|\set{i : \theta_i \neq \theta_{i+1}}| \leq m - 1$.
    The minimax lower bound over $\mcC(m)$ can be related to sparse estimation as follows.
    Let $X$ be an $n \times n$ matrix whose $(i, j)$ entries are given as $1_{\set{i \geq j}}$.
    Then, $\mcC(m)$ contains the set $\set{ \theta = X\beta: \norm{\beta}_0 \leq m}$, and the lower bound for the minimax risk over $\mcC(m)$ follows from the well-known results for $\ell_0$ balls (e.g., \citet{Raskutti2011}, Theorem 3-(b)).
    In particular, for any $m \geq 3$, the following lower bound is presented in \citet{Gao2017}:
    \[
        \sup_{\theta^* \in \mcC(m)} \frac{1}{n} \EE_{\theta^*} \norm{\hat{\theta} - \theta^*}_2^2
        \geq C_2 \frac{\sigma^2 m}{n} \log \frac{en}{m},
    \]
    where $C_2 > 0$ is a universal constant.
\end{enumerate}

It remains to show that $\Theta$ contains $\mcK(\mcV)$ and $\mcC(m)$.
$\mcC(m) \subseteq \Theta$ is obvious because an $m$-piecewise constant vector is also an $m$-piecewise monotone vector such that the piecewise total variations are zero.
From the definition, it is also clear that $\mcK(\mcV) \subseteq \tilde{\Theta}_n(m, \mcV)$.
If $\theta \in \mcK(\mcV)$, the jumps $\theta_{i + 1} - \theta_i$ that strictly exceeds $\mcV / m$ cannot occur more than $m - 1$ times.
Hence, we can choose a partition $\Pi$ with $|\Pi| \leq m$ so that each $A \in \Pi$ does not contain such large jumps, which implies that $\theta \in \Theta_n(m, \mcV)$.

\subsection{Proof of Proposition \ref{prop:suboptimal}}

The following theorem in the seminal paper of \citet{Chatterjee2014} provides useful upper and lower bounds for the risk of the least square estimator over any closed convex set $K$.

\begin{thm}[\citet{Chatterjee2014}, Corollary 1.2]\label{thm:chatterjee_method}
Let $K \subseteq \RR^n$ be any closed convex set, and let $\hat{\theta}_K$ denote the least squares estimator over $K$.
For any $\theta^* \in \RR^n$, define the function $g_{\theta^*}: \RR_{+} \to \RR \cup \set{- \infty}$ as
\[
    g_{\theta^*}(t) := \EE_{Z \sim N(0, \sigma^2 I_n)} \left[
        \sup_{\theta \in K: \norm{\theta - \theta^*}_2 \leq t} \langle Z, \theta - \theta^* \rangle
    \right] - \frac{t^2}{2}.
\]
Here, if the set $\set{\theta \in K: \norm{\theta - \theta^*}_2 \leq t}$ is empty, we define $g_{\theta^*}(t) = - \infty$.
Then, $g_{\theta^*}$ is strictly concave for $t \geq \mathrm{dist}(\theta^*, K)$ and has a unique maximizer $t_{\theta^*}$.
Moreover, there are universal constants $C_1, C_2 > 0$ such that
\begin{equation}\label{eq:chatterjee_method}
    \frac{1}{n} \max \left\{ t_{\theta^*}^2 - C_1 t_{\theta^*}^{3/2}, 0\right\}
    \leq \frac{1}{n} \EE_{\theta^*} \norm{\hat{\theta}_K - \theta^*}_2^2
    \leq \frac{C_2}{n} \max \left\{ t_{\theta^*}^2, \sigma^2 \right\}.
\end{equation}
\end{thm}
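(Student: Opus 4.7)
The plan is to reduce the result to the ``basic inequality plus Gaussian concentration'' framework that has become standard after Chatterjee's work. First I would rewrite the LSE as the maximizer over $\theta \in K$ of
\[
    F(\theta) := \langle \xi, \theta - \theta^* \rangle - \tfrac{1}{2} \norm{\theta - \theta^*}_2^2,
\]
which is immediate from expanding $\norm{y - \theta}_2^2 = \norm{\xi}_2^2 - 2 F(\theta)$. Introducing the random ``localized width'' process
\[
    H(t) := \sup_{\theta \in K : \norm{\theta - \theta^*}_2 \leq t} \langle \xi, \theta - \theta^* \rangle, \qquad t \geq \dist(\theta^*, K),
\]
(with $H(t) = -\infty$ below $\dist(\theta^*, K)$), the LSE is recovered via $\max_{\theta \in K} F(\theta) = \max_{t}(H(t) - t^2/2)$, and the maximizer on the right-hand side equals $T_* := \norm{\hat{\theta}_K - \theta^*}_2$. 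Hence controlling $\EE T_*^2$ is equivalent to controlling the argmax of a concave random function.

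Next I would establish concavity. For $\theta_i \in K$ with $\norm{\theta_i - \theta^*}_2 \leq t_i$ ($i=1,2$) and $\alpha \in [0,1]$, the point $\alpha \theta_1 + (1-\alpha)\theta_2$ lies in $K$ and within distance $\alpha t_1 + (1-\alpha) t_2$ of $\theta^*$, so the feasible set grows concavely with $t$ and $H(\cdot)$ is concave and nondecreasing on its domain. Taking expectations preserves concavity, and subtracting $t^2/2$ yields the strict concavity of $g_{\theta^*}$; existence and uniqueness of $t_{\theta^*}$ follow immediately.

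The heart of the argument is showing that $T_*$ is sharply localized around $t_{\theta^*}$. For each fixed $t$, the map $\xi \mapsto H(t)$ is a supremum of linear functionals over a set of Euclidean diameter at most $2t$, hence $t$-Lipschitz in $\xi$; Borell--TIS then gives sub-Gaussian tails at scale $\sigma t$. Combined with the strict concavity of $g_{\theta^*}$, which produces quadratic decay away from the maximizer, a fluctuation-versus-curvature comparison yields with high probability a deviation bound of the form $|T_* - t_{\theta^*}|^2 \lesssim t_{\theta^*}^{1/2}$, equivalently $|T_*^2 - t_{\theta^*}^2| \lesssim t_{\theta^*}^{3/2}$. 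Because $\hat{\theta}_K$ is a $1$-Lipschitz function of $y$, the random variable $T_*$ itself enjoys Gaussian tails of scale $\sigma$, and standard tail integration converts these high-probability statements into the expectation bounds in \eqref{eq:chatterjee_method}. The $\sigma^2$ floor in the upper bound covers the regime in which $t_{\theta^*}$ falls below the intrinsic stochastic fluctuation and is where one simply invokes the trivial bound $\EE T_*^2 \leq \EE \norm{\xi}_2^2 / n \cdot n = \sigma^2 n$ combined with the curvature argument.

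The hard part will be the quantitative sharpness step: upgrading the pointwise Gaussian concentration of $H(t)$ to a localization of the random argmax $T_*$ with the exponent $t_{\theta^*}^{3/2}$ on the correction term. One must carefully match the noise envelope $\sigma t \sqrt{\log(1/\delta)}$ against the quadratic curvature contributed by the $-t^2/2$ term near $t_{\theta^*}$; setting the noise fluctuation equal to $(t-t_{\theta^*})^2/2$ produces precisely the deviation exponent $t_{\theta^*}^{1/2}$, and squaring yields $t_{\theta^*}^{3/2}$. Once this sharpness estimate is in hand, the remainder of the proof is bookkeeping on tail integration against the Lipschitz/Borell--TIS bounds.
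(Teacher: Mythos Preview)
The paper does not give its own proof of this theorem: it is stated as an imported result (labelled ``Chatterjee (2014), Corollary 1.2'') and used as a black box in the proof of Proposition~\ref{prop:suboptimal}. There is therefore nothing in the paper to compare your proposal against.

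That said, your outline is a faithful sketch of Chatterjee's original argument: the basic-inequality rewriting, concavity of the random width $H(t)$ via convexity of $K$, Borell--TIS concentration of $H(t)$ at each fixed $t$ (using that $\xi \mapsto H(t)$ is $t$-Lipschitz), and a fluctuation-versus-curvature comparison to localize the random argmax $T_* = \norm{\hat{\theta}_K - \theta^*}_2$ around $t_{\theta^*}$. One minor slip: you write ``$|T_* - t_{\theta^*}|^2 \lesssim t_{\theta^*}^{1/2}$, equivalently $|T_*^2 - t_{\theta^*}^2| \lesssim t_{\theta^*}^{3/2}$'', but these are not equivalent; the correct high-probability deviation is $|T_* - t_{\theta^*}| \lesssim t_{\theta^*}^{1/2}$, which then gives $|T_*^2 - t_{\theta^*}^2| \approx 2t_{\theta^*}\,|T_* - t_{\theta^*}| \lesssim t_{\theta^*}^{3/2}$. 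Apart from this bookkeeping, the plan is sound and matches the source.
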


To prove Proposition \ref{prop:suboptimal}, we use the lower bound in \eqref{eq:chatterjee_method}.
Note that for a sufficiently large $t_0 > 0$, $t \mapsto t^2 - Ct^{3/2}$ is a strictly increasing in $t \in [t_0, \infty)$.
For any $n$ and $\sigma^2$, choose $t \geq t_0$ so that $t^2 - C t^{3/2} \geq n \sigma^2$.
Then, for any $\theta^*$ such that $\mathrm{dist}(\theta^*, K) \geq t$, we have
\[
    \frac{1}{n} \EE_{\theta^*} \norm{\hat{\theta}_K - \theta^*}_2^2
    \geq \frac{1}{n} (t_{\theta^*}^2 - C_1 t_{\theta^*}^{3/2})
    \geq \frac{1}{n} (t^2 - C_1 t^{3/2}) \geq \sigma^2.
\]

\begin{rmk}
We should note that the above proof is valid for \textit{any} closed convex set $K$.
For the specific choice of $K = K_n^\uparrow$, the lower bound of $t_{n, \sigma^2}$ used in the proof can be quite conservative.
In practice, the risk of the isotonic regression estimator can be larger than $\sigma^2$ under a smaller value of $\ell_2$-misspecification error.
\end{rmk}

\section{Proofs in Section \ref{sec:adaptation}}\label{sec:appendix_proof_adaptation}

\subsection{Preliminaries}

To state the results for risk upper bounds, we first introduce some quantities related to Gaussian processes.

\begin{defi}\label{defi:gaussian_width}
Let $C$ be a closed convex set in $\RR^n$.
Let $\EE$ denote the expectation with respect to an isotropic Gaussian random variable $Z \sim N(0, I_n)$.
\begin{enumerate}[label=(\roman*)]
    \item The \textit{Gaussian width} of $C$ is defined as
    \[
        w(C) := \EE \left[
            \sup_{\theta \in C} \langle Z, \theta \rangle
        \right].
    \]
    \item The \textit{Gaussian mean squared distance} is defined as
    \[
        \mbD(C) := \EE [\dist^2(Z, C)],
    \]
    where $\dist(z, C) := \inf_{x \in C} \norm{x - z}_2$.
    \item Suppose that $C$ is a convex cone. The \textit{statistical dimension} of $C$ is defined as
    \[
        \delta(C) := \EE \left[ \left(
            \sup_{\theta \in C: \norm{\theta}_2 \leq 1} \langle Z, \theta \rangle
        \right)^2 \right].
    \]
\end{enumerate}
\end{defi}

We present some historical remarks on these definitions.
The three quantities in Definition \ref{defi:gaussian_width} can be interpreted as complexity measures for the subset $C$ in the Euclidean space.
The Gaussian width has been well studied in convex geometry, signal processing, high-dimensional statistics, and empirical process theory; See e.g., Section 7.8 in \citet{Vershynin} for a literature review.
The definition of the Gaussian mean squared distance is due to \citet{Oymak2016}. As we will see in Lemma \ref{lem:proximal_general} below, the Gaussian mean squared distance is useful to provide the risk bounds for proximal denoising estimators.
The statistical dimension was defined in \citet{Amelunxen2014}. Recently, \citet{Bellec2015b} pointed out that the statistical dimension characterizes the adaptive risk bounds for some shape restricted estimators including the isotonic regression and the convex regression.

As suggested by the definitions, these three quantities are closely related to each other.
In particular, if $C$ is a convex cone, these are comparable as follows.

\begin{prop}\label{prop:statistical_dim_compare}
Let $C$ be a closed convex cone.
\begin{enumerate}[label=(\roman*)]
    \item (\citet{Amelunxen2014}, Proposition 10.2) Let $S_{n-1} = \set{x \in \RR^n: \norm{x}_2 = 1}$ be the unit sphere in $\RR^n$. Then, we have $w^2(C \cap S_{n-1}) \leq \delta(C) \leq w^2(C \cap S_{n-1}) + 1$.
    \item (\citet{Amelunxen2014}, Proposition 3.1) Let $C^\circ$ be the polar cone of $C$ defined as
    \[
         C^\circ := \set{x \in \RR^n: \langle x, z \rangle \leq 0 \; \text{for all $z \in C$}}.
    \]
    Then, we have $\mbD(C) = \delta(C^\circ)$.
\end{enumerate}
\end{prop}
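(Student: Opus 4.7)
The plan is to reduce both statements to two standard facts: Moreau's orthogonal decomposition for the projection onto a closed convex cone and its polar, and the Gaussian Poincar\'e inequality (i.e., Gaussian concentration of a $1$-Lipschitz function). As preparation I would record the identity $\delta(C) = \EE[\norm{\Pi_C(Z)}_2^2]$ for a closed convex cone $C$, where $\Pi_C$ is metric projection. This follows by writing $X := \sup_{\theta \in C \cap S_{n-1}} \langle Z, \theta \rangle$ and observing that, for a cone, $\sup_{\theta \in C,\, \norm{\theta}_2 \leq 1} \langle Z, \theta\rangle = \max\{X, 0\} =: X_+$ equals $\norm{\Pi_C(Z)}_2$, because the latter is attained at $\Pi_C(Z)/\norm{\Pi_C(Z)}_2$ when $\Pi_C(Z) \neq 0$ and is zero otherwise. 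Thus $\delta(C) = \EE[X_+^2]$ while $w(C \cap S_{n-1}) = \EE[X]$.

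For part (ii), I would invoke Moreau's decomposition: for a closed convex cone $C$, every $z \in \RR^n$ splits as $z = \Pi_C(z) + \Pi_{C^\circ}(z)$ with $\langle \Pi_C(z), \Pi_{C^\circ}(z) \rangle = 0$. Consequently $\dist^2(z, C) = \norm{z - \Pi_C(z)}_2^2 = \norm{\Pi_{C^\circ}(z)}_2^2$. Taking expectations over $Z \sim N(0, I_n)$ and applying the preparatory identity to the polar cone $C^\circ$ gives $\mbD(C) = \EE[\norm{\Pi_{C^\circ}(Z)}_2^2] = \delta(C^\circ)$.

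For part (i), the lower bound $w^2(C \cap S_{n-1}) \leq \delta(C)$ uses that $X \geq \langle Z, \theta_0 \rangle$ for any fixed $\theta_0 \in C \cap S_{n-1}$, so $\EE[X] \geq 0$; combined with the pointwise inequality $X \leq X_+$ and Jensen's inequality, this yields $\EE[X]^2 \leq \EE[X_+]^2 \leq \EE[X_+^2] = \delta(C)$. For the upper bound I would exploit the pointwise inequality $X_+^2 \leq X^2$ (trivial case analysis on the sign of $X$), which gives $\delta(C) \leq \EE[X^2] = \Var(X) + \EE[X]^2$. Since $X$ is the supremum of the linear functionals $Z \mapsto \langle Z, \theta\rangle$ over $\theta \in C \cap S_{n-1}$ (each of unit Lipschitz constant), $X$ is itself $1$-Lipschitz in $Z$, so the Gaussian Poincar\'e inequality gives $\Var(X) \leq 1$ and hence $\delta(C) \leq w^2(C \cap S_{n-1}) + 1$.

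The one subtlety worth flagging is that the naive route of applying Gaussian concentration to $X_+ = \norm{\Pi_C(Z)}_2$ directly (which is also $1$-Lipschitz) does not close the argument, because the resulting upper bound involves $\EE[X_+]^2$ rather than $\EE[X]^2 = w^2(C \cap S_{n-1})$, and $\EE[X_+]$ can be strictly larger than $\EE[X]$ when $C$ is not a subspace. Routing the estimate through $X^2$ instead of $X_+^2$ circumvents this and is the main trick of the argument; the rest is essentially bookkeeping.
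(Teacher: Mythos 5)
Your proof is correct. Note that the paper does not give its own argument here: both parts are stated with a citation to \citet{Amelunxen2014} (their Propositions 10.2 and 3.1), and no proof appears in the text or appendix. Your argument supplies that missing proof, and it follows essentially the same route as the cited source: Moreau's decomposition to identify $\delta(C)$ with $\EE\norm{\Pi_C(Z)}_2^2$ and hence $\mbD(C)=\delta(C^\circ)$, and then the combination of the pointwise inequalities $X\le X_+$ and $X_+^2\le X^2$ with the Gaussian Poincar\'e bound $\Var(X)\le 1$ for the sandwich in part (i). The subtlety you flag at the end — that concentrating $X_+$ rather than $X$ produces $\EE[X_+]^2$ instead of $w^2(C\cap S_{n-1})$, which would not match the stated bound when $C$ is not a subspace — is precisely the right one to notice, and passing through $\EE[X^2]=\Var(X)+\EE[X]^2$ is the correct fix.
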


Now, we introduce two general results for risk bounds for general projection estimators and proximal denoising estimators.

Let $K$ be a closed convex set in $\RR^n$, and define the projection estimator onto $K$ as $\hat{\theta}_K = \argmin_{\theta \in K} \norm{y - \theta}_2$.
\citet{Bellec2015b} proved the following oracle inequality that relates the risk of the projection estimator to the statistical dimension of the tangent cone of $K$.
Here, the tangent cone $T_K(\theta)$ of $K$ at $\theta \in K$ is defined as
\[
    T_K(\theta) := \mathord{\mathrm{closure}} (\set{ t(z - \theta): t \geq 0, z \in K}).
\]

\begin{lem}[\citet{Bellec2015b}, Corollary 2.2]\label{lem:projection_general}
Let $\theta^* \in \RR^n$ be any vector, and suppose that the observation $y$ is drawn according to $N(\theta^*, \sigma^2 I_n)$.
Then, we have the following risk bound:
\[
    \frac{1}{n} \EE_{\theta^*} \norm{\hat{\theta}_K - \theta^*}_2^2
    \leq \inf_{\theta \in K} \left\{
        \frac{1}{n} \norm{\theta - \theta^*}_2^2
        + \frac{\sigma^2}{n} \delta(T_K(\theta))
    \right\}.
\]
Moreover, for any $\eta \in (0, 1)$, the inequality
\[
    \frac{1}{n} \norm{\hat{\theta}_K - \theta^*}_2^2
    \leq \inf_{\theta \in K} \left\{
        \frac{1}{n} \norm{\theta - \theta^*}_2^2
        + \frac{2 \sigma^2}{n} \delta(T_K(\theta))
    \right\}
    + \frac{4 \sigma^2 \log(\eta^{-1})}{n}
\]
holds with probability at least $1 - \eta$.
\end{lem}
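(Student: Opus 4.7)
The plan is to combine the variational characterization of the Euclidean projection with a noise bound that exploits the local conic geometry of $K$ at $\theta$. First, since $\hat{\theta}_K$ minimizes $z \mapsto \norm{y - z}_2^2$ over the closed convex set $K$, the first-order optimality condition $\langle y - \hat{\theta}_K, z - \hat{\theta}_K\rangle \leq 0$ for all $z \in K$ combined with the elementary identity $2\langle a - c, a - b\rangle = \norm{a - c}_2^2 + \norm{a - b}_2^2 - \norm{b - c}_2^2$ applied with $a = \hat{\theta}_K$, $b = \theta$, $c = \theta^*$, yields the basic inequality
\[
    \norm{\hat{\theta}_K - \theta^*}_2^2 + \norm{\hat{\theta}_K - \theta}_2^2 \leq \norm{\theta - \theta^*}_2^2 + 2\langle \xi, \hat{\theta}_K - \theta\rangle, \qquad \theta \in K.
\]

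Next, since $\hat{\theta}_K - \theta$ is a nonnegative multiple of an element of $K - \theta$, it belongs to $T_K(\theta)$. Writing $W := \sup\set{\langle \xi, v\rangle : v \in T_K(\theta),\, \norm{v}_2 \leq 1}$, Cauchy-Schwarz along the direction $\hat{\theta}_K - \theta$ gives $\langle \xi, \hat{\theta}_K - \theta\rangle \leq \norm{\hat{\theta}_K - \theta}_2 \cdot W$, and the inequality $2ab \leq a^2 + b^2$ lets the $\norm{\hat{\theta}_K - \theta}_2^2$ term cancel with the one on the left side of the basic inequality, producing
\[
    \norm{\hat{\theta}_K - \theta^*}_2^2 \leq \norm{\theta - \theta^*}_2^2 + W^2.
\]
Taking expectation, using $\EE W^2 = \sigma^2 \delta(T_K(\theta))$ (by rescaling $\xi = \sigma Z$ with $Z \sim N(0, I_n)$ and invoking the definition in Definition \ref{defi:gaussian_width}), dividing by $n$, and taking the infimum over $\theta \in K$ yield the first inequality.

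For the high-probability statement, observe that, as a supremum of unit linear functionals, $\xi \mapsto W(\xi)$ is $1$-Lipschitz. The Borell-TIS Gaussian concentration inequality then gives $W \leq \EE W + \sigma \sqrt{2 \log \eta^{-1}}$ with probability at least $1 - \eta$. Squaring and using $(a + b)^2 \leq 2 a^2 + 2 b^2$ together with the Jensen estimate $(\EE W)^2 \leq \EE W^2 = \sigma^2 \delta(T_K(\theta))$ give $W^2 \leq 2 \sigma^2 \delta(T_K(\theta)) + 4 \sigma^2 \log \eta^{-1}$, which after dividing by $n$ produces the claimed bound (infimizing over $\theta \in K$ on a common high-probability event chosen for the optimizer).

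The only delicate issue is conceptual rather than computational: the key choice is to absorb the noise cross-term into the tangent cone at $\theta$ and then pay for it through the statistical dimension, rather than through a global Gaussian width of $K$. The rest is bookkeeping. Technical verifications one should not skip are that $\hat{\theta}_K - \theta$ lies in the closed cone $T_K(\theta)$ (immediate from the definition) and that $W$ is measurable (routine, since $T_K(\theta) \cap \set{\norm{v}_2 \leq 1}$ is separable).
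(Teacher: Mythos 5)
Your proof is correct, and it reproduces the standard argument behind Bellec's statistical-dimension oracle inequality: strong basic inequality from the first-order optimality condition, inclusion $\hat{\theta}_K - \theta \in T_K(\theta)$, Cauchy--Schwarz against the localized supremum $W$ followed by the $2ab \leq a^2 + b^2$ cancellation, and $\EE W^2 = \sigma^2 \delta(T_K(\theta))$ by rescaling. Note that the paper does not prove this lemma itself -- it cites it directly from Bellec (2015b) -- so there is no in-paper proof to compare against, but your argument is essentially the one given in that reference. One small cosmetic point: the phrase ``infimizing over $\theta \in K$ on a common high-probability event'' glosses over the fact that the event depends on $\theta$; the clean way is to apply Borell--TIS to a deterministic near-minimizer $\bar{\theta}_\epsilon$, obtain the bound with an additive $\epsilon$ on an event of probability at least $1-\eta$, and then let $\epsilon \downarrow 0$ using continuity of measure on the resulting nested decreasing events, which delivers exactly the stated conclusion.
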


Next, we provide a general result for proximal denoising estimators.
Let $f: \RR^n \to \RR$ be a convex function, and $\lambda \geq 0$.
We define the proximal denoising estimator $\hat{\theta}_\lambda$ as
\begin{equation}\label{eq:proximal_denoising}
    \hat{\theta}_\lambda := \argmin_{\theta \in \RR^n} \left\{
        \frac{1}{2} \norm{ y - \theta }_2^2 + \sigma \lambda f(\theta)
    \right\}.
\end{equation}The class of proximal denoising estimators contains the soft-thresholding estimator \citep{Donoho1992}, the total variation regularization \citep{Rudin1992}, the trend filtering \citep{Kim2009} and the nearly-isotonic regression \citep{Tibshirani2011}.
\citet{Oymak2016} pointed out that the risk bound of proximal denoising estimators can be characterized by the Gaussian mean squared distance of the set $\lambda \partial f(\theta^*)$.
Remarkably, based on this technique, \citet{Guntuboyina2017a} proved sharp adaptation results for the trend filtering estimators.
The following oracle inequality can be regarded as a generalization of Theorem 2.2 in \citet{Oymak2016}. For the sake of completeness, we also provide its proof below.

\begin{lem}\label{lem:proximal_general}
Let $\theta^* \in \RR^n$ be any vector, and suppose that the observation $y$ is drawn according to $N(\theta^*, \sigma^2 I_n)$.
Let $f: \RR^n \to \RR$ be a convex function, and let $\hat{\theta}_\lambda$ denote the proximal denoising estimator defined as \eqref{eq:proximal_denoising}.
Then, we have
\begin{equation}\label{eq:proximal_oracle_inequality}
    \frac{1}{n} \EE_{\theta^*} \norm{\hat{\theta}_\lambda - \theta^*}_2^2
    \leq \inf_{\theta \in \RR^n} \left\{
        \frac{1}{n} \norm{\theta - \theta^*}_2^2
        + \frac{\sigma^2}{n} \mbD(\lambda \partial f(\theta))
    \right\}.
\end{equation}
Moreover, for any $\eta \in (0, 1)$, the inequality
\begin{equation}\label{eq:proximal_oracle_inequality_hp}
    \frac{1}{n} \norm{\hat{\theta}_\lambda - \theta^*}_2^2
    \leq \inf_{\theta \in \RR^n} \left\{
        \frac{1}{n} \norm{\theta - \theta^*}_2^2
        + \frac{2 \sigma^2}{n} \mbD(\lambda \partial f(\theta^*))
    \right\}
    + \frac{16 \sigma^2 \log(\eta^{-1})}{n}
\end{equation}
holds with probability at least $1 - \eta$.
\end{lem}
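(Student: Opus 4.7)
The core task is to establish a sharp, pointwise oracle inequality of the form
\[
    \|\hat{\theta}_\lambda - \theta^*\|_2^2
    \leq \|\theta - \theta^*\|_2^2 + \|\xi - s\|_2^2
    \qquad \text{for every } \theta \in \RR^n \text{ and every } s \in \sigma\lambda\,\partial f(\theta),
\]
from which both bounds follow routinely. My starting point is the first-order optimality condition for the convex program \eqref{eq:proximal_denoising}, namely $y - \hat{\theta}_\lambda \in \sigma\lambda\,\partial f(\hat{\theta}_\lambda)$. Since $\partial f$ is a monotone operator on a convex function, picking an arbitrary $\theta \in \RR^n$ and $s \in \sigma\lambda\,\partial f(\theta)$ yields the monotonicity inequality $\langle (y - \hat{\theta}_\lambda) - s,\ \hat{\theta}_\lambda - \theta\rangle \geq 0$.

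Substituting $y = \theta^* + \xi$ and rearranging gives $\langle \hat{\theta}_\lambda - \theta^*,\ \hat{\theta}_\lambda - \theta\rangle \leq \langle \xi - s,\ \hat{\theta}_\lambda - \theta\rangle$. I would then apply the polarization identity
\[
    \langle \hat{\theta}_\lambda - \theta^*,\ \hat{\theta}_\lambda - \theta\rangle
    = \tfrac{1}{2}\bigl(\|\hat{\theta}_\lambda - \theta^*\|_2^2 + \|\hat{\theta}_\lambda - \theta\|_2^2 - \|\theta - \theta^*\|_2^2\bigr)
\]
on the left, and the inequality $2\langle \xi - s,\ \hat{\theta}_\lambda - \theta\rangle \leq \|\xi - s\|_2^2 + \|\hat{\theta}_\lambda - \theta\|_2^2$ (that is, Cauchy-Schwarz followed by AM-GM) on the right. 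Cancelling the common term $\|\hat{\theta}_\lambda - \theta\|_2^2$ delivers exactly the displayed sharp oracle inequality. This is the crux of the argument, and conceptually the main obstacle: the cancellation of $\|\hat{\theta}_\lambda - \theta\|_2^2$ is what allows the oracle inequality to have leading constant one, and it relies on applying AM-GM with the precise weighting that matches the polarization.

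Taking the infimum over $s \in \sigma\lambda\,\partial f(\theta)$ on the right gives $\|\hat{\theta}_\lambda - \theta^*\|_2^2 \leq \|\theta - \theta^*\|_2^2 + \dist^2(\xi,\ \sigma\lambda\,\partial f(\theta))$ pointwise in $\xi$. For the expectation bound \eqref{eq:proximal_oracle_inequality}, I take expectations; since $\xi/\sigma \sim N(0, I_n)$ and $\sigma\lambda\,\partial f(\theta) = \sigma \cdot (\lambda\,\partial f(\theta))$, scaling gives $\EE\,\dist^2(\xi, \sigma\lambda\,\partial f(\theta)) = \sigma^2\,\mbD(\lambda\,\partial f(\theta))$ by Definition \ref{defi:gaussian_width}. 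Dividing by $n$ and taking the infimum over $\theta \in \RR^n$ yields \eqref{eq:proximal_oracle_inequality}.

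For the high-probability bound \eqref{eq:proximal_oracle_inequality_hp}, I rely on Gaussian concentration: the map $\xi \mapsto \dist(\xi,\ \sigma\lambda\,\partial f(\theta))$ is $1$-Lipschitz, so by the Borell--Tsirelson--Ibragimov--Sudakov inequality applied to $\xi \sim N(0, \sigma^2 I_n)$, with probability at least $1 - \eta$,
\[
    \dist(\xi,\ \sigma\lambda\,\partial f(\theta))
    \leq \EE\,\dist(\xi,\ \sigma\lambda\,\partial f(\theta)) + \sigma\sqrt{2\log\eta^{-1}}.
\]
Squaring this bound and using $(a + b)^2 \leq 2a^2 + 2b^2$ together with $(\EE\,\dist)^2 \leq \EE\,\dist^2 = \sigma^2\,\mbD(\lambda\,\partial f(\theta))$ produces the factor $2$ in front of the Gaussian-distance term, and an additive remainder of order $\sigma^2 \log\eta^{-1}$. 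Combining this with the pointwise sharp oracle inequality (and, if needed, absorbing absolute constants to reach the stated coefficient $16$) and dividing by $n$ concludes the argument. The only subtlety here is bookkeeping of constants when turning a tail bound on $\dist$ into one on $\dist^2$, but no further structural tools are required.
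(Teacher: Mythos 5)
Your proposal is correct and takes essentially the same route as the paper's proof. The paper starts from the first-order optimality condition $\langle \theta - \hat{\theta},\, y - \hat{\theta} \rangle \leq \sigma\lambda\bigl(f(\theta) - f(\hat{\theta})\bigr)$, applies the subgradient inequality $f(\theta) - f(\hat{\theta}) \leq \langle v, \theta - \hat{\theta}\rangle$ for $v \in \partial f(\theta)$, and then completes the square via $2ab - b^2 \leq a^2$; chaining these two subgradient inequalities is exactly the monotonicity inequality you invoke directly, and your polarization/AM--GM step is the same completion of the square in disguise. Both proofs then optimize over $v \in \partial f(\theta)$ to obtain the pointwise bound $\norm{\hat{\theta}_\lambda - \theta^*}_2^2 \leq \norm{\theta - \theta^*}_2^2 + \dist^2(\xi, \sigma\lambda\,\partial f(\theta))$ and take expectations or apply Gaussian concentration for the one distinguished $\theta$ that minimizes the bound. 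One small aside: you correctly observe that $z \mapsto \dist(z, \cdot)$ is $1$-Lipschitz, whereas the paper uses the looser constant $2$, so your high-probability tail actually yields $4\sigma^2\log\eta^{-1}/n$ rather than the stated $16\sigma^2\log\eta^{-1}/n$ — a strictly sharper (and equally valid) constant.
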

\begin{proof}
Below, we write $\hat{\theta} := \hat{\theta}_\lambda$.
To prove \eqref{eq:proximal_oracle_inequality}, it suffices to show that we have almost surely
\[
    \norm{\hat{\theta} - \theta^*}_2^2 - \norm{\theta - \theta^*}_2^2
    \leq \sigma^2 \mbD(\lambda \partial f(\theta))
\]
for any fixed vector $\theta \in \RR^n$.
We will assume $\theta \neq \hat{\theta}$ because otherwise the inequality is trivial.

From the first order optimality condition of the convex minimization problem \eqref{eq:proximal_denoising}, we have
\[
    \langle \theta - \hat{\theta}, y - \hat{\theta} \rangle \leq \sigma \lambda (f(\theta) - f(\hat{\theta}))
    \quad \text{for any $\theta \in \RR^n$}.
\]
See Lemma 6.1 in \citet{vandeGeer2015} for a formal proof.
Using the elementary fact that $2 \langle u, v \rangle = \norm{u}_2^2 + \norm{v}_2^2 - \norm{u - v}_2^2$ and substituting $y = \theta^* + \sigma z$, we have
\begin{equation}\label{eq:proximal_general_prf1}
    \norm{\hat{\theta} - \theta^*}_2^2 - \norm{\theta - \theta^*}_2^2
    \leq 2 \sigma \lambda  (f(\theta) - f(\hat{\theta})) - 2 \sigma \langle z, \theta - \hat{\theta} \rangle
    - \norm{\theta - \hat{\theta}}_2^2.
\end{equation}

Now, take $v \in \partial f(\theta)$ arbitrarily.
From the definition of the subgradient, we have
\[
    f(\theta) - f(\hat{\theta}) \leq \langle v, \theta - \hat{\theta} \rangle.
\]
Hence, the right-hand side of \eqref{eq:proximal_general_prf1} is bounded from above by
\begin{align*}
    & 2 \sigma \langle \lambda v - z, \theta - \hat{\theta} \rangle
    - \norm{\theta - \hat{\theta}}_2^2 \\
    & = 2 \sigma \left \langle \lambda v - z, \frac{\theta - \hat{\theta}}{\norm{\theta - \hat{\theta}}_2} \right \rangle \norm{\theta - \hat{\theta}}_2
    - \norm{\theta - \hat{\theta}}_2^2\\
    & \leq \sigma^2 \left \langle \lambda v - z, \frac{\theta - \hat{\theta}}{\norm{\theta - \hat{\theta}}_2} \right \rangle^2
    \quad (\because 2ab - b^2 \leq a^2)\\
    & \leq \sigma^2 \norm{\lambda v - z}_2^2 \quad \text{( $\because$ The Cauchy--Schwarz inequality)}.
\end{align*}
Since the choice of $v \in \partial f(\theta)$ is arbitrary, we have
\begin{equation}\label{eq:proximal_general_prf2}
    \norm{\hat{\theta} - \theta^*}_2^2 - \norm{\theta - \theta^*}_2^2
    \leq \sigma^2 \inf_{v \in \partial f(\theta)} \norm{\lambda v - z}_2^2
    = \sigma^2 \mathrm{dist}^2(z, \lambda \partial f(\theta)).
\end{equation}
By taking the expectation of both sides, \eqref{eq:proximal_oracle_inequality} is proved.

To prove the high-probability bound \eqref{eq:proximal_oracle_inequality_hp}, we use the well-known Gaussian concentration inequality (see e.g., Theorem 5.6 in \citet{Boucheron2013}); for any $L$-Lipschitz function $h: \RR^n \to \RR$ and $\eta \in (0, 1)$, we have
\[
    \mathrm{Pr}_{Z \sim N(0, I_n)}\left\{
        h(Z) - \EE[h] \geq \sqrt{2L^2 \log \eta^{-1}}
    \right\} \leq \eta.
\]
In fact, the map $z \mapsto \dist(z, \lambda \partial f(\theta))$ is a $2$-Lipschitz function because, for any $z_1, z_2 \in \RR^n$, we have
\[
    |\dist(z_1, \lambda \partial f(\theta)) - \dist(z_2, \lambda \partial f(\theta))|
    \leq \Norm{(z_1 - P(z_1)) - (z_2 - P(z_2))}_2 \leq 2 \norm{z_1 - z_2}_2,
\]
where $P$ is the orthogonal projection map onto the set $\lambda \partial f(\theta)$.
Now, we take $\bar{\theta}$ as
\[
    \bar{\theta} \in \argmin_{\theta \in \RR^n} \left\{
        \norm{\theta - \theta^*}_2^2
        + \sigma^2 \left(\sqrt{\mbD(\lambda \partial f(\theta))} + \sqrt{8 \log \eta^{-1}} \right)^2
    \right\}.
\]
Combining \eqref{eq:proximal_general_prf2} and the Gaussian concentration applied for $\theta = \bar{\theta}$, we have the desired result.
\if0
Hence, by the Gaussian concentration and the inequality \eqref{eq:proximal_general_prf2} with $\theta = \theta^*$, we have
\[
    \mathrm{Pr}_{y \sim N(\theta^*, \sigma^2 I)} \left\{
        \norm{\hat{\theta} - \theta^*}_2^2 \leq
        \sigma^2 \left(\mbD(\lambda \partial f(\theta^*)) + \sqrt{8 \log \eta^{-1}}\right)^2
    \right\} \geq 1 - \eta,
\]
which implies \eqref{eq:proximal_oracle_inequality_hp}.
\fi
\end{proof}

\subsection{Risk bounds for constrained estimators (Proof of Theorem \ref{thm:constrained_tangent})}

In this subsection, we provide the proof of Theorem \ref{thm:constrained_tangent} as an application of Lemma \ref{lem:projection_general}.
To this end, we have to evaluate the statistical dimension of the tangent cone of a convex set
\begin{equation}
    K_-(\mcV) := \set{\theta \in \RR^n: \mcV_-(\theta) \leq \mcV}
    = \left\{
        \theta \in \RR^n: \sum_{i=1}^{n-1}(\theta_i - \theta_{i+1})_+ \leq \mcV
    \right\}.
\end{equation}
It is not surprising that the analysis of the tangent cone of $K_-(\mcV)$ goes very similar to that of the set with bounded total variation $K(\mcV) = \set{\theta \in \RR^n: \mcV(\theta) \leq \mcV}$ in \citet{Guntuboyina2017a}.
Our goal is to show the following upper bound for the statistical dimension:
\begin{prop}\label{prop:statistical_dimension_bound}
Suppose that $\theta$ is a vector with $\mcV_-(\theta) = \mcV$.
Then, there exists a universal constant $C > 0$ such that
\[
    \delta(T_{K_-(\mcV)}(\theta))
    \leq C n \left\{
        \frac{k(\theta)}{n} \log \frac{\ee n}{k(\theta)}
        + \frac{M(\theta)}{k(\theta)} \log \frac{\ee n}{k(\theta)}
    \right\},
\]
where $M(\theta)$ is defined in \eqref{eq:m_definition}.
\end{prop}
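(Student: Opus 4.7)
The plan is to reduce the bound on $\delta(T_{K_-(\mcV)}(\theta))$ to a Gaussian mean squared distance estimate for the subdifferential of $\mcV_-$ at $\theta$, and then attack the latter by a piecewise construction in the spirit of the constrained trend filtering analysis of \citet{Guntuboyina2017a}. Since $\mcV_-(\theta) = \mcV$ activates the constraint defining $K_-(\mcV)$, the polar $T_{K_-(\mcV)}(\theta)^\circ$ equals the normal cone, which coincides with $\cone(\partial \mcV_-(\theta))$ by a standard constraint qualification. Invoking Proposition~\ref{prop:statistical_dim_compare}(ii) together with the inclusion $\lambda \partial \mcV_-(\theta) \subseteq \cone(\partial \mcV_-(\theta))$ valid for every $\lambda \geq 0$ and monotonicity of $\mbD$ under set inclusion,
\[
    \delta(T_{K_-(\mcV)}(\theta)) \;=\; \mbD(\cone(\partial \mcV_-(\theta))) \;\leq\; \inf_{\lambda \geq 0} \mbD(\lambda \partial \mcV_-(\theta)),
\]
so it suffices to bound the right-hand side at a well-chosen $\lambda$.

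Next I would write the subdifferential explicitly. From $\mcV_-(\theta) = \sum_{i=1}^{n-1} ((D\theta)_i)_+$ with $D$ the discrete first-difference operator, standard subdifferential calculus gives $\partial \mcV_-(\theta) = D^\top \mcS(\theta)$, where $\mcS(\theta) \subset [0, 1]^{n-1}$ is a ``pinched box'' that fixes $s_i = w_j \in \set{0, 1}$ at each boundary edge $i = \tau_j - 1$ and leaves $s_i \in [0, 1]$ free on every interior edge. Since $\mcS(\theta)$ factorises across pieces and $(D^\top s)_l$ for $l \in A_j$ depends only on $s$-values on edges adjacent to $A_j$, the distance admits a piecewise decomposition
\[
    \dist^2(Z, \lambda D^\top \mcS(\theta)) \;\leq\; \langle Z, \mathbf{1}\rangle^2 / n + \sum_{j=1}^{k} \dist^2(Z_{A_j}, \lambda \mcR_j),
\]
where $\mcR_j \subset \RR^{|A_j|}$ is the image of $\mcS(\theta)$ under the local difference map on $A_j$.

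The heart of the proof is to estimate each $\EE \dist^2(Z_{A_j}, \lambda \mcR_j)$ and trace how $M(\theta)$ emerges. When the two boundary signs of $A_j$ agree, $\mcR_j$ contains the origin and a soft-thresholding-type construction of $s$ yields a Gaussian fluctuation contribution of order $\sigma^2 \log(\ee n / k)$ as soon as $\lambda \gtrsim \sigma \sqrt{\log(\ee n / k)}$; summing over the $k$ pieces gives the first term $Ck \log(\ee n / k)$. When the boundary signs disagree, the telescoping identity $\sum_{l \in A_j}(D^\top s)_l = \pm 1$ forces $\dist(0, \mcR_j) \asymp 1/\sqrt{|A_j|}$, and the best available single-jump construction places the unavoidable unit jump in $s$ adaptively at the argmin or argmax of $Z_{A_j}$; balancing the resulting $\lambda^2$ cost against the Gaussian extreme-value gain yields the per-piece contribution $\lambda^2 \max\set{1/|A_j|, k/n}$. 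Summing over sign-change pieces and matching against \eqref{eq:m_definition} produces the second term after setting $\lambda \asymp \sigma \sqrt{\log(\ee n / k)}$.

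The main obstacle will be producing precisely the weight $\max\set{1/|A_j|, k/n}$ rather than a larger one. This requires a two-regime argument in $|A_j|$: for long pieces ($|A_j| \geq n/k$) the $1/|A_j|$ factor from the telescoping constraint dictates the cost, whereas for short pieces ($|A_j| < n/k$) one must absorb a blanket $k/n$ term coming from a union bound over pieces in the Gaussian extreme-value step. Combining these two regimes seamlessly with the index shift implicit in \eqref{eq:m_definition} is where the analysis diverges most substantially from the trend filtering treatment of \citet{Guntuboyina2017a}, and is the step I expect to require the most care.
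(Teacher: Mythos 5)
Your high-level route is genuinely different from the paper's, and its first steps are sound. Where the paper bounds $\delta(T_{K_-(\mcV)}(\theta))$ directly by characterizing the tangent cone (Lemma~\ref{lem:tangent_characterization}), bounding it by a Gaussian width, quantizing the intersection $T\cap B_n$ over allocation patterns (Lemmas~\ref{lem:gamma_control}--\ref{lem:quantization}), and controlling each piece via monotone-cone width estimates (Lemmas~\ref{lem:suprema_1}--\ref{lem:suprema_2}), you instead invoke the polar identity $\delta(T_{K_-(\mcV)}(\theta)) = \mbD(\cone(\partial\mcV_-(\theta)))$ (this is the paper's Proposition~\ref{prop:conic_polar}, used there for Theorem~\ref{thm:penalized} but not for this proposition), then use the monotone inclusion $\mbD(\cone) \leq \inf_{\lambda\geq 0}\mbD(\lambda\partial\mcV_-(\theta))$ and attack the right-hand side by a piecewise decomposition. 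The polar identity and the inclusion are both correct, and the decomposition of $\dist^2(Z,\lambda D^\top\mcS(\theta))$ across pieces is in fact an \emph{equality} rather than a $\leq$: since every boundary edge value is pinned to $w_j$, the minimization over $s\in\mcS(\theta)$ factors exactly across blocks, so your added $\langle Z,\mathbf 1\rangle^2/n$ term is harmless but superfluous.

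The gap is in the per-piece analysis, and it is not a cosmetic one. A single global $\lambda$ must serve every block simultaneously. For the noise contribution $\EE\dist^2(Z_{A_j},\lambda\mcR_j)$ on a non--sign-change block to be of order $\sigma^2\log(\ee n/k)$, the bounded set $\lambda\mcR_j$ must absorb $P_{\cone(\mcR_j)}(Z_{A_j})$, whose natural scale in the $s$-coordinates is $\sigma\sqrt{|A_j|}$; this forces $\lambda\gtrsim\sigma\sqrt{m_{\max}}$ where $m_{\max}=\max_j|A_j|$. But then the telescoping bias on a \emph{short} sign-change block is $\lambda^2/|A_j|\gtrsim\sigma^2 m_{\max}/|A_j|$, which, when $m_{\max}\gg n/k$, overshoots the target $\sigma^2\max\{n/(k|A_j|),1\}\log(\ee n/k)$ by an unbounded factor. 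Your proposal fixes $\lambda\asymp\sigma\sqrt{\log(\ee n/k)}$, which is far too small to control the long pieces. The $\max\{1/|A_j|,k/n\}$ weight in \eqref{eq:m_definition} is produced in the paper precisely by \emph{refining} the constant partition into pieces $B_i$ with $|B_i|\leq 4n/k'$ and $k\leq k'\leq 2k$ (Step~4 of the proof), which caps the required Gaussian budget at the $n/k$ scale and makes the bias on a sign-change subpiece scale as $1/\min\{|A_j|,n/k\}=\max\{1/|A_j|,k/n\}$; this refinement step is absent from your sketch. Relatedly, your two-regime account (``long $\Rightarrow 1/|A_j|$, short $\Rightarrow k/n$'') actually describes the weight $\min\{1/|A_j|,k/n\}$, the \emph{opposite} of the $\max$ you intend to produce, which signals that the mechanism generating the correct weight has not yet been located. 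Once the refinement is incorporated, your subdifferential route should indeed parallel the paper's argument, with $\mbD$ of the conic hull playing the role of the squared Gaussian width of the tangent cone intersected with the unit ball.
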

We briefly outline the proof for this result.
We divide the proof into four steps:
First, we provide some useful characterizations of the tangent cone.
Second, we decompose the tangent cone into finitely many pieces so that the Gaussian widths become easy to evaluate.
Third, we provide the concrete upper bounds the Gaussian widths of these pieces.
Lastly, we combine the upper bounds and apply Lemma \ref{lem:projection_general} to complete the proof.

\noindent {\bfseries\upshape Step 1: Characterizing the tangent cone \ }
If $\mcV_-(\theta) < \mcV$, $\theta$ is contained in the interior of $K_-(\mcV)$, and the tangent cone becomes the entire Euclidean space $\RR^n$.
Hereafter, we assume that $\theta$ lies on the boundary of $K_-(\mcV)$, that is, $\mcV_-(\theta) = \mcV$.
Let us recall the definition of the sign of jumps $w_i$ in \eqref{eq:sign_definition}.
Roughly speaking, the tangent cone of $K_-(\mcV)$ is characterized by the sign of jumps.

\begin{lem}\label{lem:tangent_characterization}
Let $\theta$ be a vector in $\RR^n$ such that $\mcV_-(\theta) = \mcV$.
Let $\Pi = \{ B_1, B_2, \ldots, B_{k'} \}$ be any connected refinement
\footnote{
    Here, we say that $\Pi$ is a connected refinement of another connected partition $\Pi'$ if, for any $B \in \Pi$, there exists a unique element $A \in \Pi'$ such that $B \subseteq A$.
}
of the constant partition $\Pi_\mathrm{const}(\theta)$ of $\theta$.
Let $1 = \tau_1 < \tau_2 < \cdots < \tau_{k'} < \tau_{k' + 1} = n + 1$ be a sequence such that $B_i = \set{\tau_i, \tau_i + 1, \ldots, \tau_{i+1} - 1}$ for any $i \in \set{1, 2, \ldots, k'}$.
We define the signs $w_2, w_3, \ldots, w_{k'} \in \{ 0, 1 \}$ as
\[
    w_i = \left\{
        \begin{aligned}
            & 1 & \quad \text{if $\theta_{\tau_i - 1} > \theta_{\tau_i}$} \\
            & 0 & \quad \text{if $\theta_{\tau_i - 1} < \theta_{\tau_i}$} \\
            & \text{arbitrary value in $\{ 0, 1\}$} & \quad \text{if $\theta_{\tau_i - 1} = \theta_{\tau_i}$}
        \end{aligned}
    \right. .
\]
For any $\Pi$ and $w_2, w_3, \ldots, w_{k'}$ taken as above, we define a convex cone $T(\Pi, w)$ as
\begin{equation}\label{eq:lem_tangent_characterization}
    T(\Pi, w) = \left\{
        v \in \RR^n:
        \sum_{i=1}^{k'} \mcV_-^{B_i}(v_{B_i})
        \leq \sum_{i=2}^{k'} w_i (v_{\tau_i} - v_{\tau_i - 1})
    \right\},
\end{equation}
where $\mcV_-^{B_i}(v_{B_i})$ is the lower total variation for the restricted vector $v_{B_i}$.
Then, for the tangent cone $T_{K_-(\mcV)}(\theta)$, we have the followings:
\begin{enumerate}[label=(\roman*)]
    \item If $\Pi = \Pi_\mathrm{const}(\theta)$, then $T_{K_-(\mcV)}(\theta) = T(\Pi, w)$.
    \item If $\Pi$ is a connected refinement of $\Pi_\mathrm{const}(\theta)$ and $w$ is taken arbitrarily as above, then $T_{K_-(\mcV)}(\theta) \subseteq T(\Pi, w)$.
\end{enumerate}
\end{lem}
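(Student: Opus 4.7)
The plan is to reduce the computation of $T_{K_-(\mcV)}(\theta)$ to the one-sided directional derivative of $f(\theta) := \mcV_-(\theta) = \sum_{i=1}^{n-1}(\theta_i - \theta_{i+1})_+$. Since $f$ is a polyhedral convex function and $\theta$ satisfies $f(\theta) = \mcV$, the sublevel set $K_-(\mcV) = \{f \leq \mcV\}$ is a polyhedron, and a standard fact from polyhedral convex analysis gives
\[
    T_{K_-(\mcV)}(\theta) = \{ v \in \RR^n : f'(\theta; v) \leq 0 \}.
\]
The task then collapses to computing $f'(\theta;v)$ and comparing it with the defining inequality of $T(\Pi, w)$.

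First I would compute $f'(\theta;v)$ edge by edge, using that $t \mapsto (t)_+$ has directional derivative $s$ if $t > 0$, $0$ if $t < 0$, and $(s)_+$ if $t = 0$:
\[
    f'(\theta; v) = \sum_{i : \theta_i > \theta_{i+1}}(v_i - v_{i+1}) + \sum_{i : \theta_i = \theta_{i+1}}(v_i - v_{i+1})_+.
\]
Next I would group the edges according to the refinement $\Pi$. Because $\Pi$ refines $\Pi_{\mathrm{const}}(\theta)$, every edge whose two endpoints lie in a common $B_j$ automatically satisfies $\theta_i = \theta_{i+1}$, so the contribution of all such internal edges sums to $\sum_{j=1}^{k'} \mcV_-^{B_j}(v_{B_j})$. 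The remaining edges are the boundary edges $(\tau_j - 1, \tau_j)$ for $j = 2, \ldots, k'$. A boundary edge at a true jump of $\theta$ contributes $w_j(v_{\tau_j - 1} - v_{\tau_j}) = -w_j(v_{\tau_j} - v_{\tau_j - 1})$ with $w_j$ uniquely determined, while a boundary edge across a flat piece of $\theta$ (possible only when $\Pi$ strictly refines $\Pi_{\mathrm{const}}(\theta)$) contributes $(v_{\tau_j - 1} - v_{\tau_j})_+$.

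Part (i) then falls out immediately: when $\Pi = \Pi_{\mathrm{const}}(\theta)$, every boundary is a true jump, so the inequality $f'(\theta;v) \leq 0$ rearranges exactly to the condition defining $T(\Pi, w)$. For part (ii) I would apply the elementary inequality $(x)_+ \geq w x$, valid for every $w \in \{0, 1\}$ and every $x \in \RR$, at each flat-piece boundary with $x = v_{\tau_j - 1} - v_{\tau_j}$. This shows that
\[
    f'(\theta; v) \geq \sum_{j=1}^{k'} \mcV_-^{B_j}(v_{B_j}) - \sum_{j=2}^{k'} w_j (v_{\tau_j} - v_{\tau_j - 1}),
\]
so $f'(\theta; v) \leq 0$ forces $v \in T(\Pi, w)$. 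The only delicate point is the polyhedral tangent-cone identity invoked at the outset; a self-contained alternative would be to introduce slack variables $z_i \geq (\theta_i - \theta_{i+1})_+$ so as to rewrite $K_-(\mcV)$ as the projection of an explicit polyhedron in $\RR^{2n-1}$ and read off its tangent cone directly from the active constraints at $(\theta, z(\theta))$.
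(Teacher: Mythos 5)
Your proof is correct and takes a genuinely different route from the paper's. The paper argues directly from the definition of the tangent cone: for the inclusion $T_{K_-(\mcV)}(\theta) \subseteq T(\Pi, w)$ it shows that $z - \theta$ satisfies the defining inequality of $T(\Pi, w)$ for every $z \in K_-(\mcV)$, and for the reverse inclusion (when $\Pi = \Pi_{\mathrm{const}}(\theta)$) it constructs, for each $v \in T(\Pi, w)$, a small step size $t^{-1}$ so that $\theta + t^{-1}v \in K_-(\mcV)$, exploiting the fact that a small perturbation cannot flip any strict sign $\theta_{\tau_i - 1} \neq \theta_{\tau_i}$. You instead collapse both parts into a single computation of the one-sided directional derivative $\mcV_-'(\theta; v)$ and invoke the identity $T_{K_-(\mcV)}(\theta) = \{v : \mcV_-'(\theta; v) \leq 0\}$, valid because $\mcV_-$ is polyhedral and $\theta$ lies on the boundary $\mcV_-(\theta) = \mcV$. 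This is a cleaner and more structural argument: the edge-by-edge expansion of $\mcV_-'(\theta; v)$ makes the appearance of the signs $w_i$ and the block decomposition transparent, and part (ii) reduces to the pointwise bound $(x)_+ \geq wx$. The paper's route is more self-contained and elementary, while yours relies on a standard fact from polyhedral convex analysis (which you correctly flag, and correctly note can be made elementary via slack variables); modulo that justification, your proof is a valid and arguably more illuminating alternative.
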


\begin{proof}
First, we show that $T_{K_-(\mcV)}(\theta) \subseteq T(\Pi, w)$.
By the definition of the tangent cone, it suffices to show that $v := z - \theta \in T(\Pi, w)$ holds for any $z \in K_-(\mcV)$.
Note that $\theta$ is constant on every $B_i \in \Pi$ since $\Pi$ is finer than the constant partition of $\theta$.
Since the lower total variation is not changed by adding any constant value to each coordinates, we have $\mcV_-^{B_i}(z_{B_i} - \theta_{B_i}) = \mcV_-^{B_i}(z_{B_i})$.
Then, we have
\begin{align*}
    & \sum_{i=1}^{k'} \mcV_-^{B_i}(v_{B_i}) - \sum_{i=2}^{k'} w_i (v_{\tau_i} - v_{\tau_i - 1}) \\
    & = \sum_{i=1}^{k'} \mcV_-^{B_i}(z_{B_i})
    + \sum_{i=2}^{k'} w_i (z_{\tau_i - 1} - z_{\tau_i})
    - \sum_{i=2}^{k'} w_i (\theta_{\tau_i - 1} - \theta_{\tau_i}) \\
    & \underbrace{
        \leq \sum_{i=1}^{k'} \mcV_-^{B_i}(z_{B_i})
        + \sum_{i=2}^{k'} (z_{\tau_i - 1} - z_{\tau_i})_+
    }_{= \mcV_-(z) \leq \mcV}
    - \underbrace{
        \sum_{i=2}^{k'} w_i (\theta_{\tau_i - 1} - \theta_{\tau_i})
    }_{= \mcV_-(\theta) = \mcV} \\
    & \leq 0,
\end{align*}
which proves $v \in T(\Pi, w)$ and hence (ii).

Next, we prove that $T(\Pi, w) \subseteq T_{K_-(\mcV)}(\theta)$ under the assumption $\Pi = \Pi_{\mathrm{const}}(\theta) = \set{B_1, B_2, \ldots, B_k}$.
In this case, the definition of $w_2, \ldots, w_{k}$ coincides that in \eqref{eq:sign_definition}.
Fix any $v \in T(\Pi, w)$.
We want to show that $z$ is obtained as $v = t(z - \theta)$ for some $t > 0$ and $z \in K_-(\mcV)$.
To this end, we check that there exists a (sufficiently small) $t^{-1} > 0$ such that $\theta + t^{-1} v \in K_-(\mcV)$.
Here, we have
\begin{align*}
    \mcV_-(\theta + t^{-1}v)
    & = \sum_{i = 1}^{k} \mcV_-^{B_i}(\theta_{B_i} + t^{-1}v_{B_i})
    + \sum_{i = 2}^{k} ((\theta_{\tau_{i} - 1} + t^{-1} v_{\tau_i - 1}) - (\theta_{\tau_{i}} + t^{-1} v_{\tau_i}))_+ \\
    & = t^{-1} \sum_{i = 1}^{k} \mcV_-^{B_i}(v_{B_i})
    + \sum_{i = 2}^{k} ((\theta_{\tau_{i} - 1} + t^{-1} v_{\tau_i - 1}) - (\theta_{\tau_{i}} + t^{-1} v_{\tau_i}))_+.
\end{align*}
Recall that $w_2, \ldots, w_k$ are chosen so that $(\theta_{\tau_i - 1} - \theta_{\tau_i})_+ = w_i (\theta_{\tau_i - 1} - \theta_{\tau_i})$.
We can choose sufficiently small $t^{-1} > 0$ so that
\[
    ((\theta_{\tau_{i} - 1} + t^{-1}v_{\tau_i - 1}) - (\theta_{\tau_{i}} + t^{-1}v_{\tau_i}))_+
    = w_i ((\theta_{\tau_{i} - 1} + t^{-1}v_{\tau_i - 1}) - (\theta_{\tau_{i}} + t^{-1}v_{\tau_i}))
\]
for every $i = 2, 3, \ldots, k$.
Indeed, if we choose $t^{-1} > 0$ so that
\[
    t^{-1} |v_{\tau_i - 1} - v_{\tau_i}| < \theta_{\tau_{i} - 1} - \theta_{\tau_i}
    \quad \text{for every $i = 2, 3, \ldots, k$},
\]
the signs of $\theta$ do not change by adding $t^{-1} v$.
Consequently, we have
\begin{align*}
    \mcV_-(\theta + t^{-1}v)
    & = t^{-1} \sum_{i = 1}^{k} \mcV_-^{B_i}(z_{B_i})
    + \sum_{i = 2}^{k} w_i ((\theta_{\tau_{i} - 1} + t^{-1}v_{\tau_i - 1}) - (\theta_{\tau_{i}} + t^{-1}v_{\tau_i})) \\
    & = \mcV_-(\theta) + t^{-1} \left\{
        \sum_{i = 1}^{k} \mcV_-^{B_i}(v_{B_i})
        + \sum_{i = 2}^{k} w_i (v_{\tau_i - 1} - v_{\tau_i})
    \right\} \\
    & \leq \mcV_-(\theta) = \mcV.
\end{align*}
This proves that $T(\Pi, w) \subseteq T_{K_-(\mcV)}(\theta)$ and hence (i).
\end{proof}

From Proposition \ref{prop:statistical_dim_compare}-(i), we can bound the statistical dimension by the Gaussian width as follows:
\[
    \delta(T_{K_-(\mcV)}(\theta)) \leq w^2(T_{K_-(\mcV)}(\theta) \cap S_{n-1}) + 1
    \leq w^2(T_{K_-(\mcV)}(\theta) \cap B_n) + 1.
\]
Here, $B_n := \{ v \in \RR^n: \norm{v}_2 \leq 1 \}$ is the unit ball in $\RR^n$.
Hence, it suffices to consider the set $T_{K_-(\mcV)}(\theta) \cap B_n$.
In analogy to Lemma B.2 in \citet{Guntuboyina2017a}, we obtain the following characterization of this set.

\begin{lem}\label{lem:gamma_control}
Let $\theta$ be a vector in $\RR^n$ such that $\mcV_-(\theta) = \mcV$.
Let $\Pi = \{ B_1, B_2, \ldots, B_{k'} \}$ be any connected refinement of $\Pi_\mathrm{const}(\theta)$.
Define the signs $w_2, w_3, \ldots, w_{k'}$ as in Lemma \ref{lem:tangent_characterization}, and let $w_1 = w_{k' + 1} = 0$.
Then, for every $v \in T_{K_-(\mcV)}(\theta)$ with $\norm{v}_2 \leq 1$, there exists indices $\ell_1 \in B_1, \ell_2 \in B_2, \ldots, \ell_{k'} \in B_{k'}$ such that
\begin{equation}\label{eq:lem_gamma_control_1}
    \sum_{i = 1}^{k'} \Gamma_i(v, \ell_i)
    \leq \left(
        \sum_{i=1}^{k'} \frac{1}{|B_i|} 1_{\set{w_i \neq w_{i+1}}}
    \right)^{\frac{1}{2}},
\end{equation}
where we define $\Gamma_i(v, \ell_i)$ as
\begin{equation}\label{eq:lem_gamma_control_2}
    \Gamma_i(v, \ell_i)
    := \mcV_-^{B_i}(v_{B_i}) - w_i(v_{\tau_i} - v_{\ell_i}) - w_{i+1}(v_{\ell_i} - v_{\tau_{i+1} - 1})
    \quad \text{for $i = 1, 2, \ldots, k'$}.
\end{equation}
\end{lem}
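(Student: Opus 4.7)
The plan is to start from the tangent-cone characterization in Lemma \ref{lem:tangent_characterization}(ii), which says any $v \in T_{K_-(\mcV)}(\theta)$ satisfies $\sum_{i=1}^{k'} \mcV_-^{B_i}(v_{B_i}) \leq \sum_{i=2}^{k'} w_i (v_{\tau_i} - v_{\tau_i - 1})$. Using the convention $w_1 = w_{k'+1} = 0$, I would regroup the right-hand side block-by-block into $\sum_{i=1}^{k'} [w_i v_{\tau_i} - w_{i+1} v_{\tau_{i+1}-1}]$. For any choice of indices $\ell_i \in B_i$, the telescoping identity
\[
    w_i v_{\tau_i} - w_{i+1} v_{\tau_{i+1}-1}
    = w_i(v_{\tau_i} - v_{\ell_i}) + w_{i+1}(v_{\ell_i} - v_{\tau_{i+1}-1}) + (w_i - w_{i+1}) v_{\ell_i}
\]
then recasts the tangent-cone constraint as $\sum_{i=1}^{k'} \Gamma_i(v, \ell_i) \leq \sum_{i=1}^{k'} (w_i - w_{i+1}) v_{\ell_i}$, valid for every admissible collection $(\ell_1, \ldots, \ell_{k'})$.

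Next, I would pick $\ell_i = \ell_i^\ast \in B_i$ so as to minimize the right-hand side coordinate-wise. Since $(w_i - w_{i+1}) \in \{-1, 0, 1\}$, the minimum of $(w_i - w_{i+1}) v_{\ell_i}$ over $\ell_i \in B_i$ is always at most its block-average $(w_i - w_{i+1}) \bar v_{B_i}$, where $\bar v_{B_i} := |B_i|^{-1} \sum_{j \in B_i} v_j$ (when $w_i = w_{i+1}$ the term vanishes and the choice is immaterial). Hence $\sum_i \Gamma_i(v, \ell_i^\ast) \leq \sum_i (w_i - w_{i+1}) \bar v_{B_i}$.

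Finally, I would apply Cauchy--Schwarz by splitting the coefficient as $(w_i - w_{i+1})/\sqrt{|B_i|} \cdot \sqrt{|B_i|}$:
\[
    \sum_{i=1}^{k'} (w_i - w_{i+1}) \bar v_{B_i}
    \leq \left( \sum_{i=1}^{k'} \frac{(w_i - w_{i+1})^2}{|B_i|} \right)^{1/2}
    \left( \sum_{i=1}^{k'} |B_i| \bar v_{B_i}^2 \right)^{1/2}.
\]
Since $w_i \in \{0,1\}$, the identity $(w_i - w_{i+1})^2 = \mathbf{1}_{\{w_i \neq w_{i+1}\}}$ makes the first factor match the right-hand side of \eqref{eq:lem_gamma_control_1}. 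Cauchy--Schwarz within each block gives $|B_i| \bar v_{B_i}^2 \leq \sum_{j \in B_i} v_j^2$, so the second factor is bounded by $\norm{v}_2 \leq 1$, and the lemma follows.

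The main obstacle I anticipate is purely combinatorial bookkeeping: writing the boundary sum in a block-indexed form, verifying that the telescoping cleanly produces $\Gamma_i$ together with the residual term $(w_i - w_{i+1}) v_{\ell_i}$, and handling the boundary conventions $w_1 = w_{k'+1} = 0$ correctly. Once the algebraic rearrangement is set up, the minimization-over-$\ell_i$ and Cauchy--Schwarz steps are routine and closely parallel the tangent-cone analysis for the fused lasso by \citet{Guntuboyina2017a}.
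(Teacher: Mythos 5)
Your proof is correct and follows essentially the same route as the paper: start from the tangent-cone inequality of Lemma~\ref{lem:tangent_characterization}, rearrange it into $\sum_i \Gamma_i(v,\ell_i) \le \sum_i (w_i - w_{i+1}) v_{\ell_i}$ (valid for any admissible $(\ell_i)$), choose each $\ell_i$ favorably, and finish with Cauchy--Schwarz. The only cosmetic difference is that the paper selects $\ell_i$ so that $|v_{\ell_i}| \le \norm{v_{B_i}}_2 / \sqrt{|B_i|}$ and works with the absolute-value bound, whereas you choose $\ell_i$ to minimize the signed term $(w_i - w_{i+1}) v_{\ell_i}$ and compare it against the block mean before applying Cauchy--Schwarz; both lead to the same final estimate.
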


\begin{proof}
Fix $v \in T_{K_-(\mcV)}(\theta) \cap B_n$.
By Lemma \ref{lem:tangent_characterization}, we have
\begin{equation}\label{eq:lem_gamma_control_prf_1}
    \sum_{i=1}^{k'} \mcV_-^{B_i}(v_{B_i})
    \leq \sum_{i=2}^{k'} w_i (v_{\tau_i} - v_{\tau_i - 1})
    = \sum_{i=1}^{k' + 1} w_i (v_{\tau_i} - v_{\tau_i - 1}).
\end{equation}
Let $\ell_1 \in B_1, \ell_2 \in B_2, \ldots, \ell_{k'} \in B_{k'}$ be indices which will be specified later.
Defining $\Gamma_i(v, \ell_i)$ as in \eqref{eq:lem_gamma_control_2}, we can rewrite \eqref{eq:lem_gamma_control_prf_1} as
\begin{align}\label{eq:lem_gamma_control_prf_2}
    \sum_{i=1}^{k'} \Gamma_i(v, \ell_i)
    & \leq \sum_{i=1}^{k'} w_i(v_{\ell_i} - v_{\tau_i})
    + \sum_{i=1}^{k'} w_{i+1}(v_{\tau_{i+1} - 1} - v_{\ell_i})
    + \sum_{i=1}^{k' + 1} w_i (v_{\tau_i} - v_{\tau_i - 1}) \nonumber \\
    & = \sum_{i=1}^{k'} (w_i - w_{i+1})v_{\ell_i} \nonumber \\
    & \leq \sum_{i=1}^{k'} 1_{\set{w_i \neq w_{i+1}}} |v_{\ell_i}|
\end{align}

Now, let $t_i$ denote the $\ell_2$ norm of $v_{B_i}$ for $i = 1, 2, \ldots, k'$.
By the assumption, $\sum_{i=1}^{k'} t_i^2 = \norm{v}_2^2 \leq 1$.
Then, for any $i \in \set{1, 2, \ldots, k'}$, there exists $\ell_i \in B_i$ such that $|v_{\ell_i}| \leq t_i / \sqrt{|B_i|}$.
For these choices of $\ell_i$, the right-hand side of \eqref{eq:lem_gamma_control_prf_2} is bounded from above by
\begin{align*}
    \sum_{i=1}^{k'} \frac{t_i}{\sqrt{|B_i|}} 1_{\set{w_i \neq w_{i+1}}}
    & \leq \left(
        \sum_{i=1}^{k'} \frac{1}{|B_i|} 1_{\set{w_i \neq w_{i+1}}}
    \right)^{1/2} \left(
        \sum_{i=1}^{k'} t_i^2
    \right)^{1/2} \\
    & \leq \left(
        \sum_{i=1}^{k'} \frac{1}{|B_i|} 1_{\set{w_i \neq w_{i+1}}}
    \right)^{1/2},
\end{align*}
which proves the desired result.
\end{proof}

\begin{rmk}\label{rmk:gamma_control}
Note that $\Gamma_i(v, \ell_i)$ is always non-negative.
This is checked as follows:
First, the lower total variation is always larger than the difference of boundary points, that is, for every $v \in \RR^m$, we have
\[
    \sum_{j=1}^{m-1} (v_j - v_{j+1})_+ \geq (v_1 - v_m)_+ \geq w(v_1 - v_m),
\]
where $w$ is taken arbitrarily from $\{ 0, 1 \}$.
The equality holds if and only if $v$ is monotone non-increasing.
Then, for any $\ell \in [m]$ and $w_1, w_2 \in \{ 0, 1 \}$, we have
\[
    \mcV_-(v)
    \geq \sum_{j = 1}^{\ell - 1} (v_{j} - v_{j + 1})_+
    + \sum_{j = \ell}^{m - 1} (v_{j} - v_{j + 1})_+
    \geq w_1(v_{1} - v_{\ell}) + w_{2}(v_{\ell} - v_{m}).
\]
In particular, we obtain $\Gamma_i(v, \ell_i) \geq 0$.
If $\theta$ is monotone non-decreasing (i.e., $w_0 = w_1 = \cdots = w_{k+1} = 0$), then the right-hand side of \eqref{eq:lem_gamma_control_1} equals to $0$, and so $\Gamma_i(v, \ell_i) = 0$.
\end{rmk}

\noindent {\bfseries\upshape Step 2: Quantizing the tangent cone \ }
Now, let $\Pi = \{ B_1, B_2, \ldots, B_{k'} \}$ be a connected refinement of $\Pi_\mathrm{const}(\theta)$.
Lemma \ref{lem:gamma_control} implies that $T_{K_-(\mcV)}(\theta) \cap B_n$ is contained in the set such that $\sum_{i=1}^{k'} \lVert v_{B_i} \rVert_2^2 \leq 1$ and $\sum_{i=1}^{k'} \Gamma_i(v, \ell_i) \leq \gamma$ for some $\ell_i \in B_i$ and $\gamma > 0$.
From this perspective, we consider finitely many allocation patterns of the budgets for $\norm{v_{B_i}}_2^2$ and $\Gamma_i(v, \ell_i)$.
To be more precise, we construct a cover of the tangent cone in the following way.
Consider a triple $(\mathbf{t}, \mathbf{q}, \mathbf{l})$ such that:
\begin{enumerate}[label=(\alph*)]
    \item $\mathbf{t} = (t_1, t_2, \ldots, t_{k'})$ and $\mathbf{q} = (q_1, q_2, \ldots, q_{k'})$ are vectors consisting of non-negative numbers, and
    \item $\mathbf{l} = (\ell_1, \ell_2, \ldots, \ell_{k'})$ is a set of indices such that $\ell_i \in B_i$ for $i = 1, 2, \ldots, k'$.
\end{enumerate}
For such triple, we define a set
\begin{equation}\label{eq:quantized_cone}
    T(\mathbf{t}, \mathbf{q}, \mathbf{l}) = \left\{
        v \in \RR^n: \;
        \norm{v_{B_i}}_2^2 \leq t_i \quad \text{and} \quad
        \Gamma_i(v, \ell_i) \leq q_i \gamma
        \quad \text{for $i = 1, 2, \ldots, k'$}
    \right\},
\end{equation}
where $\gamma$ is taken as the right-hand side of \eqref{eq:lem_gamma_control_1}:
\begin{equation}\label{eq:quantize_delta}
    \gamma := \gamma(\theta, \Pi) = \left(
        \sum_{i=1}^{k'} \frac{1}{|B_i|} 1_{\set{w_i \neq w_{i+1}}}
    \right)^{\frac{1}{2}}.
\end{equation}
Then, quantizing the allocation vectors $\mathbf{t}$ and $\mathbf{q}$, we can cover the set $T_{K_-(\mcV)}(\theta) \cap B_n$ with finitely many $T(\mathbf{t}, \mathbf{q}, \mathbf{l})$s as the following lemma.

\begin{lem}\label{lem:quantization}
Suppose that $\Pi = (B_1, B_2, \ldots, B_{k'})$ is a connected refinement of $\Pi_\mathrm{const}(\theta)$.
Define the signs $w_1, w_2, \ldots, w_{k'}$ as in Lemma \ref{lem:gamma_control}.
Let $\mcQ$ be a set of allocation vectors satisfying the following condition;
there exists an integer vector $\mathbf{m} = (m_1, m_2, \ldots, m_{k'}) \in \mathbb{N}^{k'}$ such that $1 \leq m_i \leq k'$ ($i = 1, 2, \ldots, k'$) and $\sum_{i=1}^{k'} m_i \leq 2k'$, and the allocation vector $q = (q_1, q_2, \ldots, q_{k'}) \in \mathcal{Q}$ can be written as
\[
    q_i = \frac{m_i}{k'} \quad \text{for all $i = 1, 2, \ldots, k'$}.
\]
Let $\mcL$ be a set of indices $\mathbf{l} = (\ell_1, \ell_2, \ldots, \ell_{k'})$ such that $\ell_i \in B_i$ for all $i = 1, 2, \ldots, k'$.
Given $\mathbf{t}, \mathbf{q} \in \mcQ$ and $\mathbf{l} \in \mcL$, we define a set $T(\mathbf{t}, \mathbf{q}, \mathbf{l})$ as \eqref{eq:quantized_cone}.
Then, we have
\begin{equation}\label{eq:lem_quantization}
    T_{K_-(\mcV)}(\theta) \cap B_n
    \subseteq \bigcup_{\substack{\mathbf{t}, \mathbf{q} \in \mcQ,\\ \mathbf{l} \in \mcL}} T(\mathbf{t}, \mathbf{q}, \mathbf{l}).
\end{equation}
\end{lem}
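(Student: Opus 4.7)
The plan is to construct, for each $v \in T_{K_-(\mcV)}(\theta) \cap B_n$, an explicit triple $(\mathbf{t}, \mathbf{q}, \mathbf{l})$ in the allowed index set such that $v$ lies in $T(\mathbf{t}, \mathbf{q}, \mathbf{l})$. The triple will come from rounding up two underlying ``budget'' vectors to the nearest multiple of $1/k'$.

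First, I would fix $v \in T_{K_-(\mcV)}(\theta) \cap B_n$ and invoke Lemma \ref{lem:gamma_control} to obtain indices $\ell_1 \in B_1, \ldots, \ell_{k'} \in B_{k'}$ such that $\sum_i \Gamma_i(v,\ell_i) \leq \gamma$. This fixes $\mathbf{l} \in \mcL$. Define the raw budgets $\alpha_i := \lVert v_{B_i} \rVert_2^2$ and $\beta_i := \Gamma_i(v,\ell_i)$. Note that $\alpha_i \geq 0$ with $\sum_i \alpha_i = \lVert v \rVert_2^2 \leq 1$, and by Remark \ref{rmk:gamma_control} we have $\beta_i \geq 0$ with $\sum_i \beta_i \leq \gamma$.

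Next, I would quantize upward. For each $i$, set $m_i := \max\{1, \lceil k' \alpha_i \rceil\}$ and $m'_i := \max\{1, \lceil k' \beta_i/\gamma \rceil\}$ (the second expression is interpreted as $1$ when $\gamma = 0$, which forces $\beta_i = 0$ for every $i$). Since $\alpha_i \leq 1$ and $\beta_i/\gamma \leq 1$, both multipliers lie in $\{1, 2, \ldots, k'\}$. The total-budget constraint is verified by
\[
    \sum_{i=1}^{k'} m_i \leq \sum_{i=1}^{k'} (k' \alpha_i + 1) \leq k' \cdot 1 + k' = 2k',
\]
and the same bound applies to $\sum_i m'_i$. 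Thus the vectors $\mathbf{t} := (m_i/k')_{i}$ and $\mathbf{q} := (m'_i/k')_{i}$ both belong to $\mcQ$.

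Finally, by the upward-rounding construction, $\lVert v_{B_i} \rVert_2^2 = \alpha_i \leq m_i/k' = t_i$ and $\Gamma_i(v,\ell_i) = \beta_i \leq (m'_i/k')\gamma = q_i \gamma$ for every $i$ (the latter being automatic when $\gamma = 0$, since both sides vanish). Hence $v \in T(\mathbf{t}, \mathbf{q}, \mathbf{l})$, proving the inclusion \eqref{eq:lem_quantization}. The argument is essentially a routine quantization; the only subtle point is the degenerate case $\gamma = 0$, which is handled cleanly by Remark \ref{rmk:gamma_control} since monotonicity of the signs forces all $\Gamma_i(v,\ell_i)$ to vanish on the tangent cone.
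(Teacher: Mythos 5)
Your proof is correct and follows essentially the same route as the paper's: both fix $\mathbf{l}$ via Lemma~\ref{lem:gamma_control}, round the budgets $\norm{v_{B_i}}_2^2$ and $\Gamma_i(v,\ell_i)$ up to the grid $\{1/k',\dots,k'/k'\}$ (the paper phrases this as choosing the integer $m_i$ with $(m_i-1)/k'\le\alpha_i\le m_i/k'$, which is your $\lceil\cdot\rceil$ construction), use $\sum_i\alpha_i\le 1$ and $\sum_i\beta_i\le\gamma$ to bound the totals by $2k'$, and treat $\gamma=0$ separately. The only cosmetic difference is that you make the ceiling explicit and invoke Remark~\ref{rmk:gamma_control} by name, whereas the paper states the rounding implicitly; the content is identical.
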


\begin{proof}
Fix any vector $v$ in $T(\Pi, w) \cap B_n$.
Since $\norm{v_{B_i}}_2^2 \leq \norm{v}_2^2 \leq 1$, there exists an integer $1 \leq m_i \leq k'$ such that
\[
    \frac{m_i - 1}{k'} \leq \norm{v_{B_i}}_2^2 \leq \frac{m_i}{k'}.
\]
Summing over $i = 1, 2, \ldots, k'$, we have
\[
    \sum_{i = 1}^{k'} m_i \leq k' \sum_{i=1}^{k'} \norm{v_{B_i}}_2^2 + k' \leq 2k',
\]
which implies $\mathbf{t} = (m_1 / k', \ldots, m_{k'}/k') \in \mcQ$.

Next, by Lemma \ref{lem:gamma_control}, there exist $\mathbf{l} = (\ell_1, \ldots, \ell_{k'}) \in \mcL$ such that
$
    \sum_{i=1}^{k'} \Gamma_i(v, \ell_i) \leq \gamma.
$
Hence, for any $i$, there exists an integer $1 \leq l_i \leq k'$ such that
\[
    \frac{(l_i - 1)\gamma}{k'} \leq \Gamma_i(v, \ell_i) \leq \frac{l_i \gamma}{k'}
\]
Suppose $\gamma > 0$.
Summing over $i = 1, 2, \ldots, k'$, we have $\sum_{i=1}^{k'} l_i \leq 2k'$ and thus $\mathbf{q} = (l_1 / k', \ldots, l_{k'} / k') \in \mcQ$.
For the case of $\gamma = 0$, it is clear that $\mathbf{q} = (1/k', 1/k', \ldots, 1/k') \in \mcQ$.
\end{proof}

We should note that the cardinalities of $\mcQ$ and $\mcL$ are respectively bounded as follows:

\begin{prop}\label{prop:cardinality}
Let $\mcQ$ and $\mcL$ are the sets defined in Lemma \ref{lem:quantization}.
Then, we have:
\begin{enumerate}[label=(\roman*)]
    \item $\log |\mcQ| \leq 2k' \log 2\ee$, and
    \item $\log |\mcL| \leq k' \log \frac{n}{k'}$.
\end{enumerate}
\end{prop}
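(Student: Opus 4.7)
Both parts reduce to elementary counting arguments, so the plan is to handle $|\mcQ|$ and $|\mcL|$ in turn.

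For part (i), I would identify $\mcQ$ with the set of integer vectors $\mathbf{m} = (m_1, \dots, m_{k'}) \in \NN^{k'}$ satisfying $m_i \geq 1$ and $\sum_i m_i \leq 2k'$ (the auxiliary constraint $m_i \leq k'$ is nearly implied by the sum constraint and in any case only shrinks the count, so I would simply drop it). By the stars-and-bars formula, the number of positive integer solutions of $\sum_i m_i = s$ is $\binom{s-1}{k'-1}$, and summing over $s = k', k'+1, \dots, 2k'$ yields $|\mcQ| \leq \sum_{s=k'}^{2k'}\binom{s-1}{k'-1}$. The hockey-stick identity collapses this sum to $\binom{2k'}{k'}$, and the standard bound $\binom{n}{k} \leq (\ee n/k)^k$ with $n = 2k'$, $k = k'$ gives $\binom{2k'}{k'} \leq (2\ee)^{k'}$. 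Therefore $\log|\mcQ| \leq k' \log(2\ee) \leq 2k' \log(2\ee)$, which is the claimed bound (and in fact a factor of two stronger).

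For part (ii), the set $\mcL$ is literally the Cartesian product $B_1 \times B_2 \times \cdots \times B_{k'}$, so $|\mcL| = \prod_{i=1}^{k'}|B_i|$. Applying Jensen's inequality to the concave function $\log$ (equivalently, AM--GM) and using that $\Pi$ partitions $[n]$ so that $\sum_i |B_i| = n$, I obtain
\[
    \log|\mcL| = \sum_{i=1}^{k'} \log|B_i| \leq k' \log \left( \frac{1}{k'}\sum_{i=1}^{k'}|B_i| \right) = k' \log \frac{n}{k'}.
\]
There is no substantive obstacle here: the entire argument is stars-and-bars plus AM--GM, with the only mild care being to evaluate the hockey-stick telescoping in (i) and to invoke the binomial estimate with the correct base so that the constant inside the logarithm comes out as $2\ee$ rather than the slightly sharper $\ee$.
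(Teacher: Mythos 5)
Your proof is correct. For part (ii) your argument is identical to the paper's: $|\mcL| = \prod_i |B_i|$ and Jensen/AM--GM gives the bound. For part (i) you take a slightly different and arguably cleaner route: both you and the paper start from the stars-and-bars count $\sum_{s=k'}^{2k'}\binom{s-1}{k'-1}$, but you collapse the sum exactly via the hockey-stick identity to the single binomial $\binom{2k'}{k'}$ and then apply the standard estimate $\binom{2k'}{k'}\leq(2\ee)^{k'}$, whereas the paper instead bounds each summand by $\binom{2k'-1}{j}$ and then invokes the partial-binomial-sum inequality $\sum_{j=0}^{k'}\binom{2k'-1}{j}\leq\left(\tfrac{(2k'-1)\ee}{k'}\right)^{k'}$, citing Dudley. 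Both routes yield $|\mcQ|\leq(2\ee)^{k'}$, which is a factor of two stronger than the stated $\log|\mcQ|\leq 2k'\log(2\ee)$; your hockey-stick approach avoids the external citation and is self-contained, which is a modest improvement in presentation though not in strength.
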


\begin{proof}
For the first part, we observe that $|\mcQ|$ is not larger than the cardinality of 
\[
    \bigcup_{M = k'}^{2k'}
    \left\{
        \mathbf{m} = (m_1, \ldots, m_{k'}) \in \mathbb{N}^{k'}: 1 \leq m_i \leq k', \sum_{i} m_i = M
    \right\}.
\]
Then, we have
\begin{align*}
    |\mcQ| & 
    \leq \sum_{j = 0}^{k'} \binom{k' + j - 1}{k' - 1}
    = \sum_{j = 0}^{k'} \binom{k' + j - 1}{j}
    \leq \sum_{j = 0}^{k'} \binom{2k' - 1}{j}\\
    & \underset{\text{(a)}}{\leq} \left( \frac{(2k' - 1)\ee}{k'} \right)^{k'} \leq (2\ee)^{k'}. 
\end{align*}
The proof of the inequality (a) in the above can be found in Proposition 4.3 of \citet{Dudley}.

The second part is obtained by Jensen's inequality as 
\[
    \log |\mcL| = \sum_{i=1}^{k'} \log |B_i| \leq k' \log \left( \sum_{i=1}^{k'} \frac{|B_i|}{k'}\right) = k' \log \frac{n}{k'}.
\]
\end{proof}

\noindent {\bfseries\upshape Step 3: Controlling Gaussian widths \ }
As mentioned before, our goal is to obtain an upper bound of the Gaussian width
\begin{equation}\label{eq:def_tilde_i}
    \tilde{W}(\theta) := w(T_{K_-(\mcV)}(\theta) \cap B_n)
    = \EE \left[
        \sup_{v \in T_{K_-(\mcV)}(\theta) \cap B_n} \langle v, Z \rangle
    \right],
\end{equation}
where we convene that $\EE = \EE_{Z \sim N(0, I_n)}$.
Let $(\Pi, w)$ is a pair of a partition and a sign vector of knots defined as in Lemma \ref{lem:gamma_control}.
Using the decomposition in Lemma \ref{lem:quantization}, we have
\[
    \tilde{W}(\theta) \leq \EE\left[
        \max_{\mathbf{t}, \mathbf{q} \in \mcQ, \ \mathbf{l} \in \mcL}
        \sup_{v \in T(\mathbf{t}, \mathbf{q}, \mathbf{l})} \langle v, Z \rangle
    \right].
\]
Besides, leveraging a general result for Gaussian suprema (see Lemma \ref{lem:guntuboyina_d1} below), we have
\begin{equation}\label{eq:tilde_w_decomposition}
    \tilde{W}(\theta) \leq
    \max_{\mathbf{t}, \mathbf{q} \in \mcQ, \ \mathbf{l} \in \mcL} 
    \EE\left[
        \sup_{v \in T(\mathbf{t}, \mathbf{q}, \mathbf{l})} \langle v, Z \rangle
    \right]
    + 3\sqrt{k' \log \frac{\ee n}{k'} }+ \sqrt{\frac{\pi}{2}}.
\end{equation}
Here, we used Proposition \ref{prop:cardinality} to bound the cardinality of the set $\mcQ^2 \times \mcL$.
More precisely, we used the following evaluation:
\[
    2 \log |\mcQ^2 \times \mcL|
    \leq 4k' \log 2\ee + 2k' \log \frac{\ee n}{k'}
    \leq (4 \log 2 \ee + 2) k' \log \frac{\ee n}{k'}
    < 8.8 k' \log \frac{\ee n}{k'}.
\]

Given $\mathbf{t}, \mathbf{q} \in \mcQ$ and $\mathbf{l} \in \mcL$, we define
\[
    \tilde{W}(\mathbf{t}, \mathbf{q}, \mathbf{l}) = \EE \left[
        \sup_{v \in T(\mathbf{t}, \mathbf{q}, \mathbf{l})} \langle v, Z \rangle
    \right].
\]
Dividing the supremum into $k'$ pieces $v_{B_1}, v_{B_2}, \ldots, v_{B_{k'}}$, this quantity is bounded from above as
$\tilde{W}(\mathbf{t}, \mathbf{q}, \mathbf{l})  \leq \sum_{i=1}^{k'} \tilde{W}_i(t_i, q_i, \ell_i)$,
where
\begin{equation}\label{eq:tilde_w_i}
    \tilde{W}_i(t_i, q_i, \ell_i)
    := \EE_{Z_i \sim N(0, I_{|B_i|})} \left[
        \sup_{v_{B_i} \in T_i(t_i, q_i, \ell_i)}
        \langle v_{B_i}, Z_i \rangle
\right].
\end{equation}
Here, we write $T_i(t_i, q_i, \ell_i) := \set{v_{B_i} \in \RR^{B_i}: \ \norm{v_{B_i}}_2^2 \leq t_i, \ \Gamma_i(v, \ell_i) \leq q_i \gamma}$.

We now consider the quantity \eqref{eq:tilde_w_i}.
In the set $T_i(t_i, q_i, \ell_i)$ over which the supremum taken, the lower total variation of $v_{B_i}$ is bounded from above as
\begin{equation}\label{eq:sec_tildewi_1}
    \mcV_-^{B_i}(v_{B_i}) \leq w_i(v_{\tau_i} - v_\ell) + w_{i+1}(v_{\ell_i} - v_{\tau_{i+1}-1}) + q_i \gamma.
\end{equation}
As mentioned in Remark \ref{rmk:gamma_control}, the reverse inequality
\[
    \mcV_-^{B_i}(v_{B_i}) \geq w_i(v_{\tau_i} - v_\ell) + w_{i+1}(v_{\ell_i} - v_{\tau_{i+1}-1})
\]
is always true, and the equality can hold only if two sub-vectors $(v_{\tau_i}, v_{\tau_i} + 1, \ldots, \ell_i)$ and $(\ell_i, \ell_i + 1, \ldots, v_{\tau_{i + 1}} - 1)$ are either monotone increasing or non-increasing.
From this point of view, we may consider that the meaning of the condition \eqref{eq:sec_tildewi_1} is that $v_{B_i}$ is approximated by two nearly monotone pieces.
This suggests that the complexity of $T_i(t_i, q_i, \ell_i)$ can be evaluated by that of the class of monotone functions.

Below, we provide the upper bound of the Gaussian width of the form \eqref{eq:tilde_w_i}.
First, the following lemma treats a special case where $\ell_i$ is taken as the rightmost point in $B_i$.
\begin{lem}\label{lem:suprema_1}
For every $n \geq 1$, $t > 0$, $w \in \{ 0, 1 \}$ and $\gamma \geq 0$, we have
\begin{align}\label{eq:lem_suprema_1}
    \EE \bigg[
        \sup \bigg\{ \langle v, Z \rangle \ : & \ v \in \RR^n,
        \norm{v}_2 \leq t, \ \text{and} \nonumber \\
        & \sum_{i=1}^{n-1} (v_i - v_{i+1})_+ \leq w(v_1 - v_n) + \gamma
        \bigg \}
    \bigg ] \leq (t + 2\gamma \sqrt{n-1}) \sqrt{\log(\ee n)}.
\end{align}
\end{lem}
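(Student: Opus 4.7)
The plan is to decompose any feasible vector into a monotone piece and a small total-variation residual, and then bound the Gaussian supremum on each piece by a standard estimate.

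First, I would reduce to the case $w = 0$. Applying the reflection $v'_i := -v_{n+1-i}$ sends the case $w = 1$ (where the constraint reads $\mcV_-(v) \leq (v_1 - v_n) + \gamma$) to the case $w = 0$ (where it becomes $\mcV_-(v') \leq \gamma$), since $\mcV_-(v') = \mcV_+(v) = \mcV_-(v) + (v_n - v_1)$; meanwhile $\norm{v'}_2 = \norm{v}_2$ and the distribution of $\langle v', Z\rangle$ is preserved by the symmetry of $Z$. So it suffices to treat $w = 0$, under the constraint $\mcV_-(v) \leq \gamma$.

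Second, for any such $v$, introduce the running maximum $v^\uparrow_i := \max_{1 \leq j \leq i} v_j$ and the residual $r := v^\uparrow - v$. A short case analysis on whether $v_{i+1} > v^\uparrow_i$ shows that $v^\uparrow \in K_n^\uparrow$, $r \geq 0$, $r_1 = 0$, and every upward jump of $r$ corresponds to a downward jump of $v$; in particular $\mcV_+(r) \leq \mcV_-(v) \leq \gamma$. Telescoping from $r_1 = 0$ then gives $r_i \leq \mcV_+(r) \leq \gamma$, so $\norm{r}_\infty \leq \gamma$ and $\norm{r}_2 \leq \gamma \sqrt{n-1}$; the identity $r_n = \mcV_+(r) - \mcV_-(r)$ with $r_n \geq 0$ also yields $\mcV_-(r) \leq \mcV_+(r)$, hence $\mcV(r) \leq 2\gamma$. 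The triangle inequality finally gives $\norm{v^\uparrow}_2 \leq t + \gamma\sqrt{n-1}$.

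Third, split the supremum as
\[
\sup_v \langle v, Z\rangle \leq \sup_{v^\uparrow} \langle v^\uparrow, Z\rangle + \sup_r \langle -r, Z\rangle
\]
and bound each piece separately. For the monotone piece, I would invoke the classical Gaussian-width bound for the isotonic cone: $\EE \sup_{u \in K_n^\uparrow,\, \norm{u}_2 \leq T} \langle u, Z\rangle \leq T\sqrt{\log(\ee n)}$, a consequence of $\delta(K_n^\uparrow) \leq \sum_{k=1}^n 1/k \leq \log(\ee n)$ together with Proposition \ref{prop:statistical_dim_compare}, applied with $T = t + \gamma\sqrt{n-1}$. For the residual, Abel summation (using $r_1 = 0$) yields
\[
\langle r, Z\rangle = \sum_{j=1}^{n-1}(r_{j+1}-r_j)\,S_j, \qquad S_j := \sum_{k=j+1}^n Z_k,
\]
so $|\langle r, Z\rangle| \leq \mcV(r)\max_j |S_j| \leq 2\gamma \max_j |S_j|$, and a Gaussian maximal inequality gives $\EE \max_{1 \leq j \leq n-1}|S_j| \lesssim \sqrt{(n-1)\log(\ee n)}$ since each $S_j$ is centered Gaussian with variance at most $n-1$. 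Adding the two estimates recovers the stated bound.

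The main obstacle is bookkeeping the constants: since the decomposition $v = v^\uparrow - r$ is not orthogonal, the triangle-inequality loss $\gamma\sqrt{n-1}$ propagates additively through the isotonic term and must be combined carefully with the residual maximum-of-Gaussians bound in order to collapse the final prefactor of $\gamma \sqrt{n-1}$ down to exactly $2$; a more refined chaining argument for the joint class of pairs $(v^\uparrow, r)$ is likely needed to reach the precise constants stated in the lemma.
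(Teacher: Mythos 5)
Your proposal has a genuine gap, which you partly acknowledge at the end; let me pin it down and contrast it with what the paper actually does.

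First, a small slip in the reduction step: the map $v'_i := -v_{n+1-i}$ leaves $\mcV_-$ \emph{invariant} rather than swapping it with $\mcV_+$ (the sign flip and the reversal each turn $\mcV_-$ into $\mcV_+$, so the composition of the two undoes the swap). You would want $v'_i := v_{n+1-i}$ or $v'_i := -v_i$. That said, the idea of reducing $w=1$ to $w=0$ via the equivalent form $\mcV_+(v)\le\gamma$ of the constraint is sound, and the paper handles the two cases symmetrically rather than via a reduction, so this part is repairable.

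The substantive issue is the decomposition. You set $v^\uparrow_i := \max_{j\le i} v_j$ and $r := v^\uparrow - v$. The piece $v^\uparrow$ is monotone, but $r$ is \emph{not} monotone (e.g. $v=(1,0,2)$ gives $r=(0,1,0)$), so the isotonic-cone width bound cannot be applied to it and you have to fall back on Abel summation plus a maximal inequality for the partial sums $S_j$. That route costs you a prefactor of roughly $2\sqrt{2}$ on the $\gamma\sqrt{n-1}\sqrt{\log n}$ term from the residual, on top of the $\gamma\sqrt{n-1}\sqrt{\log(\ee n)}$ already spent on $v^\uparrow$ via the triangle inequality, so you land around $(t + (1+2\sqrt{2})\gamma\sqrt{n-1})\sqrt{\log(\ee n)}$ rather than the stated coefficient $2$. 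No amount of chaining fixes this, because the loss is structural: your $v^\uparrow$ is the good part and $r$ the small part, which forces a triangle inequality on $v^\uparrow$ and then a fresh bound on $r$.

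The paper instead uses the Jordan-type decomposition $v = v^+ - v^-$ with $v^+_1=0$, $v^+_i=\sum_{j\le i}(v_j-v_{j-1})_+$, and $v^-_i=-v_1+\sum_{j\le i}(v_{j-1}-v_j)_+$. Both $v^+$ and $v^-$ are monotone nondecreasing, so both can be controlled by the isotonic-cone Gaussian width. The constraint $\mcV_+(v)\le\gamma$ translates \emph{directly} into $v^+_n\le\gamma$, hence $\norm{v^+}_2\le\gamma\sqrt{n-1}$ with no triangle inequality, and then $\norm{v^-}_2\le t+\gamma\sqrt{n-1}$ by one application of the triangle inequality. Applying the width bound $\EE\sup_{u\in K_n^\uparrow,\norm{u}_2\le 1}\langle Z,u\rangle\le\sqrt{\log(\ee n)}$ to each piece gives exactly $(t+2\gamma\sqrt{n-1})\sqrt{\log(\ee n)}$. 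So the key idea you are missing is to choose the monotone decomposition so that the ``small'' piece is itself monotone and its $\ell_2$-norm is bounded by $\gamma\sqrt{n-1}$ directly from the constraint, letting you bound both halves by the same isotonic width estimate.
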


\begin{proof}

The proof is divided into two cases where $w = 1$ and $w = 0$.

\textbf{Case 1 ($w = 1$):}
By scaling properly, we need only consider the case where $t = 1$.
For a vector $v \in \RR^n$, we define a monotone vector $v^+$ as
\[
    v^+_1 = 0 \quad \text{and} \quad
    v^+_i = \sum_{j=2}^i (v_j - v_{j-1})_+ \quad \text{for $i = 2, \ldots, n$}.
\]
We also define another monotone vector $v^-$ as
\[
    v^-_1 = -v_1 \quad \text{and} \quad
    v^-_i = v^-_1 + \sum_{j=2}^i (v_{j-1} - v_j)_+ \quad \text{for $i = 2, \ldots, n$}.
\]
It is easy to check that $v = v^+ - v^-$.
Using these notations, we have
\[
    \mcV_-(v) = \sum_{i=1}^{n-1} (v_i - v_{i+1})_+ = v^-_n - v^-_1.
\]
Hence, the condition $\mcV_-(v) \leq v_1 - v_n + \gamma$ is equivalent to $v^+_n \leq \gamma$, which leads to
\[
    \norm{v^+}_2^2 \leq (n - 1) (v^+_n)^2 \leq (n-1) \gamma^2
\]
and
\[
    \norm{v_-}_2 \leq \lVert v \rVert_2 + \lVert v^+ \rVert_2 \leq 1 + \gamma \sqrt{n-1}.
\]

Denote by $\tilde{W}$ the left-hand side in \eqref{eq:lem_suprema_1} with $t = 1$.
The argument in the previous paragraph implies that
\begin{align}
    \tilde{W} & \leq
    \EE \left[
        \sup_{v^+ \in K_n^\uparrow: \ \lVert v^+ \rVert_2 \leq \gamma\sqrt{n-1}}
        \langle v^+, Z \rangle
    \right]
    + \EE \left[
        \sup_{v^- \in K_n^\uparrow: \ \lVert v^- \rVert_2 \leq 1 + \gamma\sqrt{n-1}}
        \langle v^-, Z \rangle
    \right] \nonumber \\
    & \leq (1 + 2 \gamma \sqrt{n-1}) \cdot \EE \left[
        \sup_{v \in K_n^\uparrow: \ \lVert v \rVert_2 \leq 1}
        \langle v, Z \rangle
    \right].
\label{eq:eq:lem_suprema_1_pf}
\end{align}
The expectation in the last line is bounded as
\[
    \left(
        \EE \left[
            \sup_{v \in K_n^\uparrow: \ \lVert v \rVert_2 \leq 1}
            \langle v, Z \rangle
        \right]
    \right)^2
    \leq
    \EE \left[ \left(
        \sup_{v \in K_n^\uparrow: \ \lVert v \rVert_2 \leq 1}
        \langle v, Z \rangle
    \right)^2 \right]
    \leq \log (\ee n).
\]
Here, the first inequality is the Jensen's inequality, and the second inequality is a consequence of equation (D.12) in \citet{Amelunxen2014}.
Combining with \eqref{eq:eq:lem_suprema_1_pf}, we have the desired result.

\textbf{Case 2 ($w = 0$):}
We can assume w.l.o.g. $t = 1$.
As in Case 1, and we write a vector as a difference of monotone vectors.
For $v \in \RR^n$, we define $v^+$ and $v^-$ as
\[
    v^+_1 = v_1 \quad \text{and} \quad
    v^+_i = \sum_{j=2}^i (v_j - v_{j-1})_+ \quad \text{for $i = 2, \ldots, n$}.
\]
and
\[
    v^-_1 = 0 \quad \text{and} \quad
    v^-_i = v^-_1 + \sum_{j=2}^i (v_{j-1} - v_j)_+ \quad \text{for $i = 2, \ldots, n$},
\]
respectively.
Under this notation, the condition $\mcV_-(v) \leq \gamma$ is equivalent to $v^-_n \leq \gamma$, and therefore we have
\[
    \norm{v^+}_2 \leq 1 + \gamma \sqrt{n-1} \quad \text{and} \quad
    \norm{v^-}_2 \leq \gamma \sqrt{n-1}.
\]
Then, a similar argument as Case 1 yields the result.
\end{proof}

Next, the following lemma provides an upper bound of $\tilde{W}_i$ for general choices of $\ell_i \in B_i$.

\begin{lem}\label{lem:suprema_2}
Fix $n \geq1$, $1 \leq \ell \leq n$, $t > 0$ and $\gamma \geq 0$.
For every $w_1, w_2 \in \{ 0, 1\}$, the quantity
\begin{align*}
    \tilde{W} := \EE 
    \bigg[
        \sup \bigg\{ \langle v, Z \rangle \ : & \ v \in \RR^n,
            \norm{v}_2 \leq t, \ \text{and} \nonumber \\
            & \mcV_-(v) \leq w_1(v_1 - v_\ell) + w_2(v_\ell - v_n) + \gamma
        \bigg \}
    \bigg ]
\end{align*}
is bounded from above as
\begin{equation}\label{eq:lem_suprema_2a}
    \tilde{W} \leq
    \left\{
        \begin{aligned}
            & (t + 2\gamma\sqrt{\ell - 1}) \sqrt{\log (\ee \ell)} +
            (t + 2\gamma \sqrt{n - \ell}) \sqrt{\log (\ee (n - \ell + 1))} \quad & \text{if $1 < \ell < n$} \\
            & (t + 2\gamma \sqrt{n - 1})\sqrt{\log (\ee n)} \quad & \text{if $\ell = 1$ or $n$}.
        \end{aligned}
    \right.
\end{equation}
In particular, we deduce a simpler bound
\begin{equation}\label{eq:lem_suprema_2b}
    \tilde{W} \leq 2 (t + 2 \gamma \sqrt{n - 1})\sqrt{\log (\ee n)}.
\end{equation}
\end{lem}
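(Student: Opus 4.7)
The plan is to reduce the length-$n$ problem to two applications of Lemma \ref{lem:suprema_1} on the sub-intervals $[1, \ell]$ and $[\ell, n]$. The boundary cases $\ell = 1$ or $\ell = n$ are immediate: one of the two terms $w_1(v_1 - v_\ell)$, $w_2(v_\ell - v_n)$ vanishes and the constraint collapses exactly to the form in Lemma \ref{lem:suprema_1}, which yields the stated bound.

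For the interior case $1 < \ell < n$, I would split $v$ into the overlapping sub-vectors $v^{(1)} := v_{[1,\ell]} \in \RR^{\ell}$ and $v^{(2)} := v_{[\ell,n]} \in \RR^{n-\ell+1}$, sharing the coordinate $v_\ell$. The identity $\mcV_-(v) = \mcV_-(v^{(1)}) + \mcV_-(v^{(2)})$ together with Remark \ref{rmk:gamma_control} (which gives $\mcV_-(v^{(1)}) \geq w_1(v^{(1)}_1 - v^{(1)}_\ell)$ and similarly for $v^{(2)}$) shows that the hypothesized constraint on $v$ forces
\[
    \mcV_-(v^{(1)}) \leq w_1(v^{(1)}_1 - v^{(1)}_\ell) + \gamma
    \quad \text{and} \quad
    \mcV_-(v^{(2)}) \leq w_2(v^{(2)}_1 - v^{(2)}_{n-\ell+1}) + \gamma.
\]
Together with $\norm{v^{(1)}}_2, \norm{v^{(2)}}_2 \leq t$, each sub-vector then lies in a set of exactly the shape treated by Lemma \ref{lem:suprema_1}.

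Next, I would split the Gaussian inner product as $\langle v, Z\rangle = \sum_{i=1}^{\ell} v^{(1)}_i Z_i + \sum_{i=2}^{n-\ell+1} v^{(2)}_i Z_{i+\ell-1}$ (assigning the boundary term $v_\ell Z_\ell$ to the first sum), relax the overlap constraint $v^{(1)}_\ell = v^{(2)}_1$, and bound the expected supremum by the sum of two expected suprema. The first term is a Gaussian process driven by the standard Gaussian $Z_{[1,\ell]}$ on $\RR^{\ell}$, and Lemma \ref{lem:suprema_1} applied with size $\ell$ and sign $w_1$ contributes $(t + 2\gamma\sqrt{\ell - 1})\sqrt{\log(\ee \ell)}$. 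The second term is $\langle v^{(2)}, Z''\rangle$ where $Z'' = (0, Z_{\ell+1}, \ldots, Z_n) \in \RR^{n-\ell+1}$ is a Gaussian with its first coordinate zeroed out. To recover the standard Gaussian required by Lemma \ref{lem:suprema_1}, I would invoke Sudakov-Fernique: since $\EE[(\langle v^{(2)} - v'^{(2)}, Z''\rangle)^2] = \sum_{i \geq 2}(v^{(2)}_i - v'^{(2)}_i)^2 \leq \norm{v^{(2)} - v'^{(2)}}_2^2 = \EE[(\langle v^{(2)} - v'^{(2)}, \tilde{Z}\rangle)^2]$ for standard $\tilde{Z}$ on $\RR^{n-\ell+1}$, the expected supremum of the former process is dominated by that of the latter. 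A second application of Lemma \ref{lem:suprema_1}, this time with size $n-\ell+1$ and sign $w_2$, then contributes $(t + 2\gamma\sqrt{n-\ell})\sqrt{\log(\ee(n-\ell+1))}$.

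Summing the two estimates proves \eqref{eq:lem_suprema_2a}; the cleaner form \eqref{eq:lem_suprema_2b} is immediate from the monotonicity of $\sqrt{\cdot}$ and $\sqrt{\log(\ee\cdot)}$ in their arguments, which gives $\sqrt{\ell-1}, \sqrt{n-\ell} \leq \sqrt{n-1}$ and $\sqrt{\log(\ee \ell)}, \sqrt{\log(\ee(n-\ell+1))} \leq \sqrt{\log(\ee n)}$. The only delicate step is the Sudakov-Fernique comparison that absorbs the missing Gaussian coordinate at index $\ell$ in the second sum; the rest is an arithmetic consequence of the decomposition and the non-negativity noted in Remark \ref{rmk:gamma_control}.
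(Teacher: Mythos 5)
Your proof follows the same route as the paper's: split the index set into the overlapping intervals $[1,\ell]$ and $[\ell,n]$, observe that the constraint $\mcV_-(v) \leq w_1(v_1 - v_\ell) + w_2(v_\ell - v_n) + \gamma$ together with the non-negativity from Remark~\ref{rmk:gamma_control} forces each block to satisfy a constraint of the form treated in Lemma~\ref{lem:suprema_1}, bound the supremum by a sum of two block-wise suprema, and conclude with two applications of Lemma~\ref{lem:suprema_1}.

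Where you add value is in making explicit a point the paper elides. The paper simply writes $\tilde W \leq \EE[\sup \langle v_{A_1}, Z_{A_1}\rangle] + \EE[\sup \langle v_{A_2}, Z_{A_2}\rangle]$ with $A_1 = \{1,\dots,\ell\}$ and $A_2 = \{\ell,\dots,n\}$, even though $Z_\ell$ appears in both blocks: after decoupling, the inner product decomposition either double-counts $v_\ell Z_\ell$ or leaves the second block facing a Gaussian vector with its first coordinate zeroed. You correctly assign $v_\ell Z_\ell$ to the first block and then invoke Sudakov--Fernique to replace the degenerate Gaussian $(0, Z_{\ell+1}, \dots, Z_n)$ in the second block by a full standard Gaussian on $\RR^{n-\ell+1}$, which is exactly what Lemma~\ref{lem:suprema_1} requires. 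The comparison is valid and closes the gap. One small remark: Sudakov--Fernique is stronger than necessary here; a direct Jensen argument suffices, since for any index set $T \subseteq \RR^m$ and independent coordinate $Z_1 \perp (Z_2,\dots,Z_m)$ one has
\[
\EE\Big[\sup_{v \in T} \sum_{i \geq 2} v_i Z_i\Big]
= \EE_{Z_{\geq 2}}\Big[\sup_{v \in T} \EE_{Z_1}\big[v_1 Z_1 + \sum_{i \geq 2} v_i Z_i\big]\Big]
\leq \EE\Big[\sup_{v \in T} \langle v, Z\rangle\Big].
\]
Either way, your argument is correct and, if anything, more rigorous than the paper's at the one spot where care is needed.
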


\begin{proof}
Let $(A_1, A_2)$ be a pair of sub-vectors of $[n]$ defined as $A_1 = \{ 1, 2, \ldots, \ell \}$ and $A_2 = \{ \ell, \ell + 1, \ldots, n \}$.
If either $\ell = 1$ or $\ell = n$ (i.e., one of $A_1$ and $A_2$ becomes a singleton), the result is a direct consequence of Lemma \ref{lem:suprema_1}.

Henceforth, we assume that $1 < \ell < n$.
Suppose that $v \in \RR^n$ satisfies the assumption $\mcV_-(v) \leq w_1(v_1 - v_\ell) + w_2(v_\ell - v_n) + \gamma$.
Since $\mcV_-(v) \geq \mcV_-^{A_1}(v_{A_1}) + w_2 (v_\ell - v_n)$, we have
\[
    \mcV_-^{A_1}(v_{A_1}) \leq w_1(v_1 - v_\ell) +\gamma.
\]
Similarly, we have
\[
    \mcV_-^{A_2}(v_{A_2}) \leq
    \mcV_-(v) - w_1(v_1 - v_\ell)
    \leq w_2 (v_{\ell} - v_n) + \gamma.
\]
Based on these observations, we reduce to
\[
    \tilde{W} \leq
    \EE \left[
        \sup_{\substack{v_{A_1} \in \RR^\ell:
        \norm{v_{A_1}}_2 \leq t,\\
        \mcV_-^{A_1}(v_{A_1}) \leq w_1 (v_1 - v_\ell) + \gamma}} \langle v_{A_1}, Z_{A_1} \rangle
    \right]
    + \EE \left[
        \sup_{\substack{v_{A_2} \in \RR^{n - \ell + 1}:
        \norm{v_{A_2}}_2 \leq t,\\
        \mcV_-^{A_2}(v_{A_2}) \leq w_2 (v_{\ell} - v_n) + \gamma}} \langle v_{A_2}, Z_{A_2} \rangle
    \right],
\]
in which both terms in the right-hand side can be bounded using Lemma \ref{lem:suprema_1}.
\end{proof}

Before going to the next step, we summarize the results in Step 3 as follows.

\begin{prop}\label{prop:suprema_summary}
Fix $\theta \in \RR^n$.
Let $\Pi = (B_1, B_2, \ldots, B_{k'})$ be any connected refinement of $\Pi_\mathrm{const}(\theta)$, and $w_1, w_2, \ldots, w_{k'}$ be the signs associated with $\Pi$ as in Lemma \ref{lem:gamma_control}.
Define $\gamma \geq 0$ as \eqref{eq:quantize_delta}.
Then, the quantity $\tilde{W}(\theta)$ defined in \eqref{eq:tilde_w_i} is bounded from above by

\begin{equation}\label{eq:prop_suprema_summary}
    \tilde{W}(\theta) \leq
    \max_{\mathbf{t}, \mathbf{q} \in \mcQ} \left\{
        \sum_{i=1}^{k'}
        2(\sqrt{t_i} + 2 q_i \gamma \sqrt{|B_i| - 1}) \sqrt{\log(\ee |B_i|)}
        + 3\sqrt{k' \log \frac{\ee n}{k'} }
        + \sqrt{\frac{\pi}{2}}
    \right\}.
\end{equation}
\end{prop}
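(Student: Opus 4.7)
The plan is to reduce Proposition \ref{prop:suprema_summary} to a blockwise application of Lemma \ref{lem:suprema_2}, using the decomposition \eqref{eq:tilde_w_decomposition} already derived from Lemma \ref{lem:quantization}. Since \eqref{eq:tilde_w_decomposition} already isolates the additive term $3\sqrt{k' \log(\ee n/k')} + \sqrt{\pi/2}$, I only need to bound $\tilde{W}(\mathbf{t}, \mathbf{q}, \mathbf{l})$ uniformly over $(\mathbf{t}, \mathbf{q}) \in \mcQ \times \mcQ$ and $\mathbf{l} \in \mcL$.

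For a fixed triple $(\mathbf{t}, \mathbf{q}, \mathbf{l})$, my first step is to exploit the block structure of $T(\mathbf{t}, \mathbf{q}, \mathbf{l})$. Both defining constraints, $\norm{v_{B_i}}_2^2 \leq t_i$ and $\Gamma_i(v, \ell_i) \leq q_i \gamma$, depend only on the restricted sub-vector $v_{B_i}$: this is immediate from the definition \eqref{eq:lem_gamma_control_2}, since $\Gamma_i$ involves only coordinates in $B_i$. Therefore the supremum decouples across blocks:
\[
    \sup_{v \in T(\mathbf{t}, \mathbf{q}, \mathbf{l})} \langle v, Z \rangle
    = \sum_{i=1}^{k'} \sup_{v_{B_i} \in T_i(t_i, q_i, \ell_i)} \langle v_{B_i}, Z_{B_i} \rangle,
\]
and taking expectations gives $\tilde{W}(\mathbf{t}, \mathbf{q}, \mathbf{l}) \leq \sum_{i=1}^{k'} \tilde{W}_i(t_i, q_i, \ell_i)$.

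Second, I would apply Lemma \ref{lem:suprema_2} to each term. Rewriting the constraint $\Gamma_i(v, \ell_i) \leq q_i \gamma$ as
\[
    \mcV_-^{B_i}(v_{B_i}) \leq w_i(v_{\tau_i} - v_{\ell_i}) + w_{i+1}(v_{\ell_i} - v_{\tau_{i+1} - 1}) + q_i \gamma,
\]
and identifying $B_i$ with $\{1, \ldots, |B_i|\}$, the hypotheses of Lemma \ref{lem:suprema_2} are met with $t \leftarrow \sqrt{t_i}$, $\gamma \leftarrow q_i \gamma$, $w_1 \leftarrow w_i$, $w_2 \leftarrow w_{i+1}$. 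The lemma then yields
\[
    \tilde{W}_i(t_i, q_i, \ell_i) \leq 2\bigl(\sqrt{t_i} + 2 q_i \gamma \sqrt{|B_i| - 1}\bigr) \sqrt{\log(\ee |B_i|)}.
\]
The crucial observation is that this bound is uniform in the interior point $\ell_i \in B_i$. Summing over $i$ and substituting back into \eqref{eq:tilde_w_decomposition}, the maximum over $\mcL$ collapses, leaving exactly the right-hand side of \eqref{eq:prop_suprema_summary}. I do not anticipate any substantive obstacle: the genuine work was already invested in Lemmas \ref{lem:gamma_control}, \ref{lem:quantization}, and \ref{lem:suprema_2}, and this proposition amounts to the final bookkeeping step.
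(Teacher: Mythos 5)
Your proposal is correct and follows the paper's own argument exactly: the paper likewise obtains the proposition directly from the decomposition \eqref{eq:tilde_w_decomposition}, the blockwise splitting $\tilde{W}(\mathbf{t}, \mathbf{q}, \mathbf{l}) \leq \sum_i \tilde{W}_i(t_i, q_i, \ell_i)$, and a blockwise application of \eqref{eq:lem_suprema_2b} with $t \leftarrow \sqrt{t_i}$, $\gamma \leftarrow q_i\gamma$. Your observation that the Lemma \ref{lem:suprema_2} bound is uniform in $\ell_i$, so the maximum over $\mcL$ drops out, is precisely why the paper's final expression contains no supremum over $\mathbf{l}$.
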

\begin{proof}
This is a direct consequence of \eqref{eq:tilde_w_decomposition} and \eqref{eq:lem_suprema_2b}.
\end{proof}

\noindent {\bfseries\upshape Step 4: Applying Lemma \ref{lem:projection_general} \ }
We now are ready to complete the proof of Theorem \ref{thm:constrained_tangent}.

Recall that our goal is to obtain an upper bound for $\tilde{W}(\theta)$ which is defined in \eqref{eq:tilde_w_i}.
To this end, we will construct a suitable refinement of $\Pi_\mathrm{const}(\theta)$ with moderate piece lengths so that we can control the first term in \eqref{eq:prop_suprema_summary}.
In fact, from an argument parallel to that in \citet{Guntuboyina2017a}, there exists a refinement $\Pi = (B_1, B_2, \ldots, B_{k'})$ such that
\[
    |B_i|
    \leq  \frac{4n}{k'} \quad \text{for $i = 1, 2, \ldots, k'$}
\] 
and $k(\theta) \leq k' \leq 2k(\theta)$.
We also define the signs $w_1, w_2, \ldots, w_{k'}$ in a similar way as Lemma \ref{lem:tangent_characterization}, but if the knot $\tau_i$ is not contained in the original partition $\Pi_\mathrm{const}(\theta)$, the corresponding sign $w_i$ will be specified later.

We can bound the first term in \eqref{eq:prop_suprema_summary} as the following two steps.
First, from the Cauchy--Schwarz inequality and the fact that $\mathbf{t} \in \mcQ$, we have
\begin{align*}
    \sum_{i=1}^{k'} \sqrt{t_i} \sqrt{\log(\ee |B_i|)}
    & \leq \left(
        \sum_{i=1}^{k'} t_i
    \right)^{1/2} \left(
        \sum_{i=1}^{k'} \log(\ee |B_i|)
    \right)^{1/2} \\
    & \leq \sqrt{2} \sqrt{k' \log \frac{\ee n}{k'}}
    \leq 2 \sqrt{k(\theta) \log \frac{\ee n}{k(\theta)}}.
\end{align*}
Second, by the above construction of $\Pi$, we have
\begin{align*}
    \sum_{i=1}^{k'} q_i \gamma \sqrt{|B_i| - 1} \sqrt{\log (\ee |B_i|)}
    & \leq \max_{1 \leq i \leq k'} \left[
        \sqrt{|B_i| \log (\ee |B_i|)}
    \right] \sum_{i=1}^{k'} q_i \gamma\\
    & \leq 2 \gamma \cdot 2(1 + \log 4)\sqrt{\frac{n}{k'}\log\frac{\ee n}{k'}}\\
    & \leq 10 \gamma \sqrt{\frac{n}{k(\theta)}\log\frac{\ee n}{k(\theta)}}.
\end{align*}
Therefore, the right-hand side in \eqref{eq:prop_suprema_summary} can be bounded from above by
\begin{equation}\label{eq:proof_main_1}
    10 \sqrt{k(\theta) \log \frac{\ee n}{k(\theta)}}
    + 20 \gamma \sqrt{\frac{n}{k(\theta)}\log\frac{\ee n}{k(\theta)}}.
\end{equation}
Here, to hide the constant term $\sqrt{\pi / 2}$, we have also used the fact that $\sqrt{m \log (\ee n/m)} \geq 1$ for every integer $1 \leq m \leq n$.

Let $w^0_1, w^0_2, \ldots, w^0_{k(\theta) + 1}$ be the signs associated with the constant partition $\Pi_\mathrm{const}(\theta) = (A_1, A_2, \ldots, A_{k(\theta)})$ (recall the definition \eqref{eq:sign_definition}).
Then, we can choose the values of $w_i$ so that the following inequality holds:
\begin{align}\label{eq:proof_main_2}
    \gamma^2
    & = \sum_{i=1}^{k'} |B_i|^{-1} 1_{\set{w_i \neq w_{i+1}}}
    \leq \sum_{j=1}^{k(\theta)} \left[
        \min \left\{ |A_j|,\ \left \lfloor \frac{2n}{k(\theta)} \right \rfloor \right\}
    \right]^{-1} 1_{\set{ w_j^0 \neq w_{j+1}^0 }} \nonumber \\
    & \leq \sum_{i=1}^{k(\theta)}
    \left [
        \min \left\{ |A_i|, \ \frac{n}{k(\theta)} \right\}
    \right]^{-1} 1_{\set{ w_i^0 \neq w_{i+1}^0}} \nonumber \\
    & = M(\theta).
\end{align}
In fact, this is possible if we choose $w_i$ as the sign $w_j^0$ for the nearest knot that is to the right of $\tau_i$.
Combining \eqref{eq:proof_main_2}, \eqref{eq:proof_main_1} and Proposition \ref{prop:statistical_dim_compare}, the statistical dimension of $T_{K_-(\mcV)}(\theta)$ is bounded from above as
\[
    \delta(T_{K_-(\mcV)}(\theta))
    \leq \tilde{W}^2(\theta) + 1
    \leq 800 n \left[
        \frac{k(\theta)}{n} \log \frac{\ee n}{k(\theta)}
        + \frac{M(\theta)}{k(\theta)} \log \frac{\ee n}{k(\theta)}
    \right] + 1,
\]
where we also used the elementary fact that $(a + b)^2 \leq 2 (a^2 + b^2)$.
Consequently, applying Lemma \ref{lem:projection_general}, we have desired result.

\begin{rmk}[Non-Gaussian noises]\label{rmk:non_gauss}
For non-Gaussian noise setting, we could prove an analogous result to Proposition \ref{prop:statistical_dimension_bound}. We comment on a sketch of the proof for such a generalization.

The proof of Proposition \ref{prop:statistical_dimension_bound} consists of (i) a decomposition argument for the tangent cone and (ii) bounds for some probabilistic quantities (i.e., the statistical dimension and the Gaussian width). The former argument is completely deterministic and independent from the distributional assumption on the noise variables. Regarding the probabilistic bounds, we used the following bound for (Gaussian) statistical dimension of $K_n^\uparrow$:
\[
    \delta(K_n^\uparrow) \leq \log (\ee n).
\]
Hence, if we can obtain a similar bound for non-Gaussian random variables, we can prove a analogous result to Proposition \ref{prop:statistical_dimension_bound}.

Let $\xi_1, \ldots, x_n$ be i.i.d.~random variables with $\EE[\xi_1] = 0$ and $\mathrm{Var}(\xi_1) = \sigma^2$. For a convex cone $C$, we define the statistical dimension as
\[
    \bar{\delta}(C) =
    \frac{1}{\sigma^2} \EE \left[
        \left(
            \sup_{\theta \in C: \norm{\theta}_2 \leq 1}
            \langle \xi, \theta \rangle
        \right)^2
    \right]
    = \frac{1}{\sigma^2} \EE \norm{\mathrm{Proj}_C(\xi)}^2.
\]
Here, we write $\mathrm{Proj}_C(x) = \argmin_{z \in C} \norm{z - x}_2$, and the last equality holds from a deterministic relation
\[
    \left(
        \sup_{\theta \in C: \norm{\theta}_2 \leq 1}
        \langle \xi, \theta \rangle
    \right)^2
    = \norm{\mathrm{Proj}_C(\xi)}^2.
\]
(See \citet{Amelunxen2014} for details). Then, from Theorem 3.1 in \citet{Chatterjee2015}, we can check that
\[
    \bar{\delta}(K_n^\uparrow) \leq 16 \log (\ee n).
\]
Therefore, by following a similar argument as the proof of Proposition \ref{prop:statistical_dimension_bound}, we conclude that
\[
    \bar{\delta}(T_{K_-(\mcV)})
    \leq C' n \left\{
        \frac{k(\theta)}{n} \log \frac{\ee n}{k(\theta)}
        + \frac{M(\theta)}{k(\theta)} \log \frac{\ee n}{k(\theta)}
    \right\}
\]
for some universal constant $C' > 0$. As a consequence, we can prove the expected risk bound similar to \eqref{eq:thm_penalized_1} for non-Gaussian noise variables.
\end{rmk}

\subsection{Proof of Corollary \ref{cor:min_length}}

Let $\alpha > 0$ be a number to be specified later.
Define a vector $\theta' \in \RR^n$ as $\theta'_1 = \theta^*_1$ and
\begin{align*}
    \theta'_i & =
    \theta^*_1
    + \sum_{j = 1}^{i - 1} (\theta^*_{j + 1} - \theta^*_{j})_+
    - \alpha \sum_{j = 1}^{i - 1} (\theta^*_j - \theta^*_{j + 1})_+    
    \quad \text{for $i = 2, 3, \ldots, n$}.
\end{align*}
Then, we have $\mcV_-(\theta') = \alpha \mcV_-(\theta^*)$.
Moreover, the constant partition and the sign of $\theta'$ (defined in \eqref{eq:sign_definition}) are the same as those of $\theta^*$, and therefore $k(\theta') = k(\theta^*)$ and $M(\theta') = M(\theta^*)$.

Now, we set $\alpha = \mcV / \mcV_-(\theta^*)$ so that $\mcV_-(\theta') = \mcV$.
Applying the upper bound \eqref{eq:thm_constrained_tangent_1}, we have
\[
    \frac{1}{n} \EE_{\theta^*} \norm{\hat{\theta}_\mcV - \theta^*}_2^2
    \leq \frac{1}{n} \norm{\theta' - \theta^*}_2^2
    + C \sigma^2 \frac{k(\theta^*)}{n} \log \frac{\ee n}{k(\theta^*)}
    + C \sigma^2 \frac{M(\theta^*)}{k(\theta^*)} \log \frac{\ee n}{k(\theta^*)}.
\]
The first term in the right-hand side is bounded from above as
\begin{align*}
    \frac{1}{n} \norm{\theta' - \theta^*}_2^2
    = \frac{(1 - \alpha)^2}{n} \sum_{i=2}^{n} \left(
        \sum_{j = 1}^{i - 1} (\theta^*_j - \theta^*_{j + 1})_+
    \right)^2
    \leq (1 - \alpha)^2 (\mcV_-(\theta^*))^2
    = (\mcV - \mcV_-(\theta^*))^2.
\end{align*}
From the minimal length condition \eqref{eq:min_length} and the definition of $M(\theta)$, we also have
\[
    \frac{M(\theta^*)}{k(\theta^*)} \log \frac{\ee n}{k(\theta^*)}
    \leq \frac{2 c^{-1} (m(\theta^*) - 1)}{n} \log \frac{\ee n}{k(\theta^*)}. 
\]
Combining the above inequalities, we have the desired result.

\subsection{Risk bounds for penalized estimators (Proof of Theorem \ref{thm:penalized})}\label{sec:penalized_risk_proof}

We prove Theorem \ref{thm:penalized} as an application of Lemma \ref{lem:proximal_general}.
Let $\partial \mcV_-(\theta)$ denote the set of subgradients (i.e., subdifferential) of the convex function $\mcV_-(\cdot)$ at $\theta \in \RR^n$.
The task is to provide a suitable upper bound for the Gaussian mean squared distance of the set $\lambda \partial \mcV_-(\theta)$.
To do this, we use the technique developed in \citet{Guntuboyina2017a}.
The idea is stated roughly as follows:
Recall that the Gaussian mean squared distance of a convex cone can be written as the statistical dimension of the polar cone (Proposition \ref{prop:statistical_dim_compare}-(ii)).
This motivates us to relate the Gaussian mean squared distance $\mbD(\lambda \partial \mcV_-(\theta))$ to that of an associated cone.
In particular, we consider the conic hull of the subdifferential:
\[
    \cone(\partial \mcV_-(\theta))
    := \bigcup_{\lambda \geq 0} \lambda \partial \mcV_-(\theta).
\]
As we explain later, $\mbD(\cone(\partial \mcV_-(\theta)))$ can be evaluated by the results in the previous subsection.
Then, we can complete the proof if we have an upper bound of the following form:
\begin{equation}\label{eq:conic_hull_goal}
    \mbD(\lambda \partial \mcV_-(\theta))
    \leq \mbD(\cone (\partial \mcV_-(\theta))) + \Delta(\theta, \lambda),
\end{equation}
where $\Delta(\theta, \lambda)$ is a residual term that depends on $\theta$ and $\lambda$.

First, we show that $\mbD(\cone (\partial \mcV_-(\theta)))$ has exactly the same value as the statistical dimension of the tangent cone of $T_{K_-(\mcV_-(\theta))}(\theta)$, which we have already provided a bound in the previous part in this paper.

\begin{prop}\label{prop:conic_polar}
For any $\theta \in \RR^n$, the following equality holds:
\[
    \mbD(\cone(\partial \mcV_-(\theta))) = \delta(T_{K_-(\mcV(\theta))}(\theta)).
\]
In particular, we have the following upper bound:
\[
    \mbD(\cone(\partial \mcV_-(\theta)))
    \leq C n \left\{
        \frac{k(\theta)}{n} \log \frac{\ee n}{k(\theta)}
        + \frac{M(\theta)}{k(\theta)} \log \frac{\ee n}{k(\theta)}
    \right\},
\]
where $C$ is the same universal constant as in Proposition \ref{prop:statistical_dimension_bound}.
\end{prop}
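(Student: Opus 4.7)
The plan is to derive the equality from the standard polar duality between subdifferentials and tangent cones, and then apply Proposition \ref{prop:statistical_dim_compare}(ii) to convert the Gaussian mean squared distance statement into a statistical dimension statement to which Proposition \ref{prop:statistical_dimension_bound} can be applied. Throughout, I will read $\mcV(\theta)$ on the left-hand side as $\mcV_-(\theta)$, which is the level that puts $\theta$ on the boundary of $K_-(\cdot)$.

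First, I would establish the polar relation
\[
    \bigl( \cone(\partial \mcV_-(\theta)) \bigr)^\circ = T_{K_-(\mcV_-(\theta))}(\theta).
\]
This uses the following standard convex analysis fact: if $f$ is a (finite-valued) convex function, $\theta$ satisfies $f(\theta) = c$, and a Slater-type condition holds (there is some $\theta'$ with $f(\theta') < c$), then the normal cone to the sublevel set $\{x : f(x) \leq c\}$ at $\theta$ equals $\cone(\partial f(\theta))$. Taking $f = \mcV_-$ and $c = \mcV_-(\theta)$, the Slater condition is satisfied by any monotone vector (which has $\mcV_- = 0$), so
\[
    N_{K_-(\mcV_-(\theta))}(\theta) = \cone(\partial \mcV_-(\theta))
\]
whenever $\mcV_-(\theta) > 0$. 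Since the tangent cone is the polar of the normal cone for closed convex sets, the displayed polar relation follows.

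Second, I would combine this with Proposition \ref{prop:statistical_dim_compare}(ii), which gives $\mbD(C) = \delta(C^\circ)$ for any closed convex cone $C$. Applying it to $C = \cone(\partial \mcV_-(\theta))$ yields
\[
    \mbD(\cone(\partial \mcV_-(\theta))) = \delta\bigl((\cone(\partial \mcV_-(\theta)))^\circ\bigr) = \delta(T_{K_-(\mcV_-(\theta))}(\theta)),
\]
which is the claimed equality. The second assertion is then immediate: apply Proposition \ref{prop:statistical_dimension_bound} with $\mcV := \mcV_-(\theta)$ to get the stated bound in terms of $k(\theta)$ and $M(\theta)$.

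The main obstacle is the degenerate case $\mcV_-(\theta) = 0$, where $\theta$ is monotone, hence lies in the interior of $K_-(0) = K_n^\uparrow$ inside the ambient space of monotone vectors but at a minimizer of $\mcV_-$. In this case, the Slater condition for the sublevel set $\{x : \mcV_-(x) \leq 0\}$ fails, and one must argue separately: here $\partial \mcV_-(\theta)$ is the set of subgradients at a minimizer, its conic hull equals the normal cone to $K_n^\uparrow$ at $\theta$, and $T_{K_-(0)}(\theta) = T_{K_n^\uparrow}(\theta)$ is indeed its polar. Once the edge case is handled, the bound follows since $M(\theta) = 0$ for monotone $\theta$ and Proposition \ref{prop:statistical_dimension_bound} applies directly.
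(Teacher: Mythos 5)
Your proof is correct and follows essentially the same route as the paper: establish that $\cone(\partial\mcV_-(\theta))$ is the polar of $T_{K_-(\mcV_-(\theta))}(\theta)$, apply Proposition~\ref{prop:statistical_dim_compare}(ii) to convert $\mbD$ of a cone into $\delta$ of its polar, and then invoke Proposition~\ref{prop:statistical_dimension_bound}. You also correctly read the typo $\mcV(\theta)$ as $\mcV_-(\theta)$.

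The one place your argument diverges in presentation is the polar-duality step. The paper does not reconstruct it from the Slater-condition normal-cone formula; instead it cites a black-box lemma from \citet{Guntuboyina2017a} (their Lemma A.5/A.6) stating $\cone(\partial f(\theta)) = (T_{\{f \leq f(\theta)\}}(\theta))^\circ$. That lemma is stated for convex \emph{seminorm-type} (positively homogeneous, nonnegative) functions, for which the degenerate case $f(\theta) = 0$ is already covered without a Slater-type hypothesis. Your first-principles derivation via the sublevel-set normal cone formula is correct for $\mcV_-(\theta) > 0$, and your instinct to separate out $\mcV_-(\theta) = 0$ is sound — the general normal-cone formula for sublevel sets really does need Slater, as the example $f(x) = x^2$ at $\theta = 0$ shows — but the way you dispatch the degenerate case (``its conic hull equals the normal cone to $K_n^\uparrow$'') is asserted rather than proved. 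A clean way to close that gap is to note that $\mcV_-$ is the support function of a polytope $\mcB$ containing $0$ (it is the Lov\'asz extension of a nonnegative cut function, as in Appendix~\ref{sec:weak_decomp}), so $\{z : \mcV_-(z) \leq 0\} = \mcB^\circ$ is a polyhedral cone, $\partial\mcV_-(\theta) = \{v \in \mcB : \langle v, \theta\rangle = 0\}$, and the identity $N_{\mcB^\circ}(\theta) = \cone(\partial\mcV_-(\theta))$ follows from the characterization of normal cones of polyhedra. With that filled in, your argument and the paper's are equivalent.
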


\begin{proof}
Let us write $T := T_{K_-(\mcV(\theta))}(\theta)$.
In the light of Proposition \ref{prop:statistical_dim_compare}-(ii), it suffices to show that $T$ is the polar cone of $\cone(\partial \mcV_-(\theta))$.
However, from fundamental results in convex geometry, we always have
\[
    \cone(\partial f(\theta)) = \left(
        T_{K(\theta)}(\theta)
    \right)^\circ
    \quad \text{with} \quad
    K(\theta) := \set{z \in \RR^n: f(z) \leq f(\theta)}
\]
for any convex function $f: \RR^n \to \RR$ (see Lemma A.5 and Lemma A.5 in \citet{Guntuboyina2017a}).
For the case where $f = \mcV_-$, the set $K(\theta)$ above is
\[
    K_-(\mcV_-(\theta)) = \set{z \in \RR^n: \mcV_-(z) \leq \mcV_-(\theta)},
\]
which implies the desired result.
\end{proof}

Next, we provide an inequality of the form \eqref{eq:conic_hull_goal}.
Since $\cone(\partial \mcV_-(\theta)) \supseteq \lambda \partial \mcV_-(\theta)$ holds for every $\lambda \geq 0$, the definition of the Gaussian mean squared distance (Definition \ref{defi:gaussian_width}-(ii)) suggests that $\mbD(\cone(\partial \mcV_-(\theta))) \leq \mbD(\lambda \partial \mcV_-(\theta))$.
However, we need a reverse inequality \eqref{eq:conic_hull_goal}.
To this end, we use the following result proved by \citet{Guntuboyina2017a}.

\begin{lem}[\citet{Guntuboyina2017a}, Proposition B.5]\label{lem:guntuboyina_b5}
Let $f: \RR^n \to \RR$ be a convex function, and $\theta \in \RR^n$.
Define a vector $v_0$ as
\begin{equation}\label{eq:guntuboyina_b5_def_v0}
    v_0 := \argmin_{v \in \aff(\partial f(\theta))} \norm{v}_2,
\end{equation}
where $\aff(C)$ is the affine hull of the set $C \subseteq \RR^n$.
Suppose that $v_0 \neq 0$.
For any $z \in \RR^n$, define $\lambda(z) \geq 0$ as
\[
    \lambda(z) := \argmin_{\lambda \geq 0} \dist(z, \lambda \partial f(\theta)).
\]
Then, $\lambda(z)$ is well-defined, and has a finite expectation $\EE_{Z \sim N(0, I_n)}[\lambda(Z)] < \infty$.

Further, define $\lambda^*$ as
\[
    \lambda^* := \lambda^*(\theta)
    = \EE_{Z \sim N(0, I_n)}[\lambda(Z)] + \frac{2}{\norm{v_0}_2}.
\]
Then, for every $\lambda \geq \lambda^*$ and $v^* \in \partial f(\theta)$, we have
\begin{equation}\label{eq:lem_guntuboyina_b5}
    \mbD(\lambda \partial f(\theta))
    \leq 4 + \left(
        \sqrt{\mbD(\cone(\partial f(\theta)))} + \frac{4 \norm{v^*}_2}{\norm{v_0}_2}
        + 2 + (\lambda - \lambda^*) \norm{v^*}_2
    \right)^2.
\end{equation}
\end{lem}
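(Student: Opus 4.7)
The plan is to identify $\lambda(Z) \tilde{v}(Z)$ with the metric projection of $Z$ onto the closed convex cone $\cone(\partial f(\theta))$, exploit a sharp Lipschitz property of $\lambda(\cdot)$ that depends crucially on $v_0$, and then bound $\dist(Z, \lambda \partial f(\theta))$ by $\dist(Z, \cone(\partial f(\theta)))$ plus residual terms controlled via Gaussian concentration.

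First, I would establish the well-definedness and the Lipschitz bound on $\lambda(\cdot)$. Since $\partial f(\theta)$ is nonempty, convex, compact, and its affine hull avoids the origin (by $v_0 \neq 0$), the cone $\cone(\partial f(\theta))$ is closed, and the infimum defining $\lambda(z)$ is attained at a finite value with $P_{\cone(\partial f(\theta))}(z) = \lambda(z) \tilde{v}(z)$ for some $\tilde{v}(z) \in \partial f(\theta)$. The key geometric fact is that $v_0$ is the orthogonal projection of the origin onto $\aff(\partial f(\theta))$, so $\langle v, v_0 \rangle = \norm{v_0}^2$ for every $v \in \aff(\partial f(\theta))$. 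Applied to $\tilde{v}(Z) \in \partial f(\theta)$, this yields
\[
    \lambda(Z) = \frac{\langle p(Z), v_0 \rangle}{\norm{v_0}^2}, \qquad p(Z) := P_{\cone(\partial f(\theta))}(Z),
\]
and since $p$ is $1$-Lipschitz, $\lambda(\cdot)$ is $(1/\norm{v_0})$-Lipschitz. Combined with $\lambda(0) = 0$, this implies $\EE \lambda(Z) \leq \EE \norm{Z}/\norm{v_0} < \infty$, and the Gaussian Poincaré inequality gives $\Var(\lambda(Z)) \leq 1/\norm{v_0}^2$.

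Second, I would derive the pointwise distance estimate. Given any $v^* \in \partial f(\theta)$, convexity of $\partial f(\theta)$ provides two natural feasible points in $\lambda \partial f(\theta)$: if $\lambda \geq \lambda(Z)$, then $p(Z) + (\lambda - \lambda(Z)) v^* = \lambda \bigl( \tfrac{\lambda(Z)}{\lambda} \tilde{v}(Z) + (1 - \tfrac{\lambda(Z)}{\lambda}) v^* \bigr) \in \lambda \partial f(\theta)$; if $\lambda < \lambda(Z)$, then $\lambda \tilde{v}(Z) \in \lambda \partial f(\theta)$. Applying the Euclidean triangle inequality in each case yields
\[
    \dist(Z, \lambda \partial f(\theta)) \leq \dist(Z, \cone(\partial f(\theta))) + (\lambda - \lambda(Z))_+ \norm{v^*} + (\lambda(Z) - \lambda)_+ \norm{\tilde{v}(Z)}.
\]
Passing to $L^2$ norms via the triangle inequality in $L^2(\PR)$ reduces the task to bounding the two residual $L^2$-norms.

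Third, I would combine the Lipschitz bound with the choice $\lambda^* = \EE \lambda(Z) + 2/\norm{v_0}$ to close the argument. For $\lambda \geq \lambda^*$, $\norm{(\lambda - \lambda(Z))_+}_{L^2} \leq (\lambda - \EE \lambda(Z)) + \norm{\lambda(Z) - \EE \lambda(Z)}_{L^2} \leq (\lambda - \lambda^*) + 3/\norm{v_0}$, which accounts for the $(\lambda - \lambda^*)\norm{v^*}$ term and part of the $4\norm{v^*}/\norm{v_0}$ term. The hard part will be controlling $\norm{(\lambda(Z) - \lambda)_+ \norm{\tilde{v}(Z)}}_{L^2}$, since $\norm{\tilde{v}(Z)}$ is not uniformly bounded. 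The idea is to use the identity $\lambda(Z) \norm{\tilde{v}(Z)} = \norm{p(Z)} \leq \norm{Z}$ (projection onto a cone containing the origin), so that on $\{\lambda(Z) > \lambda\}$ we get $(\lambda(Z) - \lambda)_+ \norm{\tilde{v}(Z)} \leq \norm{Z} \mathbf{1}_{\{\lambda(Z) > \lambda\}}$; this indicator has exponentially small Gaussian mass thanks to the $2/\norm{v_0}$ buffer between $\lambda^*$ and $\EE \lambda(Z)$ and the Lipschitz tail bound on $\lambda(Z)$. A Cauchy--Schwarz bookkeeping step against the Gaussian moments of $\norm{Z}$ absorbs the remaining mass into the constants $4$ and $2$ and the remaining portion of $4\norm{v^*}/\norm{v_0}$, producing the bound \eqref{eq:lem_guntuboyina_b5}.
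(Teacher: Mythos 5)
Your Steps 1 and 2 are sound: the identity $\lambda(z)=\langle P_{\cone(\partial f(\theta))}(z),v_0\rangle/\norm{v_0}_2^2$, the resulting $1/\norm{v_0}_2$-Lipschitz property, finiteness of $\EE[\lambda(Z)]$, and the pointwise bound $\dist(Z,\lambda\partial f(\theta))\le \dist(Z,\cone(\partial f(\theta)))+(\lambda-\lambda(Z))_+\norm{v^*}_2+(\lambda(Z)-\lambda)_+\norm{\tilde v(Z)}_2$ are all correct, and the first residual is handled exactly as it should be. (Note the paper itself does not prove this lemma; it imports it as Proposition B.5 of Guntuboyina et al.\ 2017a, so the comparison is with that external proof.)

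The gap is in your treatment of the event $\set{\lambda(Z)>\lambda}$, and it is not a bookkeeping issue. The buffer $2/\norm{v_0}_2$ in $\lambda^*$ exactly cancels the Lipschitz constant $1/\norm{v_0}_2$, so Gaussian concentration only gives $\Pr\set{\lambda(Z)>\lambda}\le e^{-2}$, a dimension-free \emph{constant}. Consequently your proposed estimate $\EE[\norm{Z}_2^2 1_{\set{\lambda(Z)>\lambda}}]\le \sqrt{\EE\norm{Z}_2^4}\,\sqrt{\Pr\set{\lambda(Z)>\lambda}}\asymp n e^{-1}$ grows linearly in $n$ and cannot be ``absorbed into the constants $4$ and $2$''; there is no mechanism for a fixed exceptional probability to beat $\EE\norm{Z}_2^2=n$. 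Worse, the loss is already built into your choice of comparison point $\lambda\tilde v(Z)$ (scaling the cone projection toward the apex), not only into the Cauchy--Schwarz step: take $f$ the support function of $A=\set{(1,u):u\in\RR^{n-1},\ \norm{u}_2\le\rho}$ at $\theta=0$, so $\partial f(\theta)=A$, $v_0=v^*=(1,0,\dots,0)$, and $\rho$ huge. There $\lambda\partial f(\theta)$ is essentially the slab $\set{w_1=\lambda}$, the true excess $\dist(Z,\lambda\partial f(\theta))-\dist(Z,\cone(\partial f(\theta)))$ is of order $(Z_1-\lambda)_+$, i.e.\ $O(1)$ in $L^2$, while your bound $(\lambda(Z)-\lambda)_+\norm{\tilde v(Z)}_2\ge (Z_1-\lambda)_+\norm{(Z_2,\dots,Z_n)}_2/Z_1$ has $L^2$ norm of order $\sqrt{n}$ on the bad event. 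So the route you sketch can only yield the lemma with an additional dimension-dependent additive term, which is strictly weaker than \eqref{eq:lem_guntuboyina_b5}; the case $\lambda<\lambda(Z)$ requires a genuinely different comparison argument that exploits the whole slice $\lambda\partial f(\theta)$ of the cone at height $\lambda$ (this is precisely the delicate part of Guntuboyina et al.'s Proposition B.5), not just the ray through $P_{\cone(\partial f(\theta))}(Z)$.
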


Before proceeding, we introduce an additional terminology:
A convex function $f: \RR^n \to \RR$ is said to be \textit{weakly decomposable} if we have
\begin{equation}\label{eq:defi_weak_decomposable}
    \argmin_{v \in \aff(\partial f(\theta))} \norm{v}_2 \in \partial f(\theta)
\end{equation}
for every $\theta \in \RR^n$.
In other words, we can choose $v_0 \equiv v^*$ in \eqref{eq:lem_guntuboyina_b5} if $f$ is weakly decomposable.
Under the assumption that $f$ is weakly decomposable, the inequality \eqref{eq:lem_guntuboyina_b5} can be simplified as follows:

\begin{cor}\label{cor:lem_guntuboyina_b5}
Suppose that $f: \RR^n \to \RR$ is convex and weakly decomposable.
Under the same notation as in Lemma \ref{lem:guntuboyina_b5}, we have
\[
    \mbD(\lambda \partial f(\theta))
    \leq 3 \mbD(\cone(\partial f(\theta)))
    + 3 (\lambda - \lambda^*)^2 \norm{v_0}_2^2 + 112.
\]
\end{cor}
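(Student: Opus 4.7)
The plan is to derive Corollary \ref{cor:lem_guntuboyina_b5} as a direct consequence of Lemma \ref{lem:guntuboyina_b5} combined with the weak decomposability hypothesis and a simple convexity inequality.

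First, I invoke the weak decomposability of $f$, which asserts precisely that the vector $v_0$ defined in \eqref{eq:guntuboyina_b5_def_v0} lies in $\partial f(\theta)$. This means $v_0$ itself is an admissible choice of $v^\ast \in \partial f(\theta)$ in the statement of Lemma \ref{lem:guntuboyina_b5}. Setting $v^\ast := v_0$ collapses the awkward ratio $4\norm{v^\ast}_2/\norm{v_0}_2$ to the constant $4$, and the bound \eqref{eq:lem_guntuboyina_b5} reduces to
\[
    \mbD(\lambda \partial f(\theta))
    \leq 4 + \left(
        \sqrt{\mbD(\cone(\partial f(\theta)))} + 6 + (\lambda - \lambda^\ast) \norm{v_0}_2
    \right)^2.
\]

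Next, I apply the elementary inequality $(a+b+c)^2 \leq 3(a^2+b^2+c^2)$ to the three summands inside the square, with $a = \sqrt{\mbD(\cone(\partial f(\theta)))}$, $b = 6$, and $c = (\lambda - \lambda^\ast)\norm{v_0}_2$ (both squared terms involving $(\lambda - \lambda^\ast)$ are legitimate since Lemma \ref{lem:guntuboyina_b5} presupposes $\lambda \geq \lambda^\ast$, so $c \geq 0$). This yields
\[
    \mbD(\lambda \partial f(\theta))
    \leq 4 + 3 \mbD(\cone(\partial f(\theta))) + 3 \cdot 36 + 3 (\lambda - \lambda^\ast)^2 \norm{v_0}_2^2.
\]

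Finally, adding the constants $4 + 108 = 112$ gives the claimed bound. There is no real obstacle in this argument: the only subtle point is recognizing that weak decomposability is what allows $v^\ast$ and $v_0$ to be identified, after which the result is a one-line application of $(a+b+c)^2 \leq 3(a^2+b^2+c^2)$. The corollary will be invoked in the proof of Theorem \ref{thm:penalized} by taking $f = \mcV_-$ (which is weakly decomposable, a fact presumably verified elsewhere in the appendix) and combining with the bound on $\mbD(\cone(\partial \mcV_-(\theta)))$ from Proposition \ref{prop:conic_polar}, yielding an estimate for $\mbD(\lambda\, \partial \mcV_-(\theta))$ that can be plugged into Lemma \ref{lem:proximal_general}.
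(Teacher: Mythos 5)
Your proposal is correct and is exactly the computation the paper intends (the paper states Corollary \ref{cor:lem_guntuboyina_b5} without proof, describing it simply as a simplification of \eqref{eq:lem_guntuboyina_b5} under weak decomposability). You correctly identify that weak decomposability is precisely the statement $v_0 \in \partial f(\theta)$, allowing the choice $v^* = v_0$, after which $(a+b+c)^2 \leq 3(a^2+b^2+c^2)$ with $b=6$ gives the constant $4 + 108 = 112$ as claimed.
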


Now, we apply Lemma \ref{lem:guntuboyina_b5} to the case $f = \mcV_-$.
The following proposition provides the structural information of $\partial \mcV_-(\theta)$ that we need for evaluating the upper bound \eqref{eq:lem_guntuboyina_b5}.
The proof is postponed to Appendix \ref{sec:weak_decomp}.

\begin{prop}\label{prop:mcv_weak_decomp}
\begin{enumerate}[label=(\roman*)]
    \item $\theta \mapsto \mcV_-(\theta)$ is weakly decomposable.
    \item For any $\theta \in \RR^n$, let us define $v_0$ as \eqref{eq:guntuboyina_b5_def_v0}. Then, we have
    \begin{equation}\label{eq:mcv_v0_norm}
        \norm{v_0}_2^2 = \sum_{i=1}^k \frac{1}{|A_i|} 1_{w_{i} \neq w_{i + 1}}.
    \end{equation}
\end{enumerate}
\end{prop}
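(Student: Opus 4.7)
The plan is to obtain an explicit parametrization of $\partial \mcV_-(\theta)$ via the difference operator $D\colon \RR^n \to \RR^{n-1}$ given by $(D\theta)_j = \theta_j - \theta_{j+1}$, and then reduce the minimization in \eqref{eq:guntuboyina_b5_def_v0} to a one-dimensional quadratic interpolation problem.

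First, since $\mcV_-(\theta) = \sum_{j=1}^{n-1} (D\theta)_{j+}$ is a finite sum of compositions of the scalar $(\cdot)_+$ with linear maps, the chain rule for subdifferentials gives
\[
    \partial \mcV_-(\theta) = \Bigl\{ D^\top s \;:\; s \in \RR^{n-1},\ s_j \in \partial(\cdot)_+\bigl((D\theta)_j\bigr) \text{ for each } j \Bigr\}.
\]
Partition the indices $j \in [n-1]$ into $E_+ = \{j : \theta_j > \theta_{j+1}\}$, $E_- = \{j : \theta_j < \theta_{j+1}\}$, and $E_0 = \{j : \theta_j = \theta_{j+1}\}$. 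Then $s_j$ is forced to $1$ on $E_+$, to $0$ on $E_-$, and ranges over $[0,1]$ on $E_0$; taking the affine hull simply relaxes $s_j \in [0,1]$ to $s_j \in \RR$ on $E_0$. Using the constant-piece structure of $\theta$ on $\Pi_{\mathrm{const}}(\theta) = (A_1,\ldots,A_k)$, every knot index $\tau_{i+1}-1$ (for $i=1,\ldots,k-1$) is in $E_+ \cup E_-$, and the corresponding $s_{\tau_{i+1}-1}$ is pinned to $w_{i+1}$ by the definition of the sign. All other indices belong to $E_0$.

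Next I would compute $\norm{D^\top s}_2^2$. Writing $s_0 := 0$ and $s_n := 0$ (so that $(D^\top s)_1 = s_1 - s_0$ and $(D^\top s)_n = s_n - s_{n-1}$), we get the identity
\[
    \norm{D^\top s}_2^2 = \sum_{j=1}^{n} (s_j - s_{j-1})^2.
\]
The boundary conventions $s_0 = 0 = w_1$ and $s_n = 0 = w_{k+1}$ let me unify the constraints: $s_{\tau_{i+1}-1} = w_{i+1}$ for all $i = 0, 1, \ldots, k$. Between two consecutive constraint points $\tau_i - 1$ and $\tau_{i+1}-1$, which are separated by exactly $|A_i|$ steps, the quadratic $\sum_{j=\tau_i}^{\tau_{i+1}-1} (s_j - s_{j-1})^2$ is minimized by linear interpolation from $w_i$ to $w_{i+1}$, giving a contribution of $(w_{i+1} - w_i)^2 / |A_i|$. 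Summing over $i = 1, \ldots, k$ and using $w_i, w_{i+1} \in \{0,1\}$ so that $(w_{i+1} - w_i)^2 = 1_{\{w_i \neq w_{i+1}\}}$ yields \eqref{eq:mcv_v0_norm}.

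For (i), I would simply note that the minimizing $s$ just constructed lies in $[0,1]^{n-1}$: on each segment $A_i$ the values $s_j$ interpolate linearly between $w_i \in \{0,1\}$ and $w_{i+1} \in \{0,1\}$, hence remain in $[0,1]$. Therefore this $s$ satisfies the subgradient conditions and $v_0 = D^\top s \in \partial \mcV_-(\theta)$, establishing weak decomposability. The main subtlety is purely bookkeeping: correctly identifying that the interior-of-$A_i$ coordinates are the only free ones in $\aff(\partial \mcV_-(\theta))$ while the knots are pinned to the $\{0,1\}$-valued sign vector $(w_i)$, which is precisely what makes the unconstrained $\ell_2$ minimizer automatically feasible.
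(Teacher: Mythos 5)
Your proof is correct, and it takes a genuinely different route from the paper. The paper views $\mcV_-$ as the Lov\'{a}sz extension of a cut function, develops a general sufficient condition for weak decomposability (the ``agglomerative clustering'' condition, Definition~\ref{defi:agglomerative}), proves via Proposition~\ref{prop:ac_implies_weak_decomp} -- using base-polyhedron face structure from submodularity theory -- that AC implies weak decomposability, and then verifies AC for the one-dimensional directed grid by a case analysis (Table~\ref{tb:adjacency}); the explicit $v^*$ in \eqref{eq:lovasz_v_star} drops out of the greedy-vertex description of the base polyhedron. You instead parametrize $\partial \mcV_-(\theta)$ directly through $D^\top$ and the elementary subdifferential of $(\cdot)_+$, observe that taking the affine hull unconstrains exactly the zero-jump coordinates, and reduce the norm minimization in \eqref{eq:guntuboyina_b5_def_v0} to a decoupled discrete Dirichlet problem on each constant piece, where linear interpolation gives $(w_{i+1}-w_i)^2/|A_i|$. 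Weak decomposability then follows because linear interpolation between endpoint values in $\{0,1\}$ stays in $[0,1]$, so the affine-hull minimizer is automatically in $\partial \mcV_-(\theta)$; the strict convexity of $\norm{D^\top \cdot}_2$ on the affine set (from injectivity of $D^\top$) gives uniqueness. Your argument is shorter and more self-contained for this specific statement, avoiding the entire submodular apparatus; the paper's heavier machinery earns its keep elsewhere, most notably in proving the agglomerative solution-path property underlying the weighted modified PAVA (Proposition~\ref{prop:mod_pava_validity}), which does not follow from your coordinate-wise argument. One cosmetic remark: your derivation yields the signed entries $(v_0)_j = (w_{i+1}-w_i)/|A_i|$ for $j\in A_i$, which is the correct sign (the paper's display \eqref{eq:mcv_v_star} replaces $F(S_i)-F(S_{i-1})$, which can be negative, with the nonnegative $1_{\set{w_i\neq w_{i+1}}}$, so its entries carry the wrong sign on downward-to-upward transitions); this does not affect \eqref{eq:mcv_v0_norm}, which only involves the squared norm.
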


From Proposition \ref{prop:mcv_weak_decomp} and Corollary \ref{cor:lem_guntuboyina_b5}, $\mbD(\lambda \partial \mcV_-(\theta))$ is bounded from above by
\[
    C' n \left\{
        \frac{k(\theta)}{n} \log \frac{\ee n}{k(\theta)}
        + \frac{M(\theta)}{k(\theta)} \log \frac{\ee n}{k(\theta)}
    \right\}
    + C' (\lambda - \lambda^*)^2 \sum_{i=1}^k \frac{1}{|A_i|} 1_{w_{i} \neq w_{i + 1}}
\]
provided that $\lambda \geq \lambda^*$.
Here, $C' > 0$ is a universal constant.
Combining this bound with Lemma \ref{lem:proximal_general}, we proved the desired risk bound.

Lastly, we provide an upper bound for the optimal tuning parameter $\lambda^*$.
This is obtained from the following estimate of $\EE[\lambda(Z)]$.
\begin{prop}
Suppose that $\theta \in \RR^n$ and $\mcV_-(\theta) > 0$.
For any $z \in \RR^n$, define $\lambda(z)$ as
\[
    \lambda(z) := \argmin_{\lambda \geq 0} \dist(z, \lambda \partial \mcV_-(\theta)).
\]
Then, we have
\[
    \EE[\lambda(Z)]
    \leq \min \left\{
        \frac{\norm{\theta}_2}{\mcV_-(\theta)}, \
        \left(
            \sum_{i = 1}^k \frac{1_{\set{w_i \neq w_{i + 1}}}}{|A_i|}
        \right)^{-1/2}
    \right\}
        [\delta(T_{K_-(\mcV_-(\theta))}(\theta))]^{1/2},
\]
where $\EE$ is the expectation with respect to $Z \sim N(0, I_n)$.
\end{prop}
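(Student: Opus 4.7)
The plan is to first derive a closed-form identity expressing $\lambda(z)$ as a linear functional of the projection $P_C(z)$ onto the normal cone $C := \cone(\partial \mcV_-(\theta))$, and then to pass this linear functional through the expectation before invoking the Moreau decomposition of the Gaussian vector. By Proposition \ref{prop:mcv_weak_decomp}, $\mcV_-$ is weakly decomposable, so $v_0$ belongs to $A := \partial \mcV_-(\theta)$; moreover, being the minimum-norm element of $\aff(A)$, $v_0$ is orthogonal to the linear subspace $\aff(A) - v_0$, which yields the identity $\langle v, v_0 \rangle = \norm{v_0}_2^2$ for every $v \in A$. The assumption $\mcV_-(\theta) > 0$ forces $v_0 \neq 0$. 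Writing $(\lambda(z), v(z))$ for a minimizing pair in $\set{(\lambda, v) : \lambda \geq 0, v \in A}$ of $\norm{z - \lambda v}_2^2$, one has $P_C(z) = \lambda(z) v(z)$; therefore $\lambda(z) \norm{v_0}_2^2 = \langle \lambda(z) v(z), v_0 \rangle = \langle P_C(z), v_0 \rangle$, and this identity extends trivially to the boundary case $\lambda(z) = 0$ (where $P_C(z) = 0$ because $z$ lies in the tangent cone $T$).

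Taking expectations of the above identity over $Z \sim N(0, I_n)$ gives $\EE[\lambda(Z)] \norm{v_0}_2^2 = \langle \EE[P_C(Z)], v_0 \rangle$. The next step is to use the Moreau decomposition $Z = P_C(Z) + P_T(Z)$ together with $\EE[Z] = 0$ to rewrite $\EE[P_C(Z)] = -\EE[P_T(Z)]$, and then chain Cauchy--Schwarz with Jensen's inequality: $\EE[\lambda(Z)] \norm{v_0}_2^2 \leq \norm{\EE[P_T(Z)]}_2 \cdot \norm{v_0}_2 \leq \sqrt{\EE \norm{P_T(Z)}_2^2} \cdot \norm{v_0}_2$. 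Proposition \ref{prop:conic_polar} identifies $\EE \norm{P_T(Z)}_2^2$ with $\mbD(C) = \delta(T_{K_-(\mcV_-(\theta))}(\theta))$, so dividing through by $\norm{v_0}_2^2$ yields $\EE[\lambda(Z)] \leq \sqrt{\delta(T)} / \norm{v_0}_2$. The explicit formula for $\norm{v_0}_2$ in Proposition \ref{prop:mcv_weak_decomp}(ii) furnishes one expression inside the minimum, while the other follows from positive homogeneity of $\mcV_-$, which combined with $v_0 \in \partial \mcV_-(\theta)$ forces $\langle v_0, \theta \rangle = \mcV_-(\theta)$; Cauchy--Schwarz then delivers $1/\norm{v_0}_2 \leq \norm{\theta}_2 / \mcV_-(\theta)$.

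The main subtlety lies in extracting the factor $\sqrt{\delta(T)}$ rather than the much larger $\sqrt{\delta(C)} = \sqrt{n - \delta(T)}$: the naive pointwise bound $\lambda(z) \norm{v_0}_2 \leq \norm{P_C(z)}_2$, which follows directly from $P_C(z) = \lambda(z) v(z)$ together with $\norm{v(z)}_2 \geq \norm{v_0}_2$, combined with Cauchy--Schwarz only yields $\EE[\lambda(Z)] \leq \sqrt{\EE \norm{P_C(Z)}_2^2}/\norm{v_0}_2 = \sqrt{\delta(C)}/\norm{v_0}_2$. What saves the argument is noticing that $\lambda(z)$ is in fact a \emph{linear} functional of $P_C(z)$, namely its coordinate along the direction $v_0$; this allows the expectation to commute past the inner product, after which Moreau's identity converts $\EE[P_C(Z)]$ into $-\EE[P_T(Z)]$ and Jensen's inequality produces the sharper bound $\sqrt{\delta(T)}$.
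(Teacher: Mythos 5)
Your proof is correct and follows essentially the same route as the paper's: the identity $P_C(z) = \lambda(z)\,v(z)$, pairing with the appropriate fixed vector ($v_0$ or $\theta$), the Moreau decomposition $Z = P_C(Z) + P_T(Z)$ with $\EE[Z]=0$, and then Cauchy--Schwarz plus Jensen to land on $\sqrt{\delta(T)}$. The one small organizational difference is that the paper proves both terms of the minimum independently (pairing once with $\theta$ via the support-function identity $\langle\theta,v\rangle=\mcV_-(\theta)$, and once with $v_0$ via the affine-hull orthogonality), whereas you prove only the $1/\norm{v_0}_2$ bound and observe that the $\norm{\theta}_2/\mcV_-(\theta)$ bound follows from it by Cauchy--Schwarz ($\mcV_-(\theta)=\langle v_0,\theta\rangle \le \norm{v_0}_2\norm{\theta}_2$); this is a legitimate shortcut which in fact shows the second term of the minimum always dominates the first.
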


\begin{proof}
Let $C := \cone(\partial \mcV_-(\theta))$ be the conic hull of $\partial \mcV_-(\theta)$, and let $P_{C}$ denote the orthogonal projection map onto $C$.
By the definition of $\lambda(z)$, there exists a vector $v(z) \in \partial \mcV_-(\theta)$ such that $\lambda(z) v(z) = P_C(z)$.

First, we show a partial result
\[
    \EE[\lambda(Z)]
    \leq \frac{\norm{\theta}_2}{\mcV_-(\theta)} \sqrt{\delta(T_{K_-(\mcV_-(\theta))}(\theta))}.
\]
As we will see in Appendix \ref{sec:weak_decomp}, $\mcV_-$ is the support function for a certain convex set.
Then, by the fundamental fact for the support function that $\langle \theta, v \rangle = \mcV_-(\theta)$ for all $v \in \partial \mcV_-(\theta)$ (see Corollary 8.25 in \citet{RW}), we have
\begin{align*}
    \lambda(z) \mcV_-(\theta)
    & = \lambda(z) \langle \theta, v(z) \rangle
    \quad (\because v(z) \in \partial \mcV_-(\theta))\\
    & = \langle \theta, P_C(z) \rangle
    \quad (\because \lambda(z) v(z) = P_C(z))\\
    & = \langle \theta, z - P_{T}(z) \rangle.
\end{align*}
Here, in the last line, $T := T_{K_-(\mcV_-(\theta))}(\theta)$ is the polar cone of $C$ (see Proposition \ref{prop:conic_polar}), and we used the Moreau decomposition $z = P_C(z) + P_T(z)$.
Taking the expectation of both sides with respect to $z \sim N(0, I_n)$, we have
\begin{align*}
    \mcV_-(\theta) \EE [\lambda(z)]
    & = \underbrace{\EE [\langle \theta, z \rangle]}_{=0} - \EE [\langle \theta, P_{T}(z) \rangle] \\
    & \leq \norm{\theta}_2 \EE \norm{P_{T}(z)}_2 \\
    & \leq \norm{\theta}_2 (\EE \norm{P_T(z)}_2^2)^{1/2} \\
    & = \norm{\theta}_2 (\delta(T))^{1/2},
\end{align*}
which implies the desired result.
Here, we used the equality between the statistical dimension and the expected squared norm of projection: $\delta(T) = \EE_{Z \sim N(0, I_n)} \norm{P_T(Z)}_2^2$ (see Proposition 3.1 in \citet{Amelunxen2014}).

To prove the other inequality, we use the characterization of $\aff(\partial \mcV_-(\theta))$ given in \eqref{eq:lovasz_affine_hull} in Appendix \ref{sec:weak_decomp} below.
In particular, if we take $v^*$ as in \eqref{eq:mcv_v_star}, we have
\[
    \langle \lambda(z) v(z), v^* \rangle
    = \langle v^*, P_C(z) \rangle
    \leq \norm{v^*}_2 (\delta(T))^{1/2},
\]
and
\[
      \langle v(z), v^* \rangle = \norm{v^*}_2^2
      = \sum_{i = 1}^k \frac{1_{\set{w_i \neq w_{i + 1}}}}{|A_i|},
\]
and hence the result follows.
\end{proof}

\subsection{Proof of Corollary \ref{cor:moderate}}\label{sec:cor_moderate_proof}

First, we explain that a monotone vector satisfying the moderate growth condition is approximated by a piecewise-constant vector such that the segments at both ends have sufficient lengths.
To this end, we need the following lemma.
Here, the first two statements (i) and (ii) are shown in Lemma 2 in \citet{Bellec2015a}. The third statement (iii) ensures that the moderate growth conditions implies the minimal length condition \eqref{eq:min_length}.

\begin{lem}\label{lem:pc_approximation}
Let $\theta \in K^\uparrow_n$ be a monotone vector satisfying the moderate growth condition and $\theta_n - \theta_1 = \mcV$. Then, there exists another monotone vector $\theta^\prime \in K^\uparrow_n$ satisfying the following three conditions.
\begin{enumerate}[label=(\roman*)]
    \item $\theta^\prime$ is $k$-piecewise constant with
    \begin{equation}\label{eq:lem_pc_approximation_k}
        k = \max \left\{
            3, \
            \left \lceil
                \left(
                    \frac{\mcV^2 n}{\sigma^2 \log (\ee n)}
                \right)^{1/3}
            \right \rceil
        \right\}.
    \end{equation}
    Here, $\lceil t \rceil$ is the smallest integer that is not less than $t$.
    \item We have
    \begin{equation}
        \frac{1}{n} \norm{\theta - \theta^\prime}_2^2
        \leq \frac{1}{4} \max\left\{
            \left( \frac{\sigma^2 \mcV \log (\ee n)}{n} \right)^{2/3}, \
            \frac{3 \sigma^2 \log (\ee n)}{n}
        \right\}
    \end{equation}
    and
    \begin{equation}
        \frac{\sigma^2 k}{n} \log \frac{\ee n}{k}
        \leq 2 \max\left\{
            \left( \frac{\sigma^2 \mcV \log (\ee n)}{n} \right)^{2/3}, \
            \frac{3 \sigma^2 \log (\ee n)}{n}
        \right\}.
    \end{equation}
    \item Let $\Pi^\prime = \set{ A_1, A_2, \ldots, A_k }$ be the partition on which $\theta^\prime$ is constant. Then, we have $|A_1| \geq n / k$ and $|A_k| \geq n / k$.
\end{enumerate}
\end{lem}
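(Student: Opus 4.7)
The plan is to construct $\theta^\prime$ explicitly by binning on the value axis, in the same spirit as Lemma 2 of \citet{Bellec2015a}. First, set the value-axis breakpoints $b_j = \theta_1 + (j/k)\mcV$ for $j = 0, 1, \ldots, k$, where $k$ is as in \eqref{eq:lem_pc_approximation_k}. For each $i \in [n]$, define $\theta^\prime_i := b_{j-1}$ where $j \in \set{1, \ldots, k}$ is the unique index with $\theta_i \in [b_{j-1}, b_j)$, with the convention $\theta^\prime_i = b_{k-1}$ if $\theta_i = b_k$. Since $\theta$ is monotone, so is $\theta^\prime$, and it takes at most $k$ distinct values on consecutive blocks of indices, proving (i).

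For (ii), the pointwise bound $|\theta_i - \theta^\prime_i| \leq \mcV/k$ yields $\tfrac{1}{n} \norm{\theta - \theta^\prime}_2^2 \leq (\mcV/k)^2$. The definition of $k$ is designed precisely to balance this approximation error against the complexity term $\sigma^2 k n^{-1} \log(\ee n/k)$. When $k = \lceil (\mcV^2 n / (\sigma^2 \log \ee n))^{1/3} \rceil$, a direct substitution shows $(\mcV/k)^2 \leq \tfrac{1}{4}(\sigma^2 \mcV \log \ee n / n)^{2/3}$; the complexity term is handled similarly, using $\log(\ee n/k) \leq \log \ee n$ together with the explicit value of $k$. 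When the maximum in \eqref{eq:lem_pc_approximation_k} is attained by $3$, both quantities collapse to $O(\sigma^2 \log \ee n / n)$.

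Part (iii) is the genuinely new content and is where the moderate growth condition is used. Let $A_1$ be the first constant piece of $\theta^\prime$; by construction, $\theta^\prime_i = \theta_1$ precisely when $\theta_i < \theta_1 + \mcV/k$. The moderate growth condition gives $\theta_i - \theta_1 \leq \tfrac{i - 1}{n - 1} \mcV$ for every $i \leq \lceil n/2 \rceil$, so whenever $i - 1 < (n-1)/k$ and $i \leq \lceil n/2 \rceil$, the index $i$ belongs to $A_1$. Because $k \geq 3$ and $n \geq 2$, one checks that the constraint $i \leq 1 + (n-1)/k$ is always the binding one, and the number of such integers is at least $\lfloor (n-1)/k \rfloor + 1 \geq (n - k)/k + 1 = n/k$. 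The bound $|A_k| \geq n/k$ follows from the symmetric half of the moderate growth condition applied to the last bin $[b_{k-1}, b_k]$.

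The main obstacle is bookkeeping at the boundary: ensuring that $k \geq 3$ really does place $1 + (n-1)/k$ inside the regime $\set{1, \ldots, \lceil n/2 \rceil}$ where the moderate growth inequality is available, and that the half-open binning convention is set up so that the count of indices landing in the first bin is at least $n/k$ rather than $n/k - 1$. Once these integer-rounding issues are dispatched, parts (i) and (ii) reduce to the calculation already performed in \citet{Bellec2015a}, and part (iii) follows from the simple arithmetic above.
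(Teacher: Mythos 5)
Your construction is essentially the same as the paper's---bin the value range $[\theta_1,\theta_n]$ into $k$ equal sub-intervals of width $\mcV/k$, form the induced partition of $[n]$, and use the moderate growth condition to lower-bound the size of the first and last blocks. The key, unavoidable difference is in how you assign the constant value to each block: you set $\theta'_i$ to the \emph{left endpoint} $b_{j-1}$ of the containing bin, while the paper sets $\theta'_i$ to the \emph{midpoint} $\theta_1 + \frac{j-1/2}{k}\mcV$. This is not cosmetic. With the left endpoint the pointwise error only satisfies $|\theta_i - \theta'_i| < \mcV/k$, giving $\frac{1}{n}\norm{\theta-\theta'}_2^2 \leq (\mcV/k)^2$. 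Plugging $k \geq (\mcV^2 n/(\sigma^2\log\ee n))^{1/3}$ yields only $(\mcV/k)^2 \leq (\sigma^2\mcV\log(\ee n)/n)^{2/3}$, so your asserted inequality $(\mcV/k)^2 \leq \tfrac14(\sigma^2\mcV\log(\ee n)/n)^{2/3}$ is false as stated; you are off by exactly the missing factor of $4$. The midpoint assignment is what gives $|\theta_i - \theta'_i| \leq \mcV/(2k)$ and hence the $\tfrac14$ in (ii). Replacing the left-endpoint with the midpoint fixes this; part (iii) is unaffected, since it only depends on the partition $A_1,\dots,A_k$, not on the values taken by $\theta'$.

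One smaller point: your count for (iii) is sound when $(n-1)/k \notin \ZZ$, but when $(n-1)/k = m$ is an integer the number of indices $i$ with $i-1 < (n-1)/k$ is exactly $m = (n-1)/k$, which need not dominate $n/k$. You flag this boundary issue yourself but do not resolve it; closing it requires either a slightly different convention at the right bin endpoints or the paper's route of exhibiting a single index $i^* = \lceil n/k\rceil$ that lands in $A_1$ (which automatically yields $|A_1| \geq i^* \geq n/k$ because $A_1$ is a prefix of $[n]$). That prefix observation is worth making explicit: since $\theta$ and $\theta'$ are monotone, $A_1 = \{1,\dots,|A_1|\}$, so showing a single index $i^* \geq n/k$ belongs to $A_1$ suffices, and you do not need to count all of them.
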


\begin{proof}
Let $k$ be an integer defined in \eqref{eq:lem_pc_approximation_k}.
We construct a $k$-piecewise constant monotone vector $\theta' \in K_n^\uparrow$ as follows:
First, define an equi-spaced partition $I_1, I_2, \ldots, I_k$ of the interval $[\theta_1, \theta_n]$ as
\[
    I_j := \left[\theta_1 + \frac{j - 1}{k}\mcV, \ \theta_1 + \frac{j}{k}\mcV \right)
    \quad \text{for $j = 1, 2, \ldots, k-1$},
\]
and $I_k := [\theta_1 + \frac{k - 1}{k} \mcV, \theta_n]$.
Next, define a partition $\Pi = (A_1, A_2, \ldots, A_k)$ of $[n]$ as $A_j := \set{i \in [n]: \theta_i \in I_j}$ ($j = 1, 2, \ldots, k$).
Then, let $\theta'$ be a piecewise-constant vector such that $\theta'_i := \theta_1 + \frac{j - 1/2}{k}\mcV$ for $i \in A_j$.
See the right panel of Figure \ref{fig:moderate_growth} for an illustrative example for $\theta$ and its piecewise-constant approximation $\theta'$.
By a similar argument as Lemma 2 in \citet{Bellec2015a}, we can check (i) and (ii).

It remains to prove (iii) under the moderate growth condition.
Below, we will only check that the maximal element in $A_1$ is not less than $n / k$ because $|A_k| \geq n / k$ can be checked in a similar way.
Let $i^* := \lceil n / k \rceil$.
Note that we have $i^* \leq \lceil n / 2 \rceil$ since $k \geq 3$.
By the moderate growth condition, we have
\[
    \theta_{i^*} \leq \theta_1 + \frac{n / k - 1}{n - 1} \mcV \leq \theta_1 + \frac{\mcV}{k},
\]
which means $i^* \in A_1$ and hence $|A_1| \geq \lceil n / k \rceil$.
\end{proof}

Now, we are ready to prove Corollary \ref{cor:moderate}.
Applying Lemma \ref{lem:pc_approximation} for every segments $A_1, A_2, \ldots, A_m$, we have a $k$-piecewise constant and $m$-piecewise monotone vector $\theta' \in \RR^n$ such that
\[
    \frac{1}{n} \norm{\theta^* - \theta'}_2^2
    \leq \frac{1}{4} \max \left\{
        \left(
            \frac{\sigma^2 \mcV \log \frac{\ee n}{m}}{n}
        \right)^{2/3}, \
        \frac{3 m \sigma^2}{n} \log \frac{\ee n}{m}
    \right\}
\]
and
\[
    \frac{\sigma^2 k}{n} \log \frac{\ee n}{k}
    \leq 2 \max \left\{
            \left( \frac{\sigma^2 \mcV \log \frac{\ee n}{m}}{n} \right)^{2/3}, \
            \frac{3 m \sigma^2}{n} \log \frac{\ee n}{3 m}
        \right\}.
\]
Moreover, $\theta'$ satisfies the minimum length condition \eqref{eq:min_length} with $c = 1$. Therefore, we have $M(\theta') \leq 2(m - 1) k / n$ and
\[
    \frac{\sigma^2 M(\theta')}{k} \log \frac{\ee n}{k}
    \leq \frac{2 (m - 1) \sigma^2}{n} \log \frac{\ee n}{m},
\]
where we used an obvious inequality $m \leq k$.
Then, Theorem \ref{thm:penalized} implies that there exists $\lambda$ such that
\begin{align*}
    \frac{1}{n} \EE_{\theta^*} \norm{\hat{\theta}_\lambda - \theta^*}_2^2
    & \leq \frac{1}{n} \norm{\theta^* - \theta'}_2^2
    + C\frac{\sigma^2 k}{n} \log \frac{\ee n}{k}
    + C\frac{\sigma^2 M(\theta')}{k} \log \frac{\ee n}{k} \\
    & \leq C' \max \left\{
        \left(
            \frac{\sigma^2 \mcV \log \frac{\ee n}{m}}{n}
        \right)^{2/3}, \
        \frac{m \sigma^2}{n} \log \frac{\ee n}{m}
    \right\}
\end{align*}
for some universal constant $C' > 0$.
This is the desired conclusion.
Note that an upper bound for such $\lambda$ is suggested by Proposition \ref{prop:minimal_lambda}.

\subsection{Subdifferential and weak decomposability}\label{sec:weak_decomp}

In this subsection, we discuss the structure of the subdifferential of the nearly-isotonic type penalties.
The main purpose is to discuss the weak decomposability (defined in Appendix \ref{sec:penalized_risk_proof}) of $\mcV_-$.

\subsubsection{Characterization of the subdifferential}

First, we observe that $\mcV_-(\theta) = \sum_{i=1}^{n-1} (\theta_i - \theta_{i+1})_+$ can be written as a support function of a certain convex set.
In fact, by Theorem 8.24 in \citet{RW}, we can see that
\begin{equation}\label{eq:mcv_as_support_function}
    \mcV_-(\theta) = \max_{v \in \mcB} \langle v, \theta \rangle,
\end{equation}
where $\mcB$ is a closed convex set. Conversely, once we have a convex function $\mcV_-$, the set $\mcB$ is specified as
\[
    \mcB = \set{v \in \RR^n: \forall \theta \in \RR^n, \ \langle v, \theta \rangle \leq \mcV_-(\theta)}.
\]
Many properties of the support function can be understood through the structure of the set $\mcB$;
In particular, we can characterize the subdifferential and weak decomposability.
Below, we investigate the more detailed structure of the set $\mcB$ in terms of submodular functions.

Let $G = (V, E)$ be a directed graph equipped with positive edge weights $\set{c_{(i, j)}}$.
For any $\theta \in \RR^n$, we define a nearly-isotonic type penalty $\mcV_G(\theta)$ for the weighted graph $G$ as in \eqref{eq:neariso_penalty_general}.
For any subset $A \subseteq [n]$, we also define $\kappa_G(A)$ by the total weights of outgoing edges:
\begin{equation}\label{eq:cut_general}
    \kappa_G(A) := \sum_{(i, j) \in E : \ i \in A, \ j \notin A} c_{(i, j)}.
\end{equation}
The function $A \mapsto \kappa_G(A)$ is called the cut function of the weighted graph $G$.

It is well known that the cut function is a submodular function.
Here, a function $F: 2^{[n]} \to \RR$ is called submodular if $F(\emptyset) = 0$ and
\[
    F(A) + F(B) \geq F(A \cap B) + F(A \cup B)
\]
holds for any subsets $A, B \subseteq [n]$.
We refer the reader to \citet{Bach13} for fundamental properties of submodular functions.
For any submodular function $F: 2^{[n]} \to \RR$, we define the base polyhedron $\mcB(F) \subseteq \RR^n$ as
\[
    \mcB(F) := \left\{
        v \in \RR^n: \sum_{i \in V} v_i = F(V)
        \ \text{and} \ \sum_{i \in A} v_i \leq F(A) \ \text{for all $A \subseteq V$}
    \right\}.
\]
The Lov\'{a}sz extension $f: \RR^n \to \RR$ of $F$ is defined as the support function of $\mcB(F)$, that is, for any $\theta \in \RR^n$,
$
    f(\theta) := \max_{v \in \mcB(F)} \langle v, \theta \rangle.
$

We see that the nearly-isotonic type penalty \eqref{eq:neariso_penalty_general} is actually the Lov\'{a}sz extension of the cut function \eqref{eq:cut_general}.

\begin{prop}
For any directed graph $G$ and edge weight $c_{(i, j)}$, the function $\mcV_G$ is the Lov\'{a}sz extension of the cut function $\kappa_G$.
\end{prop}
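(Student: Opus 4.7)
The plan is to identify $\mcV_G(\theta)$ with the Lov\'{a}sz extension of $\kappa_G$ via the classical Choquet-integral (level-set) representation, rather than working directly with the support function definition. The key observation is that $\kappa_G$ is submodular and, crucially, satisfies $\kappa_G([n]) = 0$, since no edge can leave the full vertex set. With this in mind, I would invoke the standard representation (available in the cited reference \citet{Bach13}) that for any submodular $F: 2^{[n]} \to \RR$ with $F(\emptyset) = 0$, the Lov\'{a}sz extension $f$ can be written as
\[
    f(\theta) = \int_{0}^{\infty} F(U_\alpha)\, \dd\alpha + \int_{-\infty}^{0} \bigl[F(U_\alpha) - F([n])\bigr] \dd\alpha,
    \quad U_\alpha := \set{i \in [n]: \theta_i \geq \alpha}.
\]

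Specialising to $F = \kappa_G$ and using $\kappa_G([n]) = 0$, this collapses to the single integral $\int_{-\infty}^{\infty} \kappa_G(U_\alpha)\, \dd\alpha$, since for $\alpha > \max_i \theta_i$ we have $U_\alpha = \emptyset$ and for $\alpha \leq \min_i \theta_i$ we have $U_\alpha = [n]$, making the integrand vanish outside a bounded interval. I would then expand the cut function inside the integral and apply Fubini to interchange the finite edge sum and the integral:
\[
    \int_{-\infty}^{\infty} \kappa_G(U_\alpha)\, \dd\alpha
    = \sum_{(i, j) \in E} c_{(i, j)} \int_{-\infty}^{\infty} 1_{\set{i \in U_\alpha,\ j \notin U_\alpha}}\, \dd\alpha.
\]

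For a fixed edge $(i, j)$, the indicator $1_{\set{\theta_j < \alpha \leq \theta_i}}$ integrates to $(\theta_i - \theta_j)_+$ (the set has Lebesgue measure $\theta_i - \theta_j$ when $\theta_i > \theta_j$ and is empty otherwise; the boundary convention in $U_\alpha$ only affects a measure-zero set). Summing over edges reproduces the definition of $\mcV_G$ exactly, completing the identification.

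I do not expect any serious obstacle here: submodularity of $\kappa_G$ is a textbook fact, the level-set formula is standard, and the remaining manipulation is a clean application of Fubini together with the elementary identity $\int 1_{\set{\theta_j < \alpha \leq \theta_i}}\, \dd\alpha = (\theta_i - \theta_j)_+$. The only fussy point worth flagging in the write-up is the choice of convention ($\geq$ vs.\ $>$) in defining the level set $U_\alpha$; I would either verify that the two conventions yield the same integral (they differ only on the finite set $\set{\theta_1, \ldots, \theta_n}$), or simply fix one convention consistent with the version of the level-set formula being cited from \citet{Bach13}.
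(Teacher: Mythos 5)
Your proof is correct, and it takes a genuinely different (and more self-contained) route than the paper. The paper simply invokes the greedy-algorithm formula for the Lov\'asz extension from \citet{Bach13} (which, after sorting $\theta$ by a permutation $\tau$ with $\theta_{\tau(1)} \geq \cdots \geq \theta_{\tau(n)}$, writes $f(\theta) = \sum_{i=1}^{n-1}(\theta_{\tau(i)} - \theta_{\tau(i+1)})\kappa_G(\set{\tau(1),\ldots,\tau(i)}) + \theta_{\tau(n)}\kappa_G([n])$) and defers to the same reference for the verification that this expression equals $\mcV_G(\theta)$. You instead use the Choquet-integral (level-set) characterization and swap the integral with the finite edge sum via Fubini, reducing the whole identification to the one-line identity $\int 1_{\set{\theta_j < \alpha \leq \theta_i}}\,\dd\alpha = (\theta_i - \theta_j)_+$. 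The two characterizations of the Lov\'asz extension are of course equivalent, but your route avoids the permutation bookkeeping entirely: rather than re-indexing the greedy sum to collect each edge's contribution, you isolate each edge's contribution directly as a one-dimensional integral. This makes the argument both cleaner and closer to a genuine proof from first principles, whereas the paper's version is essentially a pointer. Your flagging of $\kappa_G([n]) = 0$ and of the measure-zero ambiguity in the level-set convention are exactly the right details to handle explicitly.
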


\begin{proof}
This is the consequence of the well-known result so-called the greedy algorithm; see e.g., Proposition 3.2 in \citet{Bach13}. In particular, we can find a derivation in Section 6.2 of \citet{Bach13}.
\if0
For the sake of completeness, we provide a detailed derivation. Let $\theta \in \RR^n$ be any vector, and let $\tau: [n] \to [n]$ be a permutation such that $\theta_{\tau(1)} \geq \cdots \theta_{\tau(n)}$. Define $A_$
Then, it is known that the Lov\'{a}sz extension of a submodular function $F: 2^{[n]} \to \RR$ is given as
\[
    f(\theta) =
    \sum_{i=1}^{n - 1} (\theta_{\tau(i)} - \theta_{\tau(i + 1)}) F(\set{\tau(1), \ldots, \tau(i)}).
\]
\fi
\end{proof}

Now, we have the following useful characterizations of the subdifferential.

\begin{prop}\label{prop:lovasz_subdifferential}
Define $F: 2^{[n]} \to \RR$ be a submodular function and $f: \RR^n \to \RR$ be its Lov\'{a}sz extension.
Suppose $\theta \in \RR^n$.
\begin{enumerate}[label=(\roman*)]
    \item The subdifferential $\partial f(\theta)$ coincides with a face of $\mcB(F)$ given as
    \[
        \partial f(\theta) = \argmax_{v \in \mcB(F)} \langle v, \theta \rangle = \set{v \in \mcB(F): \langle v, \theta \rangle = f(\theta)}.
    \]
    \item There is an (ordered) partition $(A_1, A_2, \ldots, A_k) \subseteq [n]$ such that
    \begin{equation}\label{eq:lovasz_affine_hull}
        \aff(\partial f(\theta))
        = \left\{
            v \in \RR^n: \sum_{j \in S_i} v_j = F(S_i) \ \text{for all $i = 1, 2, \ldots, k$}
        \right\},
    \end{equation}
    where $S_i := \bigcup_{j=1}^i A_j$ ($i = 1, 2, \ldots, k$).
    In particular, we have $\partial f(\theta) = \mcB(F) \cap \aff(\partial f(\theta))$.
    \item Let $v$ be any point in the relative interior of $\partial f(\theta)$.
    Then, the normal cone of $\partial f(\theta)$ at $v$ is contained in the set of partition-wise constant vectors:
    \[
        N_{\partial f(\theta)}(v) \subseteq \Span \set{1_{A_1}, 1_{A_2}, \ldots, 1_{A_k}}.
    \]
\end{enumerate}
\end{prop}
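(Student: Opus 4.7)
The plan is to establish the three assertions sequentially, relying on standard support-function calculus together with the greedy-algorithm characterization of faces of $\mcB(F)$.

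For part (i), I would appeal to the general fact that for any closed convex set $C \subseteq \RR^n$, the subdifferential of the support function $h_C(\theta) := \max_{v \in C}\langle v, \theta \rangle$ coincides with the exposed face $\argmax_{v \in C}\langle v, \theta \rangle$. This can be checked directly from the subgradient inequality in a few lines, and since $f$ is by definition the support function of $\mcB(F)$, part (i) is immediate.

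For part (ii), the essential tool is the greedy algorithm for submodular functions (Proposition 3.2 in \citet{Bach13}). Let $A_1, \ldots, A_k$ be the level sets of $\theta$ arranged so that $\theta$ takes strictly decreasing values $t_1 > t_2 > \cdots > t_k$ on them, and set $S_i := A_1 \cup \cdots \cup A_i$. Using the Abel summation identity
\[
    \langle v, \theta \rangle
    = \sum_{i=1}^{k-1}(t_i - t_{i+1}) \sum_{j \in S_i} v_j
    + t_k \sum_{j \in V} v_j,
\]
together with the defining inequalities $\sum_{j \in S_i} v_j \leq F(S_i)$ of $\mcB(F)$, the automatic equality $\sum_{j \in V} v_j = F(V)$, and the strict positivity of the coefficients $t_i - t_{i+1}$, one sees that a point $v \in \mcB(F)$ attains the maximum if and only if $\sum_{j \in S_i} v_j = F(S_i)$ for every $i = 1, \ldots, k$. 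Hence $\partial f(\theta) = \mcB(F) \cap T$, where $T$ denotes the affine subspace defined by those equations; it then remains to verify $T = \aff(\partial f(\theta))$.

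For part (iii), I would use the fact that at a relative interior point $v$ of a convex set $K$, the normal cone $N_K(v)$ equals the orthogonal complement of $\aff(K) - v$. From (ii), $\aff(\partial f(\theta)) - v$ is the kernel of the linear map $w \mapsto \bigl(\sum_{j \in S_i} w_j\bigr)_{i=1}^{k}$, whose orthogonal complement is spanned by the indicators $1_{S_1}, \ldots, 1_{S_k}$. Since $1_{S_i} = 1_{A_1} + \cdots + 1_{A_i}$, this span equals $\Span\{1_{A_1}, \ldots, 1_{A_k}\}$, yielding the desired inclusion.

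The main obstacle I anticipate is completing the identification $\aff(\partial f(\theta)) = T$ used implicitly in (ii) and invoked in (iii): the inclusion $\subseteq$ is immediate from $\partial f(\theta) \subseteq T$, but the reverse requires exhibiting enough points in the face to span $T$ modulo translation. The standard remedy is to run the greedy algorithm for every linear extension of the ordering $A_1 > \cdots > A_k$ that refines the tie-breaking inside each $A_i$; submodularity of $F$ guarantees that each such greedy output is an extreme point of $\mcB(F)$ lying inside $\partial f(\theta)$, and coordinate-wise inspection then shows these vertices span $T$ affinely.
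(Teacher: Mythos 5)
Your treatment of parts (i) and (iii) is correct and agrees in substance with the paper, which simply cites the corresponding textbook facts (Corollary~8.25 and Theorem~6.46 in Rockafellar--Wets). For part (ii) you attempt a more explicit argument via Abel summation and the greedy algorithm instead of citing Bach's Proposition~4.7, and this is where a genuine gap appears.

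You correctly reduce (ii) to showing $T = \aff(\partial f(\theta))$, where $T$ is the affine subspace cut out by the equalities $\sum_{j\in S_i} v_j = F(S_i)$ determined by the level-set partition of $\theta$, and you correctly note that $\aff(\partial f(\theta)) \subseteq T$ is immediate. But your proposed remedy for the reverse inclusion --- that the greedy vertices corresponding to all linear extensions compatible with $\theta$ affinely span $T$ --- is false. Those greedy vertices certainly lie in $\partial f(\theta)$ and span $\aff(\partial f(\theta))$, but this can be \emph{strictly smaller} than $T$. The failure occurs whenever one of the ``layers'' is separable for the contracted submodular function: if $C \mapsto F(S_{i-1}\cup C) - F(S_{i-1})$ decomposes additively over a nontrivial partition of $A_i$, then $\partial f(\theta)$ satisfies an extra tight equality inside $A_i$ that is not among the constraints defining $T$. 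Concretely, suppose $F(S) = F_1(S\cap V_1) + F_2(S\cap V_2)$ for a partition $V = V_1\cup V_2$ and take $\theta \equiv 0$. Then the level-set partition is trivial, $T = \{v : \sum_i v_i = F(V)\}$ has dimension $n-1$, yet $\partial f(\theta) = \mcB(F) = \mcB(F_1)\times\mcB(F_2)$ has affine hull of dimension $n-2$, since each factor already lives on its own hyperplane. So no amount of tie-breaking in the greedy algorithm exhibits points outside $\aff(\mcB(F_1))\times\aff(\mcB(F_2))$.

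The resolution is that the partition $(A_1,\dots,A_k)$ in the statement is \emph{not} in general the level-set partition of $\theta$: it must be a further refinement chosen so that each $A_i$ is inseparable for the contracted function $C \mapsto F(S_{i-1}\cup C) - F(S_{i-1})$. Once that refinement is made, the equality \eqref{eq:lovasz_affine_hull} does hold, and this is precisely the content of Bach's Proposition~4.7 that the paper cites; it is also the form of the statement the paper actually uses later in the proof of Proposition~\ref{prop:ac_implies_weak_decomp}, where the inseparability of each $A_i$ is stated explicitly. Your Abel-summation argument does correctly establish $\partial f(\theta) = \mcB(F)\cap T$ for the level-set $T$, which is a useful intermediate fact, but you would still need the separability analysis to descend from $T$ to the true affine hull.
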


\begin{proof}
The first statement is just a well-known property for the support function (Corollary 8.25 in \citet{RW}).
The second statement follows from the characterization of faces for the base polyhedron (see Proposition 4.7 in \citet{Bach13}).
The third statement follows from (ii) and the characterization of normal cones of polyhedra (see Theorem 6.46 in \citet{RW}).
\end{proof}

\if0
\begin{rmk}
We should note that Proposition \ref{prop:lovasz_subdifferential} holds for any submodular function $F$ and its Lov\'{a}sz extension since we do not use any specific properties of $\mcV_G$ in the proof.
\end{rmk}
\fi

\subsubsection{Weak decomposability}

Here, we discuss the weak decomposability of the Lov\'{a}sz extension.

Before describing the result, we introduce some terminology.
Let $F: 2^{[n]} \to \RR$ be a submodular function.
We say that a set $A \subseteq [n]$ is separable for $F$ if there is a non-empty proper subset $B$ of $A$ such that $F(A) = F(B) + F(A \setminus B)$.
We also say that $A$ is inseparable if it is not separable.
For example, if $F = \kappa_G$ is the cut function defined in \eqref{eq:cut_general}, $A$ is inseparable if and only if it is a connected component in the graph $G$.
Furthermore, we define the following \textit{agglomerative clustering condition}.

\begin{defi}\label{defi:agglomerative}
We say that a submodular function $F: 2^{[n]} \to \RR$ satisfies the agglomerative clustering (AC) condition if it has the following property:
Let $A, B \subseteq [n]$ be a any disjoint pair of subsets such that $A \neq \emptyset$ and $A$ is inseparable for the function $F_B^A: 2^A \to \RR$ defined by $F_B^A(C) := F(B \cup C) - F(B)$.
Then, for any $C \subset A$, we have
\begin{equation}\label{eq:defi_ac_condition}
    \frac{|C|}{|A|} (F(B \cup A) - F(B)) \leq F(B \cup C) - F(B).
\end{equation}
\end{defi}

Recall the definition of weak decomposability \eqref{eq:defi_weak_decomposable}.
The following proposition provides a sufficient condition for the weak decomposability of the Lov\'{a}sz extension.

\begin{prop}\label{prop:ac_implies_weak_decomp}
Let $F: 2^{[n]} \to \RR$ be a submodular function satisfying the AC condition in Definition \ref{defi:agglomerative}.
Then, the Lov\'{a}sz extension of $f$ of $F$ is weakly decomposable.
\end{prop}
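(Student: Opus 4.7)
My plan is to combine the explicit description of $\aff(\partial f(\theta))$ from Proposition \ref{prop:lovasz_subdifferential}(ii) with a telescoping argument that exploits both submodularity of $F$ and the AC inequality \eqref{eq:defi_ac_condition}.

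First, I would fix $\theta \in \RR^n$ and invoke Proposition \ref{prop:lovasz_subdifferential}(ii) to obtain an ordered partition $(A_1, \ldots, A_k)$ of $[n]$ such that, with $S_i := \bigcup_{j\le i} A_j$,
\[
\aff(\partial f(\theta)) = \set{v \in \RR^n : \textstyle\sum_{j \in S_i} v_j = F(S_i) \text{ for } i = 1,\ldots,k}.
\]
Setting $F_i := F(S_i) - F(S_{i-1})$ (with $S_0 = \emptyset$), these constraints are equivalent to $\sum_{j \in A_i} v_j = F_i$ for each $i$, so the minimum-$\ell_2$-norm element is the partition-wise constant vector
\[
v_0 = \sum_{i=1}^k \frac{F_i}{|A_i|} 1_{A_i}.
\]
Since $\partial f(\theta) = \mcB(F) \cap \aff(\partial f(\theta))$, it suffices to show $v_0 \in \mcB(F)$, and since the equality $\sum_{j \in [n]} (v_0)_j = F([n])$ holds by construction, only the inequalities $\sum_{j \in C}(v_0)_j \le F(C)$ for $C \subseteq [n]$ remain.

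The second step is to invoke the fact that each piece $A_i$ of the partition supplied by Proposition \ref{prop:lovasz_subdifferential}(ii) is inseparable for the contracted submodular function $F_{S_{i-1}}^{A_i}(D) := F(S_{i-1} \cup D) - F(S_{i-1})$. This is a standard structural property of faces of base polyhedra (see e.g.\ Bach, Section 4), and I would invoke it directly; it is what puts us in a position to apply the AC condition.

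Given this, fix an arbitrary $C \subseteq [n]$ and decompose $C_i := C \cap A_i$ and $T_i := C \cap S_i = C_1 \cup \cdots \cup C_i$. Applying the AC condition \eqref{eq:defi_ac_condition} with $B = S_{i-1}$, $A = A_i$ and subset $C_i \subseteq A_i$ gives
\[
\frac{|C_i|}{|A_i|} F_i \le F(S_{i-1} \cup C_i) - F(S_{i-1}).
\]
Since $T_{i-1} \subseteq S_{i-1}$, submodularity (decreasing marginal returns when augmenting by $C_i$) upgrades this to
\[
\frac{|C_i|}{|A_i|} F_i \le F(T_{i-1} \cup C_i) - F(T_{i-1}) = F(T_i) - F(T_{i-1}).
\]
Summing over $i = 1, \ldots, k$ telescopes to
\[
\sum_{j \in C} (v_0)_j = \sum_{i=1}^k \frac{|C_i|}{|A_i|} F_i \le F(T_k) - F(T_0) = F(C),
\]
which establishes $v_0 \in \mcB(F)$ and therefore $v_0 \in \partial f(\theta)$, proving weak decomposability.

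The main obstacle is the inseparability claim in the second step: verifying that the partition $(A_1, \ldots, A_k)$ arising from $\aff(\partial f(\theta))$ really consists of pieces inseparable for the successively contracted functions $F_{S_{i-1}}^{A_i}$. Once this structural fact is in hand, the remaining argument is a clean interplay between the AC inequality and standard submodular monotonicity, and the telescoping step requires no further subtlety.
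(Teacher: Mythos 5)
Your proof is correct and reaches the same conclusion as the paper, but the verification step is genuinely different. Both you and the paper set up $v_0 = \sum_i \frac{F(S_i)-F(S_{i-1})}{|A_i|}\mathbf{1}_{A_i}$ as the candidate minimum-norm point of $\aff(\partial f(\theta))$, and both rely on the structural fact (Proposition \ref{prop:lovasz_subdifferential}(ii) together with inseparability of the pieces for the successively contracted functions $F_{S_{i-1}}^{A_i}$) to set the stage for the AC condition. The divergence is in how each argument certifies $v_0 \in \mcB(F)$. The paper argues geometrically: since $v_0 \in \aff(\partial f(\theta))$, it suffices to verify only those inequalities $\sum_{j\in S}v_j \le F(S)$ that define the facets of the face $\partial f(\theta)$, and it characterizes those facets as coming from sets of the form $S = S_{i-1}\cup A_i'$ with $A_i' \subsetneq A_i$, to which the AC inequality applies directly. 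You instead verify $\sum_{j\in C}(v_0)_j \le F(C)$ for \emph{every} $C\subseteq[n]$, by decomposing $C$ across the pieces $A_i$, applying the AC inequality with $B = S_{i-1}$, and then using the diminishing-returns form of submodularity ($T_{i-1}\subseteq S_{i-1}$ with $C_i$ disjoint from both, so $F(S_{i-1}\cup C_i)-F(S_{i-1}) \le F(T_{i-1}\cup C_i)-F(T_{i-1})$) to produce a telescoping sum. Your route is more self-contained: it never needs the facet characterization of faces of the base polyhedron, and the price it pays is only one explicit use of the submodular exchange inequality. The paper's route is shorter once the facet description is accepted, but that description is itself a nontrivial piece of convex polyhedra machinery. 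On the inseparability hypothesis, the two proofs stand on equal footing: the paper asserts it in passing and you explicitly flag it as the load-bearing structural input; both appeal to the same literature (Bach, Proposition 4.7) for it.
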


\begin{proof}
Fix $\theta \in \RR^n$.
Since $f$ is the support function of the base polyhedron $\mcB(F)$, $\partial f(\theta)$ coincides with a face of $\mcB(F)$.
Let $A_1, A_2, \ldots, A_k$ be a partition of $[n]$ such that $\aff(\partial f(\theta))$ is represented as \eqref{eq:lovasz_affine_hull}.
For $i = 1, 2, \ldots, k$, we write $S_0 := \emptyset$ and $S_i := A_1 \cup A_2 \cup \cdots \cup A_i$.
We should note that the above partition can be chosen so that $A_i$ is inseparable for the function defined as
\[
    (A_i \supseteq) \  C \mapsto F(S_{i-1} \cup C) - F(S_{i-1}).
\]
In this case, $\partial f(\theta)$ is an $n - k$ dimensional subset.

Define a vector $v^*$ as
\begin{equation}\label{eq:lovasz_v_star}
    v^* := \sum_{i=1}^k \frac{F(S_i) - F(S_{i-1})}{|A_i|} 1_{A_i}.
\end{equation}
Since
\[
    \sum_{j \in S_i} v_j^* = \sum_{j = 1}^i (F(S_j) - F(S_{j-1})) = F(S_i)
\]
holds for any $i = 1, \ldots, k$, we have $v^* \in \aff(\partial f(\theta))$.
Moreover, $v^*$ is also contained in the normal cone of $\aff(\partial f(\theta))$.
Hence, if we prove $v^* \in \partial f(\theta)$, we have
\[
    \forall v \in \partial f(\theta),
    \quad \langle v^*, v - v^* \rangle = 0,
\]
which implies that $v^* \in \argmin_{v \in \partial f(\theta)} \norm{v}_2^2$.

Now, our goal is to prove $v^* \in \partial f(\theta)$ under the AC condition.
If $k = n$, then it is clear from \eqref{eq:lovasz_affine_hull} that $\partial f(\theta) = \set{v^*}$.
Below, we assume that $k < n$.
Since $v^* \in \aff(\partial f(\theta))$, it suffices to show that $\sum_{i \in S} v_i^* \leq F(S)$ holds for any $S \subseteq [n]$ that determines a relative boundary of $\partial f(\theta)$.
The relative boundary of $\partial f(\theta)$ can be written as the union of all $n - k - 1$ dimensional faces of $\mcB(F)$ that have non-empty intersection with $\partial f(\theta)$.
Such faces can be characterized as follows:
Let $\Pi = (A_1, A_2, \ldots, A_k)$ be the partition defined in the above, and choose $A_i$ with $|A_i| \geq 2$.
Let $A'_i$ be any non-empty proper subset of $A_i$.
We define a new ordered partition of $[n]$ by inserting $(A'_{i}, A_i \setminus A'_{i})$ instead of $A_i$:
\[
    \Pi' = (A_1, A_2, \ldots, A_{i-1}, A'_{i}, (A_i \setminus A'_i), A_{i+1}, \ldots, A_k).
\]
Then, $\Pi'$ defines an $n - k- 1$ dimensional affine subspace by \eqref{eq:lovasz_affine_hull}, which defines a part of the relative boundary of $\partial f(\theta)$.
Therefore, we have to show that $\sum_{i \in S} v_i^* \leq F(S)$ for any $S$ that can be written as $S = S_{i-1} \cup A'_i$ with $A'_i \subset A_i$.
From the AC condition, we have
\begin{align*}
    \sum_{i \in S} v_i^*
    & = \sum_{j=1}^k \frac{F(S_j) - F(S_{j-1})}{|A_j|} |A_j \cap S| \\
    & = \sum_{j = 1}^{i - 1} (F(S_j) - F(S_{j-1}))
    + \frac{F(S_{i - 1} \cup A'_i) - F(S_{i-1})}{|A_i|} |A'_i| \\
    & \leq F(S_{i - 1}) + (F(S_{i - 1} \cup A'_i) - F(S_{i-1})) \\
    & = F(S).
\end{align*}
This proves that $v^* \in \partial f(\theta)$, and hence $f$ is weakly decomposable.
\end{proof}

\begin{rmk}\label{rmk:on_agglomerative}
The AC condition was originally introduced in \citet{Bach11}.
In that paper, the author consider the proximal denoising estimators \eqref{eq:proximal_denoising} where $f$ is the Lov\'{a}sz extension of a submodular function $F$.
The name ``agglomerative clustering'' captures the following property:
Let us consider the \textit{solution path} of the minimization problem \eqref{eq:proximal_denoising} parametrized by $\lambda$,
that is, the solution path is the collection $\set{\hat{\theta}_\lambda}_{\lambda \geq 0}$ calculated for all $\lambda \geq 0$.
In general, the solution path starts with $\hat{\theta}_\lambda = y$ for $\lambda = 0$, and $\hat{\theta}_\lambda$ shrinks toward some piecewise constant vector as $\lambda$ increases.
Proposition 4 of \citet{Bach11} showed that the solution path is agglomerative if $F$ satisfies the AC condition.

We provide some examples of functions satisfying the AC condition:
\begin{itemize}
    \item Let $h: \RR \to \RR$ be a concave function with $h(0) = 0$.
    A submodular function defined as $F(A) := h(|A|)$ satisfies the AC condition.
    Examples of solutions paths for this class can be found in \citet{Bach11}.
    \item The one-dimensional fused lasso has an agglomerative solution path.
    The corresponding submodular function is the cut function of the undirected one-dimensional grid graph, which satisfies the AC condition.
    Hence, by Proposition \ref{prop:ac_implies_weak_decomp}, the penalty of the one-dimensional fused lasso is weakly decomposable.
    This provides an alternative proof for Lemma 2.7 in \citet{Guntuboyina2017a}.
    On the other hand, the fused lasso on the two-dimensional grid does not satisfy this condition.
    See \citet{Bach11} for details.
    \item The nearly-isotonic regression \eqref{eq:neariso} has an agglomerative solution path.
    A direct proof for this property is provided in Lemma 1 in \citet{Tibshirani2011}.
    Below, we prove that the cut function for directed one-dimensional grid graph satisfies the AC condition, which provides an alternative proof for this fact.
\end{itemize}
\end{rmk}

The following proposition provides a proof for Proposition \ref{prop:mcv_weak_decomp}.

\begin{prop}\label{prop:neariso_ac_condition}
The cut function $F$ associated with the nearly-isotonic regression satisfies the AC condition.
In particular, the lower total variation $\mcV_-(\theta)$ is weakly decomposable.
Moreover, for any $\theta \in \RR^n$, the minimum value of the $\ell_2$-norm in $\partial \mcV_-(\theta)$ is given by \eqref{eq:mcv_v0_norm}.
\end{prop}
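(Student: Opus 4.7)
The plan is to establish the AC condition by a direct combinatorial argument, deduce weak decomposability from Proposition~\ref{prop:ac_implies_weak_decomp}, and verify the minimum-norm formula by an explicit parametrization of the subdifferential.

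The first step is a structural characterization of inseparable subsets for the cut function $F = \kappa_G$ of the directed path on $[n]$, where $G$ has edges $\set{(i, i+1) : 1 \leq i \leq n-1}$. By expanding $F(S) = |\set{i \in S : i+1 \notin S,\, i < n}|$ directly, one obtains for any pairwise disjoint $B, A_1, A_2 \subseteq [n]$ the second-order identity
\[
F_B^A(A_1 \cup A_2) = F_B^A(A_1) + F_B^A(A_2) - \bigl( |E(A_1, A_2)| + |E(A_2, A_1)| \bigr),
\]
where $E(X, Y)$ denotes the set of edges of $G$ from $X$ to $Y$. Because the only edges are $(j, j+1)$, the correction vanishes iff $A_1$ and $A_2$ contain no consecutive pair; hence $A$ is inseparable for $F_B^A$ (regardless of $B$) precisely when $A = \set{\ell, \ell+1, \ldots, r}$ is a single contiguous interval.

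Given this, I would verify the AC inequality \eqref{eq:defi_ac_condition} for such an interval $A$ by reducing both sides to a few boundary indicators. Set
\[
\alpha := 1_{\set{r < n,\; r + 1 \notin B}}, \qquad
\beta := 1_{\set{\ell > 1,\; \ell - 1 \in B}}.
\]
Then $F_B^A(A) = \alpha - \beta$, since the only edge from $A$ leaving $B \cup A$ is $(r, r+1)$ and the only edge from $B$ entering $A$ is $(\ell-1, \ell)$. For a non-empty $C \subseteq A$ with $s$ maximal sub-intervals, the same count gives $F_B^A(C) = (s - 1) + \alpha' - \beta'$, where $\alpha' = \alpha$ if $r \in C$ and $\alpha' = 1$ otherwise, and $\beta' = \beta$ if $\ell \in C$ and $\beta' = 0$ otherwise. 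The desired bound
\[
    \frac{|C|}{|A|}(\alpha - \beta) \leq (s - 1) + \alpha' - \beta'
\]
then reduces to four cases indexed by $(\alpha, \beta) \in \set{0,1}^2$. Three of them are routine, since the right-hand side is non-negative and the left-hand side is at most $|C|/|A| \leq 1$. The only delicate case is $(\alpha, \beta) = (0, 1)$, in which the right-hand side can equal $-1$; but this occurs only in the configuration $s = 1$ with both $\ell, r \in C$, forcing $C = A$, in which case both sides equal $-1$. Weak decomposability of $\mcV_-$ follows immediately from Proposition~\ref{prop:ac_implies_weak_decomp}.

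For the norm identity \eqref{eq:mcv_v0_norm}, I would avoid the abstract construction \eqref{eq:lovasz_v_star} and parametrize $\partial \mcV_-(\theta)$ directly. Every subgradient can be written as $v_j = u_j - u_{j-1}$ with boundary conditions $u_0 = u_n = 0$ and $u_i \in \partial(\cdot)_+(\theta_i - \theta_{i+1})$. On a constant block $B_l = \set{\tau_l, \ldots, \tau_{l+1}-1}$ of $\theta$, the interior variables $u_{\tau_l}, \ldots, u_{\tau_{l+1}-2}$ are free in $[0,1]$, while the block endpoints are pinned to the signs $u_{\tau_l - 1} = w_l$ and $u_{\tau_{l+1}-1} = w_{l+1}$ defined in \eqref{eq:sign_definition} (with the conventions $w_1 = w_{k+1} = 0$). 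The minimization of $\norm{v}_2^2 = \sum_j (u_j - u_{j-1})^2$ then decouples across blocks into independent one-dimensional discrete Dirichlet problems, each solved by linear interpolation; the contribution of $B_l$ equals $(w_{l+1} - w_l)^2/|B_l| = 1_{\set{w_l \neq w_{l+1}}}/|B_l|$. Summing over $l$ yields \eqref{eq:mcv_v0_norm}, and because the interpolated $u$-values lie in $[0,1]$ (both endpoints belonging to $\set{0,1}$), the minimizer lies in $\partial \mcV_-(\theta)$ itself, providing an independent confirmation of weak decomposability. The main technical hurdle is the AC step: the underlying counts are elementary, but one must carefully track which of the four boundary indicators activate in each configuration of $C$ and verify that the only tight case matches the degenerate choice $C = A$.
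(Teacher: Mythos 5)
Your proof is correct, and for the last assertion it takes a genuinely different route from the paper. For the AC condition your argument follows the same outline as the paper---casework on the adjacency of the two endpoints of $A$ to $B$---but is organized more tightly: the closed forms $F_B^A(A) = \alpha - \beta$ and $F_B^A(C) = (s-1) + \alpha' - \beta'$ reduce the check to four cases on $(\alpha,\beta) \in \set{0,1}^2$, with only $(\alpha,\beta)=(0,1)$ needing the observation that the offending configuration forces $C = A$; the paper instead tabulates $F_B^A$ over nine adjacency patterns. You also make explicit a step the paper takes for granted, namely that the second-order identity $F_B^A(A_1 \cup A_2) = F_B^A(A_1) + F_B^A(A_2) - \bigl(|E(A_1,A_2)| + |E(A_2,A_1)|\bigr)$ pins inseparability down to $A$ being a contiguous interval, uniformly in $B$. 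For the norm formula the paper invokes the base-polyhedron machinery of Proposition~\ref{prop:lovasz_subdifferential} and the abstract minimizer \eqref{eq:lovasz_v_star}, then computes $F(S_i) - F(S_{i-1}) = 1_{\set{w_i \neq w_{i+1}}}$. You instead parametrize $\partial \mcV_-(\theta)$ explicitly via $v_j = u_j - u_{j-1}$ with $u_i \in \partial(\cdot)_+(\theta_i - \theta_{i+1})$, note that pinning $u_{\tau_l - 1} = w_l$ at each knot decouples $\norm{v}_2^2$ into independent discrete Dirichlet problems over the constant blocks, and read off the block minimum $(w_{l+1}-w_l)^2/|B_l|$ from linear interpolation. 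This is more elementary, bypasses the Lov\'{a}sz-extension apparatus entirely, and---as you observe---the fact that the interpolated $u$-values already lie in $[0,1]$ yields a second, self-contained proof of weak decomposability that does not pass through Proposition~\ref{prop:ac_implies_weak_decomp}. The tradeoff is that the paper's argument applies uniformly to any Lov\'{a}sz extension (which it needs elsewhere in the appendix), whereas yours exploits the specific path structure of the underlying graph.
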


\begin{proof}
For any $A \subseteq V := [n]$, $F(A)$ is given by the number of connected components in $A$ that does not contains the rightmost point $n$.
Let $A \subseteq [n]$ be a connected subset, and $B \subseteq [n] \setminus A$.
The value of $F(B \cup A) - F(B)$ depends on whether one or both of two endpoints of $A$ are adjacent to $B$.

We will check the AC condition by considering all patterns of adjacency as Table \ref{tb:adjacency}.
\begin{table}[h!]
\centering
\caption{The values of $F_B^A$ for the cut function $F$ of one-dimensional grid graph.}
\label{tb:adjacency}
\begin{tabular}{cccc}
    Node left to $A$ & Node right to $A$ & $F(B \cup A) - F(B)$ & $F(B \cup C) - F(B)$ \\ \hline
    None & None & 0 & $\geq  0$ \\
    None & $B$ & 0 & $\geq 0$ \\
    None & $V \setminus B$ & 1 & $\geq 1_{\set{C \neq \emptyset}}$ \\
    $B$ & None & -1 & $\geq 0$ \\
    $B$ & $B$ & -1 & $\geq 0$ \\
    $B$ & $V \setminus B$ & 0 & $\geq 0$ \\
    $V \setminus B$ & None & 0 & $\geq 0$ \\
    $V \setminus B$ & $B$ & 0 & $\geq 0$ \\
    $V \setminus B$ & $V \setminus B$ & 1 & $\geq 1_{\set{C \neq \emptyset}}$
\end{tabular}
\end{table}
Here, $C$ represents any proper subset of $A$, and ``None'' means that $A$ contains $1$ or $n$.
In each case, we can easily check that the inequality \eqref{eq:defi_ac_condition} is satisfied.
Hence, $F$ satisfies the AC condition.

The second statement is a consequence of Proposition \ref{prop:ac_implies_weak_decomp}.

The last statement follows from fact that the minimizer of $\norm{v}_2^2$ in $\partial f(\theta)$ coincides with that in $\aff(\partial f(\theta))$, which is given as \eqref{eq:lovasz_v_star}.
In this case, we can choose $A_1, A_2, \ldots, A_k$ as the constant partition of $\theta$ that is sorted by the values of $\theta$.
Thus, we have
\begin{equation}\label{eq:mcv_v_star}
    v^* = \sum_{i=1}^k \frac{F(S_i) - F(S_{i-1})}{|A_i|} 1_{A_i}
    = \sum_{i=1}^k \frac{1_{w_i \neq w_{i+1}}}{|A_i|} 1_{A_i}
\end{equation}
which proves the desired result.
\end{proof}

\begin{rmk}[Missing part in the proof of Proposition \ref{prop:mod_pava_validity}]\label{rmk:concavity_usage}
With a slight modification of the above argument, we can show the AC condition for the cut function of weighted graph
\[
    F(A) = \sum \set{c_j: j \in A, j+1 \notin A},
\]
where $c_j > 0$ ($j = 1, \ldots, n - 1$) are edge weights. As mentioned in Proposition \ref{prop:mod_pava_validity}, we need this result to prove the validity of the modified PAVA algorithm (Algorithm \ref{alg:modified_pava}). Here, we prove that \eqref{eq:mod_pava_validity_condition} provides a sufficient condition for the AC condition, and hence the solution path of the weighted nearly-isotonic regression \eqref{eq:weighted_neariso} is agglomerative.

Let $A \subseteq [n]$ be a non-empty connected subset, $B$ be a subset of $[n] \setminus A$, and $C$ be a proper subset of $A$. Recall that our goal is to check the inequality \eqref{eq:defi_ac_condition}. For clarity, we write $A = \set{j_L, j_L +1, \ldots, j_R}$. As in the proof of Proposition \ref{prop:neariso_ac_condition}, we consider all adjacency patterns of $A$, $B$ and $C$. Then, we can easily check the following case statement:
\begin{enumerate}
    \item Suppose that either ``$j_L = 1$ and $j_R + 1 \notin B$'' or ``$j_L - 1 \notin B$ and $j_R + 1 \notin B$'' holds. Then, we have $F(B \cup A) - F(B) = F(A) = c_{j_R}$ and $F(B \cup C) - F(B) = F(C)$. Now, we will check \eqref{eq:defi_ac_condition} under the concavity condition \eqref{eq:mod_pava_validity_condition}. First, \eqref{eq:defi_ac_condition} trivially holds when $j_R \in C$ because in this case $F(C) \geq c_{j_R} = F(A)$. Next, we assume $j_R \notin C$. Let $i$ be the largest element in $C$. Then, we have $F(C) \geq c_i$, $|C| \leq i - j_L + 1$. Under the assumption \eqref{eq:mod_pava_validity_condition}, we have
    \begin{align*}
        \frac{|C|}{|A|} F(A) & \leq \frac{i - j_L + 1}{j_R - j_L + 1} c_{j_R} \\
        & \leq \frac{i}{j_R} c_{j_R} \quad (\because j_L \leq i < j_R) \\
        & \leq c_i \quad (\because \eqref{eq:mod_pava_validity_condition}) \\
        & \leq F(C),
    \end{align*}
    which implies \eqref{eq:defi_ac_condition}.
    \item Suppose that $j_L - 1 \in B$ and $j_R + 1 \notin B$. Then, we have $F(B \cup A) - F(B) = c_{j_R} - c_{j_L - 1}$ and $F(B \cup C) - F(B) \geq F(C) - c_{j_L - 1}$. By a similar argument above, \eqref{eq:defi_ac_condition} trivially holds when $j_R \in C$. Let $j_R \notin C$ and let $i$ be the largest element in $C$. Then, under the assumption \eqref{eq:mod_pava_validity_condition}, we have
    \begin{align*}
        \frac{|C|}{|A|} (F(B \cup A) - F(B))
        & \leq \frac{i - j_L + 1}{j_R - j_L + 1} (c_{j_R} - c_{j_L - 1}) \\
        & \leq c_i - c_{j_L - 1} \quad (\because \eqref{eq:mod_pava_validity_condition}) \\
        & \leq F(C) - c_{j_L - 1} \\
        & \leq F(B \cup C) - F(B).
    \end{align*}
    \item For other case, we have $F(B \cup A) - F(B) \leq F(B \cup C) - F(B)$, which implies \eqref{eq:defi_ac_condition}.
\end{enumerate}

\if0
\begin{table}[h!]
\centering
\caption{The values of $F_B^A$ for the cut function $F$ of weighted one-dimensional grid graph.}
\begin{tabular}{cccc}
    Node left to $A$ & Node right to $A$ & $F(B \cup A) - F(B)$ & $F(B \cup C) - F(B)$ \\ \hline
    None & None & 0 & $F(C)$ \\
    None & $B$ & 0 & $\geq 0$ \\
    None & $V \setminus B$ & $F(A)$ & $F(C)$ \\
    $B$ & None & $- F(B)$ & $\geq 0$ \\
    $B_\mathrm{left}$ & $B_\mathrm{right}$ & $- F(B_\mathrm{left})$ & $\geq 0$ \\
    $B$ & $V \setminus B$ & $c_a - c_{b}$ & $c_c - c_b$ \\
    $V \setminus B$ & None & 0 & $\geq 0$ \\
    $V \setminus B$ & $B$ & 0 & $\geq 0$ \\
    $V \setminus B$ & $V \setminus B$ & 1 & $\geq 1_{\set{C \neq \emptyset}}$
\end{tabular}
\end{table}
\fi
\end{rmk}

\section{Proofs in Section \ref{sec:model_selection}}

The goal of this section is to prove Theorem \ref{thm:model_selection}.
The outline of the proof is essentially the same as the framework of Theorem 4.18 in \citet{Massart2007}.
We explain this framework in Section \ref{sec:ms_proof_overview}.
To complete the proof, we have to control the maximum value of a certain normalized Gaussian process.
For this, we provide an upper bound in Section \ref{sec:ms_proof_peeling}.

\subsection{Proof overview}\label{sec:ms_proof_overview}

Let $(\hat{\Pi}, \hat{\mbV})$ be the selected pair in \eqref{eq:model_selection_estimator_sieve}.
Fix any connected partition $\Pi$ and $\mbV \in \mathscr{V}(|\Pi|)$.
By the definition of the estimator, we have
\begin{align*}
    \norm{y - \hat{\theta}_{\hat{\Pi}, \hat{\mbV}}}_2^2 + \mathrm{pen}(\hat{\Pi}, \hat{\mbV})
    & \leq \norm{y - \hat{\theta}_{\Pi', \mbV'}}_2^2 + \mathrm{pen}(\Pi', \mbV')\\
    & \leq \norm{y - \theta'}_2^2 + \mathrm{pen}(\Pi', \mbV')
\end{align*}
for any vector $\theta'$ that belongs to $K_{\Pi'}^\uparrow(\mbV')$.
In particular, we can choose $\theta'$ as
\[
    \theta' = \theta^*_{\Pi', \mbV'} := \argmin_{\theta' \in K_{\Pi'}^\uparrow(\mbV')} \norm{\theta' - \theta^*}_2.
\]
Substituting $y = \theta^* + \xi$, we can deduce that
\begin{equation}\label{eq:ms_proof_01}
    \norm{\theta^* - \hat{\theta}_{\hat{\Pi}, \hat{\mbV}}}_2^2
    \leq \norm{\theta^* - \theta^*_{\Pi', \mbV'}}_2^2
    - \mathrm{pen}(\hat{\Pi}, \hat{\mbV}) + \mathrm{pen}(\Pi', \mbV')
    + 2 \langle \hat{\theta}_{\hat{\Pi}, \hat{\mbV}} - \theta^*_{\Pi', \mbV'}, \ \xi \rangle.
\end{equation}
Here, recall that $\xi$ is a random variable drawn from $N(0, \sigma^2 I_n)$.

Let $z > 0$ be a positive number and $c \in (0, 1)$.
Suppose that an inequality
\begin{equation}\label{eq:ms_proof_02}
    \max_{\Pi} \sup_{\mbV \in \mathscr{V}(|\Pi|)}
    \sup_{\theta \in K_{\Pi}^\uparrow(\mbV)}
    \frac{\langle \theta - \theta^*_{\Pi', \mbV'}, \ \xi \rangle}
    {(\norm{\theta - \theta^*}_2+ \norm{\theta' - \theta^*}_2)^2 + \eta(\Pi, \mbV, z)}
    \leq \frac{c}{4}
\end{equation}
holds on some event $\Omega_z$ that occurs with probability at least $1 - \ee^{-z}$.
Here, $\eta(\Pi, \mbV, z) > 0$ is a positive constant that can depend on $\Pi, \mbV, z$.
Combining this inequality with \eqref{eq:ms_proof_01}, we have on the same event
\begin{equation}\label{eq:ms_proof_07}
    (1 - c) \norm{\theta^* - \hat{\theta}_{\hat{\Pi}, \hat{\mbV}}}_2^2
    \leq (1 + c) \norm{\theta^* - \theta^*_{\Pi', \mbV'}}_2^2
    - \mathrm{pen}(\hat{\Pi}, \hat{\mbV}) + \mathrm{pen}(\Pi', \mbV')
    + c \eta(\hat{\Pi}, \hat{\mbV}, z),
\end{equation}
where we used the elementary inequality $(a + b)^2 \leq 2(a^2 + b^2)$.
\if0
Then, if the penalty is chosen so that $\mathrm{pen}(\Pi, \mbV) \geq c \eta(\hat{\Pi}, \hat{\mbV}, z)$, we have
\[
    (1 - c) \norm{\theta^* - \hat{\theta}_{\hat{\Pi}, \hat{\mbV}}}_2^2
    \leq (1 + c) \norm{\theta^* - \theta^*_{\Pi', \mbV'}}_2^2
    + \mathrm{pen}(\Pi', \mbV').
\]
\fi

\subsection{Controlling the normalized process}\label{sec:ms_proof_peeling}

Now, our goal is to provide an inequality of the form \eqref{eq:ms_proof_02}.
Below, we fix $\theta' := \theta^*_{\Pi', \mbV'}$.

First, we fix a partition $\Pi$ and $\mbV \in \mathscr{V}(|\Pi|)$.
For any $\theta \in K_\Pi^\uparrow(\mbV)$, we define
\[
    \omega (\theta) = \omega_{\Pi, \mbV}(\theta) :=
    (\norm{\theta - \theta^*}_2 + \norm{\theta' - \theta^*}_2)^2 + \eta,
\]
where $\eta > 0$ is a positive constant which will be specified later.
Define a random variable $Z_{\Pi, \mbV}$ as
\[
    Z_{\Pi, \mbV} := \sup_{\theta \in K_\Pi^\uparrow (\mbV)}
    \frac{\langle \theta - \theta', \xi \rangle}{\omega(\theta)}.
\]
Note that $Z_{\Pi, \mbV}$ is the supremum of a sample-continuous Gaussian process.
By the concentration inequality for Gaussian processes (Lemma \ref{lem:borel_tis}), we have
\begin{equation}\label{eq:ms_proof_03}
    \mathrm{Pr} \left\{
        Z_{\Pi, \mbV} - \EE[Z_{\Pi, \mbV}] \geq \sqrt{2v(x + z)}
    \right\} \leq \exp(-(x + z))
\end{equation}
for any $x > 0$ and $z > 0$.
Here, the variance $v$ is bounded as
\[
    v := \sup_{\theta \in K_\Pi^\uparrow (\mbV)} [Z_{\Pi, \mbV}^2] \leq \frac{\sigma^2}{4\eta}
\]
because $\omega(\theta) \geq \norm{\theta - \theta'}_2^2 + \eta \geq 2 \eta^{1/2} \norm{\theta - \theta'}_2$, and $\langle u, \xi \rangle$ is distributed according to $N(0, \sigma^2 \norm{u}_2^2)$ for any $u \in \RR^n$.

We will provide an upper bound for $\EE[Z_{\Pi, \mbV}]$.
Let $\theta^*_{\Pi, \mbV}$ be the orthogonal projection of $\theta^*$ onto $K_{\Pi}^\uparrow(\mbV)$.
Note that
\begin{equation}\label{eq:ms_proof_04}
    \EE[Z_{\Pi, \mbV}]
    \leq \underbrace{\EE \left[
        \sup_{\theta \in K_\Pi^\uparrow (\mbV)}
        \frac{\langle \theta - \theta^*_{\Pi, \mbV}, \ \xi \rangle}{\omega(\theta)}
    \right]}_{\text{(a)}}
    + \underbrace{\EE \left[
        \frac{|\langle \theta^*_{\Pi, \mbV} - \theta', \ \xi \rangle|}{\inf_{\theta \in K_{\Pi}^\uparrow(\mbV)} \omega(\theta)}
    \right]}_{\text{(b)}}.
\end{equation}
The second term (b) in the right-hand side of \eqref{eq:ms_proof_04} is bounded from above by $\sigma \eta^{-1/2}$.
Indeed, since
\[
    \inf_{\theta \in K_{\Pi}^\uparrow(\mbV)} \omega(\theta)
    = (\norm{\theta^*_{\Pi, \mbV} - \theta^*}_2 + \norm{\theta' - \theta^*}_2)^2 + \eta
    \geq 2 \eta^{1/2} \norm{\theta^*_{\Pi, \mbV} - \theta'}_2,
\]
we have
\[
    \text{(b)} \leq \frac{1}{2 \sqrt{\eta}} \EE_{u \sim N(0, \sigma^2)}[|u|] = \frac{\sigma}{\sqrt{2 \pi \eta}}.
\]
To bound the term (a) in \eqref{eq:ms_proof_04}, we use the following lemma:

\begin{lem}\label{lem:ms_proof_peeling}
Let $\Pi = (A_1, A_2, \ldots, A_m)$ be any partition and $\mbV = (\mcV_1, \mcV_2, \ldots, \mcV_m)$.
Fix any $\bar{\theta} \in K_\Pi^\uparrow(\mbV)$.
For any $t > 0$, we have
\begin{equation}\label{eq:ms_proof_peeling_1}
    \EE \left[
        \sup_{\theta \in K_\Pi^\uparrow(\mbV): \norm{\theta - \bar{\theta}}_2 \leq t}
        \langle \xi, \theta - \bar{\theta} \rangle
    \right]
    \leq
    C \sigma t^{1/2} \left(
        \sum_{i=1}^m |A_i|^{1/3} \mcV_i^{2/3}
    \right)^{3/4}
    + C \sigma t \sqrt{m \log \frac{\ee n}{m}},
\end{equation}
where $C > 0$ is a universal constant.
Futhermore, for any $\eta > 0$, we have
\begin{equation}\label{eq:ms_proof_peeling_2}
    \EE \left[
        \sup_{\theta \in K_\Pi^\uparrow(\mbV)}
        \frac{\langle \theta - \bar{\theta}, \ \xi \rangle}{\norm{\theta - \bar{\theta}}_2 + \eta}
    \right]
    \leq 4 C \sigma \left\{
        \eta^{- 3/4} \left(
            \sum_{i=1}^m |A_i|^{1/3} \mcV_i^{2/3}
        \right)^{3/4}
        + \eta^{- 1/2} \sqrt{m \log \frac{\ee n}{m}}
    \right\},
\end{equation}
where $C$ is the same constant as in \eqref{eq:ms_proof_peeling_1}.
\end{lem}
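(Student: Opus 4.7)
The plan is to exploit the product structure $K_\Pi^\uparrow(\mbV) = \prod_{i=1}^m K_{|A_i|}^\uparrow(\mcV_i)$ and combine two complementary Gaussian-width estimates on each factor.

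For \eqref{eq:ms_proof_peeling_1}, the Birman--Solomyak/Gao--Wellner metric-entropy bound $\log N(\epsilon, K_{|A_i|}^\uparrow(\mcV_i), \|\cdot\|_2) \leq c\mcV_i|A_i|^{1/2}/\epsilon$, combined with the standard product-space entropy estimate, yields
\[
    \log N(\epsilon, K_\Pi^\uparrow(\mbV), \|\cdot\|_2)
    \leq \inf_{\sum_i \epsilon_i^2 \leq \epsilon^2}\sum_{i=1}^m \frac{c\,\mcV_i\,|A_i|^{1/2}}{\epsilon_i}
    \leq \frac{C\,\bigl(\sum_i |A_i|^{1/3}\mcV_i^{2/3}\bigr)^{3/2}}{\epsilon},
\]
where the optimal allocation $\epsilon_i \propto (\mcV_i|A_i|^{1/2})^{1/3}$ comes from a Lagrange-multiplier computation. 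Dudley's entropy integral $\sigma \int_0^t \sqrt{\log N(\epsilon)}\,d\epsilon$ then produces the leading term $C\sigma t^{1/2}\bigl(\sum_i|A_i|^{1/3}\mcV_i^{2/3}\bigr)^{3/4}$. The residual $C\sigma t\sqrt{m\log(\ee n/m)}$ captures the cone contribution: the statistical-dimension estimate $\delta(K_{|A_i|}^\uparrow) \leq \log(\ee|A_i|)$ (cf.\ Proposition \ref{prop:statistical_dim_compare}) gives a per-segment Gaussian width at most $\sigma s_i\sqrt{\log(\ee|A_i|)}$ on a ball of radius $s_i$, and Cauchy--Schwarz together with the concavity of $\log$ (so that $\sum_i \log|A_i| \leq m\log(n/m)$) assembles these over segments subject to $\sum_i s_i^2 \leq t^2$ into the claimed $\sigma t\sqrt{m\log(\ee n/m)}$.

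For \eqref{eq:ms_proof_peeling_2}, I would apply a dyadic peeling in the radius $\|\theta-\bar\theta\|_2$. Partition $K_\Pi^\uparrow(\mbV)$ into shells $S_k = \{\theta : r_{k-1} < \|\theta-\bar\theta\|_2 \leq r_k\}$ with radii $r_k \asymp 2^k\sqrt{\eta}$ (together with a core $S_0$), chosen so that the denominator in the ratio admits a geometric lower bound on each shell compatible with the quadratic $\omega$-type scaling that underlies the target exponents. On each shell the expected supremum of the numerator is controlled by \eqref{eq:ms_proof_peeling_1} applied at radius $r_k$, and the resulting per-shell bounds form geometric series in $k$ whose sums yield the stated $\eta^{-3/4}\bigl(\sum_i|A_i|^{1/3}\mcV_i^{2/3}\bigr)^{3/4}$ and $\eta^{-1/2}\sqrt{m\log(\ee n/m)}$ rates.

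The main obstacle will be controlling the peeling uniformly over the infinitely many shells that arise because $K_\Pi^\uparrow(\mbV)$ is unbounded (it contains arbitrary constant vectors). Gaussian concentration (Borel--TIS, Lemma~\ref{lem:borel_tis}) applied per shell with a summable failure budget $\delta_k$ handles the tail, and the resulting $\sqrt{\log\delta_k^{-1}}$ correction can be absorbed into the existing $\sqrt{m\log(\ee n/m)}$ term whenever $\delta_k$ decays at least polynomially in $k$.
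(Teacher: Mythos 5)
Your route to \eqref{eq:ms_proof_peeling_1} is a genuine departure from the paper's, which covers $K_\Pi^\uparrow(\mbV)\cap\{\norm{\theta-\bar\theta}_2\leq t\}$ by a \emph{finite} family of product-type sets $S(\mathbf{q})$ obtained by quantizing the per-segment $\ell_2$ budget (using Proposition~\ref{prop:cardinality}), applies Lemma~\ref{lem:guntuboyina_b1} segment-by-segment inside each $S(\mathbf{q})$, recombines the segments via H\"older and Cauchy--Schwarz, and finally takes the finite maximum via Lemma~\ref{lem:guntuboyina_d1}. Your sketch, however, has several gaps that I don't see how to close as written. First, the entropy bound you cite cannot hold: $K_{|A_i|}^\uparrow(\mcV_i)$ contains arbitrary constant shifts and so has infinite covering numbers; you must first intersect with the ball, and the radius then re-enters the Dudley integral. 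Second, the statistical-dimension estimate $\delta(K_{|A_i|}^\uparrow)\leq\log(\ee|A_i|)$ bounds the width of $K_{|A_i|}^\uparrow\cap B(0,s_i)$, but the set you actually need is the translate $(K_{|A_i|}^\uparrow(\mcV_i)-\bar\theta_{A_i})\cap B(0,s_i)$ --- not a cone --- whose width is what the \emph{second} term of Lemma~\ref{lem:guntuboyina_b1} controls, not a statistical dimension of the untranslated cone. Third, the additive split into a ``Dudley/BV term'' plus a ``cone term'' presupposes a decomposition $v=\phi+\psi$ of the increment that you never exhibit; Lemma~\ref{lem:guntuboyina_b1} is precisely the result that packages that decomposition per segment. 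Fourth, the phrase ``Cauchy--Schwarz assembles these over segments subject to $\sum_i s_i^2\leq t^2$'' silently interchanges $\EE$ and $\sup_{\mathbf{s}}$; that interchange is exactly what the paper's quantization set $\mcQ$ and Lemma~\ref{lem:guntuboyina_d1} are there to justify, and for the $s_i^{1/2}$-scaling BV contribution no homogeneity argument saves you, so some discretization is unavoidable.

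For \eqref{eq:ms_proof_peeling_2}, your dyadic shelling is the same idea as the Massart peeling lemma (Lemma~\ref{lem:peeling}), which the paper simply applies with $\psi$ taken from~\eqref{eq:ms_proof_peeling_1} and $x^2=\eta$. Your appeal to Borel--TIS with a summable failure budget is unnecessary for an \emph{expectation} bound and indicates a misreading of where the difficulty lies: one bounds $\EE[\sup_{\text{shell }k}]\leq\psi(r_k)$, divides by the shell's denominator floor, and sums the resulting geometric series, whose convergence is guaranteed because $\psi(t)/t$ is non-increasing. The unboundedness of $K_\Pi^\uparrow(\mbV)$ is absorbed by that geometric decay; concentration only enters in the separate high-probability argument via~\eqref{eq:ms_proof_03} later in Section~\ref{sec:ms_proof_peeling}.
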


\begin{proof}
We will prove the first inequality \eqref{eq:ms_proof_peeling_1}.
Let $W := W(\Pi, \mbV)$ denote the left-hand side of \eqref{eq:ms_proof_peeling_1}.
We consider a collection of finitely many sets $S(\mathbf{q})$ as follows:
Let $\mcQ := \mcQ(m)$ be a collection of vectors $\mathbf{q} = (q_1, q_2, \ldots, q_m)$ that can be written as $\mathbf{q} = t^2 \mathbf{a}/m$ for some integer vector $\mathbf{a} = (a_1, a_2, \ldots, a_m)$ such that $1 \leq a_i \leq m$ and $\sum_{i=1}^m a_i \leq 2m$.
Note that, by Proposition \ref{prop:cardinality}, the cardinality of $\mcQ$ is bounded by $(2 \ee)^m$.
For any $\mathbf{q} \in \mcQ$, define the set
\[
    S(\mathbf{q}) := \left\{
        \theta  \in \RR^n: \norm{\theta_{A_i}}_2^2 \leq q_i, \ 
        \mcV^{A_i}(\theta_{A_i}) \leq 2 \mcV_i \
        \text{for all $A_i \in \Pi$}
    \right\}.
\]
Then, we can easily check that
\[
    K_\Pi^\uparrow(\mbV) \cap \set{\theta \in \RR^n: \norm{\theta - \bar{\theta}}_2 \leq t}
    \subseteq \bigcup_{\mathbf{q} \in \mcQ} S(\mathbf{q}).
\]

From Lemma \ref{lem:guntuboyina_b1} below, there exists a universal constant $C > 0$ such that
\begin{equation}\label{eq:ms_proof_peeling_3}
    \EE \left[
        \sup_{\theta \in S(\mathbf{q})} \langle \theta, \xi \rangle
    \right]
    \leq C\sigma \sum_{i=1}^m \left\{
        \sqrt{2} q_i^{1/4} |A_i|^{1/4} \mcV_i^{1/2}
        + q_i^{1/2} \sqrt{\log \ee |A_i|}
    \right\}.
\end{equation}
Here, by H\"{o}lder's inequality, we have
\[
    \sum_{i=1}^m q_i^{1/4} |A_i|^{1/4} \mcV_i^{1/2}
    \leq \left(
        \sum_{i=1}^m q_i
    \right)^{1/4} \left(
        \sum_{i=1}^m (|A_i|^{1/4} \mcV_i^{1/2})^{4/3}
    \right)^{3/4}
    \leq 2^{1/4} t^{1/2} \left(
        \sum_{i=1}^m |A_i|^{1/3} \mcV_i^{2/3}
    \right)^{3/4},
\]
and by the Cauchy-–Schwarz inequality, we also have
\[
    \sum_{i=1}^m 2 q_i^{1/2} \sqrt{\log \ee |A_i|}
    \leq 2 \sqrt{2} t\left(\sum_{i=1} \log \ee |A_i| \right)^{1/2}
    \leq 2 \sqrt{2} t\sqrt{m \log \frac{\ee n}{m}}.
\]
Then, by Lemma \ref{lem:guntuboyina_d1} below, we have
\begin{align*}
    W & \leq \max_{\mathbf{q} \in \mcQ} \EE \left[
        \sup_{v \in S(\mathbf{q})} \langle \xi, v \rangle
    \right]
    + 2t \sigma \left(
        \sqrt{2 \log |\mcQ|} + \sqrt{\frac{\pi}{2}}
    \right)\\
    & \leq
    C \sigma \left\{
        2^{3/4} t^{1/2} \left(
            \sum_{i=1}^m |A_i|^{1/3} \mcV_i^{2/3}
        \right)^{3/4}
        + 2 \sqrt{2} t\sqrt{m \log \frac{\ee n}{m}}
    \right\}
    + 2t \sigma \left(
        \sqrt{4 m \log 2\ee} + \sqrt{\frac{\pi}{2}}
    \right) \\
    & \leq C' \sigma \left\{
        t^{1/2} \left(
            \sum_{i=1}^m |A_i|^{1/3} \mcV_i^{2/3}
        \right)^{3/4}
        + t \sqrt{m \log \frac{\ee n}{m}}
    \right\}
\end{align*}
for some $C' > 0$.
Thus, \eqref{eq:ms_proof_peeling_1} has been proved.

The second inequality \eqref{eq:ms_proof_peeling_2} is a consequence of the peeling lemma (Lemma \ref{lem:peeling} below).
\end{proof}

Combining \eqref{eq:ms_proof_03}, \eqref{eq:ms_proof_04} and \eqref{eq:ms_proof_peeling_2}, we conclude that
\begin{align}\label{eq:ms_proof_05}
    Z_{\Pi, \mbV} & \leq
    4 C \sigma \eta^{- 3/4} \left(
            \sum_{i=1}^m |A_i|^{1/3} \mcV_i^{2/3}
        \right)^{3/4} \nonumber  \\
    & + \sigma \eta^{- 1/2} \left\{
        4 C \sqrt{m \log \frac{\ee n}{m}}
        + (2\pi)^{-1/2} + 2^{-1/2} \sqrt{x + z}
    \right\}
\end{align}
holds with probability at least $1 - \exp(- (x + z))$, where $C$ is the constant in \eqref{eq:ms_proof_peeling_2}.
Now, we choose the two constant $\eta := \eta(\Pi, \mcV, z)$ and $x := x(\Pi, \mcV)$ as
\[
    \eta(\Pi, \mcV, z)
    := 2^8 (4C + 1)^{4/3} \sum_{i=1}^m \sigma^{4/3}|A_i|^{1/3} \mcV_i^{2/3}
    + 2^8 (4C + 2)^2 \sigma^2 m \log \frac{\ee n}{m}
    + 2^8 \sigma^2 z
\]
and
\[
    x(\Pi, \mcV)
    := \sum_{i=1}^m \sigma^{- 2/3}|A_i|^{1/3} \mcV_i^{2/3}
    + 2 m \log \frac{\ee n}{m},
\]
respectively.
Then, it is elementary to check that the right-hand side of \eqref{eq:ms_proof_05} is not larger than $1/8$.

Applying the union bound over all pairs $(\Pi, \mbV)$, we have
\begin{align*}
    \mathrm{Pr} \left\{
        \max_{\Pi} \sup_{\mbV \in \mathscr{V}(|\Pi|)} Z_{\Pi, \mbV} > \frac{1}{8}
    \right\}
    & \leq
    \exp(-z) \sum_{\Pi} \sum_{\mbV} \exp(-x (\Pi, \mbV)).
\end{align*}
Here, we can show that
\begin{equation}\label{eq:ms_proof_06}
    \sum_{\Pi} \sum_{\mbV} \exp(-x (\Pi, \mbV)) \leq 1,
\end{equation}
and hence we conclude that \eqref{eq:ms_proof_02} holds with $c = 1/2$.
Indeed, \eqref{eq:ms_proof_06} follows from the fact that, for any $\Pi$, 
\begin{align*}
    \sum_{\mbV \in \mathscr{V}(\Pi)} \exp \left(
        - \sum_{i=1}^m \sigma^{- 2/3}|A_i|^{1/3} \mcV_i^{2/3}
    \right)
    & = \prod_{i=1}^m \exp \left(
        - \sigma^{- 2/3}|A_i|^{1/3}
    \right) \left(
        \sum_{j_i = 1}^\infty \ee^{-j_i}
    \right) \\
    & \leq \exp \left(
        - \sum_{i=1}^m \sigma^{- 2/3}|A_i|^{1/3}
    \right) \leq 1
\end{align*}
and
\begin{align*}
    \sum_{\Pi} \exp\left( - 2|\Pi| \log \frac{\ee n}{|\Pi|} \right)
    & = \sum_{m=1}^n \sum_{\Pi: |\Pi| = m} \exp\left( - 2m \log \frac{\ee n}{m} \right)\\
    & \leq \sum_{m=1}^n \sum_{\Pi: |\Pi| = m} \exp\left( - m - \log \binom{n-1}{m-1} \right) \\
    & = \sum_{m=1}^n \ee^{-m} \leq 1.
\end{align*}

\subsection{Proof of Theorem \ref{thm:model_selection}}\label{sec:ms_proof_complete}

Now, we are ready to complete the proof of Theorem \ref{thm:model_selection}.
Define $\mathrm{pen}(\Pi, \mbV)$ as
\[
    2^7 (4C + 1)^{4/3} \sum_{i=1}^m \sigma^{4/3}|A_i|^{1/3} \mcV_i^{2/3}
    + 2^7 (4C + 2)^2 \sigma^2 m \log \frac{\ee n}{m},
\]
where $C$ is the constant in \eqref{eq:ms_proof_peeling_2}.
Let $(\Pi', \mbV')$ be the pair that minimizes
\[
    (\Pi, \mbV) \mapsto \frac{3}{2} \norm{\theta^* - \theta^*_{\Pi, \mcV}}_2^2 + \mathrm{pen}(\Pi, \mbV)
\]
among all possible pairs.
Applying \eqref{eq:ms_proof_07} and \eqref{eq:ms_proof_02} for this choice of $(\Pi', \mbV')$, we conclude that
\[
    \norm{\hat{\theta}_{\hat{\Pi}, \hat{\mbV}} - \theta^*}_2^2
    \leq \min_{\Pi} \min_{\mbV \in \mathscr{V}(|\Pi|)} \left\{
        3 \dist^2(\theta^*, K_{\Pi}^\uparrow(\mbV))
        + 2 \mathrm{pen}(\Pi, \mbV)
    \right\}
    + 2^8 \sigma^2 z
\]
holds with probability at least $1 - \exp(-z)$.
Moreover, by integrating both sides with respect to $z$, we have
\[
    \EE_{\theta^*} \norm{\hat{\theta}_{\hat{\Pi}, \hat{\mbV}} - \theta^*}_2^2
    \leq \min_{\Pi} \min_{\mbV \in \mathscr{V}(|\Pi|)} \left\{
        3 \dist^2(\theta^*, K_{\Pi}^\uparrow(\mbV))
        + 2 \mathrm{pen}(\Pi, \mbV)
    \right\} + 2^8 \sigma^2.
\]

\section{Auxiliary lemmas}

Here, we present several auxiliary lemmas that are used in the proofs in the previous sections.

\begin{lem}[Borel--Tsirelson--Ibragimov--Sudakov inequality; see Proposition 3.19 in \citet{Massart2007}]\label{lem:borel_tis}
Suppose that $(X_t)_{t \in T}$ is a Gaussian process on a totally bounded metric space $(T, d)$ such that $\EE[X_t] = 0$ for any $t \in T$ and the sample path $t \mapsto X_t$ is almost surely continuous.
Let $v := \sup_{t \in T} \EE[X^2_t]$.
Then, for any $z > 0$, we have
\[
    \mathrm{Pr} \left\{
        \sup_{t \in T} X_t - \EE \left[ \sup_{t \in T} X_t \right]
        \geq \sqrt{2vz}
    \right\} \leq \exp(-z).
\]
\end{lem}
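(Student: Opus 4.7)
The plan is to derive this as a corollary of the Gaussian concentration inequality for Lipschitz functions (sometimes attributed to Borell, Sudakov–Tsirelson, or Cirelson), which states that if $F: \RR^N \to \RR$ is $L$-Lipschitz with respect to the Euclidean metric and $Z \sim N(0, I_N)$, then $\Pr\{F(Z) - \EE F(Z) \geq t\} \leq \exp(-t^2/(2L^2))$ for any $t > 0$. I will take this Lipschitz concentration bound as given (it follows from Gaussian isoperimetry, or alternatively from a log-Sobolev inequality and Herbst's argument). Setting $t = \sqrt{2 v z}$ then matches the claimed tail bound, so the entire task reduces to rewriting $\sup_{t \in T} X_t$ as a Lipschitz function of a standard Gaussian vector with Lipschitz constant $\sqrt{v}$.

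First I would handle the finite case. Suppose $T = \set{t_1, \ldots, t_N}$ is finite. The vector $(X_{t_1}, \ldots, X_{t_N})$ is centered Gaussian, so by Cholesky/square-root factorization of its covariance matrix there exists a matrix $A \in \RR^{N \times N}$ with rows $a_1, \ldots, a_N \in \RR^N$ such that $(X_{t_1}, \ldots, X_{t_N}) \stackrel{d}{=} A Z$ for $Z \sim N(0, I_N)$. Define $F(z) := \max_{i \leq N} \langle a_i, z \rangle$. Then $\sup_{t \in T} X_t \stackrel{d}{=} F(Z)$, and for any $z, z' \in \RR^N$ the elementary inequality $|\max_i \langle a_i, z\rangle - \max_i \langle a_i, z'\rangle| \leq \max_i |\langle a_i, z - z'\rangle| \leq (\max_i \norm{a_i}_2) \norm{z - z'}_2$ shows $F$ is Lipschitz with constant $\max_i \norm{a_i}_2$. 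Since $\norm{a_i}_2^2 = \EE[X_{t_i}^2] \leq v$, the Lipschitz constant is at most $\sqrt{v}$. Applying Gaussian Lipschitz concentration yields the desired bound for finite $T$.

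Next I would pass to the general totally bounded, sample-continuous case by a standard approximation argument. Since $(T, d)$ is totally bounded and $t \mapsto X_t$ is almost surely continuous, there is a countable dense subset $T_0 \subseteq T$ such that $\sup_{t \in T} X_t = \sup_{t \in T_0} X_t$ almost surely. Enumerating $T_0 = \set{s_1, s_2, \ldots}$ and letting $T_N := \set{s_1, \ldots, s_N}$, I would set $M_N := \sup_{t \in T_N} X_t$ and $M := \sup_{t \in T} X_t$. The finite-case argument gives, for each $N$,
\[
    \Pr\set{M_N \geq \EE[M_N] + \sqrt{2 v z}} \leq \ee^{-z}.
\]
Monotone convergence gives $M_N \uparrow M$ almost surely and $\EE[M_N] \uparrow \EE[M]$ (the limit is finite by, e.g., Borell's inequality applied to finite sub-suprema together with Fatou's lemma, which shows $\EE[M] < \infty$). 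Passing to the limit on both the event and the expectation yields the claimed inequality.

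The only real obstacle is the Gaussian Lipschitz concentration inequality itself, which is the nontrivial ingredient; every other step is either algebraic (the Cholesky reduction) or a routine separability/monotone-convergence argument. Since that concentration inequality is standard and can be cited directly (e.g.\ Theorem~5.6 in \citet{Boucheron2013}, which is already invoked elsewhere in the paper), I would simply quote it rather than reprove it.
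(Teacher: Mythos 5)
The paper does not prove this lemma: it is quoted as a known result (Proposition 3.19 of Massart, 2007), so there is no in-paper argument to compare against. Your proof is correct and is the standard textbook derivation of the Borell--TIS inequality: Cholesky-factor the finite marginal so that $\sup_{i} X_{t_i} \stackrel{d}{=} F(Z) = \max_i \langle a_i, Z\rangle$ with $\|a_i\|_2^2 = \EE[X_{t_i}^2] \leq v$, observe that $F$ is $\sqrt{v}$-Lipschitz, apply the Gaussian Lipschitz concentration inequality, and then pass to the general totally bounded, sample-continuous case by separability and monotone convergence. One small detail worth making explicit in the limiting step: since $\EE[M_N]\leq \EE[M]$, you have $\Pr\{M_N > \EE[M]+\sqrt{2vz}\} \leq \Pr\{M_N > \EE[M_N]+\sqrt{2vz}\} \leq \ee^{-z}$, and continuity from below of the measure along the increasing events $\{M_N > \EE[M]+\sqrt{2vz}\}$ gives $\Pr\{M > \EE[M]+\sqrt{2vz}\}\leq \ee^{-z}$; the non-strict version in the statement then follows by perturbing $z$ slightly downward and letting the perturbation vanish. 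Your reference to Theorem 5.6 of Boucheron, Lugosi and Massart (2013) for the Lipschitz concentration ingredient is also consistent with what the paper cites elsewhere.
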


\begin{lem}[Peeling lemma; see e.g. Lemma 4.23 in \citet{Massart2007}]\label{lem:peeling}
Let $K$ be a set in $\RR^n$ and $\bar{\theta} \in K$.
Assume that there is a function $\psi: [0, \infty) \to \RR$ such that $\psi(t) / t$ is non-increasing and
\[
    \EE_{\xi \sim N(0, I_n)} \left[
        \sup_{\theta \in K: \norm{\theta - \bar{\theta}}_2 \leq t} \langle \xi, \theta - \bar{\theta} \rangle
    \right]
    \leq \psi(t)
\]
for any $t \geq \bar{t} \geq 0$.
Then, for any $x\geq \bar{t}$, we have
\[
    \EE_{\xi \sim N(0, I_n)} \left[
        \sup_{\theta \in K: \norm{\theta - \bar{\theta}}_2 \leq t} \frac{\langle \xi, \theta - \bar{\theta} \rangle}{\norm{\theta - \bar{\theta}}_2^2 + x^2}
    \right]
    \leq \frac{4\psi(x)}{x^2}.
\]
\end{lem}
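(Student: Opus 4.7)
The plan is to apply a standard dyadic peeling argument: decompose $K$ into annuli based on the distance to $\bar{\theta}$, bound the normalized supremum annulus by annulus using the hypothesis on $\psi$, and then sum a geometric series whose convergence is driven by the sublinearity of $\psi$. The starting observation is that, since $\bar{\theta} \in K$, the quantity in question is nonnegative almost surely (it attains $0$ at $\theta = \bar{\theta}$), so it suffices to upper bound it by a sum of positive contributions.

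Concretely, I would set $r_j := 2^j x$ for $j \geq 0$ and write $K = \bigcup_{j \geq 0} A_j$ with $A_0 := \set{\theta \in K : \norm{\theta - \bar{\theta}}_2 \leq r_0}$ and $A_j := \set{\theta \in K : r_{j-1} < \norm{\theta - \bar{\theta}}_2 \leq r_j}$ for $j \geq 1$. On $A_j$ the denominator satisfies $\norm{\theta - \bar{\theta}}_2^2 + x^2 \geq r_{j-1}^2 + x^2$ (with $r_{-1} := 0$), and $A_j$ sits inside the ball $B_j := \set{\theta \in K : \norm{\theta - \bar{\theta}}_2 \leq r_j}$, which contains $\bar{\theta}$. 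Hence
\[
\left(\sup_{\theta \in A_j} \frac{\langle \xi, \theta - \bar{\theta}\rangle}{\norm{\theta - \bar{\theta}}_2^2 + x^2}\right)_+ \leq \frac{1}{r_{j-1}^2 + x^2} \sup_{\theta \in B_j} \langle \xi, \theta - \bar{\theta}\rangle,
\]
and the right-hand side is already nonnegative. Using $\max_j a_j \leq \sum_j (a_j)_+$ and taking expectations (applying the hypothesis with $t = r_j \geq \bar{t}$, which is valid since $x \geq \bar{t}$ and $j \mapsto r_j$ is increasing), one arrives at
\[
\EE \sup_{\theta \in K} \frac{\langle \xi, \theta - \bar{\theta}\rangle}{\norm{\theta - \bar{\theta}}_2^2 + x^2} \leq \sum_{j=0}^\infty \frac{\psi(r_j)}{r_{j-1}^2 + x^2}.
\]

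By the sublinearity of $\psi$, $\psi(r_j) = \psi(2^j x) \leq 2^j \psi(x)$; combined with $r_{j-1}^2 + x^2 \geq \max\set{4^{j-1}, 1} \, x^2$, each summand is bounded by a multiple of $2^{-j}\psi(x)/x^2$, and the geometric series produces a universal constant times $\psi(x)/x^2$. A careful accounting---retaining the additive $x^2$ in every denominator rather than discarding it on the large annuli---recovers the sharp constant $4$ asserted in the lemma.

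The main obstacle I anticipate is entirely bookkeeping: the naive version of peeling produces a constant like $5$ rather than $4$, and tightening it requires either optimizing the dyadic ratio (replacing $2^j$ by $\kappa^j$ and minimizing over $\kappa > 1$) or exploiting the $+\,x^2$ term uniformly across annuli. A secondary subtlety is measurability of the supremum, which is handled by the standard separability of the continuous Gaussian process $\theta \mapsto \langle \xi, \theta - \bar{\theta}\rangle$; if $K$ is not compact, one simply restricts to a countable dense subset.
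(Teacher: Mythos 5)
Your proof is correct and is the standard dyadic peeling argument; the paper itself gives no proof (it cites Massart's Lemma~4.23, whose proof proceeds along essentially the same lines). One small remark on your bookkeeping worry: keeping the $+x^2$ in the denominators already gives $\le 4$ with the plain ratio $\kappa = 2$ and no further optimization — the $j=0$ and $j=1$ terms each contribute at most $\psi(x)/x^2$, and for $j \ge 2$ one has $\frac{2^j}{4^{j-1}+1} < 4\cdot 2^{-j}$, so the tail sums to $< 2$, giving a total $< 4$. The constant $5$ you mention arises only if one discards $x^2$ from every denominator with $j\ge 1$.
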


\begin{lem}[\citet{Guntuboyina2017a}, Lemma B.1]\label{lem:guntuboyina_b1}
For any $t > 0$ and $\mcV > 0$, let
\[
    S(V, t) := \set{\theta \in \RR^n: \mcV(\theta) \leq \mcV \ \text{and} \ \norm{\theta}_2 \leq t}.
\]
There exists a universal constant $C > 0$ such that
\[
    \EE_{\xi \sim N(0, \sigma^2 I_n)} \left[
        \sup_{\theta \in S(V, t)} \langle \theta, \xi \rangle
    \right]
    \leq C \sigma t^{1/2} n^{1/4} \mcV^{1/2} + C \sigma t \sqrt{\log \ee n}.
\]
\end{lem}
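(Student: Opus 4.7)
The plan is to derive the Gaussian width bound for the total-variation ball $S(V,t)$ via Dudley's entropy integral, leveraging the classical metric entropy estimate for bounded monotone sequences. By the homogeneity $\xi \mapsto \sigma^{-1}\xi$, I would first reduce to $\sigma = 1$ and bound $w(S(V,t)) := \EE_{Z \sim N(0,I_n)}[\sup_{\theta \in S(V,t)} \langle \theta, Z\rangle]$.

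The central step is the metric entropy estimate
\[
    \log N(\epsilon, S(V,t), \norm{\cdot}_2) \leq C \frac{V\sqrt{n}}{\epsilon} + C\log(\ee n),
    \qquad 0 < \epsilon \leq t.
\]
For the first term, I would use the monotone decomposition $\theta = \theta^+ - \theta^-$ with $\theta^\pm \in K_n^\uparrow$, $\mcV(\theta^\pm) \leq V$, and normalize so that $\norm{\theta^\pm}_\infty \lesssim V + t/\sqrt{n}$ using the $\ell_2$ constraint; this reduces the covering problem to covering monotone sequences bounded in $L_\infty$, for which the Gao–Wellner bound gives an entropy of order $V\sqrt{n}/\epsilon$. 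For the second term, I would use that $S(V,t) \subseteq B_n(t)$ and bound the entropy of the Euclidean ball in $\RR^n$ by $\log(\ee n)$ at scales $\epsilon \asymp t$ (a straightforward volumetric argument since the effective dimension is controlled).

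Applying Dudley's chaining inequality yields
\[
    w(S(V,t)) \leq C \int_0^t \sqrt{\log N(\epsilon, S(V,t),\norm{\cdot}_2)} \, d\epsilon
    \leq C_1 V^{1/2} n^{1/4} t^{1/2} + C_2 \, t \sqrt{\log(\ee n)},
\]
where the first integral $\int_0^t \sqrt{V\sqrt{n}/\epsilon}\, d\epsilon = 2 V^{1/2} n^{1/4} t^{1/2}$ produces the dominant TV-scaling term and the second term is immediate. Reintroducing $\sigma$ gives the claim.

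The main obstacle I expect is the metric entropy step. The monotone decomposition is straightforward combinatorially, but does not preserve the $\ell_2$ norm — so one must be careful to transfer a cover of the two monotone classes (taken in the natural $L_\infty$- or weighted $L_2$-sense) into an $\ell_2$-cover of $S(V,t)$ with the right constants. A clean way is to cover $\theta^+$ and $\theta^-$ separately at scale $\epsilon/2$ in $\ell_2$, noting that the $L_\infty$ envelope of each monotone piece is controlled by a combination of $V$ and of $|\theta_1| \leq t$. Handling the interplay between the two regimes (small $\epsilon$ dominated by the $V\sqrt{n}/\epsilon$ entropy, large $\epsilon$ by the dimensional $\log(\ee n)$ term) is what splits the final bound into its two characteristic terms.
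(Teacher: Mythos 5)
This lemma is imported verbatim from \citet{Guntuboyina2017a} (their Lemma B.1) and is not proved in this paper, so there is no internal proof to compare against. Judged on its own, your Dudley-integral plan is a reasonable route, and the computation $\int_0^t\sqrt{\mathcal{V}\sqrt{n}/\epsilon}\,d\epsilon = 2\,\mathcal{V}^{1/2}n^{1/4}t^{1/2}$ does reproduce the leading term. But the metric-entropy estimate you rely on has a genuine gap.

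The claimed bound $\log N(\epsilon, S(\mathcal{V},t),\lVert\cdot\rVert_2)\leq C\,\mathcal{V}\sqrt{n}/\epsilon + C\log(\ee n)$ is not correct as a uniform bound for $0<\epsilon\leq t$, and the justification you give for the second term is wrong: the covering number of a Euclidean ball of radius $t$ in $\RR^n$ at scale $\epsilon$ has logarithm of order $n\log(t/\epsilon)$, not $\log(\ee n)$, and even at $\epsilon\asymp t/2$ this is $\asymp n$. (A quick sanity check: take $\mathcal{V}=0$; then $S(0,t)$ is a segment of length $2t$ in the direction $\mathbf{1}_n$, whose entropy $\log(2t/\epsilon)$ exceeds $\log(\ee n)$ once $\epsilon<2t/(\ee n)$, yet your bound is then identically $C\log(\ee n)$.) The $\sqrt{\log(\ee n)}$ factor in the statement is not volumetric at all: it is the Gaussian width (equivalently, the square root of the statistical dimension) of the monotone cone $K_n^\uparrow\cap B_n$, as used elsewhere in this paper, e.g.\ in the proof of Lemma \ref{lem:suprema_1} via the Amelunxen et al.\ bound $\delta(K_n^\uparrow)\leq\log(\ee n)$. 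That is a cone-specific property, not a covering-number-of-a-ball property, and your sketch does not supply a substitute argument for it.

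Relatedly, you correctly flag that the monotone decomposition $\theta=\theta^+-\theta^-$ destroys the $\ell_2$ constraint; that difficulty is not cosmetic. Covering $\theta^\pm$ separately at scale $\epsilon/2$ with an $L_\infty$ envelope of order $\mathcal{V}+t/\sqrt{n}$ gives only $\lVert\theta^\pm\rVert_2\lesssim\mathcal{V}\sqrt{n}+t$, so the resulting width estimate does not use the $\ell_2$ budget $t$ in any way that produces the crucial $t^{1/2}$ scaling in the first term or the $t$ scaling in the second; one would instead obtain bounds of the form $\sigma\mathcal{V}\sqrt{n\log n}$, which can be much larger than $\sigma t^{1/2}n^{1/4}\mathcal{V}^{1/2}+\sigma t\sqrt{\log\ee n}$ in the regime $t\ll\mathcal{V}\sqrt{n}$. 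Fixing this requires an argument (e.g.\ centering at the mean to isolate a low-width $\mathbf{1}_n$ direction, plus a Dudley integral truncated at a data-dependent lower scale, or a direct appeal to the monotone-cone width) that ties the $\ell_2$ radius $t$ and the TV budget $\mathcal{V}$ together; the present sketch does not do this.
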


\begin{lem}[\citet{Guntuboyina2017a}, Lemma D.1]\label{lem:guntuboyina_d1}
Suppose $p, n \geq 1$ and let $\Theta_1, \ldots, \Theta_p$ be subset of $\RR^n$ each containing the origin and each contained in the closed Euclidean ball of radius $D$ centered at the origin. Then, for $\xi \sim N(0, \sigma^2 I)$, we have
\begin{equation}
    \EE \left[
        \max_{1 \leq i \leq p} \sup_{\theta \in \Theta_i} \langle \xi, \theta \rangle
    \right]
    \leq \max_{1 \leq i \leq p}\EE \left[
        \sup_{\theta \in \Theta_i} \langle \xi, \theta \rangle
    \right]
    + D \sigma \left(
    \sqrt{2 \log p} + \sqrt{\frac{\pi}{2}}
    \right).
\end{equation}
\end{lem}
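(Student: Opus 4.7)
The plan is to decompose the random variable $M := \max_{1 \leq i \leq p} Z_i$, where $Z_i := \sup_{\theta \in \Theta_i} \langle \xi, \theta \rangle$, by writing
\[
    M \leq \max_{1 \leq i \leq p} \EE[Z_i] + \max_{1 \leq i \leq p} (Z_i - \EE[Z_i])_+,
\]
and then to control the fluctuation term uniformly over $i$ via Gaussian concentration combined with a standard peeling-at-$t_0$ integration.

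First, I would verify that each $Z_i$ is $D$-Lipschitz as a function of $\xi$: for any $\xi_1, \xi_2 \in \RR^n$, the two-sided inequality $Z_i(\xi_1) - Z_i(\xi_2) \leq \sup_{\theta \in \Theta_i} \langle \xi_1 - \xi_2, \theta \rangle \leq D \norm{\xi_1 - \xi_2}_2$ follows from Cauchy--Schwarz and the diameter hypothesis. Applying the Borel--TIS inequality (Lemma \ref{lem:borel_tis}) with variance parameter $v = \sup_{\theta \in \Theta_i} \sigma^2 \norm{\theta}_2^2 \leq \sigma^2 D^2$ produces the sub-Gaussian tail
\[
    \Pr \left\{ Z_i - \EE[Z_i] \geq t \right\} \leq \exp \left( - \frac{t^2}{2 D^2 \sigma^2} \right), \quad t \geq 0.
\]

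Second, set $\sigma' := D \sigma$ and $t_0 := \sigma' \sqrt{2 \log p}$, so that $p \exp(-t_0^2/(2 \sigma'^2)) = 1$. I would then evaluate
\[
    \EE \left[ \max_i (Z_i - \EE[Z_i])_+ \right]
    = \int_0^\infty \Pr \left\{ \max_i (Z_i - \EE[Z_i])_+ > t \right\} dt
\]
by splitting the integral at $t_0$: bound the integrand by $1$ on $[0, t_0]$ (contributing $\sigma' \sqrt{2 \log p}$) and by $p \exp(-t^2/(2 \sigma'^2))$ on $[t_0, \infty)$ via the union bound. For the tail piece, substitute $u = t - t_0$ and use the inequality $(u + t_0)^2 \geq u^2 + t_0^2$ for $u, t_0 \geq 0$ to separate the calibration factor $p \exp(-t_0^2/(2\sigma'^2)) = 1$ from the Gaussian half-integral $\int_0^\infty \exp(-u^2/(2 \sigma'^2)) du = \sigma' \sqrt{\pi/2}$. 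Summing the two contributions gives $D \sigma (\sqrt{2 \log p} + \sqrt{\pi/2})$.

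Combining these two steps with the initial decomposition yields the asserted bound. I do not expect any genuine obstacle; the only mildly delicate point is the algebraic calibration at $t_0$, where the choice of threshold is what makes $\sqrt{\pi/2}$ appear with clean constants rather than through a looser Mill's-ratio estimate.
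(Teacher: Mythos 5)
Your proof is correct, and since the paper cites this lemma from Guntuboyina and Sen (2020, Lemma D.1) without reproducing a proof, the right comparison is to the standard argument in that reference, which your reconstruction matches: decompose as $\max_i Z_i \leq \max_i \EE[Z_i] + \max_i (Z_i - \EE[Z_i])_+$, apply Borel--TIS with variance parameter $\sigma^2 D^2$ to get a uniform sub-Gaussian tail for each fluctuation, and integrate with the cutoff $t_0 = D\sigma\sqrt{2\log p}$ calibrated so the union-bounded tail contributes exactly the half-Gaussian integral $D\sigma\sqrt{\pi/2}$. Your Lipschitz and variance computations, the substitution $u = t - t_0$ together with $(u+t_0)^2 \geq u^2 + t_0^2$, and the treatment of the $p=1$ edge case are all sound.
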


\section*{Acknowledgment}
This work was supported by JSPS KAKENHI Grant Number JP17J06640. The author would like to thank three anonymous reviewers for their valuable comments and suggestions. The author also thanks Hiromichi Nagao for suggesting the example of a seismological phenomenon, and Fumiyasu Komaki and Keisuke Yano for helpful discussions.

\bibliographystyle{plainnat}
\bibliography{main}

\end{document}